\documentclass[11pt]{amsart}
\usepackage{amsmath,amssymb,amsfonts}
\usepackage{enumerate}
\usepackage{tikz-cd}
\usepackage{url}
\usepackage{hyperref}
\hypersetup{colorlinks=true, linktoc=all, linkcolor=blue}
\usepackage{todonotes}

\newtheorem{theorem}{Theorem}[section]

\theoremstyle{definition}
\newtheorem{definition}[theorem]{Definition}
\newtheorem{proposition}[theorem]{Proposition}
\newtheorem{lemma}[theorem]{Lemma}
\newtheorem{corollary}[theorem]{Corollary}
\newtheorem{construction}[theorem]{Construction}
\newtheorem{notation}[theorem]{Notation}
\newtheorem{example}[theorem]{Example}
\newtheorem{remark}[theorem]{Remark}

\newcommand{\Ch}{\mathrm{Ch}}
\newcommand{\Ext}{\mathrm{Ext}}
\newcommand{\Gr}{\mathrm{Gr}}
\newcommand{\Hom}{\mathrm{Hom}}
\newcommand{\id}{\mathrm{id}}
\newcommand{\Mod}{\mathrm{Mod}}

\newcommand{\Ob}{\mathrm{Ob}}
\newcommand{\op}{\mathrm{op}}
\newcommand{\Sq}{\mathrm{Sq}}

\begin{document}

\title{Unstable Modules with the Top $k$ Squares}
\author{Zhulin Li}
\address{Department of Mathematics, Massachusetts Institute of Technology, Cambridge, MA, USA}
\email{zhulin@mit.edu}

\maketitle

\begin{abstract}
% <= 250 words, 1 paragraph, no reference, few symbols
Unstable modules over the Steenrod algebra with only the top $k$ operations are introduced in the language of ringoids.
We prove the category of such modules has homological dimension at most $k$.
A pratical method, which generalizes the $\Lambda$ complex, to compute the $\Ext$ group from such modules to spheres is proposed.
We are also able to establish several functors to relate such modules and unstable modules over the Steenrod algebra, and to describe the connections between the $\Ext$ groups in them.
\end{abstract}

\tableofcontents

\section*{Introduction}
Let $A$ be the Steenrod algebra over the field $\mathbb{F}_{2}$.
The purpose of this paper is to investigate the category $\mathcal{U}_{k}$ of unstable left $A$-modules where only the top $k$ Steenrod squares are allowed.
In general, on an homogeneous element of degree $n$ the top $k$ Steenrod squares on it are $\Sq^{n},\Sq^{n-1},\dots,\Sq^{n-k+1}$.

% background literature
The cohomology of a topological space is naturally an unstable left $A$-module.
The category $\mathcal{U}$ of such modules has been studied extensively (see e.g. \cite{schwartz1994}).
It is a basic problem in algebraic topology to compute the $\Ext$ groups between spheres in this category, as they coincide with the $E_{2}$ page of the unstable Adams spectral sequence.
Unfortunately, those $\Ext$ groups are often difficult to compute and the category $\mathcal{U}$ is not of finite homological dimension.
Our category $\mathcal{U}_{k}$ is easier to work with than $\mathcal{U}$ in the sense that it is of finite homological dimension.
Computation of $\Ext$ groups in $\mathcal{U}_{k}$ in turn contributes to computation of $\Ext$ groups in $\mathcal{U}$, because when $N$, a module in $\mathcal{U}$, is bounded above degree $n$ and $k$ is large compared to $n$, the $\Ext$ groups into $N$ in those two categories agree with each other.

Just as the $\Lambda$ complex computes the $\Ext$ groups into spheres in $\mathcal{U}$, our $\Lambda_{k}$ complex gives an algorithmic method to compute the $\Ext$ groups into spheres in $\mathcal{U}_{k}$.
Our arguments also give an alternative proof to the fact that the traditional $\Lambda$ complex computes the $\Ext$ groups into spheres in $\mathcal{U}$.

% structure of this paper
The paper consists of 6 sections, the first of which introduces the theory of ringoids and prepares for the formal definition of category $\mathcal{U}_{k}$ in Section \ref{sec:U_k}.
Some examples of modules in $\mathcal{U}_{k}$ and the symmetric monoidal structure of $\mathcal{U}_{k}$ are also given in Section \ref{sec:U_k}.
Section \ref{sec:functors} is devoted to various functors between $\mathcal{U}_{k},\mathcal{U}_{k+1},\mathcal{U}$.
Our main theorem on the homological dimension of $\mathcal{U}_{k}$ is presented in Section \ref{sec:U_k_homodim}.
Section \ref{sec:Lambda_k} introduces the $\Lambda_{k}$-complex of a module in $\mathcal{U}_{k}$ and shows that it computes the $\Ext$ group from that module into spheres.
The convergence of an inverse system of $\Ext$ groups in $\mathcal{U}_{k}$'s to the $\Ext$ group in $\mathcal{U}$ is studied in Section \ref{sec:invsys}.

% acknowledgement
The author would like to thank Professor Haynes Miller without whom this paper would not exist.
Professor Miller's conjecture that the homological dimension of $\mathcal{U}_{k}$ is at most $k$, was the starting point of this project.
Professor Miller pointed the author to this topic and the author benefited a lot from weekly discussions with him.

\section{Ringoids}
\label{sec:ringoids}
In this section, we briefly review the theory of ringoids, which is not a novelty of this paper.
Informally, a ringoid is a ring with several objects.
Similar to rings, ringoids have subringoids, ideals and quotients.
We point interested readers to more extensive literature e.g. \cite{mitchell1972}.

\subsection{Ringoids and modules}
\begin{definition}[Preadditive category]
A category $\mathcal{A}$ is called preadditive if each morphism set $\mathcal{A}(x, y)$ is endowed with the structure of an abelian group in such a way that the compositions \[\mathcal{A}(x, y)\times\mathcal{A}(y, z)\to\mathcal{A}(x, z)\] are bilinear.
\end{definition}

\begin{definition}[Additive functor]
A functor $F:\mathcal{A}\to\mathcal{B}$ of preadditive categories is called additive if and only if \[F:\mathcal{A}(x, y)\to\mathcal{B}(F(x), F(y))\] is a homomorphism of abelian groups for all objects $x$ and $y$ in $\mathcal{A}$.
\end{definition}

\begin{definition}[Ringoid]
A ringoid is a small preadditive category and a morphism of ringoids is an additive functor.
Denote the category of ringoids by $\mathbf{Ringoid}$.
A ring is just a special ringoid --- a ringoid with a single object.
Many statements in ring theory can be generalized to ringoids.
\end{definition}

\begin{definition}[Left module over a ringoid]
\label{def:moduleRingoid}
If $\mathcal{A}$ is a ringoid, then a left $\mathcal{A}$-module is a covariant additive functor from $\mathcal{A}$ to the category $\mathbf{Ab}$ of abelian groups.
These modules form a category $\mathcal{A}\mathbf{Mod}$.
From now on, by an $\mathcal{A}$-module we mean a left $\mathcal{A}$-module.
Note that for any object $x$ in a ringoid $\mathcal{A}$, the covariant functor $\mathcal{A}(x,-):\mathcal{A}\to\mathbf{Ab}$ is an $\mathcal{A}$-module.
\end{definition}

\begin{proposition}
\label{prop:yoneda}
For any object $x$ in a ringoid $\mathcal{A}$ and any $\mathcal{A}$-module $M$, we have the following isomorphism of abelian groups
\[\mathcal{A}\mathbf{Mod}(\mathcal{A}(x,-),M) \cong M(x).\]
What's more, this isomorphism is natural in $M$, i.e. given an $\mathcal{A}$-module map $f:M\to N$, we have the following commutative diagram
\[\begin{tikzcd}
\mathcal{A}\mathbf{Mod}(\mathcal{A}(x,-),M)\arrow[r]\arrow[d] &M(x)\arrow[d]\\
\mathcal{A}\mathbf{Mod}(\mathcal{A}(x,-),N)\arrow[r] &N(x).
\end{tikzcd}\]
That is to say, the functor $\mathcal{A}\mathbf{Mod}(\mathcal{A}(x,-), -)$ from $\mathcal{A}\mathbf{Mod}$ to $\mathbf{Ab}$ is equal to the functor sending $M$ to $M(x)$.
\end{proposition}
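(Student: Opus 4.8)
The plan is to establish this as the $\mathbf{Ab}$-enriched Yoneda lemma, keeping explicit track of the abelian group structures so that the isomorphism is one of abelian groups and not merely of sets. Write $\id_x \in \mathcal{A}(x,x)$ for the identity morphism. First I would introduce the evaluation map
\[
\Phi \colon \mathcal{A}\mathbf{Mod}(\mathcal{A}(x,-),M) \longrightarrow M(x), \qquad \Phi(\eta) = \eta_x(\id_x),
\]
together with a candidate inverse $\Psi \colon M(x) \to \mathcal{A}\mathbf{Mod}(\mathcal{A}(x,-),M)$ that sends $m \in M(x)$ to the transformation $\Psi(m)$ whose component at an object $y$ is
\[
\Psi(m)_y \colon \mathcal{A}(x,y) \to M(y), \qquad f \longmapsto M(f)(m).
\]

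The verification then proceeds in small, routine steps, which I would carry out in this order. First, $\Psi(m)$ really is a morphism of $\mathcal{A}$-modules: each $\Psi(m)_y$ is a group homomorphism because $M$ is additive, and naturality in $y$ is exactly the identity $M(g)\bigl(M(f)(m)\bigr) = M(gf)(m)$ coming from functoriality of $M$. Second, $\Phi$ and $\Psi$ are homomorphisms of abelian groups: for $\Phi$ this is immediate since the group structure on morphisms of modules is computed componentwise, and for $\Psi$ it again follows from additivity of $M$. Third, $\Phi \circ \Psi = \id$, which collapses to $M(\id_x) = \id_{M(x)}$. Fourth, $\Psi \circ \Phi = \id$: for a module map $\eta$ and a morphism $f \colon x \to y$, the naturality square of $\eta$ applied to $f$ gives $M(f)\bigl(\eta_x(\id_x)\bigr) = \eta_y\bigl(f \circ \id_x\bigr) = \eta_y(f)$, so $\Psi(\Phi(\eta))_y = \eta_y$ for every $y$. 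This last step is the only place where the naturality hypothesis on $\eta$ is used.

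For naturality of the isomorphism in $M$, given a module map $\varphi \colon M \to N$ I would chase an element $\eta$ around the square: going right then down yields $\varphi_x\bigl(\eta_x(\id_x)\bigr)$, while going down then right yields $(\varphi \circ \eta)_x(\id_x) = \varphi_x\bigl(\eta_x(\id_x)\bigr)$, using only that postcomposition with $\varphi$ acts componentwise. The final sentence of the statement, that the functor $\mathcal{A}\mathbf{Mod}(\mathcal{A}(x,-),-)$ equals evaluation at $x$, is then just a reformulation of this naturality.

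I do not expect any genuine obstacle here: the argument is entirely formal and is the classical Yoneda lemma, the only extra bookkeeping being that every map in sight respects the abelian group structure, which in each instance reduces to the additivity of the functor $M$ or to the componentwise definition of the group structure on $\mathcal{A}\mathbf{Mod}$.
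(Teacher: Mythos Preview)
Your proposal is correct and is essentially the same approach as the paper's: the paper's entire proof is the single sentence ``This follows from the Yoneda lemma,'' and you have simply unpacked that invocation into the standard explicit argument with the evaluation map $\eta \mapsto \eta_x(\id_x)$ and its inverse. There is nothing to add.
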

\begin{proof}
This follows from the Yoneda lemma.
\end{proof}

\begin{proposition}
For any object $x$ in the ringoid $\mathcal{A}$, the $\mathcal{A}$-module $\mathcal{A}(x,-)$ is projective.
\end{proposition}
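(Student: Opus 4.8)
The plan is to deduce projectivity of $\mathcal{A}(x,-)$ from Proposition \ref{prop:yoneda} together with the fact that the evaluation functor at $x$ is exact. Recall that an object $P$ of an abelian category is projective if and only if the functor $\Hom(P,-)$ is exact, equivalently if every epimorphism onto a module lifts along it. By Proposition \ref{prop:yoneda} the functor $\mathcal{A}\mathbf{Mod}(\mathcal{A}(x,-),-)\colon\mathcal{A}\mathbf{Mod}\to\mathbf{Ab}$ is (naturally) the evaluation functor $\mathrm{ev}_x\colon M\mapsto M(x)$, so it suffices to show $\mathrm{ev}_x$ is exact.

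To see this, I would first record that $\mathcal{A}\mathbf{Mod}$ is an abelian category in which kernels and cokernels are computed objectwise: for a natural transformation $\varphi\colon M\to N$, the assignments $y\mapsto\ker(\varphi_y)$ and $y\mapsto\operatorname{coker}(\varphi_y)$ inherit $\mathcal{A}$-module structures from $M$ and $N$, and they serve as the kernel and cokernel of $\varphi$ in $\mathcal{A}\mathbf{Mod}$. Consequently a sequence of $\mathcal{A}$-modules is exact precisely when it is exact after evaluating at each object of $\mathcal{A}$, so in particular $\mathrm{ev}_x$ is exact. Since exactness is preserved under natural isomorphism, $\mathcal{A}\mathbf{Mod}(\mathcal{A}(x,-),-)\cong\mathrm{ev}_x$ is exact, and therefore $\mathcal{A}(x,-)$ is projective.

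Alternatively, I could give the lifting property directly, which is arguably cleaner: given an epimorphism $p\colon M\to N$ in $\mathcal{A}\mathbf{Mod}$ and a map $f\colon\mathcal{A}(x,-)\to N$, use the Yoneda isomorphism to identify $f$ with an element $n\in N(x)$; since $p$ is objectwise surjective, pick $m\in M(x)$ with $p_x(m)=n$, and let $\tilde f\colon\mathcal{A}(x,-)\to M$ be the map corresponding to $m$ under Yoneda. The naturality square in Proposition \ref{prop:yoneda} applied to $p$ then forces $p\circ\tilde f=f$. Either way, the only point that genuinely needs care is that $\mathcal{A}\mathbf{Mod}$ is abelian with objectwise (co)kernels and objectwise epimorphisms; this is standard for functor categories valued in an abelian category such as $\mathbf{Ab}$, and I would cite it (e.g.\ \cite{mitchell1972}) rather than reprove it here.
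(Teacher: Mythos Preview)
Your proof is correct and follows essentially the same approach as the paper: both invoke Proposition~\ref{prop:yoneda} to identify $\mathcal{A}\mathbf{Mod}(\mathcal{A}(x,-),-)$ with the evaluation functor $M\mapsto M(x)$ and then observe that this functor is exact. You simply supply more detail (the objectwise description of (co)kernels and an alternative lifting argument), but the argument is the same.
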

\begin{proof}
By Proposition \ref{prop:yoneda}, the functor $\mathcal{A}\Mod(\mathcal{A}(x,-),-)$ is equal to the functor $\mathcal{A}\Mod\to\mathbf{Ab}$ sending $M$ to $M(x)$.
So this functor is exact and the module $\mathcal{A}(x,-)$ is projective.
\end{proof}

\begin{proposition}
\label{prop:enoughProj}
$\mathcal{A}\mathbf{Mod}$ is a complete and cocomplete abelian category with enough projectives.
\end{proposition}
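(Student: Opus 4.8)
The plan is to transport every piece of structure objectwise from $\mathbf{Ab}$, and then to build the required projective surjections out of the representable modules $\mathcal{A}(x,-)$ using Proposition \ref{prop:yoneda}.

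First I would verify that $\mathcal{A}\mathbf{Mod}$ is abelian. The zero object is the constant functor at $0$, and the biproduct of $M$ and $N$ is $x\mapsto M(x)\oplus N(x)$, which is again additive since $(M\oplus N)(x\oplus y)\cong M(x)\oplus N(x)\oplus M(y)\oplus N(y)$. For a morphism $f\colon M\to N$ one sets $(\ker f)(x)=\ker(f_x)$ and $(\operatorname{coker} f)(x)=\operatorname{coker}(f_x)$; functoriality in $x$ is automatic, and additivity of these functors follows because forming kernels and cokernels in $\mathbf{Ab}$ commutes with finite biproducts. A natural transformation is then a monomorphism (resp. epimorphism) precisely when each $f_x$ is injective (resp. surjective), since kernels and cokernels are computed objectwise; the remaining abelian-category axioms, in particular that every monomorphism is a kernel and every epimorphism is a cokernel, are inherited objectwise from $\mathbf{Ab}$.

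Next, completeness and cocompleteness. For a diagram $D\colon I\to\mathcal{A}\mathbf{Mod}$ put $(\lim D)(x)=\lim_i D(i)(x)$ and $(\operatorname{colim} D)(x)=\operatorname{colim}_i D(i)(x)$, using that $\mathbf{Ab}$ is complete and cocomplete. These are functorial in $x$ and carry the evident universal cone and cocone; the only thing to check is that they land among \emph{additive} functors, which holds because in $\mathbf{Ab}$ a finite biproduct is simultaneously a product (hence commutes with all limits) and a coproduct (hence commutes with all colimits). Thus $\mathcal{A}\mathbf{Mod}$ is both complete and cocomplete.

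Finally, enough projectives. Given a module $M$, for each object $x$ of $\mathcal{A}$ and each $m\in M(x)$ let $\varphi_{x,m}\colon\mathcal{A}(x,-)\to M$ be the morphism corresponding to $m$ under the isomorphism of Proposition \ref{prop:yoneda}, and form
\[
P \;=\; \bigoplus_{x\in\Ob\mathcal{A}}\ \bigoplus_{m\in M(x)}\mathcal{A}(x,-),
\]
a legitimate coproduct because $\mathcal{A}$ is small and each $M(x)$ is a set. Let $\pi\colon P\to M$ restrict to $\varphi_{x,m}$ on the $(x,m)$-summand. Each $\mathcal{A}(x,-)$ is projective by the previous proposition, and a coproduct of projectives is again projective because $\mathcal{A}\mathbf{Mod}\bigl(\bigoplus_j P_j,-\bigr)\cong\prod_j\mathcal{A}\mathbf{Mod}(P_j,-)$ and a product of exact $\mathbf{Ab}$-valued functors is exact. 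To see $\pi$ is an epimorphism it suffices, by the objectwise description of epimorphisms, to check surjectivity of $\pi_y$ for each object $y$: for $n\in M(y)$ the $(y,n)$-summand contributes $\varphi_{y,n}(y)(\id_y)=n$ under the Yoneda identification, so $n$ lies in the image. Hence every module admits an epimorphism from a projective. The only input here beyond routine objectwise bookkeeping is Proposition \ref{prop:yoneda} (to define $\pi$ and to identify $\varphi_{y,n}(y)(\id_y)=n$) together with the smallness of $\mathcal{A}$ ensuring $P$ exists; that, rather than any genuine difficulty, is the crux of the argument.
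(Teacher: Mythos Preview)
Your approach matches the paper's: inherit the abelian/(co)complete structure objectwise from $\mathbf{Ab}$, then build a surjection from $\bigoplus_{x,m}\mathcal{A}(x,-)$ via Yoneda. The ``enough projectives'' paragraph is essentially identical to the paper's argument.

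One genuine slip to fix: you repeatedly verify ``additivity'' of the constructed functors by checking that they preserve biproducts \emph{in the domain} $\mathcal{A}$, e.g.\ writing $(M\oplus N)(x\oplus y)\cong M(x)\oplus N(x)\oplus M(y)\oplus N(y)$. But a ringoid is only preadditive, so $x\oplus y$ need not exist; moreover the paper's definition of an additive functor is that each map $\mathcal{A}(x,y)\to\mathbf{Ab}(F(x),F(y))$ is a group homomorphism, not that biproducts are preserved. The correct check is simpler: if each $D(i)$ sends $f+g$ to $D(i)(f)+D(i)(g)$, then so does the pointwise limit or colimit, because the structure maps of the limiting cone are $\mathbb{Z}$-linear and separate points (for limits) or are jointly epic (for colimits). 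The same remark applies to your verification that $M\oplus N$, $\ker f$, and $\operatorname{coker} f$ are additive. This is a local repair rather than a structural gap; the paper sidesteps all of this by simply citing Grothendieck's Ab5 framework.
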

\begin{proof}
Since $\mathbf{Ab}$ is a complete and cocomplete abelian category which satisfies \emph{Ab}5, the same is true of $\mathcal{A}\mathbf{Mod}$.
See \cite{grothendieck1957} for more details on the axiom \emph{Ab}5.

To see that $\mathcal{A}\mathbf{Mod}$ has enough projectives, it suffices to construct for every $\mathcal{A}$-module $M$ an epimorphism $e:F\to M$ with $F$ projective.
For each object $x$ in the ringoid $\mathcal{A}$ and each element $m$ in the abelian group $M(x)$, we take the $\mathcal{A}$-module map $\mathcal{A}(x,-)\to M$ corresponding to $m\in M(x)$, according to Proposition \ref{prop:yoneda}.
Then the map $\mathcal{A}(x,-)\to M$ sends $\id_{x}\in\mathcal{A}(x,x)$ to $m\in M(x)$.
Taking the direct sum of all such maps, we get an epimorphism from a direct sum of projective modules to $M$.
\end{proof}

\begin{proposition}
\label{prop:projFree}
An $\mathcal{A}$-module $M$ is projective if and only if there exists another $\mathcal{A}$-module $N$ such that \[M\oplus N = \bigoplus_{i\in I}\mathcal{A}(x(i),-)\] for some index set $I$.
\end{proposition}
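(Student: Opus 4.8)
The plan is to prove the standard "projective iff direct summand of a free module" characterization, adapted to the ringoid setting. The "if" direction is the easy half: I would first show that each $\mathcal{A}(x,-)$ is projective (already done in the excerpt), that arbitrary direct sums of projectives are projective (because $\mathcal{A}\mathbf{Mod}(\bigoplus_i P_i, -) \cong \prod_i \mathcal{A}\mathbf{Mod}(P_i, -)$, and a product of exact functors is exact), and that a direct summand of a projective is projective (if $M \oplus N$ is projective, then $\mathcal{A}\mathbf{Mod}(M,-)$ is a retract of $\mathcal{A}\mathbf{Mod}(M \oplus N, -)$, hence exact, since a retract of an exact functor into $\mathbf{Ab}$ is exact). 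Combining these three facts gives that $\bigoplus_{i \in I} \mathcal{A}(x(i),-)$ is projective and hence so is any summand $M$.

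For the "only if" direction, I would invoke Proposition \ref{prop:enoughProj}: given a projective $M$, the construction there produces an epimorphism $e : F \to M$ where $F = \bigoplus_{i \in I} \mathcal{A}(x(i),-)$ is a direct sum of representable modules, indexed by pairs $(x,m)$ with $x \in \Ob(\mathcal{A})$ and $m \in M(x)$. Since $M$ is projective and $e$ is an epimorphism, the identity $\id_M : M \to M$ lifts along $e$ to a map $s : M \to F$ with $e \circ s = \id_M$. Then $s$ is a split monomorphism, so $F \cong M \oplus N$ where $N = \ker(e)$ (equivalently, $N$ is the image of the idempotent $\id_F - s\circ e$); this exhibits $M$ as the required summand. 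I should check that $\mathcal{A}\mathbf{Mod}$ being abelian (Proposition \ref{prop:enoughProj}) is what licenses forming this kernel and the splitting $F \cong \ker(e) \oplus \mathrm{im}(s)$, which is the usual splitting-lemma argument valid in any abelian category.

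I do not anticipate a serious obstacle here — every ingredient is formal category theory that transfers verbatim from modules over a ring to modules over a ringoid. The only point requiring a sentence of care is the behavior of $\mathcal{A}\mathbf{Mod}(-,-)$ with respect to infinite direct sums in the first variable and the observation that exactness of functors valued in $\mathbf{Ab}$ is detected objectwise, so that retracts and products of exact functors remain exact; both follow from the fact that limits and colimits in $\mathcal{A}\mathbf{Mod}$ are computed pointwise in $\mathbf{Ab}$, as used in the proof of Proposition \ref{prop:enoughProj}. With those remarks in place the argument is complete.
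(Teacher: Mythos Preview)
Your proposal is correct and follows essentially the same approach as the paper: both directions match the paper's proof, with the ``only if'' direction invoking Proposition~\ref{prop:enoughProj} to obtain a free cover and then splitting it using projectivity of $M$. Your ``if'' direction is slightly more detailed than the paper's (you spell out why direct sums and retracts of projectives are projective, whereas the paper simply asserts these), but the underlying argument is identical.
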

\begin{proof}
If $M\oplus N = \bigoplus_{i\in I}\mathcal{A}(x(i),-)$, then $M\oplus N$ is projective and $M$, as a retract of $M\oplus N$, is projective as well.

For the other direction, let $M$ be a projective $\mathcal{A}$-module.
According to the proof to Proposition \ref{prop:enoughProj}, we can construct an epimorphism $e:F\to M$ with $F=\bigoplus_{i\in I}\mathcal{A}(x(i),-)$.
Since $M$ is projective, we can construct $f:M\to F$ such that $e\circ f$ is equal to the identity map on $M$.
Denote the kernel of $e$ by $N$.
Then it is easy to check that the map $M\oplus N\to F$ induced by $f:M\to F$ and the monomorphism $N\to F$ is an isomorphism.
\end{proof}

\subsection{Subringoids and ideals}
A subring of a ring is an abelian subgroup that is closed under multiplication and contains the multiplicative identity of the ring.
We generalize this to the following definition of subringoid.
\begin{definition}[Subringoid]
A subringoid is a wide preadditive subcategory of the ringoid.
In other words, a subringoid $\mathcal{B}$ of a ringoid $\mathcal{A}$ is a subcategory of $\mathcal{A}$ such that
\begin{itemize}
\item
$\Ob(\mathcal{B}) = \Ob(\mathcal{A})$,
\item
$\mathcal{B}(x, y)$ is an abelian subgroup of $\mathcal{A}(x, y)$ for all objects $x, y$.
\end{itemize}
\end{definition}

An ideal of a ring is an abelian subgroup that is closed under multiplication with elements in the ring both from the left and right.
We generalize this to the following definition of an ideal of a ringoid.
\begin{definition}[Ideal and quotient]
An ideal $\mathcal{I}$ of a ringoid $\mathcal{A}$ consists of an abelian subgroup $\mathcal{I}(x,y)$ of $\mathcal{A}(x,y)$ for each pair of objects $x,y\in\Ob(\mathcal{A})$ such that for all objects $x,y,z\in\Ob(\mathcal{A})$,
\begin{itemize}
\item
the image of composition $\mathcal{I}(x, y)\times \mathcal{A}(y, z)\to \mathcal{A}(x, z)$ lies in $\mathcal{I}(x, z)$,
\item
the image of composition $\mathcal{A}(x, y)\times \mathcal{I}(y, z)\to \mathcal{A}(x, z)$ lies in $\mathcal{I}(x, z)$.
\end{itemize}
Given a ringoid $\mathcal{A}$ and an ideal $\mathcal{I}$ in it, we can form the quotient $\mathcal{Q} = \mathcal{A}/\mathcal{I}$ by equalizing morphisms in $\mathcal{I}$ to zero.
We call $\mathcal{Q}$ the quotient ringoid of the ringoid $\mathcal{A}$ by the ideal $\mathcal{I}$.
More precisely, given a ringoid $\mathcal{A}$ and an ideal $\mathcal{I}$ in it, we define $\mathcal{Q}$ as the category with
\begin{itemize}
\item
$\Ob(\mathcal{Q}) = \Ob(\mathcal{A})$,
\item
$\mathcal{Q}(x, y) = \mathcal{A}(x,y) / \mathcal{I}(x, y)$ for all objects $x,y$.
\end{itemize}
The quotient maps $\mathcal{A}(x,y)\to\mathcal{Q}(x,y)$ on morphisms provide a ringoid map $\mathcal{A}\to\mathcal{Q}$.
\end{definition}

In ring theory, we have
\begin{itemize}
\item
any intersection of subrings of ring $R$ is again a subring of $R$,
\item
any intersection of ideals of ring $R$ is again an ideal of $R$,
\item
the intersection of an ideal and a subring is an ideal of the subring.
\end{itemize}
The next lemma is their counterparts in the ringoid theory.
Since the proof is straightforward, we will omit it.

\begin{lemma}[Intersection]
Let $\mathcal{A}$ be a ringoid. Then we have
\begin{itemize}
\item
any intersection of subringoids of $\mathcal{A}$ is again a subringoid of $\mathcal{A}$,
\item
any intersection of ideals of $\mathcal{A}$ is again an ideal of $\mathcal{A}$,
\item
the intersection of an ideal $\mathcal{I}$ and a subringoid $\mathcal{B}$ is an ideal of the subringoid $\mathcal{B}$.
\end{itemize}
\end{lemma}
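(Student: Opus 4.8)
The statement to prove is the three-part Intersection Lemma for ringoids. Here is my plan.

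\medskip

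The plan is to verify each of the three bullet points directly from the definitions, essentially by pointwise-on-hom-sets checks that reduce to the familiar ring-theoretic facts applied to the abelian groups $\mathcal{A}(x,y)$. For the first item, given a family $\{\mathcal{B}_\alpha\}$ of subringoids of $\mathcal{A}$, I would define $\mathcal{B}$ by $\Ob(\mathcal{B}) = \Ob(\mathcal{A})$ and $\mathcal{B}(x,y) = \bigcap_\alpha \mathcal{B}_\alpha(x,y)$; since each $\mathcal{B}_\alpha(x,y)$ is a subgroup of $\mathcal{A}(x,y)$, so is the intersection, and since each $\mathcal{B}_\alpha$ is closed under composition and contains every $\id_x$, the intersection inherits both properties. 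The only thing to note is that this intersection is nonempty/well-defined as a category precisely because all the $\mathcal{B}_\alpha$ are wide, so the identities are available. For the second item, given a family of ideals $\{\mathcal{I}_\alpha\}$, set $\mathcal{I}(x,y) = \bigcap_\alpha \mathcal{I}_\alpha(x,y)$; this is a subgroup of $\mathcal{A}(x,y)$, and if $f \in \mathcal{I}(x,y)$ and $g \in \mathcal{A}(y,z)$ then $g\circ f \in \mathcal{I}_\alpha(x,z)$ for every $\alpha$, hence lies in the intersection, and symmetrically for precomposition, giving the two absorption conditions.

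\medskip

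For the third item, let $\mathcal{I}$ be an ideal of $\mathcal{A}$ and $\mathcal{B}$ a subringoid. Define $\mathcal{J}(x,y) = \mathcal{I}(x,y) \cap \mathcal{B}(x,y)$, which is a subgroup of $\mathcal{B}(x,y)$. To check it is an ideal of $\mathcal{B}$: if $f \in \mathcal{J}(x,y)$ and $g \in \mathcal{B}(y,z)$, then on one hand $g \circ f \in \mathcal{B}(x,z)$ because $\mathcal{B}$ is closed under composition, and on the other hand $g \circ f \in \mathcal{I}(x,z)$ because $f \in \mathcal{I}(x,y)$ and $g \in \mathcal{A}(y,z)$ (using that $\mathcal{B}(y,z) \subseteq \mathcal{A}(y,z)$ and the absorption property of the ideal $\mathcal{I}$ in $\mathcal{A}$); hence $g \circ f \in \mathcal{J}(x,z)$. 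The precomposition case is symmetric.

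\medskip

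I expect no real obstacle here — the proof is a routine bookkeeping exercise, which is exactly why the author chose to omit it. The only subtle point worth keeping an eye on is the role of wideness in the first bullet (ensuring the intersection is genuinely a subcategory containing all identities), and the fact that in the third bullet one must pass through $\mathcal{A}(y,z) \supseteq \mathcal{B}(y,z)$ to invoke the absorption property of $\mathcal{I}$, since $\mathcal{I}$ is a priori only known to absorb morphisms of $\mathcal{A}$, not of $\mathcal{B}$.
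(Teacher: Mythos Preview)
Your proposal is correct and is exactly the routine verification the paper has in mind; the paper in fact omits the proof entirely, stating only that it is straightforward, so your direct check from the definitions is precisely the expected argument.
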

%\begin{proof}
%\begin{itemize}
%\item
%Denote the intersection by $\mathcal{B}'$.
%Since intersection of abelian subgroups is still an abelian subgroup, $\mathcal{B}'(x,y)$ is an abelian subgroup of $\mathcal{A}(x,y)$ for any two objects $x,y$.
%The closedness of precomposition and postcomposition follows from the fact that each $\mathcal{B}_{j}$ is a subringoid.
%\item
%Denote the intersection by $\mathcal{I}'$.
%Since intersection of abelian subgroups is still an abelian subgroup, $\mathcal{I}'(x,y)$ is an abelian subgroup of $\mathcal{A}(x,y)$ for any two objects $x,y$.
%The closedness of precomposition and postcomposition follows from the fact that each $\mathcal{I}_{j}$ is an ideal.
%\item
%Since intersection of abelian subgroups is still an abelian subgroup, $(\mathcal{I}\cap\mathcal{B})(x,y)$ is an abelian subgroup of $\mathcal{B}(x,y)$ for any two objects $x,y$.
%The morphisms in $\mathcal{I}\cap\mathcal{B}$ are closed under under precomposition and postcomposition with morphisms in $\mathcal{B}$, because $\mathcal{I}$ is an ideal and $\mathcal{B}$ is a subringoid.
%\end{itemize}
%\end{proof}

%\begin{lemma}[Sum of two ideals]
%If $\mathcal{I}$ and $\mathcal{J}$ are two ideals of ringoid $\mathcal{A}$, then $\mathcal{I}+\mathcal{J}$ is still an ideal of $\mathcal{A}$. 
%\end{lemma}
%\begin{proof}
%The sum of two abelian subgroup is still an abelian subgroup.
%The closedness of precomposition and postcomposition follows from the fact that both $\mathcal{I}$ and $\mathcal{J}$ are ideals.
%\end{proof}

\begin{definition}[Subringoid generated by a set of morphisms]
Let $\mathcal{A}$ be a ringoid and $\mathcal{M}$ be a set of morphisms in it.
Then the subringoid of $\mathcal{A}$ generated by $\mathcal{M}$ is defined to be the smallest subringoid (intersection of all the ringoids) of $\mathcal{A}$ containing all the morphisms in $\mathcal{M}$.
More explicitly, the subringoid of $\mathcal{A}$ generated by $\mathcal{M}$ is the ringoid $\mathcal{B}$ such that $\Ob(\mathcal{B}) = \Ob(\mathcal{A})$, and for any two objects $x$ and $y$,
\[\mathcal{B}(x,y) = \left\{\sum_{i=1}^{n}a_{i}m_{i}\right\},\]
where $n\geq 0, a_{i}\in\mathbb{Z}$ and $m_{i}:x\to y$ is a composition of morphisms in $\mathcal{M}$.
Note that we allow the composition to be empty and $m_{i}$ to be identity morphism $\id:x\to x$.
It is easy to see that $\mathcal{B}$ is a subringoid of $\mathcal{A}$ and it is the smallest one containing $\mathcal{M}$.
\end{definition}

\begin{definition}[Ideal generated by a set of morphisms]
Let $\mathcal{A}$ be a ringoid and $\mathcal{M}$ be a set of morphisms in it.
Then the ideal of $\mathcal{A}$ generated by $\mathcal{M}$ is defined to be the smallest ideal (intersection of all the ideals) of $\mathcal{A}$ containing all the morphisms in $\mathcal{M}$.
More explicitly, the ideal of $\mathcal{A}$ generated by $\mathcal{M}$ is $\mathcal{I}$ with \[\mathcal{I}(x,y) = \left\{\sum_{i=1}^{n}a_{i}\cdot(f_{i}\circ m_{i}\circ g_{i})\right\},\]
where $n\geq 0, a_{i}\in\mathbb{Z}, g_{i}\in\mathcal{A}(x, x_{i}), m_{i}\in\mathcal{M}(x_{i}, y_{i}), f_{i}\in\mathcal{A}(y_{i}, y)$.
It is easy to see that $\mathcal{I}$ is an ideal of $\mathcal{A}$ and that it is the smallest one containing $\mathcal{M}$.
\end{definition}

\subsection{Adjunction between quivers and ringoids}
In this subsection, we will develop a pair of adjoint functors between the category of quivers and the the category of ringoids.
Then we will use this to give an alternative definition of subringoid generated by a set of morphisms.

\begin{definition}[Quiver]
A quiver $G$ consists of two sets $E$ and $V$ and two functions $s,t:E\rightrightarrows V$.
If $G = (E, V, s, t)$ and $G' = (E', V', s', t')$ are two quivers, a morphism $g:G\to G'$ is a pair of morphisms $g_{0}:V\to V'$ and $g_{1}:E\to E'$ such that $g_{0}\circ s = s'\circ g_{1}$ and $g_{0}\circ t = t'\circ g_{1}$.
Denote the category of quivers by $\mathbf{Quiver}$.
Intuitively, $E$ consists of oriented edges and $V$ consists of vertices.
\end{definition}

Denote the category of categories by $\mathbf{Cat}$.

\begin{definition}[Free functor from $\mathbf{Quiver}$ to $\mathbf{Cat}$]
We define the free functor $F:\mathbf{Quiver}\to\mathbf{Cat}$ to be the left adjoint to the forgetful functor $u:\mathbf{Cat}\to\mathbf{Quiver}$.
Below we give one construction of the free functor.
Given a quiver $G = (E, V)$, we construct a category $\mathcal{C} = FG$ such that
\begin{itemize}
\item
$\Ob(\mathcal{C}) = V$,
\item
the morphism set $\mathcal{C}(x,y)$ is the set of finite paths from $x$ to $y$ in the quiver $G$, where a path is defined as a finite sequence of composable edges and an ``empty path'' constitutes the identity morphisms of $\mathcal{C}$,
\item
the composition law of $\mathcal{C}$ follows from concatenation of paths in the quiver $G$.
\end{itemize}
\end{definition}

\begin{definition}[Free functor from $\mathbf{Cat}$ to $\mathbf{Ringoid}$]
We define the free functor $\mathbb{Z}:\mathbf{Cat}\to\mathbf{Ringoid}$ to be the left adjoint to the forgetful functor $u:\mathbf{Ringoid}\to\mathbf{Cat}$.
Here is one construction of the free functor $\mathbb{Z}$.
Given a category $\mathcal{C}$, we construct a ringoid $\mathcal{A} = \mathbb{Z}\mathcal{C}$ such that
\begin{itemize}
\item
$\Ob(\mathcal{A}) = \Ob(\mathcal{C})$,
\item
the morphism set $\mathcal{A}(x, y)$ is the free abelian group generated by $\mathcal{C}(x, y)$, where $x, y$ are any two objects in $\Ob(\mathcal{A}) = \Ob(\mathcal{C})$,
\item
the composition $\mathcal{A}(x,y)\times \mathcal{A}(y,z)\to \mathcal{A}(x, z)$ sends \[\left(\sum_{i=1}^{m}a_{i}f_{i}, \sum_{j=1}^{n}b_{j}g_{j}\right)\mapsto\sum_{i=1}^{m}\sum_{j=1}^{n}a_{i}b_{j}(g_{j}\circ f_{i}),\]
where $a_{i}, b_{j}$ are integers and $f_{i}\in\mathcal{C}(x,y), g_{j}\in\mathcal{C}(y, z)$.
\end{itemize}
\end{definition}

Composing those two pairs of adjoint functors above, we get a pair of adjoint functors between $\mathbf{Quiver}$ and $\mathbf{Ringoid}$.
Next, we will use that adjunction to give an alternative definition of the subringoid generated by a set of morphisms.

In general, the image of a functor is not necessarily a subcategory.
But as we will see in the following lemma, when the functor is ``nice'', the image will be a subcategory.

\begin{lemma}
If $F:\mathcal{A}\to\mathcal{B}$ is a functor  of categories such that $F:\Ob(\mathcal{A})\to\Ob(\mathcal{B})$ is bijective, then the image $\mathcal{C}:=F\mathcal{A}$ is a wide subcategory of $\mathcal{B}$.
\end{lemma}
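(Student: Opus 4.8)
The plan is to verify directly that $\mathcal{C} = F\mathcal{A}$ satisfies the two defining conditions of a wide subcategory of $\mathcal{B}$: that $\Ob(\mathcal{C}) = \Ob(\mathcal{B})$, and that $\mathcal{C}$ is genuinely a subcategory, i.e. each $\mathcal{C}(x,y)$ is a subset of $\mathcal{B}(x,y)$ and $\mathcal{C}$ is closed under the identities and composition of $\mathcal{B}$. Both halves of the hypothesis get used: surjectivity on objects for wideness, injectivity on objects for the subcategory property.

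First I would unwind what the image is. Since $F$ is bijective on objects, each $x\in\Ob(\mathcal{B})$ equals $F(a)$ for a unique $a\in\Ob(\mathcal{A})$, which I denote $F^{-1}(x)$. By definition $\Ob(\mathcal{C})=\{F(a):a\in\Ob(\mathcal{A})\}$, and surjectivity gives $\Ob(\mathcal{C})=\Ob(\mathcal{B})$; so $\mathcal{C}$ will be wide as soon as it is shown to be a subcategory. For hom-sets, injectivity on objects lets me identify, for $x,y\in\Ob(\mathcal{B})$,
\[
\mathcal{C}(x,y)=\{F(h):h\in\mathcal{A}(F^{-1}x,F^{-1}y)\}\subseteq\mathcal{B}(x,y),
\]
since a morphism $F(h)$ with $h\colon a\to b$ runs from $F(a)$ to $F(b)$, and $F(a)=x$, $F(b)=y$ force $a=F^{-1}(x)$, $b=F^{-1}(y)$.

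Next, identities: for $x\in\Ob(\mathcal{B})$, functoriality of $F$ gives $F(\id_{F^{-1}x})=\id_{F(F^{-1}x)}=\id_x$, so $\id_x\in\mathcal{C}(x,x)$. Then closure under composition: given $\phi\in\mathcal{C}(x,y)$ and $\psi\in\mathcal{C}(y,z)$, write $\phi=F(f)$ with $f\in\mathcal{A}(F^{-1}x,F^{-1}y)$ and $\psi=F(g)$ with $g\in\mathcal{A}(F^{-1}y,F^{-1}z)$. Here is the one place the hypothesis bites: the target of $f$ and the source of $g$ are both literally $F^{-1}(y)$, so $g\circ f$ is defined in $\mathcal{A}$, and then $\psi\circ\phi=F(g)\circ F(f)=F(g\circ f)\in\mathcal{C}(x,z)$. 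Together with the first step this shows $\mathcal{C}$ is a wide subcategory of $\mathcal{B}$.

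The only real obstacle is this composition step, and it is exactly where bijectivity on objects is essential: if $F$ were arbitrary on objects, two morphisms $F(f),F(g)$ of the image could fail to compose inside $\mathcal{A}$ because the common image-level object $y$ might be hit by two distinct objects of $\mathcal{A}$, one serving as the target of $f$ and another as the source of $g$; then $F(g)\circ F(f)$ need not lie in the image at all. Injectivity rules this out, and everything else is a formal unwinding of the definition of the image of a functor together with functoriality of $F$.
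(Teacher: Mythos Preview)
Your proof is correct and follows essentially the same approach as the paper's: verify that the image has all objects, contains identities, and is closed under composition by lifting $f=F(f')$ and $g=F(g')$ and computing $g\circ f=F(g'\circ f')$. You are slightly more explicit than the paper in tracking how injectivity on objects is used (via the notation $F^{-1}(x)$) to guarantee that the lifts $f'$ and $g'$ are composable in $\mathcal{A}$, whereas the paper silently identifies $\Ob(\mathcal{A})$ with $\Ob(\mathcal{B})$ through the bijection.
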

\begin{proof}
We know $\Ob(\mathcal{C}) = \Ob(\mathcal{B}) = \Ob(\mathcal{A})$.
We also know that the morphism set $\mathcal{C}(x, y)$ is equal to the image of $F:\mathcal{A}(x, y)\to\mathcal{B}(x, y)$ for all objects $x$ and $y$.
So the identify morphisms exist in $\mathcal{C}$.

The composition law in $\mathcal{C}$ follows the composition law in $\mathcal{B}$.
We only need to check that if $f\in\mathcal{C}(x,y)$ and $g\in\mathcal{C}(y,z)$, then $g\circ f\in\mathcal{C}(x, z)$.
Since $f = F(f')$ for some $f'\in\mathcal{A}(x,y)$ and $g = F(g')$ for some $g'\in\mathcal{A}(y,z)$, we have $g\circ f = F(g')\circ F(f') = F(g'\circ f')$ with $g'\circ f'\in\mathcal{A}(x, z)$.
Therefore, $g\circ f\in\mathcal{C}(x, z)$.
\end{proof}

Furthermore, the following lemma will give us a condition on when the image of a morphism of ringoids is a subringoid.

\begin{lemma}
If $F:\mathcal{A}\to\mathcal{B}$ is a morphism of ringoids such that $F:\Ob(\mathcal{A})\to\Ob(\mathcal{B})$ is bijective, then the image $\mathcal{C}:=F\mathcal{A}$ is a subringoid of $\mathcal{B}$.
\end{lemma}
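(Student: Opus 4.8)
The plan is to reduce to the previous lemma and then verify the single remaining ingredient, namely that the hom-sets of $\mathcal{C}$ are abelian subgroups of those of $\mathcal{B}$. First I would apply the preceding lemma to the underlying functor of categories $u(F)\colon u(\mathcal{A})\to u(\mathcal{B})$, which is still bijective on objects; this already gives that $\mathcal{C} = F\mathcal{A}$ is a wide subcategory of $\mathcal{B}$, so $\Ob(\mathcal{C}) = \Ob(\mathcal{B}) = \Ob(\mathcal{A})$, the identity morphisms lie in $\mathcal{C}$, and $\mathcal{C}$ is closed under the composition of $\mathcal{B}$.

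Next I would observe that, for each pair of objects $x, y$, the morphism set $\mathcal{C}(x,y)$ is by construction the image of the map $F\colon\mathcal{A}(x,y)\to\mathcal{B}(x,y)$. Since $F$ is a morphism of ringoids, this map is a homomorphism of abelian groups, so its image is an abelian subgroup of $\mathcal{B}(x,y)$. This is the only place the additivity hypothesis on $F$ is used, and it is exactly what upgrades the wide subcategory of the previous lemma to a subringoid in the sense of our definition. Finally I would note that the composition maps of $\mathcal{C}$ are the restrictions of the composition maps of $\mathcal{B}$ to these subgroups, and a restriction of a bilinear map to subgroups of its source is again bilinear; hence $\mathcal{C}$ is preadditive and the inclusion $\mathcal{C}\hookrightarrow\mathcal{B}$ is a morphism of ringoids. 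The two conditions in the definition of subringoid — equality on objects and subgroup on hom-sets — are then both in hand.

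I do not expect a genuine obstacle here; the only point requiring a word of care is checking that the abelian group structure on $\mathcal{C}(x,y)$ is the one inherited from $\mathcal{B}(x,y)$ rather than one transported from $\mathcal{A}(x,y)$, and this is automatic once we have identified $\mathcal{C}(x,y)$ as a subset of $\mathcal{B}(x,y)$ closed under the operations of $\mathcal{B}(x,y)$.
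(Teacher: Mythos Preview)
Your proposal is correct and follows essentially the same approach as the paper: invoke the preceding lemma to get a wide subcategory, then use that each $F\colon\mathcal{A}(x,y)\to\mathcal{B}(x,y)$ is a group homomorphism so its image is a subgroup. The additional remarks you make about bilinearity of composition and the inherited group structure are fine but the paper simply appeals to the definition of subringoid at that point.
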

\begin{proof}
By the lemma above, $\mathcal{C}$ is a wide subcategory of $\mathcal{B}$.
Since $F:\mathcal{A}(x, y)\to\mathcal{B}(x, y)$ is a homomorphism of abelian groups for all objects $x$ and $y$, we know that $\mathcal{C}(x,y)$ is an abelian subgroup of $\mathcal{B}(x,y)$.
Therefore, by definition $\mathcal{C}$ is a subringoid of $\mathcal{B}$.
\end{proof}

\begin{definition}[Subringoid generated by a set of morphisms]
Let $\mathcal{A}$ be a ringoid and $\mathcal{M}$ be a set of morphisms in it.
Denote by $\mathcal{M}'$ the union of $\mathcal{M}$ and $\{\id:x\to x,\forall x\in\Ob(\mathcal{A})\}$.
Then we get a morphism of quivers $\mathcal{M}'\to u\mathcal{A}$ and by adjunction, this gives arise to a morphism of ringoids $F\mathcal{M}'\to\mathcal{A}$.
Since this morphism of ringoids is bijective on objects, its image is a subringoid of $\mathcal{A}$ by the lemma above.
We define this subringoid as the subringoid of $\mathcal{A}$ generated by morphisms in $\mathcal{M}$.
\end{definition}

\section{Unstable $A$-modules with the top $k$ squares}
\label{sec:U_k}

\subsection{Steenrod algebra and unstable modules over it}

\begin{definition}[Steenrod algebra]
The mod 2 Steenrod algebra $A$ is the quotient of the free unital graded $\mathbb{F}_{2}$-algebra generated by the elements $\Sq^{i}$ of degree $i$ by the ideal generated by
\[\Sq^{0} = 1, \Sq^{i} = 0\quad \textrm{when }i < 0\]
and the Adem relations
\begin{equation}
\label{eq:steenrod_relation}
\Sq^{i}\Sq^{j} = \sum_{t = 0}^{\lfloor i/2\rfloor}{j - t - 1 \choose i - 2t}\Sq^{i + j - t}\Sq^{t}\quad\textrm{when }0<i<2j.
\end{equation}
We shall denote by $\mathcal{M}$ the category of graded left $A$-modules, whose morphisms are $A$-linear maps of degree zero.
From now on, by an $A$-module we mean a module in the category $\mathcal{M}$.
Note that the Adem relations actually hold for all integers $i, j$.
\end{definition}

The following is a standard fact.
\begin{lemma}
The basis of the Steenrod algebra $A$ as a graded vector space over $\mathbb{F}_{2}$ is the admissible squares $\Sq^{i(1)}\Sq^{i(2)}\cdots\Sq^{i(m)}$ with $i(1)\geq 2i(2),i(2)\geq 2i(3),\dots,i(m-1)\geq 2i(m), i(m)> 0$.
Note that when $m = 0$, we have $\Sq^{0} = 1$.
\end{lemma}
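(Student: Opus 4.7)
The plan is to prove the two standard halves separately: that admissible monomials span $A$, and that they are linearly independent.

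For the spanning part, I would show by induction that every monomial $\Sq^{i(1)}\cdots\Sq^{i(m)}$ in the free algebra (modulo only the $\Sq^0=1$ relation) can be rewritten as an $\mathbb{F}_2$-linear combination of admissible monomials using the Adem relations. The key device is a well-founded \emph{moment} function on monomials, e.g. $\mu(\Sq^{i(1)}\cdots\Sq^{i(m)}) = \sum_{s=1}^{m} 2^{m-s} i(s)$, ordered lexicographically against the length $m$. The first step is to check that whenever $0<i(s)<2i(s+1)$, each term $\Sq^{i+j-t}\Sq^{t}$ on the right-hand side of the Adem relation (\ref{eq:steenrod_relation}), substituted into position $(s,s+1)$, yields a monomial of strictly smaller moment; since the moment takes values in $\mathbb{N}$, iterating this rewrite terminates in a sum of admissibles. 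Finally, the $\Sq^{i}=0$ relations for $i<0$ only kill some of the resulting admissibles, so spanning persists in $A$.

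For linear independence, I would exploit the action of $A$ on $H^{*}((\mathbb{R}P^{\infty})^{n};\mathbb{F}_{2}) = \mathbb{F}_{2}[x_{1},\dots,x_{n}]$. The plan is to show that for $n$ at least the length of any admissible monomial under consideration, the elements $\Sq^{I}(x_{1}\cdots x_{n})$, as $I=(i(1),\dots,i(m))$ ranges over admissible sequences with $i(m)>0$, are linearly independent in the polynomial ring. Concretely, using the Cartan formula and $\Sq^{j}x = x^{2}$ when $j=|x|$ (else $x$ or $0$), one computes that $\Sq^{I}(x_{1}\cdots x_{n})$ has a distinguished leading monomial in the lex order (read off directly from the exponents of $I$), and these leading monomials are pairwise distinct for distinct admissible sequences because admissibility makes the recovery of $I$ from the exponent tuple unique.

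Combining the two halves, the admissibles span and are linearly independent, so they form an $\mathbb{F}_{2}$-basis of $A$. The main obstacle I expect is the bookkeeping in the termination proof for the first part: verifying that each term appearing in the Adem relation really has strictly smaller moment, uniformly in $t$, requires a short but careful computation with the exponents $i+j-t$ and $t$ compared to $i$ and $j$. The second part is largely mechanical once the leading-monomial calculation on $(\mathbb{R}P^{\infty})^{n}$ is set up. Since the statement is described as a standard fact, I would cite \cite{schwartz1994} for the detailed verification rather than reproducing it here.
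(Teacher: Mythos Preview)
The paper does not prove this lemma at all: it is introduced as ``a standard fact'' and simply stated without argument. Your proposal therefore goes well beyond what the paper does, sketching the classical Serre--Cartan argument (rewriting via Adem relations for spanning, action on $H^{*}((\mathbb{R}P^{\infty})^{n};\mathbb{F}_{2})$ for independence). That outline is correct and is exactly the textbook proof you would find by following the citation to \cite{schwartz1994}, so your concluding remark that one should simply cite the literature matches the paper's treatment precisely.

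One small technical point worth flagging in your spanning argument: with the moment $\mu(\Sq^{i(1)}\cdots\Sq^{i(m)})=\sum_{s}2^{m-s}i(s)$ as you wrote it, applying the Adem relation at an inadmissible pair actually \emph{increases} $\mu$, not decreases it. Replacing $(i,j)$ by $(i+j-t,t)$ changes the local contribution by $2^{m-s-1}(j-t)>0$ since $t\le \lfloor i/2\rfloor<j$. Termination then follows not from well-foundedness of $\mathbb{N}$ under decrease, but from the fact that there are only finitely many monomials of the given total degree (length is non-increasing and each exponent is positive), so $\mu$ is bounded above. This is exactly the ``careful computation with the exponents'' you anticipated as the main obstacle; the fix is routine, but the direction as stated is backwards.
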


\begin{notation}[Lower squares]
If $x$ is a homogeneous element in an $A$-module, then we denote \[\Sq_{i}x = \Sq^{|x| - i}x.\]
\end{notation}

\begin{proposition}[Adem relations in lower squares]
\label{prop:AdemLower}
Take any $A$-module $M$.
Let $i, j, n$ be any integers satisfying $n> j$ and $2n> i + j$.
Then for any homogeneous element $x$ of degree $n$ in $M$, we have
\[\Sq_{i}\Sq_{j}x = \sum_{s=\lceil (i+j)/2 \rceil}^{n}{s-j-1 \choose 2s-i-j}\Sq_{i+2j-2s}\Sq_{s}x.\]
\end{proposition}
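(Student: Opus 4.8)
The plan is to translate the identity into ordinary (upper) Steenrod squares, apply the Adem relations there, and translate back. Since $|x|=n$, the defining relation for lower squares gives $\Sq_j x=\Sq^{n-j}x$, a homogeneous element of degree $(n)+(n-j)=2n-j$; here the hypothesis $n>j$ is what makes the exponent $n-j$ positive. Applying $\Sq_i$ to it and using $|\Sq_j x|=2n-j$ yields
\[
\Sq_i\Sq_j x=\Sq^{(2n-j)-i}\,\Sq^{n-j}x=\Sq^{2n-i-j}\,\Sq^{n-j}x,
\]
where the hypothesis $2n>i+j$ makes the outer exponent $2n-i-j$ positive.

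Next I would apply the Adem relations to the composite $\Sq^{2n-i-j}\Sq^{n-j}$. Because these relations hold for all integers (as noted just after the definition of $A$) and $2n-i-j>0$, we get
\[
\Sq^{2n-i-j}\Sq^{n-j}=\sum_{t=0}^{\lfloor(2n-i-j)/2\rfloor}\binom{n-j-t-1}{2n-i-j-2t}\,\Sq^{3n-i-2j-t}\Sq^{t},
\]
with the standard conventions on binomial coefficients (in particular $\binom{p}{q}=0$ for $q<0$). Note that $2(n-j)$ need not exceed $2n-i-j$, so it is essential here to use the Adem relations in their form valid for all integers, rather than only in the range $0<i<2j$ of \eqref{eq:steenrod_relation}.

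Finally I would set $s=n-t$ and pass back to lower squares via the rule $\Sq^{\ell}y=\Sq_{|y|-\ell}y$. This gives $\Sq^{t}x=\Sq_{n-t}x=\Sq_s x$, a homogeneous element of degree $2n-s$, and then $\Sq^{3n-i-2j-t}(\Sq_s x)=\Sq_{(2n-s)-(3n-i-2j-t)}(\Sq_s x)=\Sq_{i+2j-2s}\Sq_s x$ after substituting $t=n-s$. The binomial coefficient becomes $\binom{s-j-1}{2s-i-j}$, and the summation range $0\le t\le\lfloor(2n-i-j)/2\rfloor$ turns into $\lceil(i+j)/2\rceil\le s\le n$, since $\lfloor(2n-i-j)/2\rfloor=n-\lceil(i+j)/2\rceil$. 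Applying the resulting expansion to $x$ gives exactly the asserted formula.

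The argument is essentially bookkeeping: the only points that need care are invoking the all-integers form of the Adem relations and checking that the classical bound $t\le\lfloor(2n-i-j)/2\rfloor$ transforms into the lower bound $s\ge\lceil(i+j)/2\rceil$. Equivalently, $\lceil(i+j)/2\rceil$ is precisely the threshold below which $2s-i-j<0$, so that $\binom{s-j-1}{2s-i-j}$ vanishes there and the sum may harmlessly be taken over the stated range. I do not expect any conceptual obstacle beyond this.
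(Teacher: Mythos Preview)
Your proposal is correct and follows essentially the same route as the paper: convert $\Sq_i\Sq_j x$ to $\Sq^{2n-i-j}\Sq^{n-j}x$, apply the Adem relations (in their all-integers form), and then substitute $s=n-t$ to rewrite the result in lower squares. The paper's proof is exactly this four-line computation, and your additional remarks about why the all-integers form is needed and how the summation bounds transform are accurate elaborations of steps the paper leaves implicit.
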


\begin{proof}
Compute:
\[
\begin{split}
\Sq_{i}\Sq_{j}x
&= \Sq^{2n-i-j}\Sq^{n-j}x\\
&= \sum_{t=0}^{\lfloor n-(i+j)/2\rfloor}{n-j-t-1\choose 2n-i-j-2t}\Sq^{3n-i-2j-t}\Sq^{t}x\\
&= \sum_{t=0}^{\lfloor n-(i+j)/2\rfloor}{n-j-t-1\choose 2n-i-j-2t}\Sq_{-2n+i+2j+2t}\Sq_{n-t}x\\
&= \sum_{s=\lceil (i+j)/2 \rceil}^{n}{s-j-1 \choose 2s-i-j}\Sq_{i+2j-2s}\Sq_{s}x.
\end{split}
\]
The second equality comes from the Adem relations in upper squares.
The second to last equality comes from the substitution $s = n-t$.
\end{proof}

As in the case of upper squares, the Adem relations in lower squares hold for all integers $i,j,n$.

When $i>j$, all the terms of the summation on the right hand side satisfy $i+2j-2s\leq s$.
The reason is $s\geq (i+j)/2 \geq j$ and thus $(i+j-2s) + (j-s)\leq 0$.
So whenever there is a $\Sq_{i}\Sq_{j}$ with $i>j$, one can rewrite it as of a sum of $\Sq_{i'}\Sq_{j'}$'s such that $i'\leq j'$.
This observation agrees with Proposition \ref{prop:Gn} in a later section, which says whenever there is a sequence of lower squares, one can always rewrite it as a sum of $\Sq_{i(1)}\Sq_{i(2)}\cdots\Sq_{i(m)}$'s such that $i(1)\leq i(2)\leq\dots\leq i(m)$.

\begin{definition}[Unstable $A$-module]
An $A$-module is said to be unstable if $\Sq^{i}x = 0$ for any $i>n$ and any homogeneous element $x$ of degree $n$.
Or in other words, $M$ is unstable if $\Sq_{i}M = 0$ for all $i<0$.

Note that if $M$ is an unstable $A$-module, then $M^{n} = 0$ for all $n < 0$ because $x = \Sq^{0}x = 0$ for any homogeneous element $x$ in $M$ of degree $n < 0$.
We shall denote by $\mathcal{U}$ the full subcategory of $\mathcal{M}$ with objects the unstable ones.
From now on, by an unstable $A$-module we mean a module in the category $\mathcal{U}$.
\end{definition}

\begin{example}[Sphere module $S(n)$]
For any integer $n$, we define the sphere module $S(n)$ to be the $A$-module with the degree $n$ part equal to $\mathbb{F}_{2}$ and the other parts equal to zero.
The sphere module is defined for any integer $n$, but it is unstable only when $n \geq 0$.
\end{example}

\subsection{Intuition and formal definition}

Let $k$ be any natural number.
We will define a new kind of ``unstable $A$-module'', where the only simple Steenrod operations allowed are $\Sq_{i}$ with $i<k$.
As indicated by ``unstable'', we require $Sq_{i}=0$ when $i<0$ and thus $M^{n} = 0$ when $n < 0$.

For example, when $k=0$, the only Steenrod operations allowed are $\Sq_{i} = 0$ with $i <0$ and we have non-negatively graded $\mathbb{F}_{2}$-vector spaces.
When $k = 1$, we have one nontrivial Steenrod operation $\Sq_{0}$ which doubles the degree.
Note that $\Sq_{0}$ is equal to the identity on degree zero.

For another example, when $k = 3$, we have nontrivial Steenrod operations $\Sq_{2},\Sq_{1},\Sq_{0}$ and trivial Steenrod operations $\Sq_{i} = 0$ with $i<0$.
The Steenrod operations $\Sq_{2}$ and $\Sq_{1}$ are not available in all degrees --- $\Sq_{2}$ acts on degrees $\geq 2$ and $\Sq_{1}$ acts on degrees $\geq 1$.

We shall denote by $\mathcal{U}_{k}$ the category of such ``unstable $A$-modules''.
To make this idea clear, we are going to use the language of ringoids.
We are interested in
\begin{itemize}
\item
$\mathcal{M}$, the category of $A$-modules,
\item
$\mathcal{U}$, the category of unstable $A$-modules,
\item
$\mathcal{U}_{k}$, the category of ``unstable $A$-modules with only the top $k$ squares''.
\end{itemize}
Although $\mathcal{U}$ is not the category of all modules over any graded ring, it is the category of all modules over a certain ringoid.
In fact, we can formulate each of those three categories above as the category all modules over some ringoid.

\begin{definition}[Ringoid $\mathcal{R}$]
Let $\mathcal{R}$ be the ringoid such that
\begin{itemize}
\item
the objects are all the integers,
\item
for any $a, b\in\mathbb{Z}$, the morphism set $\mathcal{R}(a, b)$ is the $\mathbb{F}_{2}$-vector space whose basis are all finite sequences of integers $(c_{1},c_{2},\dots,c_{m})$ such that $a = c_{1} < c_{2} <\dots< c_{m} = b$.
\end{itemize}
We write the sequence \[(a=c_{1},c_{2},\dots,c_{m}=b)\] as \[\Sq^{c_{2}-c_{1}}\Sq^{c_{3}-c_{2}}\cdots\Sq^{c_{m}-c_{m-1}}.\]
For example, the morphism set $\mathcal{R}(-1, 2)$ is a four-dimentional $\mathbb{F}_{2}$-vector space with basis 
\[(-1,0,1,2), (-1,0,2),(-1,1,2),(-1,2)\]
or equivalently
\[\Sq^{1}\Sq^{1}\Sq^{1},\Sq^{1}\Sq^{2},\Sq^{2}\Sq^{1},\Sq^{3}.\]
The identity morphism $n\to n$ is also written as $\Sq^{0}$.
\end{definition}

\begin{definition}[Ringoid $\mathcal{A}$]
\label{def:ringoid_A}
Let $\mathcal{I}$ be the ideal of $\mathcal{R}$ generated by the Adem relations
\[\Sq^{i}\Sq^{j} - \sum_{t = 0}^{\lfloor i/2\rfloor}{j - t - 1 \choose i - 2t}\Sq^{i + j - t}\Sq^{t}\in\mathcal{R}(n, n + i + j)\textrm{ with }0<i<2j.\]
Define $\mathcal{A}$ as the quotient ringoid of $\mathcal{R}$ by the ideal $\mathcal{I}$.
This new ringoid $\mathcal{A}$ is the same as the ringoid such that
\begin{itemize}
\item
the objects are all the integers,
\item
for any $a, b\in\mathbb{Z}$, the morphism set $\mathcal{A}(a, b)$ is the degree $(b-a)$ part of the Steenrod algebra $A$.
\end{itemize}
The category of left modules over the ringoid $\mathcal{A}$ is exactly $\mathcal{M}$, the category of modules over the Steenrod algebra $A$.
\end{definition}

\begin{definition}[Ringoid $\mathcal{Q}$]
Let $\mathcal{J}$ be the ideal of $\mathcal{R}$ generated by the Adem relations (as in Definition \ref{def:ringoid_A}) and the unstability conditions
\[\Sq^{i}:n\to n + i\textrm{ with }i > n.\]
Define $\mathcal{Q}$ as the quotient ringoid of $\mathcal{R}$ by the ideal $\mathcal{J}$.
Then the category of left modules over the ringoid $\mathcal{Q}$ is exactly $\mathcal{U}$, the category of unstable modules over the Steenrod algebra $A$.
Note that $\mathcal{Q}$ can also be seen as the quotient of ringoid $\mathcal{A}$ by the ideal $\mathcal{L}$ generated by the unstability conditions alone.
\end{definition}

\begin{definition}[Ringoid $\mathcal{Q}_{k}$]
Let $k\geq 0$.
Let $\mathcal{Q}_{k}$ be the subringoid of $\mathcal{Q}$ generated by the top $k$ squares
\[\Sq^{i}:n\to n + i\textrm{ with }i\geq 0, n-i < k.\]
Denote the category of left modules over the ringoid $\mathcal{Q}_{k}$ by $\mathcal{U}_{k}$ and we call it the category of unstable modules over the Steenrod algebra $A$ with only the top $k$ squares.
\end{definition}

\begin{notation}
When $M$ is a module in $\mathcal{M},\mathcal{U}$ or $\mathcal{U}_{k}$, we use notations $M^{n}$ and $M(n)$ interchangeably throughout this paper to denote the degree $n$ part of module $M$.
\end{notation}

\begin{example}[Sphere module $S_{k}(n)$]
The sphere module $S_{k}(n)$ is defined to be the module in $\mathcal{U}_{k}$ with the degree $n$ part equal to $\mathbb{F}_{2}$ and the other parts equal to zero.
Note that $n$ cannot be negative because the negative degree parts of a module in $\mathcal{U}_{k}$ must be zero.
\end{example}

\subsection{Example: free modules}

\begin{definition}[Free modules]
By Definition \ref{def:moduleRingoid}, $\mathcal{A}(x,-)$ is an $\mathcal{A}$-module for any ringoid $\mathcal{A}$ and any object $x$ in the ringoid $\mathcal{A}$.
So when $\mathcal{A},\mathcal{Q},\mathcal{Q}_{k}$ are ringoids as defined in the last subsection,
\begin{itemize}
\item
$\mathcal{A}(n,-)$ is an $\mathcal{A}$-module for any integer $n$,
\item
$\mathcal{Q}(n,-)$ is a $\mathcal{Q}$-module for any integer $n$,
\item
$\mathcal{Q}_{k}(n,-)$ is a $\mathcal{Q}_{k}$-module for any integers $n$ and $k$ with $k\geq 0$.
\end{itemize}
These modules and their direct sums are said to be \emph{free} module in $\mathcal{M},\mathcal{U},\mathcal{U}_{k}$ respectively.
Denote $F(n):=\mathcal{Q}(n,-)$ and $F_{k}(n):=\mathcal{Q}_{k}(n,-)$.
Note that $F(n) = 0$ and $F_{k}(n) = 0$ for all $n<0$.
\end{definition}

From now on, we use $\iota_{n}$ to denote the universal element of degree $n$.

\begin{proposition}
\label{prop:Gn}
A basis of $\mathcal{A}(n,-)$ as a graded vector space over $\mathbb{F}_{2}$ is
\[\Sq_{i(1)}\Sq_{i(2)}\cdots\Sq_{i(m)}\iota_{n}\] with \[i(1)\leq i(2)\leq\dots\leq i(m)<n.\]
Note that when $m = 0$, we have $\iota_{n}$.
\end{proposition}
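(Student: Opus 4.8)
The plan is to show that the given set of monomials both spans $\mathcal{A}(n,-)$ and is linearly independent, by relating it to the admissible basis of the Steenrod algebra $A$. Recall that $\mathcal{A}(n, n+d)$ is by definition the degree $d$ part of $A$, and that a homogeneous element $x$ of degree $n$ satisfies $\Sq_i x = \Sq^{|x|-i}x$, where $|x|$ increases as we apply operations. So a monomial $\Sq_{i(1)}\Sq_{i(2)}\cdots\Sq_{i(m)}\iota_n$ is, after unwinding the lower-square notation from the inside out, an honest monomial $\Sq^{a(1)}\Sq^{a(2)}\cdots\Sq^{a(m)}$ in upper squares acting on $\iota_n$, where the exponents are determined recursively by the degree bookkeeping: if $\iota_n$ has degree $n$ and we set $d(m) = n$, $d(j-1) = d(j) + a(j)$ with $a(j) = d(j) - i(j)$, then $a(j) = d(j) - i(j)$ and $d(j-1) = 2d(j) - i(j)$.

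First I would make the translation between the two indexing conventions precise. The condition $i(1)\le i(2)\le\cdots\le i(m) < n$ on the lower indices should correspond exactly to the admissibility condition $a(1)\ge 2a(2),\ \dots,\ a(m-1)\ge 2a(m),\ a(m) > 0$ on the upper exponents, together with the constraint that the whole monomial is applied to something of degree $n$. Indeed, $a(j-1) \ge 2a(j)$ unwinds to $d(j-1) - i(j-1) \ge 2(d(j) - i(j))$, i.e. $(2d(j) - i(j)) - i(j-1) \ge 2d(j) - 2i(j)$, i.e. $i(j) \ge i(j-1)$; and $a(m) > 0$ is $d(m) - i(m) > 0$, i.e. $i(m) < n$. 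So the map $(i(1),\dots,i(m)) \mapsto (a(1),\dots,a(m))$ is a bijection between the indexing sets in the proposition and the set of admissible monomials $\Sq^{a(1)}\cdots\Sq^{a(m)}$ of the appropriate total degree that are allowed to act on degree $n$ (with the convention that the empty monomial gives $\iota_n$).

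Next I would invoke the standard lemma quoted earlier in the excerpt: the admissible monomials $\Sq^{a(1)}\cdots\Sq^{a(m)}$ with $a(1)\ge 2a(2),\dots,a(m-1)\ge 2a(m),\ a(m)>0$ form an $\mathbb{F}_2$-basis of $A$. Since $\mathcal{A}(n,-)$ as a graded vector space is $\bigoplus_d \mathcal{A}(n,n+d) = \bigoplus_d A^d = A$ acting on $\iota_n$ (the module $\mathcal{A}(n,-)$ is free on $\iota_n$), a basis of $A$ pulls back to a basis of $\mathcal{A}(n,-)$ under $a \mapsto a\iota_n$. The only subtlety is that the admissibility/degree conditions must be compatible: every admissible monomial of every degree appears, and we must check that the recursive exponents $a(j)$ are automatically nonnegative when $i(1)\le\cdots\le i(m)<n$ — this follows since $a(m) = n - i(m) > 0$ and, inductively, $a(j) \ge 2a(j+1) > 0$ once we know the admissibility translation, but one must be slightly careful to run the induction in the right direction (from $j = m$ upward) to see the $d(j)$ stay large enough that no $\Sq^{a(j)}$ is forced to vanish for degree reasons beyond what admissibility already encodes.

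The main obstacle, and the step I would spend the most care on, is precisely this compatibility check: making sure the reindexing $(i(j)) \leftrightarrow (a(j))$ is a genuine bijection onto the admissible monomials \emph{of the correct degrees}, with no admissible monomial left out and none counted that would act as zero or fall outside the range. Concretely one should verify that for a fixed $n$, as $(i(1),\dots,i(m))$ ranges over all tuples with $i(1)\le\cdots\le i(m)<n$ (and $m\ge 0$), the associated $(a(1),\dots,a(m))$ ranges over \emph{all} admissible tuples with $a(m)>0$ and $a(1)\ge 2a(2)\ge\cdots$ — i.e. the degree-$n$ constraint $i(m)<n$ is the only place $n$ enters, and every admissible monomial of every degree $d\ge 0$ arises from exactly one such tuple. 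Once this bookkeeping is nailed down, the proposition is immediate from the admissible basis theorem for $A$ together with the freeness of $\mathcal{A}(n,-)$ on $\iota_n$.
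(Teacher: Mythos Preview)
Your proposal is correct and follows essentially the same route as the paper: start from the admissible basis $\Sq^{j(1)}\cdots\Sq^{j(m)}$ of $A$, identify $\mathcal{A}(n,-)$ with $A$ acting on $\iota_n$, and then translate the admissibility condition into the lower-index condition $i(1)\le\cdots\le i(m)<n$. The paper compresses the translation into the phrase ``It translates into,'' whereas you carry out the degree bookkeeping explicitly; your extra care about surjectivity of the reindexing is not strictly necessary here since the $i(j)$ are allowed to be arbitrary integers (no nonnegativity is required in the statement), so every admissible upper-index tuple automatically lands in the target set.
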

\begin{proof}
The admissible basis of the Steenrod algebra $A$ is \[\Sq^{j(1)}\Sq^{j(2)}\cdots\Sq^{j(m)}\] with \[j(s)>0, j(s)\geq 2j(s+1).\]
Therefore, a basis of $\mathcal{A}(n,-)$ is \[\Sq^{j(1)}\Sq^{j(2)}\cdots\Sq^{j(m)}\iota_{n}\] with \[j(s)>0, j(s)\geq 2j(s+1).\]
It translates into \[\Sq_{i(1)}\Sq_{i(2)}\cdots\Sq_{i(m)}\iota_{n}\] with \[i(1)\leq i(2)\leq\dots\leq i(m)<n.\qedhere\]
\end{proof}

We will write down the basis of $F(n)$ and $F_{k}(n)$ as vector spaces over $\mathbb{F}_{2}$ explicitly.
Before that, we present a lemma about the structure of $\mathcal{L}$, the ideal of ringoid $\mathcal{A}$ generated by $\Sq^{i}:n\to n+i$ with $i>n$.
Remember that the ringoid $\mathcal{Q}$ is equal to the quotient of $\mathcal{A}$ by $\mathcal{L}$.

\begin{lemma}
\label{lem:Lna_basis}
The $\mathcal{L}(n,a)$ is indeed a vector space generated by admissible Steenrod monomials \[\Sq^{j(1)}\Sq^{j(2)}\cdots\Sq^{j(m)}:n\to a,\]
where there exists at least one $s\in[1,m]$ such that \[j(s)>n+\sum_{t=s+1}^{m}j(t).\]
\end{lemma}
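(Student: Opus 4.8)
The plan is to describe $\mathcal{L}(n,a)$ explicitly by identifying, inside the admissible basis of $\mathcal{A}(n,a)$ (i.e.\ admissible monomials $\Sq^{j(1)}\cdots\Sq^{j(m)}\iota_n$ with $j(s)\geq 2j(s+1)$, $j(m)>0$), exactly which basis elements should be ``unstable'' and then showing the ideal is spanned by these. Translate the unstability condition into the admissible language first: a monomial $\Sq^{j(1)}\cdots\Sq^{j(m)}\iota_n$ acting on degree $n$ has its last square $\Sq^{j(m)}$ applied in degree $n$, the next in degree $n+j(m)$, and so on; so the monomial lies in the span of products of unstable squares precisely when some $\Sq^{j(s)}$ is applied in a degree strictly below $j(s)$, i.e.\ $j(s) > n + \sum_{t=s+1}^m j(t)$. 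Call a basis element of $\mathcal{A}(n,a)$ \emph{bad} if this holds for some $s$, and \emph{good} otherwise; good basis elements are exactly (via the $\Sq_i$-translation of Proposition~\ref{prop:Gn}) a basis of $\mathcal{Q}(n,a) = \mathcal{A}(n,a)/\mathcal{L}(n,a)$. The claim is then equivalent to: the bad admissible monomials span $\mathcal{L}(n,a)$.

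The containment ``bad monomials $\subseteq \mathcal{L}(n,a)$'' is the routine direction. A bad admissible monomial $\Sq^{j(1)}\cdots\Sq^{j(m)}\iota_n$ with $j(s) > n+\sum_{t>s}j(t)$ factors as $\Sq^{j(1)}\cdots\Sq^{j(s-1)}\circ(\Sq^{j(s)})\circ \Sq^{j(s+1)}\cdots\Sq^{j(m)}$, where the middle factor $\Sq^{j(s)}: n+\sum_{t>s}j(t) \to \cdots$ is one of the chosen generators of $\mathcal{L}$ (it raises a degree $< j(s)$ by $j(s)$); hence the whole monomial lies in $\mathcal{L}$ by the two-sided ideal property, and so does any $\mathbb{F}_2$-linear combination.

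For the reverse containment I would argue that the span $V$ of the bad admissible monomials is \emph{already} a two-sided ideal of $\mathcal{A}$ containing all the generators $\Sq^i:n\to n+i$ with $i>n$; since $\mathcal{L}$ is the \emph{smallest} such ideal, $\mathcal{L}\subseteq V$, giving equality. That each generator lies in $V$ is immediate: $\Sq^i$ with $i>n$ is already admissible (length one) and bad ($s=1$, empty tail sum, $i>n$). The real content is that $V$ is closed under pre- and post-composition with arbitrary morphisms of $\mathcal{A}$. By linearity it suffices to check composition with a single admissible monomial on either side, and after composing one must re-expand the (possibly inadmissible) result back into the admissible basis using the Adem relations; the key point is that applying Adem relations to a product one of whose admissible factors is bad yields only bad admissible monomials. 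Concretely, post-composition $\Sq^i \cdot (\Sq^{j(1)}\cdots\Sq^{j(m)})$ with the right factor bad: the unstable factor $\Sq^{j(s)}$ is untouched by Adem rewriting of the $\Sq^i\Sq^{j(1)}$ juncture since rewriting only ever \emph{increases} the leading exponent and \emph{decreases} later ones (it replaces $\Sq^a\Sq^b$ by a sum of $\Sq^{a+b-t}\Sq^t$ with $t\le a/2 < a$), so the degree at which $\Sq^{j(s)}$ is applied cannot increase and $j(s)$ cannot decrease, keeping the badness inequality at position $s$ intact; this requires a careful but elementary induction on the length/degree lexicographic ordering used in the standard proof that Adem relations suffice to rewrite in the admissible basis. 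Pre-composition $(\Sq^{j(1)}\cdots\Sq^{j(m)})\cdot\Sq^i$ with the left factor bad is handled the same way at the $\Sq^{j(m)}\Sq^i$ juncture, plus the bookkeeping that increasing the total tail degree below a bad $\Sq^{j(s)}$ only helps the inequality.

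The main obstacle is exactly this last stability-under-Adem-rewriting step: one must show that the subspace of bad admissibles is preserved under the admissible-reduction algorithm, i.e.\ that an inadmissible product containing a ``witnessed'' unstable square reduces to a sum of admissibles each of which still has a witness. I would isolate this as a sublemma, proved by induction on the well-ordering of Steenrod monomials used in Adem's theorem, tracking that the exponent–suffix-sum comparison $j(s) > n+\sum_{t>s}j(t)$ is monotone in the right direction under each elementary Adem substitution. Everything else — the translation to lower squares, the easy containment, and the reduction to ``$V$ is an ideal containing the generators'' — is bookkeeping.
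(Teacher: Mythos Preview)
Your overall strategy is the paper's: reduce everything to the claim that a single Adem rewrite preserves badness, then let the admissible-reduction algorithm run. The organization as ``$V$ is an ideal containing the generators, hence contains $\mathcal{L}$'' is just a repackaging of the paper's ``every bad monomial rewrites to a sum of bad admissibles''.

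The one genuine gap is the invariant you propose to track. You say the witness position $s$ stays put because ``$j(s)$ cannot decrease'' and the comparison $j(s)>n+\sum_{t>s}j(t)$ is ``monotone in the right direction under each elementary Adem substitution''. That is false exactly when the cascade reaches the pair $(s-1,s)$: an Adem move there replaces $j(s)$ by a strictly smaller $j(s)'$ while leaving the tail sum $\sum_{t>s}j(t)$ unchanged, so the inequality at position $s$ can (and typically does) fail. The correct local invariant, which is the content of the paper's proof, allows the witness to migrate: for $\Sq^i\Sq^j:b\to b+i+j$ with either factor unstable ($i>b+j$ or $j>b$), every Adem summand $\Sq^{i'}\Sq^{j'}$ has its \emph{first} factor unstable, i.e.\ $i'>b+j'$. (If $i>b+j$ this follows from $i'\ge i$, $j'\le j$; if $j>b$ it follows from $i'=i+j-j'$ together with $i\ge 2j'$, giving $i'\ge j'+j>j'+b$.) So after each Adem step there is still a bad position, but it may have shifted to the left member of the pair just rewritten; that is the statement your sublemma should prove, and with it your argument goes through and coincides with the paper's.
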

\begin{proof}
By definition, the $\mathcal{L}(n,a)$ as a vector space is generated by
\[\Sq^{j(1)}\Sq^{j(2)}\cdots\Sq^{j(m)}:n\to a,\]
where there exists at least one $s\in[1,m]$ such that \[j(s)>n+\sum_{t=s+1}^{m}j(t).\]
Using the Adem relations (\ref{eq:steenrod_relation}), we can rewrite the morphism above as a sum of some admissible Steenrod monomials.
Say we are applying the Adem relations on $\Sq^{i}\Sq^{j}:b\to b+i+j$ and get a sum of $\Sq^{i'}\Sq^{j'}:b\to c$.
If $i>j+b$, then $i'>j'+b$ because the Adem relations increases the first upper index and decreases the second one.
If $j>b$, then $i'>j'+b$ because $i' = i+j-j'$ and $i\geq 2j'$ together imply $i'\geq j'+j>j'+b$.
Therefore, for each admissible summand \[\Sq^{j'(1)}\Sq^{j'(2)}\cdots\Sq^{j'(m')}:n\to\ell\] on the right hand side of the Adem relations, there exists at least one $s\in[1,m']$ such that \[j'(s) > n + \sum_{t=s'+1}^{m'}j'(t).\]
So those admissible monomials generate the $\mathcal{L}(n,a)$.
\end{proof}

\begin{proposition}
\label{prop:Fn}
When $n\geq 0$, a basis of $F(n)$ as a graded vector space over $\mathbb{F}_{2}$ is
\[\Sq_{i(1)}\Sq_{i(2)}\cdots\Sq_{i(m)}\iota_{n}\] with \[0\leq i(1)\leq i(2)\leq\dots\leq i(m)< n.\]
Note that when $m = 0$, we have $\iota_{n}$.
\end{proposition}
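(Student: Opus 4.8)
The plan is to obtain the basis of $F(n) = \mathcal{Q}(n,-)$ from the basis of $\mathcal{A}(n,-)$ given in Proposition \ref{prop:Gn} by passing to the quotient $\mathcal{Q} = \mathcal{A}/\mathcal{L}$. Recall that $F(n)(a) = \mathcal{Q}(n,a) = \mathcal{A}(n,a)/\mathcal{L}(n,a)$, so it suffices to show that, among the elements $\Sq_{i(1)}\cdots\Sq_{i(m)}\iota_{n}$ with $i(1)\leq\cdots\leq i(m)<n$ forming a basis of $\mathcal{A}(n,-)$, exactly those with all $i(s)\geq 0$ survive in the quotient, and moreover the ones with some $i(s)<0$ span $\mathcal{L}(n,-)$. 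The key translation to keep in mind is the dictionary between upper and lower indices: an admissible monomial $\Sq^{j(1)}\cdots\Sq^{j(m)}$ acting on a class of degree $n$ corresponds to $\Sq_{i(1)}\cdots\Sq_{i(m)}$ where $i(s) = n + \sum_{t=s+1}^{m} j(t) - \sum_{t=s}^{m} j(t) \cdot(\text{something})$; more precisely, the $s$-th lower index records the degree \emph{before} applying $\Sq^{j(s)}$ minus $j(s)$, i.e. $i(s)$ equals $\bigl(n+\sum_{t>s} j(t)\bigr) - j(s)$.

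First I would make that index dictionary precise: if $\Sq_{i(1)}\cdots\Sq_{i(m)}\iota_n = \Sq^{j(1)}\cdots\Sq^{j(m)}\iota_n$, then $i(s) = n + \sum_{t=s+1}^{m} j(t) - j(s)$, equivalently $j(s) = n + \sum_{t=s+1}^{m} j(t) - i(s)$. Under this dictionary, the admissibility condition $j(s)>0$, $j(s)\geq 2j(s+1)$ corresponds exactly to $i(1)\leq i(2)\leq\cdots\leq i(m)<n$, as already noted in the proof of Proposition \ref{prop:Gn}. The crucial observation is then: the condition ``there exists $s$ with $j(s) > n + \sum_{t=s+1}^m j(t)$'' from Lemma \ref{lem:Lna_basis} translates, via $j(s) = n + \sum_{t>s} j(t) - i(s)$, precisely into ``there exists $s$ with $i(s) < 0$''. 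Hence Lemma \ref{lem:Lna_basis} says that $\mathcal{L}(n,a)$ is spanned by exactly those admissible monomials $\Sq_{i(1)}\cdots\Sq_{i(m)}\iota_n$ (in the lower-index normal form, with $i(1)\leq\cdots\leq i(m)<n$) for which some $i(s)<0$.

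Now the conclusion follows from a linear-algebra splitting. Let $B$ be the basis of $\mathcal{A}(n,a)$ from Proposition \ref{prop:Gn}, partitioned as $B = B_{\geq 0}\sqcup B_{<0}$ according to whether all lower indices are $\geq 0$ or some lower index is negative. By Lemma \ref{lem:Lna_basis} (after the translation above), $B_{<0}$ spans $\mathcal{L}(n,a)$; since $B$ is linearly independent, $B_{<0}$ is a basis of $\mathcal{L}(n,a)$, and the images of $B_{\geq 0}$ in the quotient $\mathcal{A}(n,a)/\mathcal{L}(n,a) = F(n)(a)$ form a basis. Translating $B_{\geq 0}$ back through the index dictionary gives exactly the monomials $\Sq_{i(1)}\cdots\Sq_{i(m)}\iota_n$ with $0\leq i(1)\leq\cdots\leq i(m)<n$, as claimed; the $n\geq 0$ hypothesis guarantees $F(n)\neq 0$ and that $\iota_n$ itself (the $m=0$ case) survives.

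The step I expect to require the most care is the index translation, i.e. checking that ``$j(s) > n + \sum_{t>s} j(t)$ for some $s$'' is genuinely equivalent to ``$i(s)<0$ for some $s$'' \emph{for the same} $s$, and that this is compatible with the fact that Lemma \ref{lem:Lna_basis} phrases things in terms of admissible monomials while Proposition \ref{prop:Gn} gives the lower-index normal form. One must be sure that the admissible-monomial description of $\mathcal{L}(n,a)$ and the lower-index normal form of $\mathcal{A}(n,a)$ are related by the \emph{identity} change of basis (same monomials, different notation), so that the partition of $B$ is well-defined and respected by the span of $\mathcal{L}$. Everything else is the routine observation that a basis of a vector space partitioned so that one block spans a subspace yields a basis of the subspace and a basis of the quotient.
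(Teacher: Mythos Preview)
Your proposal is correct and follows essentially the same route as the paper: both arguments identify $F(n)(a)=\mathcal{A}(n,a)/\mathcal{L}(n,a)$, invoke Lemma~\ref{lem:Lna_basis} to describe $\mathcal{L}(n,a)$ in terms of the admissible basis, and then translate the unstability condition $j(s)\le n+\sum_{t>s}j(t)$ into the lower-index condition $i(s)\ge 0$. The paper carries out the argument in upper-index notation first and translates only at the end, whereas you front-load the dictionary and phrase the splitting more explicitly, but the content is the same.
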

\begin{proof}
We have $F(n) = \mathcal{Q}(n,-)$ and $F(n)^{a} = \mathcal{Q}(n,a) = \mathcal{L}(n,a)/\mathcal{I}(n,a)$.
The $\mathcal{Q}(n,-)$ is generated as a vector space over $\mathbb{F}_{2}$ by 
\[\Sq^{j(1)}\Sq^{j(2)}\cdots\Sq^{j(m)}\iota_{n}\]
with \[j(s)>0, j(s)\geq 2j(s+1), j(s)\leq n + \sum_{t = s+1}^{m}j(t).\]
They are linearly independent by Lemma \ref{lem:Lna_basis}.
It translates into \[\Sq_{i(1)}\Sq_{i(2)}\cdots\Sq_{i(m)}\iota_{n}\] with \[0\leq i(1)\leq i(2)\leq\dots\leq i(m)< n.\qedhere\]
\end{proof}

\begin{proposition}
\label{prop:Fkn}
When $n\geq 0$, a basis of $F_{k}(n)$ as a graded vector space over $\mathbb{F}_{2}$ is \[\Sq_{i(1)}\Sq_{i(2)}\cdots\Sq_{i(m)}\iota_{n}\] with \[0\leq i(1)\leq i(2)\leq\dots\leq i(m)< \min(n, k).\]
Note that when $m = 0$, we have $\iota_{n}$.
\end{proposition}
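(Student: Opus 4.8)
The plan is to realise $F_k(n)=\mathcal{Q}_k(n,-)$ as a graded subspace of $F(n)=\mathcal{Q}(n,-)$ --- which is legitimate because $\mathcal{Q}_k$ is by construction a subringoid of $\mathcal{Q}$, so $\mathcal{Q}_k(n,a)\subseteq\mathcal{Q}(n,a)$ for every $a$ --- and then to identify that subspace as the span of the claimed monomials. Write $B$ for the set of all $\Sq_{i(1)}\Sq_{i(2)}\cdots\Sq_{i(m)}\iota_n$ with $0\le i(1)\le i(2)\le\cdots\le i(m)<\min(n,k)$. Since $B$ is a subset of the $\mathbb{F}_2$-basis of $F(n)$ exhibited in Proposition~\ref{prop:Fn}, it is linearly independent automatically, so the whole task is to show that $B\subseteq\mathcal{Q}_k(n,-)$ and that $B$ spans $\mathcal{Q}_k(n,-)$.

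For the inclusion $B\subseteq\mathcal{Q}_k(n,-)$: read a monomial of $B$ from the right and let $d$ denote the degree at which the factor $\Sq_{i(\ell)}$ acts. Because every factor has nonnegative lower index strictly smaller than $n$, the degree never decreases, so $d\ge n>i(\ell)\ge 0$; hence $\Sq_{i(\ell)}=\Sq^{\,d-i(\ell)}$ with $d-i(\ell)\ge 0$ and $d-(d-i(\ell))=i(\ell)<k$, which is exactly the condition defining a generator of $\mathcal{Q}_k$. Thus the monomial is a composite of generators of $\mathcal{Q}_k$ and lies in $\mathcal{Q}_k(n,-)$ (and it is nonzero there, being one of the basis elements of Proposition~\ref{prop:Fn}).

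For the spanning: by the explicit description of a subringoid generated by a set of morphisms, $\mathcal{Q}_k(n,-)$ is spanned over $\mathbb{F}_2$ by composites $\Sq_{j(1)}\Sq_{j(2)}\cdots\Sq_{j(m)}\iota_n$ in which every $j(\ell)$ lies in $[0,k)$ --- at its stage a generator is $\Sq^i=\Sq_{d-i}$ with $d-i<k$, and the operation vanishes unless $d-i\ge 0$. I would then run the usual reduction to admissible form, repeatedly applying the lower-square Adem relation of Proposition~\ref{prop:AdemLower} to an adjacent inadmissible pair $\Sq_i\Sq_j$ with $i>j$, and verify that this reduction preserves the property that all indices lie in $[0,k)$. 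In $\Sq_i\Sq_j x=\sum_s\binom{s-j-1}{2s-i-j}\Sq_{i+2j-2s}\Sq_s x$ the binomial coefficient vanishes unless $\lceil(i+j)/2\rceil\le s\le i-1$; for such $s$ the new inner index obeys $s\le i-1<k$, while by the inequality recorded just after Proposition~\ref{prop:AdemLower} the new outer index obeys $i+2j-2s\le s$, so it either lies in $[0,s]\subseteq[0,k)$ or is negative, in which case that term vanishes in $\mathcal{Q}$. Since a reduction step therefore only ever acts on, and produces, pairs with both indices in $[0,k)$, $k$-boundedness is an invariant of the whole process; and the reduction terminates, this termination being exactly what underlies the admissible-basis arguments behind Propositions~\ref{prop:Gn} and \ref{prop:Fn}. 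Hence every original composite rewrites, inside $F(n)$, as an $\mathbb{F}_2$-combination of admissible monomials all of whose indices lie in $[0,\min(n,k))$, i.e., as a combination of elements of $B$. Combined with the linear independence above, this proves the proposition.

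The step I expect to be the main obstacle is this last verification: reading off from the explicit bounds in Proposition~\ref{prop:AdemLower} --- the nonvanishing range $\lceil(i+j)/2\rceil\le s\le i-1$ of the binomial coefficient together with the inequality $i+2j-2s\le s$ --- that a single Adem reduction applied to a $k$-bounded inadmissible pair yields only $k$-bounded (or zero) terms, so that $k$-boundedness really is an invariant of the reduction. Everything else is routine or already available: linear independence and termination are inherited from Propositions~\ref{prop:Gn} and \ref{prop:Fn}, and $B\subseteq\mathcal{Q}_k(n,-)$ is a direct check against the definition of $\mathcal{Q}_k$.
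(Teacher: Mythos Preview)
Your proof is correct and follows essentially the same approach as the paper: realize $\mathcal{Q}_k(n,-)$ as a subspace of $\mathcal{Q}(n,-)$ and identify it as the span of those admissible monomials from Proposition~\ref{prop:Fn} that use only $\Sq_0,\ldots,\Sq_{k-1}$. The paper's proof is terser and simply asserts that one may ``select the ones using only $\Sq_0,\dots,\Sq_{k-1}$'', leaving implicit the verification --- which you carry out explicitly via the bounds in Proposition~\ref{prop:AdemLower} --- that Adem reduction of an inadmissible pair preserves $k$-boundedness of the lower indices.
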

\begin{proof}
We have $F_{k}(n) = \mathcal{Q}_{k}(n,-)$ and $F_{k}(n)^{a} = \mathcal{Q}_{k}(n,a)$.
For all integers $n$ and $a$, $\mathcal{Q}_{k}(n,a)$ is an abelian subgroup of $\mathcal{Q}(n,a)$ generated by $\Sq_{0},\dots,\Sq_{k-1}$.
So we can compute a basis of $\mathcal{Q}_{k}(n,-)$ using a basis of $\mathcal{Q}(n,-)$ by selecting the ones using only $\Sq_{0},\dots,\Sq_{k-1}$.
Proposition \ref{prop:Fn} gives a basis of $\mathcal{Q}(n,-)$.
Therefore, a basis of $\mathcal{Q}_{k}(n,-)$ is \[\Sq_{i(1)}\Sq_{i(2)}\cdots\Sq_{i(m)}\iota_{n}\] with \[0\leq i(1)\leq i(2)\leq\dots\leq i(m)< \min(n, k).\qedhere\]
\end{proof}

\begin{remark}[Locally Noetherian]
The category $\mathcal{U}$ is locally noetherian (see \cite{schwartz1994} Chapter 1, Section 8).
But the categories $\mathcal{U}_{k}$ are not locally noetherian in general.
For example, $\mathcal{U}_{2}$ is not locally noetherian.
Although $F_{2}(2)$ is finitely generated, it has a submodule $M$ which is not finitely generated: Let $M$ be the submodule generated by $\Sq_{0}(\Sq_{1})^{i}\iota_{2}$ with $i\geq 0$.
\end{remark}

\subsection{Symmetric monoidal category}
We will construct a functor $\otimes:\mathcal{U}_{k}\times\mathcal{U}_{k}\to\mathcal{U}_{k}$, and then prove that $\mathcal{U}_{k}$ is a symmetric monoidal category with this tensor product and that $S_{k}(0)$ is the unit object with respect to this tensor product.
Before constructing the tensor product, we propose an alternative construction of ringoid $\mathcal{Q}_{k}$ in terms of generators and relations.

Recall that $\mathcal{Q} = \mathcal{R} / \mathcal{J}$ and $\mathcal{Q}_{k}$ is the subringoid of $\mathcal{Q}$ generated by the top $k$ squares.
In other words, $\mathcal{Q}_{k}$ is a subringoid of a quotient ringoid of $\mathcal{R}$.
We will present $\mathcal{Q}_{k}$ as a quotient ringoid of a subringoid of $\mathcal{R}$, after the following lemma.

\begin{lemma}
\label{lem:subquotient_ringoid}
Let $\mathcal{A}$ be a ringoid and $\mathcal{I}$ an ideal in it.
Let $\mathcal{M}$ be a set of morphisms in $\mathcal{A}$.
Then the subringoid $\mathcal{B}$ of $\mathcal{A}/\mathcal{I}$ generated by $\mathcal{M}$ is equivalent to the quotient ringoid of $\mathcal{C}$ by the ideal $\mathcal{I}$, where $\mathcal{C}$ is defined as the subringoid of $\mathcal{A}$ generated by $\mathcal{M}$ and $\mathcal{I}$.
\end{lemma}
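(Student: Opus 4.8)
The plan is to produce an explicit ringoid isomorphism $\mathcal{C}/\mathcal{I}\cong\mathcal{B}$ that is the identity on objects; since $\Ob(\mathcal{C}/\mathcal{I})=\Ob(\mathcal{A})=\Ob(\mathcal{A}/\mathcal{I})=\Ob(\mathcal{B})$, such a map is what ``equivalent'' should mean here. Write $\pi\colon\mathcal{A}\to\mathcal{A}/\mathcal{I}$ for the quotient map. The first thing I would check is that $\mathcal{C}/\mathcal{I}$ is even legitimate: because $\mathcal{C}$ is the subringoid generated by $\mathcal{M}$ together with all morphisms of $\mathcal{I}$, we have $\mathcal{I}(x,y)\subseteq\mathcal{C}(x,y)$ for all $x,y$, so $\mathcal{I}\cap\mathcal{C}=\mathcal{I}$, and by the Intersection Lemma this is an ideal of $\mathcal{C}$; thus $(\mathcal{C}/\mathcal{I})(x,y)=\mathcal{C}(x,y)/\mathcal{I}(x,y)$.

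Next I would restrict $\pi$ to $\mathcal{C}$. This is a morphism of ringoids that is bijective on objects, so by the lemma identifying the image of such a morphism, $\pi(\mathcal{C})$ is a subringoid of $\mathcal{A}/\mathcal{I}$; I claim it equals $\mathcal{B}$. One inclusion is immediate, since $\mathcal{B}$ is generated by the morphisms $\pi(m)$ with $m\in\mathcal{M}$, all of which lie in the subringoid $\pi(\mathcal{C})$, so $\mathcal{B}\subseteq\pi(\mathcal{C})$. For the reverse inclusion I would use the explicit description of $\mathcal{C}(x,y)$ as the set of $\mathbb{Z}$-linear combinations of composites of morphisms drawn from $\mathcal{M}$ and $\mathcal{I}$: applying $\pi$, any composite having a factor in $\mathcal{I}$ maps to $0$ (composition with a zero morphism is zero, by bilinearity of composition in a preadditive category), and any composite built only from morphisms of $\mathcal{M}$ maps to a composite of the $\pi(m)$'s, hence lies in $\mathcal{B}$. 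Therefore $\pi(\mathcal{C}(x,y))\subseteq\mathcal{B}(x,y)$, and the two subringoids coincide.

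Finally I would compute the kernel of $\pi|_{\mathcal{C}}$: since $\ker\pi(x,y)=\mathcal{I}(x,y)$ and $\mathcal{I}(x,y)\subseteq\mathcal{C}(x,y)$, we get $\ker(\pi|_{\mathcal{C}})(x,y)=\mathcal{C}(x,y)\cap\mathcal{I}(x,y)=\mathcal{I}(x,y)$. Consequently the map $\mathcal{C}(x,y)/\mathcal{I}(x,y)\to\mathcal{B}(x,y)$ induced by $\pi|_{\mathcal{C}}$ is a well-defined isomorphism of abelian groups for every pair $x,y$, and these maps assemble, being induced by the ringoid morphism $\pi|_{\mathcal{C}}$, into a ringoid isomorphism $\mathcal{C}/\mathcal{I}\to\mathcal{B}$ that is the identity on objects.

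I do not expect a genuine obstacle; the only points needing a little care are (i) justifying that $\mathcal{I}$ really is an ideal of the subringoid $\mathcal{C}$, so that the quotient makes sense, and (ii) the bookkeeping in the surjectivity step, i.e.\ tracking that $\pi$ annihilates exactly the composites that touch $\mathcal{I}$. A first-isomorphism-theorem for ringoids would subsume the last paragraph, but since the paper has not set one up I would simply verify well-definedness, injectivity, and surjectivity of the induced map directly as above.
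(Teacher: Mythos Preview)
Your proof is correct and follows essentially the same route as the paper's: both restrict the quotient map $\pi$ to a subringoid and invoke a first-isomorphism-theorem computation on hom-groups. The only cosmetic difference is that the paper introduces an auxiliary subringoid $\mathcal{D}\subseteq\mathcal{A}$ generated by $\mathcal{M}$ alone and compresses your surjectivity/kernel analysis into the single chain $\mathcal{B}(x,y)=(\mathcal{D}(x,y)+\mathcal{I}(x,y))/\mathcal{I}(x,y)=\mathcal{C}(x,y)/\mathcal{I}(x,y)$, whereas you argue directly from the explicit description of $\mathcal{C}$; your version is slightly longer but more explicit about why $\mathcal{I}$ is an ideal of $\mathcal{C}$ and why $\pi(\mathcal{C})=\mathcal{B}$.
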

\begin{proof}
The bijection on objects is easy to see.
We need to constract a bijection between $\mathcal{B}(x,y)$ and $(\mathcal{C}/\mathcal{I})(x,y)$ for any two objects $x,y$ of ringoid $\mathcal{A}$.
Let $\mathcal{D}$ be the subringoid of $\mathcal{A}$ generated by $\mathcal{M}$.
Then $\mathcal{B}(x,y) = (\mathcal{D}(x,y) + \mathcal{I}(x,y))/\mathcal{I}(x,y) = \mathcal{C}(x,y)/\mathcal{I}(x,y) = (\mathcal{C}/\mathcal{I})(x,y)$.
\end{proof}

\begin{proposition}
\label{prop:Q_k_gen_rel}
Let $k\geq 0$.
Let $\mathcal{R}_{k}$ be the subringoid of $\mathcal{R}$ generated by
\[\mathcal{M}:=\left\{\Sq^{i}:n\to n + i\textrm{ with }i\geq 0, 0\leq n-i<k\right\}.\]
Then $\mathcal{R}_{k} / (\mathcal{R}_{k}\cap \mathcal{J})$ is equivalent to the ringoid $\mathcal{Q}_{k}$.
\end{proposition}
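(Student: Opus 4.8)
The plan is to deduce this from Lemma~\ref{lem:subquotient_ringoid} together with a ``second isomorphism theorem'' for ringoids. The first step is to check that inside $\mathcal{Q}=\mathcal{R}/\mathcal{J}$ the subringoid generated by the top $k$ squares $\Sq^{i}\colon n\to n+i$ with $i\ge 0,\ n-i<k$ coincides with the subringoid generated by the images of the morphisms in $\mathcal{M}$. Indeed, the top $k$ squares not already lying in $\mathcal{M}$ are exactly those with $n-i<0$, i.e.\ the $\Sq^{i}\colon n\to n+i$ with $i>n$; these belong to $\mathcal{J}$ by the unstability conditions, so they are the zero morphism in $\mathcal{Q}$, and adjoining zero morphisms to a generating set does not enlarge the subringoid it generates. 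Hence $\mathcal{Q}_{k}$ is the subringoid of $\mathcal{R}/\mathcal{J}$ generated (via the quotient map) by the set $\mathcal{M}$ of morphisms of $\mathcal{R}$.

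Now apply Lemma~\ref{lem:subquotient_ringoid} with $\mathcal{A}=\mathcal{R}$, $\mathcal{I}=\mathcal{J}$, and this $\mathcal{M}$: it gives $\mathcal{Q}_{k}\cong\mathcal{C}/\mathcal{J}$, where $\mathcal{C}$ is the subringoid of $\mathcal{R}$ generated by $\mathcal{M}$ and $\mathcal{J}$. The proof of that lemma identifies $\mathcal{C}(x,y)=\mathcal{R}_{k}(x,y)+\mathcal{J}(x,y)$ for all objects $x,y$, since $\mathcal{R}_{k}$ is precisely the subringoid of $\mathcal{R}$ generated by $\mathcal{M}$; in particular this morphism-wise sum $\mathcal{R}_{k}+\mathcal{J}$ is a subringoid of $\mathcal{R}$ (which is also transparent from bilinearity of composition and the inclusions $\mathcal{R}_{k}\cdot\mathcal{R}_{k}\subseteq\mathcal{R}_{k}$ and $\mathcal{R}_{k}\cdot\mathcal{J},\ \mathcal{J}\cdot\mathcal{R}_{k},\ \mathcal{J}\cdot\mathcal{J}\subseteq\mathcal{J}$, using that $\mathcal{J}$ is an ideal).

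It remains to identify $\mathcal{C}/\mathcal{J}=(\mathcal{R}_{k}+\mathcal{J})/\mathcal{J}$ with $\mathcal{R}_{k}/(\mathcal{R}_{k}\cap\mathcal{J})$. By the intersection lemma, $\mathcal{R}_{k}\cap\mathcal{J}$ is an ideal of $\mathcal{R}_{k}$, so the quotient makes sense. Restrict the quotient ringoid map $\mathcal{C}\to\mathcal{C}/\mathcal{J}$ to $\mathcal{R}_{k}$: this is an additive functor which is the identity on objects, is surjective on each morphism group because $\mathcal{C}(x,y)=\mathcal{R}_{k}(x,y)+\mathcal{J}(x,y)$, and sends $f\in\mathcal{R}_{k}(x,y)$ to $0$ exactly when $f\in\mathcal{J}(x,y)$, i.e.\ when $f\in(\mathcal{R}_{k}\cap\mathcal{J})(x,y)$. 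Hence on morphism groups it induces the standard abelian-group isomorphisms $\mathcal{R}_{k}(x,y)/(\mathcal{R}_{k}\cap\mathcal{J})(x,y)\cong\mathcal{C}(x,y)/\mathcal{J}(x,y)$, and these assemble to an equivalence $\mathcal{R}_{k}/(\mathcal{R}_{k}\cap\mathcal{J})\cong\mathcal{C}/\mathcal{J}\cong\mathcal{Q}_{k}$.

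Essentially everything here is formal; the one place demanding care is the first step, where one must make sure the discrepancy between the two descriptions of the generating set ($n-i<k$ versus $0\le n-i<k$) is absorbed by the unstability relations already present in $\mathcal{J}$, so that no generators of $\mathcal{Q}_{k}$ are lost when passing to $\mathcal{M}$.
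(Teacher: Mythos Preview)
Your proof is correct and follows essentially the same approach as the paper: both apply Lemma~\ref{lem:subquotient_ringoid}, identify $\mathcal{C}(x,y)=\mathcal{R}_{k}(x,y)+\mathcal{J}(x,y)$, and finish with the second isomorphism theorem computation $(\mathcal{R}_{k}+\mathcal{J})/\mathcal{J}\cong\mathcal{R}_{k}/(\mathcal{R}_{k}\cap\mathcal{J})$. The only cosmetic difference is ordering: you first argue in $\mathcal{Q}$ that the generating sets with $n-i<k$ and $0\le n-i<k$ yield the same subringoid (since the extra squares vanish in $\mathcal{Q}$), whereas the paper applies the lemma with the larger generating set and then observes those extra squares already lie in $\mathcal{J}$, hence do not enlarge $\mathcal{C}$.
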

\begin{proof}
According to Lemma \ref{lem:subquotient_ringoid}, $\mathcal{Q}_{k}$ is equivalent to the quotient ringoid of $\mathcal{C}$ by the ideal $\mathcal{J}$, where $\mathcal{C}$ is defined as the subringoid of $\mathcal{R}$ generated by $\mathcal{J}$ and $\Sq^{i}:n\to n+i$ with $i\geq 0, n-i<k$.
Note that if $i\geq 0$ and $n-i<0$, then $\Sq^{i}:n\to n+i$ is a morphism in ideal $\mathcal{J}$.
Therefore, $\mathcal{C}$ is the subringoid of $\mathcal{R}$ generated by $\mathcal{J}$ and $\mathcal{M}$.
Recall that $\mathcal{R}_{k}$ is the subringoid generated by $\mathcal{M}$.
Then the morphism set $\mathcal{C}(a, b) = \mathcal{J}(a, b) + \mathcal{R}_{k}(a,b)$ and \[\left(\frac{\mathcal{C}}{\mathcal{J}}\right)(a, b) = \frac{\mathcal{C}(a,b)}{\mathcal{J}(a,b)} = \frac{\mathcal{J}(a, b) + \mathcal{R}_{k}(a,b)}{\mathcal{J}(a,b)} = \frac{\mathcal{R}_{k}(a,b)}{\mathcal{J}(a,b)\cap\mathcal{R}_{k}(a,b)}.\]
Therefore, the ringoid $\mathcal{C}/\mathcal{J}$ is equivalent to $\mathcal{R}_{k} / (\mathcal{R}_{k}\cap \mathcal{J})$.
\end{proof}

After describing the ringoid $\mathcal{Q}_{k}$ in terms of generators and relations, we are now ready to define tensor product structure on the module category $\mathcal{U}_{k}$.

\begin{definition}[Tensor product]
Given two modules $M, N$ in $\mathcal{U}_{k}$, define a new module $M\otimes N$ in $\mathcal{U}_{k}$ as
\[(M\otimes N)(n) := \bigoplus_{i + j = n}M(i)\otimes N(j).\]
This is a finite direct sum because $M(i) = 0$ when $i < 0$ and similarly for $N$.
Let $x$ and $y$ be any nonzero homogeneous elements in $M$ and $N$ respectively.
We define the top $k$ Steenrod squares on $x\otimes y$ as
\[\Sq^{n}(x\otimes y) := \sum_{i + j = n,\,0\leq i\leq |x|,\, 0\leq j\leq |y|}\left(\Sq^{i}x\right)\otimes\left(\Sq^{j}y\right)\]
when $n\geq 0$ and $0\leq |x| + |y| - n < k$.
Note that on the right hand side, we only have the top $k$ squares because $|x| - i < k + n - |y| - i = k + j - |y|\leq k$ and similarly $|y| - j < k$.
This check uses the unstable condition $j\leq|y|$ and $i\leq|x|$.

We need to verify that such defined $M\otimes N$ is a module over $\mathcal{Q}_{k}$.
According to Proposition \ref{prop:Q_k_gen_rel}, it suffices to verify that $M\otimes N$ is a left $\mathcal{R}_{k}$-module with $f(x\otimes y) = 0$ for any morphism $f\in \mathcal{R}_{k}\cap\mathcal{J}$ and any two nonzero homogenous elements $x,y$ in $M,N$.
Since there is a similar tensor product structure on $\mathcal{U}$ and $\mathcal{Q} = \mathcal{R}/\mathcal{J}$, we know that $f(x\otimes y) = 0$ for any morphism $f\in\mathcal{J}$.
This finishes our proof that $M\otimes N$ is indeed a module over $\mathcal{Q}_{k}$.
\end{definition}

This tensor product $M\otimes N$ is functorial in both $M$ and $N$, so we get the tensor product functor $\otimes:\mathcal{U}_{k}\times\mathcal{U}_{k}\to\mathcal{U}_{k}$.
The sphere module $S_{k}(0)$ is the unit object with respect to this tensor product because $\left(M\otimes S_{k}(0)\right)(n) = M(n)$ and $\Sq^{n}(x\otimes y) = \left(\Sq^{n}x\right)\otimes y$, where $x$ is any homogenous element in $M$ and $y$ is the only nontrivial element in $S_{k}(0)$.
It is easy to further verify that $\left(\mathcal{U}_{k},\otimes, S_{k}(0)\right)$ is a symmetric monoidal category.

\section{Functors between categories $\mathcal{U}$ and $\mathcal{U}_{k}$}
\label{sec:functors}
\subsection{Forgetful functor}

\begin{definition}[Forgetful functor]
Inclusion morphisms of ringoids \[\mathcal{Q}_{0}\to\mathcal{Q}_{1}\to\dots\to\mathcal{Q}_{k-1}\to\mathcal{Q}_{k}\to\dots\to\mathcal{Q}\]
induce forgetful functors $u$ \[\mathcal{U}_{0}\leftarrow\mathcal{U}_{1}\leftarrow\dots\leftarrow\mathcal{U}_{k-1}\leftarrow\mathcal{U}_{k}\leftarrow\dots\leftarrow\mathcal{U}.\]
Those forgetful functors $u$ are additive.
\end{definition}

\begin{proposition}
The forgetful functors $u$ send free modules to free modules.
\end{proposition}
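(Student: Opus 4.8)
The plan is to trace through how the forgetful functors act on the representable modules, since every free module is a direct sum of these and the forgetful functors are additive. Concretely, for the inclusion $\mathcal{Q}_{k}\hookrightarrow\mathcal{Q}_{k+1}$ (or $\mathcal{Q}_{k}\hookrightarrow\mathcal{Q}$, which is handled identically), the induced forgetful functor $u$ sends a $\mathcal{Q}_{k+1}$-module $M$ to its restriction along the inclusion. I want to show $u(F_{k+1}(n)) \cong F_{k}(n)$, and similarly $u(F(n))\cong F_{k}(n)$ for the forgetful functor $\mathcal{U}\to\mathcal{U}_{k}$; then additivity finishes the job for arbitrary free modules, which are direct sums of the $F_{k+1}(n)$'s (resp. $F(n)$'s).

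First I would recall that $F_{k+1}(n) = \mathcal{Q}_{k+1}(n,-)$ and $F_{k}(n) = \mathcal{Q}_{k}(n,-)$ by definition. The key observation is that restricting the representable functor $\mathcal{Q}_{k+1}(n,-)$ along the inclusion $\iota\colon\mathcal{Q}_{k}\hookrightarrow\mathcal{Q}_{k+1}$ gives, on an object $a$, the abelian group $\mathcal{Q}_{k+1}(n,a)$ regarded merely as a $\mathcal{Q}_{k}$-module via precomposition. So I must produce a natural isomorphism $\mathcal{Q}_{k}(n,-) \xrightarrow{\ \sim\ } \mathcal{Q}_{k+1}(n,-)\circ\iota$ of $\mathcal{Q}_{k}$-modules. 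The natural candidate is the map induced by $\iota$ itself on hom-sets, $\mathcal{Q}_{k}(n,a)\to\mathcal{Q}_{k+1}(n,a)$, which is automatically a $\mathcal{Q}_{k}$-module map. I would then check this is an isomorphism degreewise: injectivity is immediate since $\iota$ is an inclusion of subringoids of $\mathcal{Q}$; surjectivity onto $\mathcal{Q}_{k+1}(n,a)$ amounts to showing that every morphism $n\to a$ in $\mathcal{Q}_{k+1}$ that actually starts from source $n$ already lies in $\mathcal{Q}_{k}$. This is exactly the content of the explicit bases computed in Propositions~\ref{prop:Fkn} and~\ref{prop:Fn}: a basis element $\Sq_{i(1)}\cdots\Sq_{i(m)}\iota_{n}$ of $F_{k+1}(n)$ has all $i(j) < \min(n,k+1)$, but the constraint $i(1)\le i(2)\le\dots\le i(m) < n$ combined with the requirement that these come from applying squares starting at degree $n$ forces $i(j)<\min(n,k)$ as well once we are inside $\mathcal{Q}_{k}$ — more carefully, one observes directly from Propositions~\ref{prop:Fkn} and~\ref{prop:Fn} that the bases of $F_{k+1}(n)$ and $F_{k}(n)$ coincide for every $n$, since $\min(n,k+1)$ and $\min(n,k)$ differ only when $n>k$, and in that range the extra generator $\Sq_{k}\iota_{n}$ would need to be applied at source degree $n$, which is permitted, so in fact I should instead argue that $F_{k+1}(n)$ and $F_{k}(n)$ need not literally agree but that the restriction of the former along $\iota$ is still free — let me reconsider.

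On reflection, the cleaner route is: the forgetful functor $u$ is the restriction-of-scalars functor along $\iota\colon\mathcal{Q}_{k}\to\mathcal{Q}_{k+1}$, which is right adjoint to the induction functor $\iota_{!}$, and left adjoint to coinduction; but more elementarily, restriction of scalars sends the free module $\mathcal{Q}_{k+1}(n,-)$ to a module whose underlying functor is $a\mapsto\mathcal{Q}_{k+1}(n,a)$ with the $\mathcal{Q}_{k}$-action. I claim this equals $\bigoplus_{a}\mathcal{Q}_{k}(a,-)^{\oplus d_{a}}$ where $d_a$ is the rank of a suitable ``$\mathcal{Q}_{k}$-free generating set'' of $\mathcal{Q}_{k+1}(n,-)$. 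Concretely, using the admissible-type basis of Proposition~\ref{prop:Fkn}, I would exhibit an explicit $\mathbb{F}_2$-basis $B$ of $\mathcal{Q}_{k+1}(n,-)$ consisting of monomials $\Sq_{i(1)}\cdots\Sq_{i(m)}\iota_n$ each of which is either built entirely from $\Sq_0,\dots,\Sq_{k-1}$ (these span a copy of $F_k(n)$) or has exactly the generators involving $\Sq_k$ factored off on the left: write each basis monomial uniquely as $w\cdot(\Sq_k)^{r}\cdot\iota_n$ or, better, split according to the largest lower-index appearing, and show the submodules generated by the ``$\Sq_k$-topped'' monomials are each $\mathcal{Q}_k$-free on a single generator $\Sq_k^{r}\cdots\iota_n$. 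Then $u(F_{k+1}(n))$ is a direct sum of modules of the form $F_k(m)$, hence free in $\mathcal{U}_k$; additivity of $u$ extends this to all free modules, and the case of $u\colon\mathcal{U}\to\mathcal{U}_k$ is identical using Proposition~\ref{prop:Fn} in place of Proposition~\ref{prop:Fkn}.

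The main obstacle I anticipate is precisely this last decomposition: verifying that when one strips off the leading $\Sq_k$'s (or the leading lower-squares with index $\ge k$) from the admissible-type basis of $F_{k+1}(n)$, what remains genuinely organizes into $\mathcal{Q}_k$-free summands rather than a mere $\mathbb{F}_2$-vector-space splitting. One must confirm that no Adem relation inside $\mathcal{Q}_{k+1}$ mixes a ``$\Sq_k$-topped'' monomial back into the $\mathcal{Q}_k$-generated part in a way that destroys freeness — this should follow from the uniqueness of the admissible expansion in Proposition~\ref{prop:Fkn} together with the observation that applying $\Sq_0,\dots,\Sq_{k-1}$ to a monomial whose leading factor has lower-index $\ge k$ can never lower that leading index (since the Adem relations in lower squares, Proposition~\ref{prop:AdemLower}, only produce terms $\Sq_{i+2j-2s}\Sq_s$ with $s$ between $\lceil(i+j)/2\rceil$ and $n$, so the second index stays large). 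I would spell out this bookkeeping carefully but expect it to be routine once the right indexing of the basis is fixed.
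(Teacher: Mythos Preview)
Your corrected approach --- splitting each admissible basis monomial $\Sq_{i(1)}\cdots\Sq_{i(m)}\iota_n$ into a $\mathcal{Q}_k$-part (indices $<k$) followed by a tail of indices $\geq k$, and declaring those tails to be the generators of free summands --- is exactly the paper's argument. The paper simply states the decomposition: $u(F_{k+1}(n))=\bigoplus_{r\geq 0}F_k(|(\Sq_k)^r\iota_n|)$ when $k<n$ (and just $F_k(n)$ when $k\geq n$), and $u(F(n))=\bigoplus_x F_k(|x|)$ over $x=\Sq_{i(1)}\cdots\Sq_{i(m)}\iota_n$ with $k\leq i(1)\leq\dots\leq i(m)<n$.

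Your final worry about Adem relations ``mixing'' the pieces is unnecessary, and the bookkeeping you propose via Proposition~\ref{prop:AdemLower} is more than you need. The clean way to finish is purely formal: for each tail $x$ of degree $|x|$, Yoneda (Proposition~\ref{prop:yoneda}) gives a $\mathcal{Q}_k$-module map $F_k(|x|)\to u(F_{k+1}(n))$ sending $\iota_{|x|}\mapsto x$; summing these yields a $\mathcal{Q}_k$-module map $\bigoplus_x F_k(|x|)\to u(F_{k+1}(n))$. This map carries the admissible basis of the source bijectively onto the admissible basis of the target --- the concatenated sequence $(j(1),\dots,j(q),i(1),\dots,i(m))$ with $j$'s $<k$ and $i$'s $\geq k$ is already nondecreasing, so it \emph{is} the admissible form in $F_{k+1}(n)$ by Proposition~\ref{prop:Fkn} (or \ref{prop:Fn}). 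No Adem rewriting ever occurs, so there is nothing to check about relations; the map is an isomorphism because it is a bijection on $\mathbb{F}_2$-bases. Your initial attempt to prove $u(F_{k+1}(n))\cong F_k(n)$ is indeed false in general (e.g.\ $k=1$, $n=2$), and you were right to abandon it.
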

\begin{proof}
It suffices to prove that $u(F(n))$ and $u(F_{k+1}(n))$ are free modules in $\mathcal{U}_{k}$.
By Proposition \ref{prop:Fn}, $u(F(n))$ is a direct sum of $F_{k}(|x|)$ where $x = \Sq_{i(1)}\Sq_{i(2)}\cdots\Sq_{i(m)}\iota_{n}$ with $k\leq i(1)\leq\dots\leq i(m)<n$.
Similarly by Proposition \ref{prop:Fkn}, $u(F_{k+1}(n))$ is a direct sum of $F_{k}(|x|)$ where $x = (\Sq_{k})^{m}\iota_{n}$ with $m\geq 0$ if $k < n$ and $x = \iota_{n}$ if $k \geq n$.
\end{proof}

\begin{proposition}
\label{prop:prefor}
The forgetful functors $u$ send projective modules to projective modules.
\end{proposition}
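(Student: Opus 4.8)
The plan is to reduce the statement for projective modules to the already-established fact about free modules. By Proposition \ref{prop:projFree}, an $A$-module $M$ in $\mathcal{U}$ (resp. in $\mathcal{U}_{k+1}$) is projective if and only if there is a module $N$ in the same category with $M\oplus N$ free, i.e. a direct sum of modules of the form $F(n)$ (resp. $F_{k+1}(n)$). First I would apply the forgetful functor $u$ to the isomorphism $M\oplus N\cong\bigoplus_{i\in I}F(x(i))$; since $u$ is additive, it preserves finite biproducts, and being a left adjoint — or simply by inspection of its construction as restriction of functors along an inclusion of ringoids, which commutes with arbitrary colimits of abelian-group-valued functors — it also preserves arbitrary direct sums. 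Hence $u(M)\oplus u(N)\cong\bigoplus_{i\in I}u(F(x(i)))$ in $\mathcal{U}_{k}$.

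Next I would invoke the previous proposition, which says $u$ sends free modules to free modules: each $u(F(x(i)))$ (resp. $u(F_{k+1}(x(i)))$) is free in $\mathcal{U}_{k}$, so their direct sum $\bigoplus_{i\in I}u(F(x(i)))$ is again free, being a direct sum of modules of the form $F_{k}(m)$. Therefore $u(M)\oplus u(N)$ is free in $\mathcal{U}_{k}$, and applying the easy direction of Proposition \ref{prop:projFree} once more, $u(M)$, as a retract of a free module, is projective in $\mathcal{U}_{k}$. Since the chain of forgetful functors $\mathcal{U}\to\mathcal{U}_{k}$ and $\mathcal{U}_{k+1}\to\mathcal{U}_{k}$ is what we must handle, and every forgetful functor in the diagram $\mathcal{U}_{0}\leftarrow\cdots\leftarrow\mathcal{U}$ factors as a composite of the one-step functors $\mathcal{U}_{k}\leftarrow\mathcal{U}_{k+1}$ (together with the limit functor $\mathcal{U}_{k}\leftarrow\mathcal{U}$), it suffices to treat these cases, which the argument above does uniformly.

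The only genuine point requiring care — and the step I expect to be the main obstacle — is the claim that $u$ commutes with arbitrary direct sums, since $\mathcal{A}\mathbf{Mod}(\mathcal{A}(x,-),-)$-style exactness only gives us finite biproducts for free. Here I would note that $u$ is restriction along a ringoid morphism $\mathcal{Q}_{k}\hookrightarrow\mathcal{Q}_{k+1}$ (or $\mathcal{Q}_{k}\hookrightarrow\mathcal{Q}$), and such a restriction functor on functor categories into $\mathbf{Ab}$ is computed objectwise — $(u M)(n) = M(n)$ — so it manifestly preserves all colimits, in particular arbitrary direct sums, because direct sums in $\mathcal{A}\mathbf{Mod}$ are computed objectwise in $\mathbf{Ab}$ (this is part of the cocompleteness in Proposition \ref{prop:enoughProj}). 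With that observation the proof is essentially a two-line reduction, so I would keep the write-up short, citing Proposition \ref{prop:projFree} and the preceding proposition and remarking only on the direct-sum preservation.

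\begin{proof}
By Proposition \ref{prop:projFree}, a projective module $M$ in $\mathcal{U}$ or $\mathcal{U}_{k+1}$ admits a complement $N$ with $M\oplus N$ a direct sum of free modules $F(x(i))$, respectively $F_{k+1}(x(i))$. The forgetful functor $u$ is additive and, being computed objectwise as $(uM)(n) = M(n)$, preserves arbitrary direct sums since direct sums in $\mathcal{A}\mathbf{Mod}$ are formed objectwise in $\mathbf{Ab}$. Hence $u(M)\oplus u(N)$ is a direct sum of the modules $u(F(x(i)))$ (resp. $u(F_{k+1}(x(i)))$), each of which is free in $\mathcal{U}_{k}$ by the previous proposition. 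A direct sum of free modules is free, so $u(M)\oplus u(N)$ is free, and therefore $u(M)$ is projective by Proposition \ref{prop:projFree} again. The general forgetful functor in the diagram of the preceding definition is a composite of such one-step functors (and the functor $\mathcal{U}_{k}\leftarrow\mathcal{U}$), so the claim holds in all cases.
\end{proof}
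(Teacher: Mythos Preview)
Your argument is correct and follows essentially the same route as the paper: complement a projective to a free module via Proposition~\ref{prop:projFree}, apply $u$, invoke the preceding proposition that $u$ preserves free modules, and conclude by Proposition~\ref{prop:projFree} again. The only difference is that you explicitly justify why $u$ preserves \emph{arbitrary} direct sums (since ``free'' may involve an infinite index set), a point the paper leaves implicit under the word ``additive''; your objectwise argument is the right way to fill that gap.
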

\begin{proof}
Say the forgetful functor goes from category $\mathcal{U}$ to category $\mathcal{U}_{k}$.
Take any projective $M$ in $\mathcal{U}$.
By Proposition \ref{prop:projFree}, there is another module $N$ in $\mathcal{U}$ such that $M\oplus N$ is free in $\mathcal{U}$.
Applying the forgetful functor $u$, we get $u(M\oplus N)$ that is free in $\mathcal{U}_{k}$.
Since the forgetful functor $u$ is additive, we have $u(M)\oplus u(N)$ is free and therefore by Proposition \ref{prop:projFree} the module $u(M)$ is a projective in $\mathcal{U}_{k}$.
When the forgetful functor goes from $\mathcal{U}_{k+1}$ to $\mathcal{U}_{k}$, the proof is similar and omitted.
\end{proof}

\begin{proposition}
\label{prop:exfor}
The forgetful functor $u$ is exact.
\end{proposition}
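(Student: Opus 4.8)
The plan is to reduce exactness to a statement about abelian groups, degree by degree. The key point is that in the category $\mathcal{A}\mathbf{Mod}$ of left modules over any ringoid $\mathcal{A}$, kernels, cokernels, and more generally all limits and colimits are computed objectwise, i.e.\ in $\mathbf{Ab}$; this is precisely why the abelian structure on $\mathcal{A}\mathbf{Mod}$ is inherited from that of $\mathbf{Ab}$, as used in Proposition \ref{prop:enoughProj}. Consequently a sequence of $\mathcal{A}$-modules is exact if and only if, for every object $x$ of $\mathcal{A}$, the sequence of abelian groups obtained by evaluating at $x$ is exact.

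First I would recall that each forgetful functor $u$ is induced by an inclusion of ringoids, say $\iota\colon\mathcal{Q}_{k}\hookrightarrow\mathcal{Q}_{k+1}$ (or $\mathcal{Q}_{k}\hookrightarrow\mathcal{Q}$), and is given simply by restriction along $\iota$: on a module $M\colon\mathcal{Q}_{k+1}\to\mathbf{Ab}$ it sends $u(M) = M\circ\iota$, with the corresponding restriction on module maps. Since $\iota$ is the identity on objects (all of $\mathcal{Q}_{k}$, $\mathcal{Q}$ have object set $\mathbb{Z}$), evaluating $u(M)$ at an integer $n$ yields literally the same abelian group $M(n)$, and evaluating $u(f)$ at $n$ yields the same homomorphism $f(n)$.

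Then, given a short exact sequence $0\to M'\to M\to M''\to 0$ in $\mathcal{U}_{k+1}$, the objectwise criterion says it is exact precisely when $0\to M'(n)\to M(n)\to M''(n)\to 0$ is exact in $\mathbf{Ab}$ for every $n\in\mathbb{Z}$. By the previous paragraph this is the very same sequence of abelian groups obtained by evaluating $0\to u(M')\to u(M)\to u(M'')\to 0$ at $n$, so the latter is exact in $\mathcal{U}_{k}$. Hence $u$ carries short exact sequences to short exact sequences, i.e.\ it is exact; the argument for $\mathcal{U}\to\mathcal{U}_{k}$ is identical. There is essentially no obstacle here: the only point requiring care is the observation that exactness in a module category over a ringoid is detected objectwise, after which exactness of $u$ is immediate from the fact that $u$ does not alter the underlying degreewise abelian groups.
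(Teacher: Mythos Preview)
Your proof is correct and follows essentially the same approach as the paper: both argue that $u$ leaves the underlying degreewise abelian groups and maps unchanged, so exactness, being detected objectwise, is preserved. Your version simply spells out more carefully why exactness in $\mathcal{A}\mathbf{Mod}$ is an objectwise condition and why restriction along an inclusion that is the identity on objects does not alter these objectwise data.
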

\begin{proof}
Since the forgetful functor does not change the underlying set of a module and the underlying map between them, it must be exact.
\end{proof}

\subsection{Suspension functor}

\begin{definition}[Suspension morphism]
We define the suspension morphism $\sigma:\mathcal{A}\to\mathcal{A}$ of ringoids such that
\begin{itemize}
\item
$\sigma(n) = n - 1$,
\item
$\sigma\left(\Sq^{i}\right) = \Sq^{i}$.
\end{itemize}
The suspension morphism $\sigma:\mathcal{A}\to\mathcal{A}$ induces a suspension morphism of the quotient ringoids $\sigma':\mathcal{Q}\to\mathcal{Q}$ because $\sigma(\mathcal{I}(n,n+i))\subseteq\mathcal{I}(n-i,n+i-1)$.
Furthermore, the suspension morphism $\sigma':\mathcal{Q}\to\mathcal{Q}$ induces another suspension morphism of ringoids $\sigma_{k}:\mathcal{Q}_{k+1}\to\mathcal{Q}_{k}$ because $\sigma$ sends $\Sq_{i} = \Sq^{n-i}\in\mathcal{A}(n,2n-i)$ with $i < k + 1$ to $\Sq_{i-1} = \Sq^{n-i}\in\mathcal{A}(n-1,2n-i-1)$ with $i - 1 < k$.
\end{definition}

\begin{definition}[Suspension functor]
The suspension morphism $\sigma:\mathcal{Q}\to\mathcal{Q}$ of ringoids induces a suspension functor $\Sigma:\mathcal{U}\to\mathcal{U}$.
Similarly, the suspension morphism $\sigma_{k}:\mathcal{Q}_{k+1}\to\mathcal{Q}_{k}$ of ringoids induces a suspension functor $\Sigma:\mathcal{U}_{k}\to\mathcal{U}_{k+1}$.

The suspension functors are exact, because they just shift the underlying sets and maps.
Let $M$ be any module in $\mathcal{U}_{k}$.
Then the underlying sets and the Steenrod operations of $\Sigma M$ are
\[(\Sigma M)^{n + 1} = M^{n}\quad\forall n\]
and
\[\Sq_{i + 1}(\Sigma x) = \Sigma(\Sq_{i}x)\quad\forall i < k,\]
where $\Sigma x$ denotes the element in $(\Sigma M)^{n+1}$ corresponding to a homogeneous $x$ in $M^{n}$.
In particular, we have $\Sq_{0}(\Sigma x) = \Sigma(\Sq_{-1}x) = 0$.
\end{definition}

\begin{proposition}
\label{prop:invsus}
The suspension functor $\Sigma:\mathcal{U}_{k}\to\mathcal{U}_{k+1}$ restricts to an equivalence between $\mathcal{U}_{k}$ and the full subcategory of $\mathcal{U}_{k+1}$ with objects the ones with $\Sq_{0} = 0$.
\end{proposition}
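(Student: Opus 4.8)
The plan is to realize $\Sigma$ as restriction of scalars along $\sigma_{k}:\mathcal{Q}_{k+1}\to\mathcal{Q}_{k}$ and to show that $\sigma_{k}$ presents $\mathcal{Q}_{k}$ as a quotient ringoid of $\mathcal{Q}_{k+1}$. Let $\mathcal{K}$ be the ideal of $\mathcal{Q}_{k+1}$ generated by the morphisms $\Sq_{0}:n\to 2n$ for all $n\geq 0$. Since a $\mathcal{Q}_{k+1}$-module is a functor to $\mathbf{Ab}$, it annihilates $\mathcal{K}$ exactly when it annihilates these generators, that is, exactly when $\Sq_{0}=0$ on it; so the full subcategory $\mathcal{V}\subseteq\mathcal{U}_{k+1}$ of modules with $\Sq_{0}=0$ is canonically identified, compatibly with morphisms, with the category of $\mathcal{Q}_{k+1}/\mathcal{K}$-modules. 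It therefore suffices to prove that $\sigma_{k}$ induces an isomorphism of ringoids $\overline{\sigma}_{k}:\mathcal{Q}_{k+1}/\mathcal{K}\to\mathcal{Q}_{k}$. Granting this, restriction of scalars along $\overline{\sigma}_{k}$ is an equivalence $\mathcal{U}_{k}\to\mathcal{V}$; and because $\Sigma$ is restriction along $\sigma_{k}$, which factors as $\mathcal{Q}_{k+1}\to\mathcal{Q}_{k+1}/\mathcal{K}\xrightarrow{\ \overline{\sigma}_{k}\ }\mathcal{Q}_{k}$, the functor $\Sigma$ is precisely this equivalence followed by the inclusion $\mathcal{V}\hookrightarrow\mathcal{U}_{k+1}$. (One already knows $\Sigma$ lands in $\mathcal{V}$ from $\Sq_{0}(\Sigma x)=\Sigma(\Sq_{-1}x)=0$.)

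To build $\overline{\sigma}_{k}$, observe first that $\sigma_{k}$ is a bijection on objects ($n\mapsto n-1$) and kills $\mathcal{K}$: it sends the generator $\Sq_{0}:n\to 2n$ to $\Sq_{-1}:n-1\to 2n-1$, which is zero in $\mathcal{Q}_{k}$ since $\Sq_{j}$ vanishes for $j<0$. So $\sigma_{k}$ factors through $\overline{\sigma}_{k}:\mathcal{Q}_{k+1}/\mathcal{K}\to\mathcal{Q}_{k}$, and it remains to show $\overline{\sigma}_{k}$ is full and faithful on each morphism group. For this I would compute $\sigma_{k}$ on the free module $\mathcal{Q}_{k+1}(a,-)=F_{k+1}(a)$ using the explicit basis of Proposition \ref{prop:Fkn}: $\sigma_{k}$ carries the admissible monomial $\Sq_{i(1)}\Sq_{i(2)}\cdots\Sq_{i(m)}\iota_{a}$ to $\Sq_{i(1)-1}\Sq_{i(2)-1}\cdots\Sq_{i(m)-1}\iota_{a-1}$, which vanishes exactly when $i(1)=0$ and is otherwise an admissible basis element of $F_{k}(a-1)$. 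The shift $i\mapsto i-1$ is a bijection from the admissible sequences with $1\leq i(1)$ and $i(m)<\min(a,k+1)$ onto the admissible sequences with $i(m)<\min(a-1,k)$, so $\sigma_{k}|_{F_{k+1}(a)}:F_{k+1}(a)\to F_{k}(a-1)$ is surjective --- this gives fullness --- with kernel the span of the admissible monomials that begin with $\Sq_{0}$.

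Faithfulness of $\overline{\sigma}_{k}$ amounts to identifying this kernel with $\mathcal{K}(a,-)$. One inclusion is immediate: a monomial $\Sq_{0}\Sq_{i(2)}\cdots\Sq_{i(m)}\iota_{a}$ is $\Sq_{0}\circ g$ for the $\mathcal{Q}_{k+1}$-morphism $g=\Sq_{i(2)}\cdots\Sq_{i(m)}\iota_{a}$, hence lies in $\mathcal{K}(a,-)$. For the other, note that $\ker(\sigma_{k}|_{F_{k+1}(a)})$ is a $\mathcal{Q}_{k+1}$-submodule of $F_{k+1}(a)$, being the kernel of the $\mathcal{Q}_{k+1}$-linear map $F_{k+1}(a)\to\Sigma F_{k}(a-1)$; it contains $\Sq_{0}x$ for every basis monomial $x$ (prepending $\Sq_{0}$ to an admissible sequence keeps it admissible), hence it contains the $\mathcal{Q}_{k+1}$-submodule those elements generate, which is precisely $\mathcal{K}(a,-)$. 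The two inclusions give $\ker(\sigma_{k}|_{F_{k+1}(a)})=\mathcal{K}(a,-)$ for every $a$, so $\overline{\sigma}_{k}$ is faithful, and the proof is complete.

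I expect the genuinely fiddly part to be purely combinatorial: checking that the range conditions $\min(a,k+1)$ and $\min(a-1,k)$ match correctly under $i\mapsto i-1$, and dealing with the degenerate objects $a\leq 0$, where $F_{k+1}(a)$ is $\mathbb{F}_{2}$ or $0$ (for $a=0$ one has $\Sq_{0}\iota_{0}=\iota_{0}$, consistent since $F_{k}(-1)=0$). The one conceptual observation that does the work is that $\ker\sigma_{k}$ is a submodule, which is what lets me avoid carrying out any Adem reductions by hand. A more computational alternative would build a desuspension functor $N\mapsto(N^{n+1})_{n}$ with $\Sq_{i}$ acting as $\Sq_{i+1}$ of $N$ (well defined into $\mathcal{U}_{k}$ since $\Sq_{0}=0$ forces $N^{0}=0$) and verify directly that the lower-square Adem relations of Proposition \ref{prop:AdemLower} go over to themselves under $(i,j,n)\mapsto(i+1,j+1,n+1)$, $s\mapsto s+1$; this is perhaps more transparent but needs Proposition \ref{prop:Q_k_gen_rel} to guarantee that these degree-two relations suffice to pin down a $\mathcal{Q}_{k}$-module structure.
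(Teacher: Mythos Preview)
Your proof is correct and takes a genuinely different route from the paper. The paper simply constructs the inverse desuspension functor $F:\mathcal{C}\to\mathcal{U}_{k}$ by hand --- setting $(FM)^{n}=M^{n+1}$ with $\Sq^{i}$ acting as in $M$, after noting that $\Sq_{0}=0$ forces $M^{0}=0$ --- and then asserts without further detail that $FM$ is a $\mathcal{Q}_{k}$-module and that $\Sigma F=1$, $F\Sigma=1$. This is exactly the ``more computational alternative'' you sketch in your last paragraph. Your main argument instead works at the ringoid level: you identify the full subcategory with $\Sq_{0}=0$ as modules over $\mathcal{Q}_{k+1}/\mathcal{K}$ and then prove $\overline{\sigma}_{k}:\mathcal{Q}_{k+1}/\mathcal{K}\to\mathcal{Q}_{k}$ is an isomorphism by a basis calculation using Proposition~\ref{prop:Fkn}. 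This is conceptually cleaner --- it explains \emph{why} the desuspension is well-defined without appealing to Adem relations or to Proposition~\ref{prop:Q_k_gen_rel}, and the observation that $\ker\sigma_{k}$ is a $\mathcal{Q}_{k+1}$-submodule does real work --- at the cost of the bookkeeping needed to match up admissible bases and identify the kernel with $\mathcal{K}(a,-)$. The paper's version is shorter precisely because it hides that verification behind the phrase ``it is easy to check.''
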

\begin{proof}
Denote the full subcategory by $\mathcal{C}$.
It suffices to find a functor $F:\mathcal{C}\to\mathcal{U}_{k}$ such that $\Sigma F = 1$ and $F\Sigma  = 1$.
Here is the construction of the functor $F$.
Given a module $M$ in $\mathcal{C}$, we observe that $M^{0} = 0$ because if $x$ is a nonzero homogeneous element of degree zero in $M$, then $x = \Sq_{0}x = 0$.
Construct $FM$ as the one-degree downward shift of $M$.
More precisely, let \[(FM)^{n} = M^{n+1}\quad\forall n\geq 0\] and
\[\begin{tikzcd}
(FM)^{n} \arrow[r, "\Sq^{i}"]\arrow[d, "="]& (FM)^{n+i}\arrow[d, "="]\\
M^{n+1} \arrow[r, "\Sq^{i}"] & M^{n + i + 1}
\end{tikzcd}\quad\forall 0\leq i\leq n, i > n - k.\]
It is easy to check that $FM$ is indeed a module in $\mathcal{U}_{k}$ and $F$ is a functor from $\mathcal{C}$ to $\mathcal{U}_{k}$.
It is also easy to check that $\Sigma F = 1$ and $F\Sigma = 1$.
\end{proof}

\subsection{Frobenius functor and loop functor}
We are going to introduce a functor $\Phi:\mathcal{U}_{k}\to\mathcal{U}_{2k}$, which is a analogue of the Frobenius functor $\Phi:\mathcal{U}\to\mathcal{U}$ described in Section 1.7 of \cite{schwartz1994}.
To do that, we need an alternative story of the ringoids $\mathcal{A},\mathcal{Q},\mathcal{Q}_{k}$.

\begin{definition}[Ringoid $\mathcal{A}^{+}$]
Let $\mathcal{A}^{+}$ be the ringoid with objects $\{+\}\cup\mathbb{Z}$.
The morphism set $\mathcal{A}^{+}(n, n+i)$ is defined to be the degree $i$ part of the Steenrod algebra.
The morphism sets $\mathcal{A}^{+}(+,+), \mathcal{A}^{+}(n,+), \mathcal{A}^{+}(+, n)$ are all zero.
If $M$ is a left $\mathcal{A}^{+}$-module, i.e. a covariant additive functor from $\mathcal{A}^{+}$ to $\mathbf{Ab}$, then $M(+) = 0$ because $\mathcal{A}^{+}(+,+) = 0$.
Therefore, $\mathcal{A}^{+}\mathbf{Mod} = \mathcal{A}\mathbf{Mod} = \mathcal{M}$, where $\mathcal{M}$ denotes the category of modules over the Steenrod algebra $A$.
\end{definition}

\begin{definition}[Ringoid $\mathcal{Q}^{+}$]
Let $\mathcal{I}^{+}$ be the ideal of $\mathcal{A}^{+}$ generated by \[\Sq^{i}:n\to n+i\textrm{ with }i\geq 0, i>n.\]
Let $\mathcal{Q}^{+}$ be the quotient ringoid of $\mathcal{A}^{+}$ by the ideal $\mathcal{I}^{+}$.
Just as above, we have $\mathcal{Q}^{+}\mathbf{Mod} = \mathcal{Q}\mathbf{Mod} = \mathcal{U}$.
\end{definition}

\begin{definition}[Ringoid $\mathcal{Q}_{k}^{+}$]
Let $k\geq 0$.
Let $\mathcal{Q}_{k}^{+}$ be the subringoid of $\mathcal{Q}^{+}$ generated by
\[\Sq^{i}:n\to n + i\textrm{ with }i\geq 0, n-i<k.\]
Just as above, we have $\mathcal{Q}_{k}^{+}\mathbf{Mod} = \mathcal{Q}_{k}\mathbf{Mod} = \mathcal{U}_{k}$.
\end{definition}

The following lemma prepares us for the definition of the Frobenius morphism $\mathcal{Q}^{+}\to\mathcal{Q}^{+}$ of ringoids.
\begin{lemma}
\label{lem:froMor}
There is a unique morphism of ringoids $\phi:\mathcal{A}^{+}\to\mathcal{A}^{+}$ satisfying
\begin{itemize}
\item
$\phi(2n) = n, \phi(2n+1) = +, \phi(+) = +$,
\item
$\phi\left(\Sq^{2i}\right) = \Sq^{i}$.
\end{itemize}
\end{lemma}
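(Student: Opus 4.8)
The plan is to realize $\phi$ as (a reindexing on objects of) restriction of scalars along the classical Frobenius endomorphism of the Steenrod algebra, and then to deduce uniqueness from the fact that $\mathcal{A}^{+}$ is generated as a ringoid by the $\Sq^{i}$ together with the identities. On objects there is no choice: set $\phi(2n)=n$, $\phi(2n+1)=+$, and $\phi(+)=+$. For a morphism set $\mathcal{A}^{+}(x,y)$ the target group $\mathcal{A}^{+}(\phi(x),\phi(y))$ vanishes unless both $x$ and $y$ are even, so the only datum to be specified is a family of homomorphisms $\mathcal{A}^{+}(2n,2m)\to\mathcal{A}^{+}(n,m)$; identifying $\mathcal{A}^{+}(2n,2m)=A^{2(m-n)}$ and $\mathcal{A}^{+}(n,m)=A^{m-n}$, this amounts to one degree-halving linear endomorphism of $A$. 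I would take it to be the unital algebra endomorphism $\lambda\colon A\to A$ with $\lambda(\Sq^{2i})=\Sq^{i}$ and $\lambda(\Sq^{2i+1})=0$, whose existence is standard: the squaring map $x\mapsto x^{2}$ of the dual Steenrod algebra $A_{*}$ is a Hopf-algebra endomorphism (a ring map because $A_{*}$ is commutative of characteristic $2$, and a coalgebra map because squaring distributes over the coproduct), and $\lambda$ is its graded dual; a computation on the Milnor basis shows that $\lambda$ halves all Milnor indices and annihilates monomials having an odd index, which yields the stated values on the $\Sq^{i}$, together with $\lambda(1)=1$ and $\lambda(A^{\mathrm{odd}})=0$.

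With $\phi$ defined this way I would check it is a morphism of ringoids. Additivity is the linearity of $\lambda$. Preservation of identities is $\phi(\Sq^{0})=\lambda(1)=1$ between even objects, while between odd objects or at $+$ both $\id$ and its image lie in a zero hom-group. For compatibility with composition, take composable morphisms $f\colon x\to y$, $g\colon y\to z$ with all three objects integers (there are no nonzero morphisms touching $+$): in $A$ these become homogeneous elements and $g\circ f$ their product, while $\phi$ acts by $\lambda$ when the source/target is even and by $0$ when it is odd. When $x,y,z$ are all even, $\phi(g\circ f)=\lambda(g f)=\lambda(g)\lambda(f)=\phi(g)\circ\phi(f)$ by multiplicativity of $\lambda$; when $y$ is odd, $f$ and $g$ have odd degree, so $\phi(f)=\phi(g)=0$ and also $\phi(g\circ f)=\lambda(gf)=\lambda(g)\lambda(f)=0$; the remaining parity combinations go the same way. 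Hence $\phi$ is a ringoid morphism.

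For uniqueness I would use that $A$ is generated as an $\mathbb{F}_{2}$-algebra by the $\Sq^{i}$ (with $\Sq^{0}=1$), so $\mathcal{A}^{+}$ is generated as a ringoid by the morphisms $\Sq^{i}\colon n\to n+i$ and the identities. Any $\phi$ satisfying the two displayed conditions is then pinned down on generators: it sends each $\Sq^{2i}$ out of an even object to $\Sq^{i}$ by hypothesis and each $\Sq^{2i}$ out of an odd object to $0$ (the target hom-group vanishes), each $\Sq^{2i+1}$ to $0$ (its source or target is odd, so the target hom-group vanishes), and identities to identities; being additive and functorial, $\phi$ is thereby determined on all morphisms. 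The one step that is not purely formal is the existence of $\lambda$ --- equivalently, that the rule $\Sq^{2i}\mapsto\Sq^{i}$, $\Sq^{2i+1}\mapsto 0$ is compatible with the Adem relations --- and I expect that to be the main obstacle if one wants a hands-on proof, since it requires pushing an Adem relation $\Sq^{a}\Sq^{b}=\sum_{t}\binom{b-t-1}{a-2t}\Sq^{a+b-t}\Sq^{t}$ through the halving map and reorganizing the resulting mod-$2$ binomial coefficients (via Lucas' theorem); the dual-Hopf-algebra description above is exactly what lets one bypass that bookkeeping.
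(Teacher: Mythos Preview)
Your argument is correct, and it takes a genuinely different route from the paper. The paper proves existence by a direct, elementary verification that the assignment $\Sq^{2i}\mapsto\Sq^{i}$, $\Sq^{\mathrm{odd}}\mapsto 0$ annihilates each Adem relation: it splits into the parity cases for $i,j$ and reduces everything to two mod-$2$ binomial identities handled by Lucas's theorem (exactly the ``hands-on'' computation you anticipated at the end). You instead produce the algebra endomorphism $\lambda$ in one stroke as the graded dual of the Frobenius $x\mapsto x^{2}$ on $A_{*}$, and then read off its effect on the Milnor basis. Your approach is cleaner and more conceptual, and it makes multiplicativity of $\lambda$ automatic rather than something to be checked relation by relation; the trade-off is that it imports Milnor's description of $A_{*}$ and its Hopf-algebra structure, whereas the paper's argument is entirely self-contained at the level of binomial arithmetic. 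Your uniqueness argument (generation of $A$ by the $\Sq^{i}$, forced vanishing whenever source or target is odd) is the same in spirit as what the paper implicitly uses, and your case analysis for compatibility with composition is fine.
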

\begin{proof}
It suffices to prove that the Frobenius morphism sends
\begin{equation}
\label{eq:froMor}
\Sq^{i}\Sq^{j} - \sum_{t = 0}^{\lfloor i/2\rfloor}{j - t - 1 \choose i - 2t}\Sq^{i + j - t}\Sq^{t}
\end{equation}
to zero.
If $i+j$ is odd, every summing term is sent to zero.
So we assume that $i+j$ is even.
If both $i$ and $j$ are odd, then we need to prove that \[{j-t-1\choose i-2t}\equiv 0\mod 2\quad\textrm{ when }t\textrm{ is even}.\]
It is true because of Lucas's theorem.
From now on, we assume that both $i$ and $j$ are even.
The Frobenius morphism sends (\ref{eq:froMor}) to \[\Sq^{i/2}\Sq^{j/2} - \sum_{s = 0}^{\lfloor i/4\rfloor}{j - 2s - 1 \choose i - 4s}\Sq^{i/2 + j/2 - s}\Sq^{s}.\]
We know that \[0 = \Sq^{i/2}\Sq^{j/2} - \sum_{s=0}^{\lfloor i/4\rfloor}{j/2 - s - 1 \choose i/2 - 2s}\Sq^{i/2 + j/2 - s}\Sq^{s}.\]
So it suffices to prove \[{j - 2s - 1 \choose i - 4s} \equiv {j/2 - s - 1 \choose i/2 - 2s}\mod 2,\]
or equivalently \[{2a+1 \choose 2b} \equiv {a\choose b}\mod 2.\]
It is true again by Lucas's theorem.
\end{proof}

\begin{definition}[Frobenius morphisms]
We define the Frobenius morphism $\phi:\mathcal{A}^{+}\to\mathcal{A}^{+}$ of ringoids such that
\begin{itemize}
\item
$\phi(2n) = n, \phi(2n+1) = +, \phi(+) = +$,
\item
$\phi\left(\Sq^{2i}\right) = \Sq^{i}$.
\end{itemize}
As we have seen in Lemma \ref{lem:froMor}, there is a unique morphism $\phi$ satisfying these properties.
The Frobenius morphism $\mathcal{A}^{+}\to\mathcal{A}^{+}$ induces a Frobenius morphism of the quotient ringoids $\mathcal{Q}^{+}\to\mathcal{Q}^{+}$ because $\phi$ sends $\Sq^{i}\in\mathcal{A}^{+}(n, n+i)$ with $i<n$ to $\Sq^{i/2}\in\mathcal{A}^{+}(n/2, n/2+i/2)$ with $i/2<n/2$ if both $n$ and $i$ are even, and zero otherwise.
Furthermore, the Frobenius morphism $\mathcal{Q}^{+}\to\mathcal{Q}^{+}$ induces another Frobenius morphism of ringoids $\mathcal{Q}_{2k}^{+}\to\mathcal{Q}_{k}^{+}$ because $\phi$ sends $\Sq_{i} = \Sq^{n-i}\in\mathcal{A}^{+}(n,2n-i)$ with $i < 2k$ to $\Sq_{i/2} = \Sq^{n/2-i/2}\in\mathcal{A}^{+}(n/2,n-i/2)$ with $i/2<k$ if both $n$ and $i$ are even, and zero otherwise.
\end{definition}

\begin{definition}[Frobenius functors]
The Frobenius morphism $\mathcal{Q}^{+}\to\mathcal{Q}^{+}$ of ringoids induces a Frobenius functor $\Phi:\mathcal{U}\to\mathcal{U}$.
Similarly, the Frobenius morphism $\mathcal{Q}^{+}_{2k}\to\mathcal{Q}^{+}_{k}$ of ringoids induces a Frobenius functor $\Phi:\mathcal{U}_{k}\to\mathcal{U}_{2k}$.
\end{definition}

\begin{remark}
If $M$ is a module in $\mathcal{U}_{k}$, then $\Phi M$ is a module in $\mathcal{U}_{2k}$ with \[(\Phi M)^{2n} = M^{n},\quad(\Phi M)^{\text{odd}} = 0.\]
We denote the element in $(\Phi M)^{2n}$ corresponding to $x$ in $M^{n}$ by $\Phi x$.
The Steenrod operations on $\Phi M$ are \[\Sq_{2i}(\Phi x) = \Phi(\Sq_{i}x),\quad\Sq_{\text{odd}}(\Phi x) = 0.\]
\end{remark}

\begin{proposition}
\label{prop:exfro}
The Frobenius functor is exact.
\end{proposition}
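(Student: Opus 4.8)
The plan is to treat the Frobenius functor exactly as the forgetful functors were treated in Proposition~\ref{prop:exfor}: the functor $\Phi\colon\mathcal{U}_{k}\to\mathcal{U}_{2k}$ is restriction of scalars along the Frobenius morphism of ringoids $\mathcal{Q}^{+}_{2k}\to\mathcal{Q}^{+}_{k}$, that is, $\Phi M = M\circ\phi$ with $\phi\colon\mathcal{Q}^{+}_{2k}\to\mathcal{Q}^{+}_{k}$, and any such restriction functor is exact because kernels and cokernels in a ringoid-module category are formed objectwise.

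First I would record the general principle. By Definition~\ref{def:moduleRingoid} a module over a ringoid is a functor to $\mathbf{Ab}$, and, as in the proof of Proposition~\ref{prop:enoughProj}, all limits and colimits in $\mathcal{A}\mathbf{Mod}$ are computed objectwise; in particular kernels, cokernels and images are. Hence a sequence of $\mathcal{Q}^{+}_{k}$-modules is exact if and only if its evaluation at every object of $\mathcal{Q}^{+}_{k}$ is an exact sequence of abelian groups. In the degree-indexed language of $\mathcal{U}_{k}$, a sequence of modules is exact precisely when it is exact in every degree $n$, and likewise for $\mathcal{U}_{2k}$.

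Next I would invoke the description of $\Phi$ from the remark preceding the statement. For $M$ in $\mathcal{U}_{k}$ one has $(\Phi M)^{2n}=M^{n}$ and $(\Phi M)^{\mathrm{odd}}=0$, and for a morphism $f\colon M\to N$ the morphism $\Phi f\colon\Phi M\to\Phi N$ is, in degree $2n$, literally the component $M^{n}\to N^{n}$ of $f$, and is the zero map $0\to 0$ in every odd degree. Consequently, if $M'\to M\to M''$ is an exact sequence in $\mathcal{U}_{k}$, i.e. exact in each degree $n$, then the sequence $\Phi M'\to\Phi M\to\Phi M''$ coincides in degree $2n$ with the exact sequence $(M')^{n}\to M^{n}\to (M'')^{n}$ and is the trivially exact $0\to 0\to 0$ in every odd degree. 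Thus $\Phi M'\to\Phi M\to\Phi M''$ is exact in every degree, so $\Phi$ carries exact sequences to exact sequences and is exact.

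There is no genuine obstacle in this argument; the only step that deserves care is the first one, namely stating precisely that exactness in $\mathcal{U}_{k}$ and $\mathcal{U}_{2k}$ is tested degree by degree. This is immediate because $\mathbf{Ab}$ is the common target of the functors defining all these modules, and it is the same observation that makes the forgetful functor of Proposition~\ref{prop:exfor} exact.
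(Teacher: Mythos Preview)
Your argument is correct and is essentially the same as the paper's one-line proof (``The Frobenius functor only shifts the underlying sets and maps''), just spelled out with the degreewise detail. Nothing to change.
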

\begin{proof}
The Frobenius functor only shifts the underlying sets and maps.
\end{proof}

For any $k>0$, we have a natural transformation $\phi u\to\id$ between morphisms of ringoids $\mathcal{Q}_{k}^{+}\to\mathcal{Q}_{k}^{+}$, where $u$ is the forgetful morphism $\mathcal{Q}_{k}^{+}\to\mathcal{Q}_{2k}^{+}$ and $\phi$ is the Frobenius morphism $\mathcal{Q}_{2k}^{+}\to\mathcal{Q}_{k}^{+}$.
The natural transformation is given by
\begin{equation*}\begin{split}
&\Sq_{0}:\phi u(2n) = n\to 2n,\\
&0:\phi u(2n+1)=+\to 2n+1,\\
&0:\phi u(+) = +\to +.
\end{split}\end{equation*}
This natural transformation gives rise to another natural transformation $\lambda:u\Phi\to\id$ between functors $\mathcal{Q}_{k}^{+}\Mod\to\mathcal{Q}_{k}^{+}\Mod$, i.e. between functors $\mathcal{U}_{k}\to\mathcal{U}_{k}$.
The map $\lambda_{M}:u\Phi M\to M$ of modules in $\mathcal{U}_{k}$ sends $\Phi x\mapsto \Sq_{0}x$.
The kernel and cokernel of $\lambda_{M}:u\Phi M\to M$ are suspensions because $\Sq_{0}$ acts trivially on both of them.
We define functors $\Omega,\Omega_{1}:\mathcal{U}_{k}\to\mathcal{U}_{k - 1}$ such that $\Sigma\Omega M$ is the the cokernel of $\lambda_{M}$ and $\Sigma\Omega_{1} M$ is the kernel of $\lambda_{M}$.
So we have an exact sequence in $\mathcal{U}_{k}$ \[0\to\Sigma\Omega_{1}M\to u\Phi M\to M\to\Sigma\Omega M\to 0.\]

\begin{proposition}
\label{prop:omeSig}
Let $k$ be any positive integer.
The loop functor $\Omega:\mathcal{U}_{k}\to\mathcal{U}_{k-1}$ is the left adjoint of the suspension functor $\Sigma:\mathcal{U}_{k-1}\to\mathcal{U}_{k}$.
The functor $\Omega_{1}:\mathcal{U}_{k}\to\mathcal{U}_{k-1}$ is the first left derived functor of $\Omega$, and all higher derived functors are trivial.
\end{proposition}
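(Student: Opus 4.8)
The plan is to prove the two assertions separately. \emph{The adjunction.} Recall from Proposition~\ref{prop:invsus} that $\Sigma\colon\mathcal{U}_{k-1}\to\mathcal{U}_k$ is fully faithful, being an equivalence onto the full subcategory $\mathcal{C}\subseteq\mathcal{U}_k$ of modules killed by $\Sq_0$. Fix $M\in\mathcal{U}_k$ and $N\in\mathcal{U}_{k-1}$. Any morphism $\varphi\colon M\to\Sigma N$ satisfies $\varphi(\Sq_0 x)=\Sq_0\varphi(x)=0$ for every homogeneous $x\in M$, so $\varphi$ vanishes on the image of $\lambda_M\colon u\Phi M\to M$ (which is spanned by the elements $\Sq_0 x$) and hence factors uniquely through the quotient map $M\twoheadrightarrow\Sigma\Omega M$. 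This identifies $\mathcal{U}_k(M,\Sigma N)$ with $\mathcal{U}_k(\Sigma\Omega M,\Sigma N)$, naturally in $M$ and $N$, and full faithfulness of $\Sigma$ rewrites the latter as $\mathcal{U}_{k-1}(\Omega M,N)$. Thus $\Omega$ is left adjoint to $\Sigma$; in particular $\Omega$ is right exact.

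\emph{The $\delta$-functor structure.} For the derived-functor statement I would first promote $(\Omega,\Omega_1,0,0,\dots)$ to a homological $\delta$-functor. Given a short exact sequence $0\to M'\to M\to M''\to 0$ in $\mathcal{U}_k$, applying the exact functor $u\Phi$ (exact by Propositions~\ref{prop:exfor} and~\ref{prop:exfro}) produces a commutative diagram of short exact sequences with vertical maps $\lambda_{M'},\lambda_M,\lambda_{M''}$. The snake lemma yields the exact sequence
\[0\to\Sigma\Omega_1 M'\to\Sigma\Omega_1 M\to\Sigma\Omega_1 M''\xrightarrow{\delta}\Sigma\Omega M'\to\Sigma\Omega M\to\Sigma\Omega M''\to 0,\]
with $\delta$ natural in the short exact sequence. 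Since $\Sigma$ is an exact equivalence onto $\mathcal{C}$, this descends to an exact sequence in $\mathcal{U}_{k-1}$, supplying the long exact sequence and natural connecting maps of a $\delta$-functor (and re-confirming right exactness of $\Omega$).

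\emph{Effaceability.} The step carrying real content is that $\Omega_1$ vanishes on projectives, equivalently that $\lambda_P$ is a monomorphism for every projective $P$; by Proposition~\ref{prop:projFree} it suffices to treat $P=F_k(n)$. Using the basis of $F_k(n)$ from Proposition~\ref{prop:Fkn}, $\lambda_{F_k(n)}$ sends the basis element $\Phi(\Sq_{i(1)}\cdots\Sq_{i(m)}\iota_n)$ of $u\Phi F_k(n)$ to $\Sq_0\Sq_{i(1)}\cdots\Sq_{i(m)}\iota_n$. When $n\geq 1$ the sequence $(0,i(1),\dots,i(m))$ again satisfies the conditions of Proposition~\ref{prop:Fkn}, so these images are pairwise distinct basis elements of $F_k(n)$ and $\lambda_{F_k(n)}$ is injective; when $n=0$ the map $\lambda_{F_k(0)}$ is already an isomorphism. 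Passing to direct sums and retracts (Proposition~\ref{prop:projFree}) shows $\lambda_P$ is injective for all projective $P$, hence $\Sigma\Omega_1 P=\ker\lambda_P=0$ and $\Omega_1 P=0$. Finally, since $\mathcal{U}_k$ has enough projectives (Proposition~\ref{prop:enoughProj}), the standard characterization of derived functors — a homological $\delta$-functor whose positive-degree pieces are effaceable is the left derived functor of its zeroth piece — identifies $\Omega_1$ with $L_1\Omega$ and forces $L_i\Omega=0$ for $i\geq 2$.

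I expect the effaceability verification to be the main obstacle, though it is not deep: it amounts to the combinatorial observation that prepending $\Sq_0$ carries the admissible basis of $F_k(n)$ injectively into itself. Everything else — the adjunction, the snake-lemma $\delta$-sequence, and the universality step — is formal and rests on Propositions~\ref{prop:invsus},~\ref{prop:exfor},~\ref{prop:exfro},~\ref{prop:enoughProj} together with the snake lemma.
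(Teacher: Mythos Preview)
Your proof is correct. The adjunction argument is essentially the paper's own. For the derived-functor half, however, you and the paper diverge: the paper takes a free resolution $\cdots\to P_1\to P_0\to M$, forms the double complex with rows $u\Phi P_\bullet$ and $P_\bullet$ (vertical maps $\lambda_{P_i}$), and compares the two associated spectral sequences; one collapses to $\Sigma FM,\Sigma\Omega M$ and the other to $\Sigma\Omega_\bullet M$, forcing $F=\Omega_1$ and $\Omega_i=0$ for $i\geq 2$. Your route via the snake lemma and the universal-$\delta$-functor criterion is a legitimate alternative and arguably more elementary. Both approaches hinge on the same fact---that $\lambda_P$ is injective for projective $P$---which the paper silently uses when it asserts that the first row of vertical homology is ``$\dots,0,0,0$''. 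Your explicit verification of this via the admissible basis of $F_k(n)$ therefore supplies a step the paper leaves to the reader; conversely, the paper's double-complex packaging avoids the need to separately invoke the effaceability characterization of derived functors.
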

\begin{proof}
We will first prove that $\Omega$ is the left adjoint to $\Sigma$.
Given any morphism $M\to\Sigma N$, the composition $u\Phi M\to M\to \Sigma N$ is equal to zero because $\Sq_{0}(\Sigma x) = 0$.
So by the universal property of cokernel, we get $\Sigma\Omega M\to \Sigma N$ and thus get $\Omega M\to N$ by Proposition \ref{prop:invsus}.
Similarly, given any morphism $\Omega M\to N$, we get $\Sigma\Omega M\to \Sigma N$ and composing it with $M\to \Sigma \Omega M$, we get $M\to\Sigma N$.
Thus we get a bijection between the two morphism sets $\mathcal{U}_{k}(\Sigma M, N)$ and $\mathcal{U}_{k-1}(M,\Omega N)$.

Now let us prove $\Omega_{1}$ is the first left derived functor of $\Omega$ and all higher derived functors are trivial.
Let $M$ be any module in $\mathcal{U}_{k}$.
For now, let us denote the kernel of $u\Phi M\to M$ by $\Sigma FM$.
So we are going to prove $F = \Omega_{1}$ and $\Omega_{i} = 0$ for $i \geq 2$.
Consider a free resolution of $M$ in the category $\mathcal{U}_{k}$
\[\dots\to P_{1}\to P_{0}\to M\to 0.\]
Then by definition, $\Omega_{i}(M)$ is equal to the $i$-th homology of \[\dots\to \Omega(P_{2})\to \Omega(P_{1})\to \Omega(P_{0})\to 0.\]
Think about the double complex $K^{*,*}$
\[\begin{tikzcd}
\dots \arrow[r]\arrow[d] &u\Phi P_{2} \arrow[r]\arrow[d] &u\Phi P_{1} \arrow[d] \arrow[r] & u\Phi P_{0} \arrow[r]\arrow[d]& 0\\
\dots \arrow[r] &P_{2}\arrow[r] &P_{1} \arrow[r] & P_{0} \arrow[r] & 0.
\end{tikzcd}\]
We get two spectral sequences from the double complex $K^{*,*}$ above.
Since for every $n\in\mathbb{Z}$, there are only finitely many nonzero $K^{p,q}$ with $p+q = n$, the two spectral sequence converge to the same thing.
Now we are going to compute the limits of those two spectral sequences.

Since both the Frobenius functor $\Phi$ and the forgetful functor $u$ are exact by Proposition \ref{prop:exfro} and \ref{prop:exfor}, we get $u\Phi M\to M$ by taking horizonal homology first.
Then take vertical homology, we get $\Sigma FM$ and $\Sigma\Omega M$.
If instead taking vertical homology first, we get \[\dots, 0, 0, 0\quad\mathrm{and}\quad\dots, \Sigma\Omega P_{2}, \Sigma\Omega P_{1}, \Sigma\Omega P_{0}.\]
Then taking horizontal homology, we get $\dots,\Sigma\Omega_{2}M, \Sigma\Omega_{1}M, \Sigma\Omega M$ because the suspension functors $\Sigma$ are exact.

Since the limits of those two spectral sequences are expected to be the same, we get $F = \Omega_{1}$ and $\Omega_{i} = 0$ for all $i\geq 2$.
\end{proof}

\begin{proposition}
\label{prop:omePro}
The loop functor $\Omega:\mathcal{U}_{k+1}\to\mathcal{U}_{k}$ preserves projectives.
\end{proposition}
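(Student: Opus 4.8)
The plan is to exploit the adjunction established in Proposition~\ref{prop:omeSig}: applying that proposition with $k+1$ in place of $k$, the loop functor $\Omega\colon\mathcal{U}_{k+1}\to\mathcal{U}_{k}$ is the left adjoint of the suspension functor $\Sigma\colon\mathcal{U}_{k}\to\mathcal{U}_{k+1}$. The categorical fact I want to invoke is the standard one that a left adjoint whose right adjoint is exact automatically preserves projective objects.

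First I would recall that $\Sigma\colon\mathcal{U}_{k}\to\mathcal{U}_{k+1}$ is exact; this was already noted when the suspension functor was defined, since $\Sigma$ merely shifts the underlying graded abelian groups and their Steenrod operations by one degree, so it carries short exact sequences to short exact sequences.

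Next, let $P$ be a projective module in $\mathcal{U}_{k+1}$; the goal is to show that the functor $\mathcal{U}_{k}\bigl(\Omega P,-\bigr)\colon\mathcal{U}_{k}\to\mathbf{Ab}$ is exact. By the adjunction $\Omega\dashv\Sigma$ there is a natural isomorphism of functors
\[\mathcal{U}_{k}\bigl(\Omega P,-\bigr)\;\cong\;\mathcal{U}_{k+1}\bigl(P,\Sigma(-)\bigr).\]
The right-hand side is the composite of $\Sigma\colon\mathcal{U}_{k}\to\mathcal{U}_{k+1}$, which is exact by the previous paragraph, with $\mathcal{U}_{k+1}(P,-)\colon\mathcal{U}_{k+1}\to\mathbf{Ab}$, which is exact because $P$ is projective. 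A composite of exact functors is exact, so $\mathcal{U}_{k}(\Omega P,-)$ is exact and hence $\Omega P$ is projective in $\mathcal{U}_{k}$.

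I do not expect a serious obstacle here: the only points requiring care are the bookkeeping of which category each functor lands in (the indices $k$ versus $k+1$) and checking that the displayed natural isomorphism is genuinely an isomorphism of $\mathbf{Ab}$-valued functors, which is precisely the content of the adjunction in Proposition~\ref{prop:omeSig}. One could alternatively argue concretely, since by Proposition~\ref{prop:projFree} every projective in $\mathcal{U}_{k+1}$ is a retract of a direct sum of free modules $F_{k+1}(n)$ and $\Omega$, being a left adjoint, is additive and preserves retracts; but that route would require an explicit computation of $\Omega$ on the $F_{k+1}(n)$, which the adjunction argument avoids entirely.
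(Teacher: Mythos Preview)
Your argument is correct and is essentially the same as the paper's: both use the adjunction $\Omega\dashv\Sigma$ together with exactness of $\Sigma$ to conclude that $\Omega$ preserves projectives. The paper phrases it more tersely (noting only that $\Sigma$ preserves epimorphisms, which suffices), while you spell out the standard $\mathcal{U}_{k}(\Omega P,-)\cong\mathcal{U}_{k+1}(P,\Sigma(-))$ computation.
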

\begin{proof}
It suffices to prove that its right adjoint $\Sigma:\mathcal{U}_{k}\to\mathcal{U}_{k+1}$ preserves epimorphisms.
Remember that the $\Sigma$ suspension functor is exact.
\end{proof}

\begin{proposition}
When $M$ is a sphere module in $\mathcal{U}_{k}$ and $k>0$,
\[
\Omega_{1}(S_{k}(n)) = \left\{
\begin{split}
&S_{k-1}(2n-1)&\textrm{ if }n > 0\\
&0&\textrm{ if }n = 0
\end{split}\right.
\]
\[
\Omega(S_{k}(n)) = \left\{
\begin{split}
&S_{k-1}(n-1)&\textrm{ if }n > 0\\
&0&\textrm{ if }n = 0
\end{split}\right.
\]
\end{proposition}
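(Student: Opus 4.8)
The plan is to feed $M=S_{k}(n)$ into the four-term exact sequence
\[0\to\Sigma\Omega_{1}M\to u\Phi M\to M\to\Sigma\Omega M\to 0\]
constructed just before Proposition \ref{prop:omeSig}, and then read off $\Omega_{1}M$ and $\Omega M$ as the (desuspended) kernel and cokernel of $\lambda_{M}\colon u\Phi M\to M$. The first step is to identify $u\Phi S_{k}(n)$. Since $S_{k}(n)$ is concentrated in degree $n$ with no nontrivial operations (only $\Sq^{0}$ acts, as the identity), the description of $\Phi$ given in the remark following the definition of the Frobenius functors shows that $\Phi S_{k}(n)$ is concentrated in degree $2n$ with $\Sq_{2i}(\Phi x)=\Phi(\Sq_{i}x)=0$ for $i\geq 1$; hence $\Phi S_{k}(n)\cong S_{2k}(2n)$ and $u\Phi S_{k}(n)\cong S_{k}(2n)$. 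Under this identification, $\lambda_{S_{k}(n)}\colon S_{k}(2n)\to S_{k}(n)$ is the map sending $\Phi x\mapsto\Sq_{0}x$.

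Next I split into cases. If $n>0$, then $\Sq_{0}x=\Sq^{n}x\in S_{k}(n)^{2n}=0$, so $\lambda_{S_{k}(n)}=0$; the exact sequence then gives $\Sigma\Omega_{1}S_{k}(n)=\ker\lambda_{S_{k}(n)}=S_{k}(2n)$ and $\Sigma\Omega S_{k}(n)=\operatorname{coker}\lambda_{S_{k}(n)}=S_{k}(n)$. To recover $\Omega_{1}S_{k}(n)$ and $\Omega S_{k}(n)$ themselves I would apply Proposition \ref{prop:invsus}: both $S_{k}(2n)$ and $S_{k}(n)$ are concentrated in a positive degree, on which $\Sq_{0}$ automatically vanishes (it would land in degree $4n$, resp.\ $2n$, which is zero), so they lie in the essential image of $\Sigma\colon\mathcal{U}_{k-1}\to\mathcal{U}_{k}$, namely $S_{k}(2n)=\Sigma S_{k-1}(2n-1)$ and $S_{k}(n)=\Sigma S_{k-1}(n-1)$. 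Since $\Sigma$ is an equivalence onto that full subcategory it reflects isomorphisms, so $\Omega_{1}S_{k}(n)\cong S_{k-1}(2n-1)$ and $\Omega S_{k}(n)\cong S_{k-1}(n-1)$.

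If $n=0$, then $\Sq_{0}x=\Sq^{0}x=x$, so $\lambda_{S_{k}(0)}\colon S_{k}(0)\to S_{k}(0)$ is an isomorphism; hence $\Sigma\Omega_{1}S_{k}(0)=0$ and $\Sigma\Omega S_{k}(0)=0$, and since $\Sigma N=0$ forces $N=0$ (because $(\Sigma N)^{m+1}=N^{m}$), we conclude $\Omega_{1}S_{k}(0)=0=\Omega S_{k}(0)$. There is no genuinely hard step here; the only points requiring any care are the identification $\Phi S_{k}(n)\cong S_{2k}(2n)$, which is a direct unwinding of the degree and operation formulas for $\Phi$, and the observation that $S_{k}(2n)$ and $S_{k}(n)$ are honest suspensions so that Proposition \ref{prop:invsus} applies — both immediate once one checks that $\Sq_{0}$ acts as zero in the relevant degree.
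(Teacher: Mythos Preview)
Your proof is correct and follows essentially the same route as the paper: identify $u\Phi S_{k}(n)\cong S_{k}(2n)$, observe that $\lambda$ is zero when $n>0$ and the identity when $n=0$, then read off kernel and cokernel and desuspend. Your write-up is in fact a bit more careful than the paper's, spelling out why the desuspension step via Proposition~\ref{prop:invsus} is legitimate.
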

\begin{proof}
We have $u\Phi S_{k}(n) = S_{k}(2n)$ and the map $u\Phi S_{k}(n)\to S_{k}(n)$ sends $\iota_{2n}$ to $\Sq_{0}\iota_{n}$.
When $n > 0$, we have $\Sq_{0}\iota_{n} = 0$ and thus \[\Sigma\Omega_{1}S_{k}(n) = S_{k}(2n),\quad \Sigma\Omega S_{k}(n) = S_{k}(n).\]
When $n = 0$, we have $\Sq_{0}\iota_{n} = \iota_{n}$ and thus \[\Sigma\Omega_{1}S_{k}(n) = \Sigma\Omega S_{k}(n) = 0.\qedhere\]
\end{proof}

\section{Homological dimension of category $\mathcal{U}_{k}$}
\label{sec:U_k_homodim}
\begin{notation}
We abbreviate $\Ext_{\mathcal{U}_{k}}^{*}(M,N)$ to $\Ext_{k}^{*}(M,N)$.
%Although our modules are graded, our $\Ext$ groups are not bigraded, because the suspension functor $\Sigma$ does not go from $\mathcal{U}_{k}$ to $\mathcal{U}_{k}$.
%The single grading in $\Ext_{k}^{*}(M,N)$ corresponds to the homological dimension.
\end{notation}

In this section, we shall prove that the homological dimension of the category $\mathcal{U}_{k}$ is at most $k$.
Our goal is to prove that $\Ext_{k}^{s}(M, N) = 0$ for all $s>k\geq 0$.
Our strategy is to first prove it for $N$ a sphere module by induction on $k$, then for $N$ bounded above, and finally for $N$ a general module.

\subsection{EHP sequence}
\begin{lemma}
\label{lem:U_k_EHP}
Let $M$ be any module in $\mathcal{U}_{k}$ and $N$ any module in $\mathcal{U}_{k-1}$.
Then we have the following long exact sequence of vector spaces over $\mathbb{F}_{2}$
\[\begin{tikzcd}
&&\dots\arrow[dll]\\
\Ext_{k-1}^{s}(\Omega M, N)\arrow[r] &\Ext_{k}^{s}(M, \Sigma N)\arrow[r] &\Ext_{k-1}^{s-1}(\Omega_{1}M, N)\arrow[dll]\\
\Ext_{k-1}^{s+1}(\Omega M, N)\arrow[r] &\Ext_{k}^{s+1}(M, \Sigma N)\arrow[r] &\Ext_{k-1}^{s}(\Omega_{1}M, N)\arrow[dll]\\
\dots&&
\end{tikzcd}\]
\end{lemma}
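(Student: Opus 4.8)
The plan is to build the long exact sequence from a short exact sequence of chain complexes together with the derived-functor interpretation of $\Omega$ and $\Omega_1$ established in Proposition \ref{prop:omeSig}. Concretely, I would start with a free (hence projective) resolution $P_\bullet \to M$ of $M$ in $\mathcal{U}_k$. Applying the exact sequence $0\to\Sigma\Omega_1 M\to u\Phi M\to M\to\Sigma\Omega M\to 0$ levelwise, or rather working with the two-term complex $\Sigma\Omega_1 M \to u\Phi M$ computing the kernel and $u\Phi M \to M$ surjecting onto $M$ modulo $\Sigma\Omega M$, I obtain from $P_\bullet$ two complexes $\Sigma\Omega_1 P_\bullet$ and $\Sigma\Omega P_\bullet$ together with the short exact sequence of complexes
\[
0\to \Sigma\Omega_1 P_\bullet \to u\Phi P_\bullet \to (\ker\text{ part})\to 0,
\]
but the cleaner route is: by Proposition \ref{prop:omeSig}, $\Omega P_\bullet \to \Omega M$ has homology $\Omega_i M$ concentrated in degrees $0,1$ (namely $\Omega M$ and $\Omega_1 M$), and by Proposition \ref{prop:omePro} each $\Omega P_i$ is projective in $\mathcal{U}_{k-1}$. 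So $\Omega P_\bullet$ is a complex of projectives in $\mathcal{U}_{k-1}$ whose homology is $\Omega M$ in degree $0$ and $\Omega_1 M$ in degree $1$.

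Next I would apply $\Hom_{\mathcal{U}_{k-1}}(-, N)$ to $\Omega P_\bullet$. On one hand, by the adjunction $\Omega\dashv\Sigma$ from Proposition \ref{prop:omeSig}, $\Hom_{\mathcal{U}_{k-1}}(\Omega P_\bullet, N) \cong \Hom_{\mathcal{U}_k}(P_\bullet, \Sigma N)$, and since $P_\bullet$ is a projective resolution of $M$, the cohomology of the latter is exactly $\Ext_k^s(M,\Sigma N)$. On the other hand, $\Omega P_\bullet$ is a bounded-below complex of projectives with only two nonzero homology groups, so there is a standard hyperhomology / two-stage Postnikov argument: the stupid/canonical filtration of $\Omega P_\bullet$ gives a distinguished triangle relating it to $\Omega M$ (in degree $0$) and $\Omega_1 M$ (in degree $1$), which upon applying $\Hom_{\mathcal{U}_{k-1}}(-,N)$ and taking cohomology produces the long exact sequence
\[
\cdots \to \Ext_{k-1}^{s}(\Omega M, N)\to \Ext_k^s(M,\Sigma N)\to \Ext_{k-1}^{s-1}(\Omega_1 M, N)\to \Ext_{k-1}^{s+1}(\Omega M, N)\to\cdots,
\]
with the degree shift on the $\Omega_1$ term coming precisely from $\Omega_1 M$ sitting in homological degree $1$. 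The connecting maps are the boundary maps of this filtration spectral sequence, which collapses at $E_2$ because only two rows are nonzero.

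Alternatively, and perhaps more transparently for the write-up, I would package $\Omega P_\bullet$ via its good truncations: let $\tau_{\geq 1}\Omega P_\bullet$ be the subcomplex which is $\Omega P_\bullet$ in degrees $\geq 2$, $\ker(\Omega P_1\to\Omega P_0)$ in degree $1$, and $0$ below; it is quasi-isomorphic to $\Omega_1 M[1]$. The quotient complex is quasi-isomorphic to $\Omega M[0]$. This yields a short exact sequence of complexes $0\to \tau_{\geq1}\Omega P_\bullet \to \Omega P_\bullet \to (\text{quotient})\to 0$; since all three are bounded-below complexes of projectives resolving $\Omega_1 M[1]$, $\Omega M$ (and $\Omega P_\bullet$ resolves nothing in particular but has the right homology), applying $\Hom_{\mathcal{U}_{k-1}}(-,N)$ gives a short exact sequence of cochain complexes and hence a long exact sequence in cohomology, which after identifying the terms as the stated $\Ext$ groups (using that $\Hom(\Omega P_\bullet,N)$ computes $\Ext_k^*(M,\Sigma N)$ via adjunction, and that $\tau_{\geq 1}$, quotient resolve $\Omega_1 M$, $\Omega M$ by projectives in $\mathcal{U}_{k-1}$) is exactly the EHP sequence.

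The main obstacle I anticipate is the bookkeeping needed to justify that $\Hom_{\mathcal{U}_{k-1}}(\Omega P_\bullet, N)$ genuinely computes $\Ext_k^*(M,\Sigma N)$: the adjunction gives an isomorphism of cochain complexes $\Hom_{\mathcal{U}_{k-1}}(\Omega P_\bullet,N)\cong\Hom_{\mathcal{U}_k}(P_\bullet,\Sigma N)$ levelwise and compatibly with differentials, so the cohomology is $\Ext_k^*(M,\Sigma N)$ provided $P_\bullet\to M$ is a genuine projective resolution — this is fine — but I must be careful that $\Omega P_\bullet$ is \emph{not} itself a resolution of anything (its homology is spread over two degrees), so the identification of the other two-thirds of the sequence relies essentially on Propositions \ref{prop:omeSig} and \ref{prop:omePro}, i.e. on $\Omega_i P = 0$ for $i\geq 1$ (projectivity) and on the derived-functor computation. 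Getting the connecting homomorphism and the precise index shift ($\Ext^{s-1}$ versus $\Ext^s$ versus $\Ext^{s+1}$) to line up correctly is the one place where care is required; everything else is the standard long exact sequence machinery.
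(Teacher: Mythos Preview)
Your primary approach is correct and is essentially the paper's proof unpacked: the paper invokes the Grothendieck spectral sequence for the composite $\mathcal{U}_{k-1}(-,N)\circ\Omega$ (using $\Omega\dashv\Sigma$, Proposition~\ref{prop:omePro}, and Proposition~\ref{prop:omeSig}), obtains $E_2^{s,t}=\Ext_{k-1}^{s}(\Omega_t M,N)\Rightarrow\Ext_k^{s+t}(M,\Sigma N)$ with only rows $t=0,1$ nonzero, and reads off the long exact sequence. Your hyperhomology argument with the complex $\Omega P_\bullet$ is exactly this spectral sequence built by hand.

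Your ``alternative, perhaps more transparent'' truncation argument, however, has a genuine gap. The good truncation $\tau_{\geq 1}\Omega P_\bullet$ places $\ker(\Omega P_1\to\Omega P_0)$ in degree $1$, and the quotient places $\mathrm{im}(\Omega P_1\to\Omega P_0)$ in degree $1$; neither of these objects need be projective, so your claim that ``all three are bounded-below complexes of projectives'' is false. Concretely, the quotient complex vanishes in degrees $\geq 2$, so $H^s\bigl(\Hom_{\mathcal{U}_{k-1}}(\text{quotient},N)\bigr)=0$ for $s\geq 2$, which is not $\Ext_{k-1}^{s}(\Omega M,N)$ in general. The fix is either to stay with your first approach, or to observe that $\tau_{\geq 1}\Omega P_\bullet$, while not levelwise projective, \emph{is} quasi-isomorphic to a projective resolution of $\Omega_1 M[1]$ via a map of complexes (because it is bounded below with projective terms in degrees $\geq 2$), and then run a comparison argument; but at that point you are back to the spectral sequence.
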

\begin{proof}
Recall from Proposition \ref{prop:omeSig} that the loop functor $\Omega:\mathcal{U}_{k}\to\mathcal{U}_{k-1}$ is the left adjoint of the suspension functor $\Sigma:\mathcal{U}_{k-1}\to\mathcal{U}_{k}$.
So \[\mathcal{U}_{k-1}(\Omega(-),N) = \mathcal{U}_{k}(-,\Sigma N)\] and its right derived functor is $\Ext_{k}^{*}(-,\Sigma N)$.
Since the inside functor $\Omega$ sends projectives to projectives by Proposition \ref{prop:omePro}, we have a Grothendieck spectral sequence with \[E_{2}^{s,t} = \Ext^{s}_{k-1}(\Omega_{t}M, N)\Rightarrow\Ext^{s+t}_{k}(M, \Sigma N).\]
Since $\Omega_{t} = 0$ for $t>1$, the $E_{2}$ page consists of only two nontrivial rows $t = 0, 1$.
We have the exact sequences 
\[\Ext^{s-2}_{k-1}(\Omega_{1} M, N)\to \Ext^{s}_{k-1}(\Omega M, N)\to E_{3}^{s, 0}\to 0\]
and
\[0\to E_{3}^{s-1, 1}\to \Ext^{s-1}_{k-1}(\Omega_{1}M, N)\to \Ext_{k-1}^{s+1}(\Omega M, N).\]
Since all further differentials are trivial, we have $E_{3} = E_{\infty}$.
By convergence of the spectral sequence, we have a short exact sequence
\[0\to E_{\infty}^{s, 0}\to \Ext_{k}^{s}(M,\Sigma N)\to E_{\infty}^{s-1, 1}\to 0.\]
Conbining these three exact sequences above, we get the long exact sequence.
\end{proof}

\begin{proposition}
\label{prop:hdsphere}
$\Ext_{k}^{s}(-, S_{k}(n)) = 0$ for all $s > k\geq 0$.
\end{proposition}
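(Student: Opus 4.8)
The plan is to prove the stronger statement that $\Ext_{k}^{s}(-,S_{k}(n))=0$ for \emph{all} $s>k$, by induction on $k$, reducing every sphere with $n>0$ to the category $\mathcal{U}_{k-1}$ via the EHP sequence of Lemma~\ref{lem:U_k_EHP} and handling the sphere $S_{k}(0)$ directly. For the base case $k=0$, observe that the generators $\Sq^{i}\colon n\to n+i$ with $i>n$ of $\mathcal{Q}_{0}$ already vanish in $\mathcal{Q}$, so $\mathcal{Q}_{0}$ has only identity (and zero) endomorphisms and no other morphisms; hence $\mathcal{U}_{0}$ is the semisimple category of non-negatively graded $\mathbb{F}_{2}$-vector spaces, every object is projective, and $\Ext_{0}^{s}(-,-)=0$ for all $s\geq 1$, in particular $\Ext_{0}^{s}(-,S_{0}(n))=0$ for $s>0$.

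The key step, valid for every $k$, is that $S_{k}(0)$ is an injective object of $\mathcal{U}_{k}$. A module map $M\to S_{k}(0)$ is a natural transformation of functors on $\mathcal{Q}_{k}$; since $S_{k}(0)$ is concentrated in degree $0$, such a map amounts to an $\mathbb{F}_{2}$-linear functional on $M^{0}$: the only morphism of $\mathcal{Q}_{k}$ with target the object $0$ is $\id_{0}$, so naturality imposes no condition coming from below, and every morphism out of $0$ acts as zero on $S_{k}(0)^{0}$, so naturality imposes no condition going up either. Thus $\Hom_{\mathcal{U}_{k}}(-,S_{k}(0))$ is naturally identified with evaluation at the object $0$ followed by $\mathbb{F}_{2}$-linear duality; the former is exact (evaluation at an object of the ringoid $\mathcal{Q}_{k}$) and the latter is exact (duality over a field), so $\Hom_{\mathcal{U}_{k}}(-,S_{k}(0))$ is exact. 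Hence $\Ext_{k}^{s}(-,S_{k}(0))=0$ for all $s\geq 1$, a fortiori for $s>k$.

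For the inductive step with $n>0$ (so $k\geq 1$), note that $\Sq_{0}\iota_{n}=\Sq^{n}\iota_{n}=0$ in $S_{k}(n)$, so by Proposition~\ref{prop:invsus} we may write $S_{k}(n)=\Sigma N$ with $N=S_{k-1}(n-1)$ a sphere module in $\mathcal{U}_{k-1}$. Fixing an arbitrary $M\in\mathcal{U}_{k}$ and applying the long exact sequence of Lemma~\ref{lem:U_k_EHP} to $M$ and $N$, for $s>k$ the two flanking terms $\Ext_{k-1}^{s}(\Omega M,N)$ and $\Ext_{k-1}^{s-1}(\Omega_{1}M,N)$ both vanish by the inductive hypothesis, since $s>k>k-1$ and $s-1\geq k>k-1$. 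Exactness then forces $\Ext_{k}^{s}(M,S_{k}(n))=\Ext_{k}^{s}(M,\Sigma N)=0$, completing the induction.

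I expect the only delicate point to be the injectivity of $S_{k}(0)$: one must verify carefully that no Adem relation or unstability relation in $\mathcal{Q}_{k}$ involving the object $0$ constrains a degree-$0$ functional, so that the identification $\Hom_{\mathcal{U}_{k}}(M,S_{k}(0))\cong\Hom_{\mathbb{F}_{2}}(M^{0},\mathbb{F}_{2})$ is genuinely natural and hence exact in $M$. Everything else is a formal consequence of Lemma~\ref{lem:U_k_EHP}, Proposition~\ref{prop:invsus}, and the inductive hypothesis.
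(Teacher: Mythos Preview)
Your proof is correct and follows essentially the same approach as the paper: both arguments rest on the exactness of $\Hom_{\mathcal{U}_{k}}(-,S_{k}(0))$, the semisimplicity of $\mathcal{U}_{0}$, and the EHP reduction $S_{k}(n)=\Sigma S_{k-1}(n-1)$ via Lemma~\ref{lem:U_k_EHP}. The paper phrases it as a double induction on $(n,k)$ while you induct on $k$ alone, but since the EHP step only ever invokes the pair $(n-1,k-1)$, the two organizations are equivalent; your single induction on $k$ is if anything slightly cleaner, and your justification of why $\Hom_{\mathcal{U}_{k}}(-,S_{k}(0))$ is exact is more detailed than the paper's one-line assertion.
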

\begin{proof}
Say $N$ is the sphere module $S_{k}(n)$.
We will proceed by double induction on $n$ and $k$.
The base case is $n = 0$ or $k = 0$.
When $n = 0$, $\mathcal{U}_{k}(-,S_{k}(0))$ is an exact functor so $\Ext^{s}_{k}(-,S_{k}(0)) = 0$ for all $s > 0$.
When $k = 0$, $\mathcal{U}_{0}(-,S_{0}(n))$ is an exact functor so $\Ext^{s}_{0}(-,S_{0}(n)) = 0$ for all $s > k = 0$.

Now assume $n>0$ and $k>0$.
Our goal is to prove that \[\Ext_{k}^{s}(-, S_{k}(n)) = 0\] for all $s>k$.
Our induction hypothesis is that \[\Ext_{k'}^{s'}(-, S_{k'}(n')) = 0\] for all $n',k',s'$ satisfying $0\leq n'<n, 0\leq k'<k, s'>k'$.
Observe that $S_{k}(n) = \Sigma S_{k-1}(n-1)$.
Take $N = S_{k-1}(n-1)$ in the lemma above and we get a long exact sequence.
When $s>k$, we have $\Ext^{s}_{k-1}(\Omega M, N) = 0$ and $\Ext^{s-1}_{k-1}(\Omega_{1}M, N) = 0$ by the induction hypothesis.
Thus $\Ext^{s}_{k}(M, \Sigma N) = 0$.
\end{proof}

\subsection{Bounded above modules}
\begin{proposition}
\label{prop:hdbdd}
If $N$ is bounded above, i.e. $N^{n} = 0$ for large enough $n$, then $\Ext_{k}^{s}(-, N) = 0$ for all $s > k\geq 0$.
\end{proposition}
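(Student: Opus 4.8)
The plan is to reduce the case of a bounded-above module $N$ to the case of sphere modules, which is already handled by Proposition \ref{prop:hdsphere}. First I would induct on the number of nonzero graded pieces of $N$ (a finite quantity once we also use that $N^n = 0$ for $n < 0$, since $N \in \mathcal{U}_k$). If $N$ has only one nonzero piece, say in degree $n$, then every Steenrod operation $\Sq_i$ with $i \geq 0$ lands in strictly higher degree and hence acts as zero, so $N$ is a direct sum of copies of the sphere module $S_k(n)$; the result then follows from Proposition \ref{prop:hdsphere} together with the fact that $\Ext_k^s(-, -)$ commutes with (possibly infinite) direct sums in the second variable — here one should note that $N$ bounded above and concentrated in a single degree might still be an infinite-dimensional $\mathbb{F}_2$-vector space, so I would either invoke that $\Ext_k^s(M,-)$ preserves products, or observe that $S_k(n)$ is injective relative to the bottom-degree issues; in any case $\Ext_k^s(-, \bigoplus S_k(n)) = 0$ for $s > k$ reduces to the single-sphere statement.

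For the inductive step, suppose $N$ has top nonzero degree $n$. Let $N' \subseteq N$ be the submodule which agrees with $N$ in all degrees $< n$ and is zero in degree $n$; this is a submodule because no operation decreases degree, and $N^{<n}$ is closed under all the $\Sq_i$. Then $N / N'$ is concentrated in degree $n$, hence a sum of copies of $S_k(n)$, while $N'$ has strictly fewer nonzero graded pieces than $N$. The short exact sequence
\[
0 \to N' \to N \to N/N' \to 0
\]
induces a long exact sequence in $\Ext_k^*(-, -)$, and for $s > k$ both $\Ext_k^s(-, N')$ (by the induction hypothesis) and $\Ext_k^s(-, N/N')$ (by the single-degree case) vanish, so $\Ext_k^s(-, N) = 0$ as well.

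The main obstacle I anticipate is bookkeeping around infinite-dimensionality: a bounded-above module can have infinite-dimensional graded pieces, so I must be careful that the single-degree modules really do split as direct sums of sphere modules and that $\Ext_k^s$ interacts correctly with such sums. The cleanest fix is to observe that $\Ext_k^s(M, -)$ is computed from a projective resolution $P_\bullet \to M$ as the cohomology of $\Hom_{\mathcal{U}_k}(P_\bullet, -)$, and by Proposition \ref{prop:yoneda} each $\Hom_{\mathcal{U}_k}(P_i, -)$ is a (possibly infinite) product of evaluation functors $N \mapsto N(a)$, which are exact and commute with filtered colimits; hence the vanishing for sphere modules propagates to arbitrary direct sums of them. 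Everything else — closedness of $N^{<n}$ under operations, the long exact sequence, the finiteness of the induction — is routine.
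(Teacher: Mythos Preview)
Your overall strategy---peel off a single graded piece, use Proposition~\ref{prop:hdsphere} for that piece, and induct---is exactly the paper's approach. But you have the short exact sequence the wrong way around. The truncation $N^{<n}$ is \emph{not} a submodule of $N$: since Steenrod operations raise degree, an element in degree $m<n$ can be sent by some $\Sq^{i}$ into degree $n$. (Concretely, take $k\geq 1$ and $N$ with $N^{1}=N^{2}=\mathbb{F}_{2}$ and $\Sq^{1}=\Sq_{0}:N^{1}\to N^{2}$ nonzero; then $N^{<2}$ is not closed under $\Sq_{0}$.) Your justification ``no operation decreases degree'' is precisely the reason the \emph{top} piece $N^{n}$ is a submodule, not the bottom part.

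The fix is immediate: take $N'$ to be the degree-$n$ piece (a submodule, since any $\Sq^{i}$ with $i>0$ lands above the top and hence vanishes) and $N''=N/N'$ the degree $<n$ part; this is exactly the sequence the paper uses. The induction then goes through as you outlined, now inducting on the top degree $n$ (or equivalently on the number of nonzero pieces). Your remarks about infinite-dimensional graded pieces and $\Ext$ commuting with the relevant direct sums are fine and in fact more careful than the paper's own treatment.
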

\begin{proof}
If $N = 0$, then it is trivial.
Let us assume $N\neq 0$.
Say the highest nontrivial degree of $N$ is equal to $n$.
We will proceed by induction on $n$.
The base case is $n = 0$.
When $n = 0$, $N$ is equal to a direct sum of sphere modules and therefore everything follows.
Let us assume $n > 0$ and that we have proven the cases $0,1,\dots, n-1$.
Then we have a short exact sequence of modules in $\mathcal{U}_{k}$
\[0\to N'\to N\to N''\to 0,\]
where $N'$ is the degree $n$ part of $N$ and $N''$ is the degree $<n$ part of $N$.
This short exact sequence induces a long exact sequence of $\Ext$ groups
\[\begin{tikzcd}
&&\dots\arrow[dll]\\
\Ext_{k}^{s}(M, N')\arrow[r] &\Ext_{k}^{s}(M, N)\arrow[r] &\Ext_{k}^{s}(M, N'')\arrow[dll]\\
\Ext_{k}^{s+1}(M, N')\arrow[r] &\Ext_{k}^{s+1}(M, N)\arrow[r] &\Ext_{k}^{s+1}(M, N'')\arrow[dll]\\
\dots&&
\end{tikzcd}\]
Since $N'$ is a direct sum of spheres, we that $\Ext_{k}^{s}(M, N') = 0$ by Proposition \ref{prop:hdsphere}.
We also know that $\Ext^{s}_{k}(M, N'') = 0$ by the induction hypothesis.
Therefore, $\Ext^{s}_{k}(M, N) = 0$.
\end{proof}

\subsection{General modules}
In this subsection, we are going to prove $\Ext_{k}^{s}(-,-) = 0$ for all $s>k\geq 0$.
Preparing for that proof, we present without proof the following lemma on the Milnor exact sequence for $\Ext$ groups.

\begin{lemma}[Milnor exact sequence]
Let $M$ be any module in $\mathcal{U}_{k}$.
Let $N_{0}\leftarrow N_{1}\leftarrow N_{2}\leftarrow \cdots$ be an inverse system of modules in $\mathcal{U}_{k}$ such that all maps are surjective.
Denote its inverse limit by $N$.
Then we have a short exact sequence \[0\to{\varprojlim}^{1}\Ext^{s-1}_{k}(M,N_{i})\to\Ext^{s}_{k}(M,N)\to\varprojlim\Ext^{s}_{k}(M,N_{i})\to 0.\]
\end{lemma}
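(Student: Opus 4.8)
The plan is to deduce this from the standard Milnor $\varprojlim$--${\varprojlim}^{1}$ exact sequence, by first producing a short exact sequence of modules in $\mathcal{U}_{k}$ that computes the inverse limit and then applying the cohomological functor $\Ext_{k}^{*}(M,-)$.

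\emph{Step 1: the module-level Milnor sequence.} Write $f_{i}\colon N_{i}\to N_{i-1}$ for the transition maps. The category $\mathcal{U}_{k}=\mathcal{Q}_{k}\mathbf{Mod}$ is complete by Proposition \ref{prop:enoughProj}, so the product $\prod_{i\geq 0}N_{i}$ exists (computed degreewise). Consider the map
\[\partial\colon\prod_{i\geq 0}N_{i}\longrightarrow\prod_{i\geq 0}N_{i},\qquad (x_{i})_{i}\longmapsto\bigl(x_{i}-f_{i+1}(x_{i+1})\bigr)_{i},\]
which is a morphism in $\mathcal{U}_{k}$ since it is built from the module maps $f_{i+1}$ and subtraction. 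Its kernel is by definition $N=\varprojlim_{i}N_{i}$. The map $\partial$ is surjective: given $(y_{i})_{i}$ one solves $x_{i}=y_{i}+f_{i+1}(x_{i+1})$ recursively, and this is the only place the hypothesis enters, namely that each $f_{i+1}$ is surjective so the recursion can be carried out in $\mathcal{U}_{k}$. Hence
\[0\to N\to\prod_{i}N_{i}\xrightarrow{\ \partial\ }\prod_{i}N_{i}\to 0\]
is a short exact sequence in $\mathcal{U}_{k}$.

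\emph{Step 2: $\Ext_{k}^{s}(M,-)$ commutes with products.} Pick a projective resolution $P_{\bullet}\to M$ in $\mathcal{U}_{k}$, which exists since $\mathcal{U}_{k}$ has enough projectives by Proposition \ref{prop:enoughProj}; then $\Ext_{k}^{s}(M,L)=H^{s}\bigl(\Hom_{\mathcal{U}_{k}}(P_{\bullet},L)\bigr)$ for every $L$. Since $\Hom_{\mathcal{U}_{k}}(P_{p},-)$ sends a product of modules to the product of the $\Hom$ groups, and products of $\mathbb{F}_{2}$-vector spaces are exact so that $H^{s}$ commutes with products, we obtain a natural isomorphism $\Ext_{k}^{s}(M,\prod_{i}N_{i})\cong\prod_{i}\Ext_{k}^{s}(M,N_{i})$, under which the endomorphism induced by $\partial$ is $(\xi_{i})_{i}\mapsto\bigl(\xi_{i}-(f_{i+1})_{*}\xi_{i+1}\bigr)_{i}$, by additivity and functoriality.

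\emph{Step 3: assembly.} Apply the $\delta$-functor $\Ext_{k}^{*}(M,-)$ to the short exact sequence of Step 1 to obtain a long exact sequence; feeding in the identification of Step 2 and splitting the long exact sequence into short pieces yields
\[0\to\mathrm{coker}\,\partial_{*}^{\,s-1}\to\Ext_{k}^{s}(M,N)\to\ker\partial_{*}^{\,s}\to 0,\]
where $\partial_{*}^{\,s}$ denotes the map $\mathrm{id}-(f_{\bullet})_{*}$ acting on $\prod_{i}\Ext_{k}^{s}(M,N_{i})$. For an inverse system of abelian groups indexed by $\mathbb{N}$, the kernel of such a map is $\varprojlim$ and its cokernel is ${\varprojlim}^{1}$; substituting gives the asserted sequence. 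The only nonformal ingredient is the surjectivity of $\partial$ in Step 1, which is exactly what surjectivity of the transition maps provides; everything else is the standard homological-algebra dictionary together with completeness and enough projectives of $\mathcal{U}_{k}$.
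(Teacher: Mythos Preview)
The paper states this lemma without proof, so there is nothing to compare against; your argument is the standard derivation of the Milnor sequence and is correct. One small remark on presentation: in Step~1 the recursion should be read as choosing $x_{i+1}$ from $x_{i}$ via surjectivity of $f_{i+1}$ (start with $x_{0}=0$ and pick $x_{i+1}$ with $f_{i+1}(x_{i+1})=x_{i}-y_{i}$), not the other way around, but this is clearly what you intend.
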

%\begin{proof}
%Since this inverse system is surjective, its $\varprojlim^{1}$ is zero and we have the following short exact sequence \[0\to N = \varprojlim N_{i} \to\prod_{i}N_{i} \to\prod_{i}N_{i} \to 0,\] where the map $f:\prod_{i}N_{i}\to\prod_{i}N_{i}$ is one minus the shift map, i.e. $f(n) = n - n'$ with $(n')_{i} = n_{i+1}$.
%This short exact sequence induces the following long exact sequence
%\[\begin{tikzcd}
%&&\cdots\arrow[dll]\\
%\Ext^{s-1}_{k}(M,N)\arrow[r] &\prod_{i}\Ext^{s-1}_{k}(M,N_{i})\arrow[r,"f_{*}"] &\prod_{i}\Ext^{s-1}_{k}(M,N_{i})\arrow[dll]\\
%\Ext^{s}_{k}(M,N)\arrow[r] &\prod_{i}\Ext^{s}_{k}(M,N_{i})\arrow[r,"f_{*}"] &\prod_{i}\Ext^{s}_{k}(M,N_{i})\arrow[dll]\\
%\cdots&&
%\end{tikzcd}\]
%Note that we used the fact that $\prod_{i}\Ext^{s}_{k}(M,N_{i}) = \Ext^{s}_{k}(M,\prod_{i}N_{i})$.
%Truncating this long exact sequence by forming kernel and cokernel of $f_{*}$, it becomes the following short exact sequence \[0\to{\varprojlim}^{1}\Ext^{s-1}_{k}(M,N_{i})\to\Ext^{s}_{k}(M,N)\to\varprojlim\Ext^{s}_{k}(M,N_{i})\to 0.\]
%\end{proof}

\begin{theorem}
\label{thm:hdall}
$\Ext_{k}^{s}(-, -) = 0$ for all $s > k\geq 0$.
\end{theorem}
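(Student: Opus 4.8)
The plan is to reduce the general case to the bounded‑above case, which is Proposition~\ref{prop:hdbdd}, by writing an arbitrary $N\in\mathcal{U}_{k}$ as an inverse limit of its bounded‑above truncations and feeding this tower into the Milnor exact sequence.

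First, fix $M,N\in\mathcal{U}_{k}$. For each $i\geq 0$ let $N^{>i}\subseteq N$ be the submodule spanned by the homogeneous elements of degree $>i$ (this is a submodule because the Steenrod operations do not lower degree), and set $N_{i}:=N/N^{>i}$. Each $N_{i}$ is bounded above: it is concentrated in degrees $\leq i$ and agrees with $N$ there. The canonical maps $N_{i+1}\to N_{i}$ are surjective, and since limits in $\mathcal{U}_{k}$ are computed degreewise and each graded piece $N^{j}\to (N_{i})^{j}$ is an isomorphism once $i\geq j$, we have $\varprojlim_{i}N_{i}=N$. Moreover the kernel of $N_{i+1}\to N_{i}$ is $N^{>i}/N^{>i+1}\cong N^{i+1}$, concentrated in degree $i+1$ with vanishing Steenrod action (any positive operation on a degree $i+1$ element lands in degree $>i+1$, hence in the part that was killed); thus this kernel, call it $K_{i}$, is a direct sum of copies of $S_{k}(i+1)$, and in particular it is bounded above.

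Now apply the Milnor exact sequence to the tower $(N_{i})$:
\[0\to{\varprojlim}^{1}\Ext_{k}^{s-1}(M,N_{i})\to\Ext_{k}^{s}(M,N)\to\varprojlim\Ext_{k}^{s}(M,N_{i})\to 0.\]
Suppose $s>k$. Each $N_{i}$ is bounded above, so $\Ext_{k}^{s}(M,N_{i})=0$ by Proposition~\ref{prop:hdbdd}, which kills the right‑hand term. If moreover $s\geq k+2$, then $s-1>k$ as well, so $\Ext_{k}^{s-1}(M,N_{i})=0$ and the left‑hand term vanishes too, giving $\Ext_{k}^{s}(M,N)=0$. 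The one remaining case is $s=k+1$, where one must show ${\varprojlim}^{1}\Ext_{k}^{k}(M,N_{i})=0$. For this it suffices that the transition maps $\Ext_{k}^{k}(M,N_{i+1})\to\Ext_{k}^{k}(M,N_{i})$ be surjective, since a tower with surjective transition maps is Mittag--Leffler and hence has trivial $\varprojlim^{1}$. But from the short exact sequence $0\to K_{i}\to N_{i+1}\to N_{i}\to 0$ the long exact sequence in $\Ext_{k}^{*}(M,-)$ gives
\[\Ext_{k}^{k}(M,N_{i+1})\to\Ext_{k}^{k}(M,N_{i})\to\Ext_{k}^{k+1}(M,K_{i}),\]
and $\Ext_{k}^{k+1}(M,K_{i})=0$ because $K_{i}$ is bounded above (Proposition~\ref{prop:hdbdd}). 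Hence the transition maps are surjective, $\varprojlim^{1}$ vanishes, and $\Ext_{k}^{k+1}(M,N)=0$.

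The argument is essentially formal once one has the Milnor exact sequence and the bounded‑above case in hand, so the real content lies in those earlier results; the only point here demanding a little care is that Proposition~\ref{prop:hdbdd} gets applied to possibly infinitely generated bounded‑above modules (the truncations $N_{i}$ and the sums‑of‑spheres kernels $K_{i}$), which is exactly the generality in which it was stated. I would double‑check that $\varprojlim_{i}N_{i}=N$ and the identification of $K_{i}$ hold on the nose, and that the Mittag--Leffler bookkeeping in the $s=k+1$ case is watertight, but I anticipate no genuine difficulty there.
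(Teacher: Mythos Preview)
Your proof is correct and follows essentially the same route as the paper: truncate $N$ to bounded-above pieces $N_{i}$, apply the Milnor exact sequence, kill the right-hand term using Proposition~\ref{prop:hdbdd}, and kill the $\varprojlim^{1}$ term by showing the transition maps are surjective via the long exact sequence for $0\to K_{i}\to N_{i+1}\to N_{i}\to 0$. The only cosmetic differences are that the paper treats all $s>k$ uniformly rather than splitting off the case $s=k+1$, and it cites Proposition~\ref{prop:hdsphere} (since $K_{i}$ is a sum of spheres) rather than Proposition~\ref{prop:hdbdd} for the vanishing of $\Ext_{k}^{s}(M,K_{i})$.
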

\begin{proof}
Let $M$ and $N$ be any two modules in $\mathcal{U}_{k}$.
We are going to prove $\Ext_{k}^{s}(M, N) = 0$ for all $s>k\geq 0$.
For any $i\geq 0$, define $N_{i}$ as a module in $\mathcal{U}_{k}$ with the degree $\leq i$ part equal to $N$ and the degree $> i$ part being zero.
That is, $N_{i}^{j} = N^{j}$ if $j \leq i$ and $N_{i}^{j} = 0$ if $j > i$.
The Steenrod opeartions on $N_{i}$ follow from $N$.
Then we get a surjective inverse system
\[N_{0}\leftarrow N_{1}\leftarrow N_{2}\leftarrow \cdots\]
and the inverse limit of the inverse system is exactly our module $N$.
So we get the Milnor exact sequence of $\Ext$ groups \[0\to{\varprojlim}^{1}\Ext^{s-1}_{k}(M,N_{i})\to\Ext^{s}_{k}(M,N)\to\varprojlim\Ext^{s}_{k}(M,N_{i})\to 0.\]
Since each $N_{i}$ is bounded above, we know $\Ext^{s}_{k}(M, N_{i}) = 0$ for all $i$ by Proposition \ref{prop:hdbdd} and thus the right term in the short exact sequence is zero.

It remains to prove that the left term in the short exact sequence is zero.
For all $i\geq 0$, we have a short exact sequence $0\to K\to N_{i+1}\to N_{i}\to 0$ with $K$ being a direct sum of sphere modules.
Therefore we have a long exact sequence, part of which looks like
\[\cdots\to\Ext_{k}^{s-1}(M,N_{i+1}) \to\Ext_{k}^{s-1}(M,N_{i}) \to\Ext_{k}^{s}(M,K) \to\cdots.\]
We know that $\Ext_{k}^{s}(M,K) = 0$ by Proposition \ref{prop:hdsphere}.
Therefore, the map $\Ext_{k}^{s-1}(M,N_{i+1})\to\Ext_{k}^{s-1}(M,N_{i})$ is a surjection for all $i$.
The left term is thus zero because it is $\varprojlim^{1}$ of a surjective inverse system.
\end{proof}

%\begin{lemma}
%In an abelian category with enough projectives, if \[\Ext^{s}(M,-) = 0 \textrm{ for all }s>k\geq 0,\] then $M$ has a projective resolution of length $\leq k$.
%\end{lemma}
%\begin{proof}
%We will proceed by induction on $k$.
%The base case is $k = 0$.
%Assume that $\Ext^{s}(M,-) = 0$ for all $s > 0$.
%Take any epimorphism $f:A\to B$ and let $K:=\ker(f)$.
%Then we have the following long exact sequence\[\cdots\to\Hom(M,A)\to\Hom(M,B)\to\Ext^{1}(M,K)\to\cdots.\]
%Since $\Ext^{1}(M,-) = 0$, the map $\Hom(M,A)\to\Hom(M,B)$ is surjective and thus $M$ is projective.
%
%Assume $k>0$ and that we've covered the cases $0,\cdots,k-1$.
%Take any object $M$ and we are going to prove that it allows a length $\leq k$ projective resolution.
%
%Take a short exact sequence $0\to K \to P\to M\to 0$ such that $P$ is a projective object.
%For any object $N$, it induces a long exact sequence \[\cdots\to\Ext^{s}(P,N)\to\Ext^{s}(K,N)\to\Ext^{s+1}(M,N)\to\cdots\]
%When $s>k-1$, we have $\Ext^{s}(P,N) = 0$ because $P$ is projective and $\Ext^{s+1}(M,N) = 0$ because $s+1>k$.
%Therefore, $\Ext^{s}(K,N) = 0$ when $s>k-1$ and by the induction hypothesis, $K$ has a projective resolution of length $\leq k-1$.
%Connecting this projective resolution with $P$, we get a length $\leq k$ projective resolution of $M$.
%\end{proof}

\begin{corollary}
Any module in $\mathcal{U}_{k}$ has a length $\leq k$ projective resolution.
In other words, the homological dimension of the category $\mathcal{U}_{k}$ is at most $k$.
\end{corollary}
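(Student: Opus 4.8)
The plan is to deduce this purely formally from Theorem \ref{thm:hdall} by a dimension-shifting argument, so the real content is already done. Fix a module $M$ in $\mathcal{U}_{k}$. Since $\mathcal{U}_{k}=\mathcal{Q}_{k}\mathbf{Mod}$ has enough projectives by Proposition \ref{prop:enoughProj}, choose a projective resolution
\[\cdots\to P_{2}\to P_{1}\to P_{0}\to M\to 0,\]
and let $K$ be the $k$-th syzygy, i.e. the image of $P_{k}\to P_{k-1}$ (equivalently the kernel of $P_{k-1}\to P_{k-2}$, with the convention $P_{-1}=M$). Breaking the resolution into short exact sequences $0\to Z_{i+1}\to P_{i}\to Z_{i}\to 0$ with $Z_{0}=M$ and $Z_{k}=K$, the long exact sequences in $\Ext_{k}^{*}(-,N)$ together with the vanishing of $\Ext_{k}^{>0}(P_{i},N)$ give the usual dimension-shift isomorphisms $\Ext_{k}^{1}(K,N)\cong\Ext_{k}^{k+1}(M,N)$, natural in $N$.

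The key step is then immediate: by Theorem \ref{thm:hdall}, $\Ext_{k}^{k+1}(M,N)=0$ for every module $N$ in $\mathcal{U}_{k}$, hence $\Ext_{k}^{1}(K,N)=0$ for all $N$. I would then invoke the standard fact that a module $K$ in an abelian category with enough projectives satisfying $\Ext^{1}(K,-)\equiv 0$ is projective: choose an epimorphism $P\twoheadrightarrow K$ with $P$ projective, whose kernel gives a short exact sequence $0\to Z\to P\to K\to 0$; since $\Ext_{k}^{1}(K,Z)=0$ this sequence splits, exhibiting $K$ as a retract of $P$, hence projective by Proposition \ref{prop:projFree} (or simply because retracts of projectives are projective).

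With $K$ projective, the truncated sequence
\[0\to K\to P_{k-1}\to\cdots\to P_{1}\to P_{0}\to M\to 0\]
is a projective resolution of $M$ of length $\leq k$, which is exactly the claim; the statement about homological dimension follows by definition. I do not expect a genuine obstacle here — the only points requiring a word of care are that the dimension-shift isomorphisms are valid in the ringoid-module setting (which holds because $\mathcal{U}_{k}$ is an abelian category with enough projectives, so $\Ext_{k}^{*}$ is an honest derived functor with the usual long exact sequences) and that "enough projectives'' is needed to realize $K$ as a retract; both are supplied by Proposition \ref{prop:enoughProj}.
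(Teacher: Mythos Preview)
Your proof is correct and takes essentially the same approach as the paper: both invoke Proposition \ref{prop:enoughProj} for enough projectives and deduce the result from Theorem \ref{thm:hdall}. The paper simply cites the standard fact that $\Ext^{s}(M,-)=0$ for all $s>k$ forces a length $\leq k$ projective resolution, whereas you spell out the dimension-shifting argument proving that fact.
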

\begin{proof}
By Proposition \ref{prop:enoughProj}, the abelian category $\mathcal{U}_{k}$ has enough projectives.
In an abelian category with enough projectives, if \[\Ext^{s}(M,-) = 0 \textrm{ for all }s>k\geq 0,\] then $M$ has a projective resolution of length $\leq k$.
So this corollary follows immediately from Theorem \ref{thm:hdall}.
\end{proof}

\section{$\Lambda$-complex for modules in $\mathcal{U}_{k}$}
\label{sec:Lambda_k}
In this section, we will introduce a contravariant functor $\Lambda_{k}$ from the category of unstable modules over the Steenrod algebra with only the top $k$ squares to the category of cochain complexes of graded vector spaces over $\mathbb{F}_{2}$, namely $\Lambda_{k}:\mathcal{U}_{k}^{\op}\to\Ch^{*}(\Gr(\mathbb{F}_{2}\Mod))$.
Note that we use the upper index to emphasize the \emph{cochain} complex.
The cohomological degree is denoted by $s$ and the degree in the graded vector space is denoted by $a$.
To motivate its study, we list two nice properties of this functor here:
\begin{itemize}
\item
The cohomology $H^{s,a}(\Lambda_{k}(M))$ is equal to $\Ext_{k}^{s}(M,S_{k}(a))$ for all $s,a$.
\item
The cochain complex $\Lambda_{k}(M)$ is relatively small and easy to compute.
To be more concrete, $\Lambda_{k}^{s}(M) = 0$ for all $s < 0$ or $s > k$.
Furthermore, when $M$ is finite, so is its cochain complex $\Lambda_{k}(M)$.
\end{itemize}

\subsection{Recall: $\Lambda$ algebra and $\Lambda$ functor}
In this subsection, we provide a brief recollection \cite{bousfield1966, priddy1970} of our knowledge on the $\Lambda$ algebra and the $\Lambda$ functor.

Formally, $\Lambda$ is an associative differential bigraded $\mathbb{F}_{2}$-algebra with generators $\lambda_{i}\in\Lambda^{1, i+1}$ for $i\geq 0$ and relations
\begin{equation}
\label{eq:lambda_relation}
\lambda_{i}\lambda_{2i+1+j} = \sum_{t\geq 0}{j-t-1\choose t}\lambda_{i+j-t}\lambda_{2i+1+t}\quad\textrm{for }i,j\geq 0
\end{equation}
with differential
\[d(\lambda_{i}) = \sum_{j\geq 1}{i-j\choose j}\lambda_{i-j}\lambda_{j-1}.\]
We refer to the first grading in $\Lambda$ as the cohomological degree $s$ and the second as the internal degree $t$.
The differential $d$ in $\Lambda$ increases $s$ by one and preserves $t$.

\begin{definition}[Admissible monomials]
A monomial \[\lambda_{I} := \lambda_{I(1)}\lambda_{I(2)}\cdots\lambda_{I(s)}\in\Lambda\] is said to be admissible if \[2I(r)\geq I(r+1)\quad\textrm{for all } 1\leq r < s.\]
The excess of $\lambda_{I}$ is defined as \[\textrm{excess}(I):=\sum_{r=1}^{s-1}(2I(r) - I(r+1)).\]
\end{definition}

\begin{proposition}
The admissible monomials form an additive basis for $\Lambda$.
\end{proposition}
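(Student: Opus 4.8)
The plan is to establish separately the two assertions contained in the statement: that the admissible monomials span $\Lambda$ as an $\mathbb{F}_{2}$-vector space, and that they are linearly independent. Both are classical (see \cite{bousfield1966, priddy1970}), and what follows is the standard rewriting argument. Throughout, recall that $\Lambda$ is by definition the free associative $\mathbb{F}_{2}$-algebra on the generators $\lambda_{i}$, $i\geq 0$, modulo the two-sided ideal generated by the relations (\ref{eq:lambda_relation}); a monomial $\lambda_{I}=\lambda_{I(1)}\cdots\lambda_{I(s)}$ lies in bidegree $(s,a)$ with $a=s+\sum_{r}I(r)$, so inside a fixed bidegree both the length $s$ and the sum $\sum_{r}I(r)$ are fixed.

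First I would prove spanning by a straightening procedure. Each relation in (\ref{eq:lambda_relation}) has the shape ``$\lambda_{i}\lambda_{j}=(\text{a sum of terms }\lambda_{i'}\lambda_{j'})$'' with $j>2i$ on the left and each monomial on the right again of length $2$ and of the same internal degree. Given a non-admissible monomial $\lambda_{I}$, one locates an adjacent inadmissible pair $\lambda_{I(r)}\lambda_{I(r+1)}$ and substitutes the corresponding right-hand side of (\ref{eq:lambda_relation}); this rewrites $\lambda_{I}$ as a linear combination of monomials of the same bidegree. To see that iterating this terminates, equip monomials with a left-weighted quantity such as $w(I)=\sum_{r=1}^{s}2^{\,s-r}I(r)$ (any strictly decreasing choice of positive coefficients works): the nonzero terms on the right-hand side of (\ref{eq:lambda_relation}) all have strictly larger $w$ than the left-hand monomial, since the relation shifts index from a later slot toward an earlier one; and $w$ is bounded above within a bidegree because $\sum_{r}I(r)$ is. So the process stops, and a monomial on which no relation applies is exactly an admissible one. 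Hence the admissibles span $\Lambda$ in each bidegree.

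The substantive part is linear independence, and here I would appeal to Bergman's Diamond Lemma for the reduction system just described, with the descending chain condition on monomials supplied (bidegree by bidegree) by the order induced by $w$. It then remains to verify confluence, i.e. that the overlap ambiguities resolve: these arise only from triples $\lambda_{a}\lambda_{b}\lambda_{c}$ in which both $(a,b)$ and $(b,c)$ are inadmissible, and one must check that reducing the left pair first versus the right pair first, followed by straightening each result completely, produce the same element of $\Lambda$. Expanding (\ref{eq:lambda_relation}) in the two orders, this collapses to a single family of congruences among sums of products of binomial coefficients modulo $2$, which one verifies with Lucas's theorem. I expect this confluence computation to be the main obstacle: it is elementary but the bookkeeping is genuinely delicate, and it is the only place where the precise form of the relations (\ref{eq:lambda_relation}) is used. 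Granting it, the Diamond Lemma yields that the irreducible---that is, admissible---monomials form an $\mathbb{F}_{2}$-basis of $\Lambda$.

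Finally I would note that linear independence can also be extracted from Priddy's theorem that $\Lambda$ is a Koszul algebra on quadratic generators \cite{priddy1970}, or from exhibiting a concrete module over $\Lambda$ on which the admissible monomials act visibly without relations; but the Diamond-Lemma route keeps the argument internal to the presentation of $\Lambda$ used here, which is why I would favour it.
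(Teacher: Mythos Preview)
The paper does not prove this proposition at all: it appears in a subsection titled ``Recall: $\Lambda$ algebra and $\Lambda$ functor,'' which opens by saying the material is a brief recollection of \cite{bousfield1966, priddy1970}, and the statement is simply asserted with no argument. So there is no ``paper's own proof'' to compare against; your proposal already goes further than the paper does.

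Your outline is the standard one and is correct in shape. The spanning argument is complete as written: your weight $w(I)=\sum_{r}2^{s-r}I(r)$ strictly increases across each application of (\ref{eq:lambda_relation}) (since the binomial forces $t<j$, hence the first index goes up by $j-t>0$), and within a fixed bidegree $w$ is bounded above by $2^{s-1}\sum_{r}I(r)$, so straightening terminates. For linear independence you correctly identify the Diamond Lemma as the right tool and the overlap ambiguities $\lambda_{a}\lambda_{b}\lambda_{c}$ as the only thing to check; however, you stop short of actually carrying out that confluence verification, and the phrase ``one verifies with Lucas's theorem'' is doing a lot of work. That computation is genuinely intricate (it is essentially the associativity of the Adem-type relations in $\Lambda$), and if you intend a self-contained proof you should either execute it or point to a precise place in the literature where it is done. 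Your closing remark that Priddy's Koszulity result \cite{priddy1970} supplies an alternative is well taken and is in fact closer in spirit to what the paper is implicitly relying on by citing that reference.
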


\begin{proposition}
The internal degree $t$ part of the $s$-th cohomology of $\Lambda$ is equal to the $s$-th Ext group in the category of $\mathcal{M}$ from $S(0)$ to $S(t)$, i.e. \[H^{s,t}(\Lambda) = \Ext^{s}_{\mathcal{M}}(S(0),S(t)).\]
\end{proposition}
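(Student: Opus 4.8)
The plan is to realize the internal-degree-$t$ part of $\Lambda$ as the cochain complex one gets by applying $\Hom_{\mathcal{M}}(-,S(t))$ to an explicit free resolution of $S(0)$ in $\mathcal{M}$, and then to prove that resolution is exact. By Proposition \ref{prop:yoneda}, $\Hom_{\mathcal{M}}(\mathcal{A}(n,-),S(t)) = S(t)(n)$, which is $\mathbb{F}_{2}$ when $n=t$ and $0$ otherwise. So for each $s\geq 0$ I would take $P_{s}:=\bigoplus_{I}\mathcal{A}(e_{I},-)$, the sum running over $\Lambda$-admissible sequences $I=(I(1),\dots,I(s))$ with $I(r)\geq 0$ and $2I(r)\geq I(r+1)$, where $e_{I}=\sum_{r=1}^{s}(I(r)+1)$ is the internal degree of $\lambda_{I}$. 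Then $\Hom_{\mathcal{M}}(P_{s},S(t))$ has a basis indexed by the admissible $\lambda_{I}$ of length $s$ and internal degree $t$, which (by the admissible-basis fact recalled just above) is precisely a basis of $\Lambda^{s,t}$.

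Next I would equip $P_{\bullet}$ with an $\mathcal{A}$-linear differential $\partial\colon P_{s}\to P_{s-1}$ whose scalar part --- the part factoring through $\id=\Sq^{0}$, which is all that $\Hom_{\mathcal{M}}(-,S(t))$ detects --- realizes the transpose of the $\Lambda$-differential $d$ in the admissible bases, the higher components being the corrections forced by $\partial^{2}=0$; concretely $\partial$ is assembled from the straightening rules for the products $\Sq^{a}\lambda_{I}$ encoded in the relations (\ref{eq:lambda_relation}). At the bottom, $P_{0}=\mathcal{A}(0,-)$ with augmentation onto $S(0)$ having kernel the augmentation ideal, and $\partial$ sends the length-one generator of internal degree $i+1$ to $\Sq^{i+1}\iota_{0}$; since $\{\Sq^{j}:j\geq 1\}$ generates the augmentation ideal as a left ideal, $P_{\bullet}$ is exact at $P_{0}$. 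The theorem then reduces to the single assertion that $P_{\bullet}$ is a resolution, i.e. exact in all positive degrees: granting that, $\Ext_{\mathcal{M}}^{s}(S(0),S(t)) = H^{s}\bigl(\Hom_{\mathcal{M}}(P_{\bullet},S(t))\bigr) = H^{s,t}(\Lambda)$.

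To prove exactness I would compare $\Lambda$ with the cobar complex of the dual Steenrod algebra $A_{*}=\mathbb{F}_{2}[\xi_{1},\xi_{2},\dots]$, which computes $\Ext_{A}^{s,t}(\mathbb{F}_{2},\mathbb{F}_{2})=\Ext_{\mathcal{M}}^{s}(S(0),S(t))$ since $A$ is connected and of finite type over the field $\mathbb{F}_{2}$. I would build a map of differential graded algebras $\theta$ from $\Lambda$ to this cobar complex, defined on generators by sending $\lambda_{2^{j}-1}$ to the Hopf class represented by the primitive $\xi_{1}^{2^{j}}\in\bar{A}_{*}$ and each remaining $\lambda_{i}$ to an explicit polynomial in the $\xi_{j}$ of internal degree $i+1$, and then check --- using the Milnor coproduct $\psi(\xi_{n})=\sum_{i}\xi_{n-i}^{2^{i}}\otimes\xi_{i}$ together with the mod-$2$ binomial identities of Lucas type already used in the proof of Lemma \ref{lem:froMor} --- that $\theta$ annihilates the relations (\ref{eq:lambda_relation}) and intertwines $d$ with the cobar differential. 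Finally I would show $\theta$ is a quasi-isomorphism by putting compatible multiplicative filtrations on the two sides: a May-type weight filtration on the cobar complex and the excess filtration on $\Lambda$, arranged so that on associated graded the cobar complex becomes that of a primitively generated polynomial Hopf algebra, whose $\mathrm{Cotor}$ is visibly polynomial, while the $\Lambda$-relations degenerate to the admissibility-straightening rules; since all the complexes in sight are of finite type in each internal degree, both spectral sequences converge and a comparison argument gives that $\theta$ is an isomorphism on cohomology. A more hands-on alternative, bypassing $A_{*}$, is to construct an explicit $\mathbb{F}_{2}$-linear contracting homotopy on $P_{\bullet}$ directly from the Adem-straightening algorithm --- elementary but combinatorially heavier.

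The main obstacle is precisely this exactness/quasi-isomorphism step. The subtlety is that neither the Steenrod algebra nor $\Lambda$ is Koszul in the naive internal-degree-graded sense: the relations (\ref{eq:lambda_relation}) and the differential $d$ are inhomogeneous because of the powers of $2$ in the indices, so one must calibrate the weight and excess filtrations carefully enough that the leading terms of the $\Lambda$-relations cut the associated graded of $\Lambda$ down to exactly the admissible monomials --- no coarser and no finer. That bookkeeping is the combinatorial heart of the argument, and it is where the $2$-adic valuations of the binomial coefficients appearing in (\ref{eq:lambda_relation}) and in $d$, together with Lucas's theorem, carry the load.
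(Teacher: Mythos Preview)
The paper does not prove this proposition: it sits in the subsection titled ``Recall: $\Lambda$ algebra and $\Lambda$ functor'' and is simply stated as a known fact with implicit reference to \cite{bousfield1966, priddy1970}. So there is nothing in the paper to compare against; you have supplied a proof sketch where the paper supplies none.

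As a sketch of the classical argument, your outline is broadly on the right track --- it is essentially the Priddy/Koszul story --- but there is a real gap in the construction of the resolution $P_{\bullet}$. You define $P_{s}$ correctly as a graded free module, but the differential $\partial$ is not actually defined: saying that its ``scalar part'' is the transpose of $d$ and that ``higher components [are] the corrections forced by $\partial^{2}=0$'' is not a construction, and there is no a priori reason such corrections exist or are unique. In the classical treatment the differential on $A\otimes\Lambda$ is given by an explicit closed formula (the Koszul differential, built from the pairing between $\Sq^{i}$ and $\lambda_{i-1}$), and $\partial^{2}=0$ is then a theorem requiring the compatibility of the Adem relations with the $\Lambda$-relations, not an input used to conjure the higher terms. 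Similarly, your map $\theta$ to the cobar complex is only specified on $\lambda_{2^{j}-1}$; ``an explicit polynomial in the $\xi_{j}$'' for the remaining generators is a placeholder, not a definition, and checking that $\theta$ kills the relations \eqref{eq:lambda_relation} is exactly the nontrivial content. The filtration comparison you describe is the May spectral sequence argument, which does work, but the calibration of filtrations you allude to is precisely what Priddy's PBW theorem packages cleanly --- absent that, the ``bookkeeping'' you flag at the end is the whole proof, not a tail-end detail.
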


\begin{definition}[Subcomplex $\Lambda(m)$]
$\Lambda(m)$ is the defined to be the sub-bigraded vector space of $\Lambda$ spanned by the admissible monomials $\lambda_{I}$ with $I(1) < m$.
The trivial monomial $1$ lives in all $\Lambda(m)$.
\end{definition}

The $\Lambda(m)$ is a subcomplex of $\Lambda$ by virtue of Lemma \ref{lem:lambda_check}.

\begin{lemma}\label{lem:lambda_check}
\[d(\Lambda(m))\subseteq\Lambda(m),\quad\Lambda^{s,t}(m)\Lambda(m+t)\subseteq\Lambda(m).\]
\end{lemma}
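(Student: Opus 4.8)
The plan is to deduce both containments from a single combinatorial statement about arbitrary, not necessarily admissible, monomials. For a monomial $M=\lambda_{i_1}\cdots\lambda_{i_n}$ set $\tau_r=\sum_{p=1}^{r}(i_p+1)$, with $\tau_0=0$, and call $M$ \emph{$m$-bounded} if $i_r<m+\tau_{r-1}$ for every $1\le r\le n$. The statement to establish, call it $(\star)$, is that every $m$-bounded monomial lies in $\Lambda(m)$, i.e.\ that expanding it in the admissible basis involves only monomials with leading index $<m$. Two elementary observations link $(\star)$ to the lemma. First, an admissible $\lambda_I$ with $I(1)<m$ is $m$-bounded: from $I(r)\le 2I(r-1)$ one gets $I(r)\le I(1)+\sum_{p=1}^{r-1}I(p)<m+\tau_{r-1}$ by an immediate induction on $r$. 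Second, if in addition $\lambda_J$ is admissible with $J(1)<m+t$, where $t$ is the internal degree of $\lambda_I$, then the concatenation $\lambda_I\lambda_J$ is $m$-bounded: the indices from $\lambda_I$ are handled by the first observation, those from $\lambda_J$ by the first observation with the bound $m+t$, using that $t=\sum_{p}(I(p)+1)$ is exactly $\tau_s$ for $s$ the length of $I$.

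Granting $(\star)$, the second containment $\Lambda^{s,t}(m)\,\Lambda(m+t)\subseteq\Lambda(m)$ follows at once from bilinearity and the second observation. For the first, $d(\Lambda(m))\subseteq\Lambda(m)$, I would combine $(\star)$ with the Leibniz rule: expanding $d(\lambda_I)$ and then $d(\lambda_{I(r)})=\sum_{j\ge1}\binom{I(r)-j}{j}\lambda_{I(r)-j}\lambda_{j-1}$, each term has the shape $\lambda_{I(1)}\cdots\lambda_{I(r-1)}\lambda_{I(r)-j}\lambda_{j-1}\lambda_{I(r+1)}\cdots\lambda_{I(s)}$ with nonzero coefficient, which forces $2j\le I(r)$; using the first observation for $\lambda_I$ and $2j\le I(r)$ one checks this monomial is again $m$-bounded — the only inequalities needing attention sit at the two inserted positions and are settled by $I(r)-j\le I(r)$ and by $2j\le I(r)$ — so it lies in $\Lambda(m)$, hence so does $d(\lambda_I)$.

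It remains to prove $(\star)$, which is the heart of the argument. I would run the straightening algorithm and induct on the moment $\mu(M)=\sum_{r=1}^{n}r\,i_r$. If $M$ is admissible its leading index is $i_1<m+\tau_0=m$. Otherwise, picking an adjacent inadmissible pair $\lambda_{i_r}\lambda_{i_{r+1}}$ and writing $i_{r+1}=2i_r+1+j$ with $j\ge1$ (if $j=0$ the pair, hence $M$, is zero), relation (\ref{eq:lambda_relation}) rewrites $M$ as $\sum_{0\le t\le (j-1)/2}\binom{j-t-1}{t}\,M_t$, where $M_t$ is obtained from $M$ by replacing the pair $(i_r,i_{r+1})$ at positions $(r,r+1)$ with $(i_r+j-t,\,2i_r+1+t)$. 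A direct computation gives $\mu(M_t)=\mu(M)-(j-t)<\mu(M)$, so the inductive hypothesis applies to $M_t$; a second computation — using $m$-boundedness of $M$ at positions $r$ and $r+1$ together with $2t\le j-1$ — shows each $M_t$ is again $m$-bounded. Hence each $M_t\in\Lambda(m)$ and therefore $M\in\Lambda(m)$. I expect the main obstacle to be exactly this pair of computations: they are routine algebra, but one must track how the partial sums $\tau_r$ change under the replacement and use that the surviving binomial coefficients are precisely those with $0\le t\le(j-1)/2$, which is simultaneously what forces the moment to drop and what keeps the new indices within range.
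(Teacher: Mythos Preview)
Your argument is correct. The key device---the notion of an $m$-bounded monomial together with the moment $\mu(M)=\sum_r r\,i_r$ as an induction invariant for the straightening process---is exactly what is needed, and your verifications at the two modified positions (both for the Leibniz terms and for the relation terms $M_t$) go through as stated. The crucial inequality $i_r+j<m+\tau_{r-1}$, which you extract from $m$-boundedness at position $r+1$, is precisely what controls the new leading index $i_r+j-t$ after applying relation~(\ref{eq:lambda_relation}).

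As for comparison: the paper does not actually prove this lemma. It appears in the ``Recall'' subsection on the classical $\Lambda$ algebra, where the results are cited from \cite{bousfield1966,priddy1970} without argument. Your proof is therefore not an alternative to the paper's approach but a self-contained substitute for an omitted one. It is in the same spirit as the standard arguments in the literature (straightening plus a weight function that drops under each relation), though your packaging via the single $m$-boundedness condition is clean and lets both containments fall out of one claim $(\star)$.
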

% references:
% Lemma 3.2 from the The Lambda algebra and Sq^0 by J. H. Palmieri
% Proposition 1.1 from Lambda Algebra Unstable Composition Products and the Lambda EHP sequence

\begin{proposition}
\label{prop:lambda_m_coho}
The internal degree $t$ part of the $s$-th cohomology of $\Lambda(m)$ is equal to the $s$-th Ext group in the category of $\mathcal{U}$ from $S(m)$ to $S(m+t)$, i.e. \[H^{s,t}(\Lambda(m)) = \Ext^{s}_{\mathcal{U}}(S(m),S(m+t)).\]
\end{proposition}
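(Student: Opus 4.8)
The plan is to deduce this from the two recalled facts about $\Lambda$ together with the algebraic EHP long exact sequence in $\mathcal{U}$, by a double induction.

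First I would record that there is a short exact sequence of cochain complexes
\[
0 \longrightarrow \Lambda(m) \longrightarrow \Lambda(m+1) \xrightarrow{\ \lambda_m\,\cdot\ } \Lambda(2m+1)[1,m+1] \longrightarrow 0,
\]
where $\Lambda(2m+1)[1,m+1]$ denotes $\Lambda(2m+1)$ with cohomological degree raised by $1$ and internal degree raised by $m+1$, and the right-hand map sends $\lambda_{I'}$ to $\lambda_m\lambda_{I'}$. On underlying bigraded vector spaces this is immediate from the admissible basis, since an admissible monomial with $I(1)=m$ is exactly $\lambda_m\lambda_{I'}$ with $\lambda_{I'}$ admissible and $2m\ge I'(1)$, i.e. $\lambda_{I'}\in\Lambda(2m+1)$. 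That $\Lambda(m)\hookrightarrow\Lambda(m+1)$ is a map of complexes is part of Lemma~\ref{lem:lambda_check}, and the induced differential on the quotient is $\lambda_m\cdot d$ because $d\lambda_m$ already lies in $\Lambda(m)$ (every term $\lambda_{m-j}\lambda_{j-1}$ of $d\lambda_m$ has first index $<m$), so $d(\lambda_m)\lambda_{I'}\in\Lambda^{2,m+1}(m)\,\Lambda(2m+1)\subseteq\Lambda(m)$ by Lemma~\ref{lem:lambda_check}.

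Next I would pass to cohomology and identify the resulting long exact sequence with the algebraic EHP sequence in $\mathcal{U}$. The loop functor $\Omega$ and its first derived functor $\Omega_1$ on $\mathcal{U}$ --- the classical counterparts of the functors built in Section~\ref{sec:functors} --- satisfy $\Omega S(m+1)=S(m)$ and $\Omega_1 S(m+1)=S(2m+1)$, and $\Omega$ is left adjoint to $\Sigma\colon\mathcal{U}\to\mathcal{U}$ and preserves projectives; so the Grothendieck spectral sequence of $\mathcal{U}(\Omega(-),-)=\mathcal{U}(-,\Sigma(-))$ collapses exactly as in Lemma~\ref{lem:U_k_EHP} into a long exact sequence
\[
\cdots\to\Ext^s_{\mathcal{U}}(S(m),S(m+t))\to\Ext^s_{\mathcal{U}}(S(m+1),S(m+1+t))\to\Ext^{s-1}_{\mathcal{U}}(S(2m+1),S(m+t))\to\cdots,
\]
whose internal gradings match those of the cohomology sequence term by term. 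I would then run the comparison by induction on the internal degree $t$, with $t=0$ checked by hand on both sides; for fixed $t$ the groups $\Lambda(m)^{\ast,t}$ have stabilized to $\Lambda^{\ast,t}$ once $m>t$, so the recalled identity $H^{\ast,t}(\Lambda)=\Ext^{\ast}_{\mathcal{M}}(S(0),S(t))$, suspension invariance of $\Ext$ in $\mathcal{M}$, and the classical stable-range comparison $\Ext_{\mathcal{M}}(S(m),S(m+t))=\Ext_{\mathcal{U}}(S(m),S(m+t))$ for $m>t$ give the statement for all $m>t$; a finite downward induction on $m$ from $m=t+1$ down to $m=0$, using the EHP sequence together with the outer inductive hypothesis for the $\Lambda(2m+1)$ term (whose internal degree $t-m-1$ is $<t$), settles the remaining values.

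The step I expect to be the main obstacle is turning this ``same long exact sequences'' idea into an honest five-lemma argument, which requires natural comparison maps $H^{s,t}(\Lambda(m))\to\Ext^s_{\mathcal{U}}(S(m),S(m+t))$ commuting with all three arrows. The cleanest fix is to realize $\Lambda(m)$ from the outset as $\Hom_{\mathcal{U}}(R_\bullet(m),S(m+\ast))$ for an explicit free resolution $R_\bullet(m)\to S(m)$ in $\mathcal{U}$ with $R_0(m)=F(m)$ and $R_s(m)=\bigoplus F(m+\deg\lambda_I)$ summed over admissible $\lambda_I=\lambda_{I(1)}\cdots\lambda_{I(s)}$ with $I(1)<m$, the differential being given by the structure constants of the relations~(\ref{eq:lambda_relation}) and of $d\lambda_i$; then the short exact sequence of complexes above is induced by a short exact sequence of resolutions, all naturality is automatic, and the whole content collapses to the exactness of $R_\bullet(m)$, proved once more by the same EHP induction. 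In practice the real work therefore concentrates in that exactness statement together with the $\mathcal{U}$-versions of $\Omega,\Omega_1$ and the stable-range comparison; alternatively one may simply cite \cite{bousfield1966, priddy1970}, where the result follows from the Koszulness of the (unstable) Steenrod algebra and the identification of $\Lambda$ with its Koszul dual.
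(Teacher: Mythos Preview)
The paper does not give its own proof of this proposition: it is placed in the ``Recall'' subsection and simply cited from \cite{bousfield1966, priddy1970}. What the paper does offer is the remark at the end of Section~\ref{sec:Lambda_k} that the filtration arguments there (Constructions~\ref{cons:P^u} and~\ref{cons:Q^u_L}, Lemmas~\ref{lem:Q^u_L}--\ref{lem:hom_Q_ag}) yield an alternative proof of the more general Proposition~\ref{prop:lambda_coho}, of which this is the special case $M=S(m)$. That alternative proof goes by showing directly that $\Lambda(F(n))$ has cohomology $\mathbb{F}_2$ concentrated in bidegree $(0,n)$ via an explicit multi-filtration whose associated graded pieces are acyclic, and then running a double-complex spectral sequence over a free resolution of $M$.

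Your approach is genuinely different: an EHP induction on $(t,m)$ anchored in the stable range, rather than a filtration of $\Lambda(F(n))$. The outline is sound, and you put your finger on the real obstacle yourself: the two long exact sequences are not enough without natural comparison maps, so the honest content is the exactness of the explicit resolution $R_\bullet(m)$ you describe. Once you commit to that, the EHP decoration becomes one of several possible ways to prove exactness of $R_\bullet(m)$; the paper's filtration argument is another, and arguably cleaner since it avoids importing the stable-range identification $\Ext_{\mathcal{M}}=\Ext_{\mathcal{U}}$ as an outside fact. Conversely, your route has the advantage of making the EHP compatibility transparent and of recycling the unstable $\Omega,\Omega_1$ machinery already set up in Section~\ref{sec:functors}. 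Your final sentence---just cite \cite{bousfield1966, priddy1970}---is exactly what the paper does.
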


\begin{proposition}
\label{prop:lambda_EHP}
For each $m\geq 0$, there is a short exact sequence \[0\to\Lambda(m)\xrightarrow{e}\Lambda(m+1)\xrightarrow{h}\Sigma^{1,m+1}\Lambda(2m+1)\to 0,\]
where the suspension suspends $s$ and $t$ by 1 and $m+1$ respectively.
The map $e$ does not change the admissible monomials.
The map $h$ drops $\lambda_{n}$ if the admissible monomial starts with $\lambda_{n}$ and sends all other admissible basis vectors to zero.
\end{proposition}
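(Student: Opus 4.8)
The plan is to exhibit the maps $e$ and $h$ explicitly on admissible bases, check they are chain maps, and then verify exactness degreewise by a basis count. The map $e\colon\Lambda(m)\to\Lambda(m+1)$ is simply the inclusion of sub-bigraded vector spaces: an admissible monomial $\lambda_I$ with $I(1)<m$ is in particular one with $I(1)<m+1$, and by Lemma \ref{lem:lambda_check} both are subcomplexes of $\Lambda$, so $e$ is automatically a chain map. For $h$, define it on an admissible monomial $\lambda_I=\lambda_{I(1)}\lambda_{I(2)}\cdots\lambda_{I(s)}\in\Lambda^{s,t}(m+1)$ by $h(\lambda_I)=\lambda_{I(2)}\cdots\lambda_{I(s)}$ when $I(1)=m$ and $h(\lambda_I)=0$ when $I(1)<m$; one must check the target monomial $\lambda_{I(2)}\cdots\lambda_{I(s)}$ is admissible (inherited from admissibility of $\lambda_I$) and that it lies in $\Lambda(2m+1)$, i.e. $I(2)\le 2I(1)=2m<2m+1$, which is exactly the admissibility inequality $2I(1)\ge I(2)$. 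The bidegree bookkeeping: dropping $\lambda_m\in\Lambda^{1,m+1}$ lowers $s$ by $1$ and $t$ by $m+1$, matching the stated suspension $\Sigma^{1,m+1}$.

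Next I would verify that $h$ is a chain map, i.e. $h\circ d = d\circ h$ up to the suspension sign (which is trivial over $\mathbb{F}_2$). This is the one computation requiring the explicit differential formula $d(\lambda_i)=\sum_{j\ge 1}\binom{i-j}{j}\lambda_{i-j}\lambda_{j-1}$. For a monomial $\lambda_m\lambda_{I(2)}\cdots\lambda_{I(s)}$ starting with $\lambda_m$, the differential $d$ acts by Leibniz; the term where $d$ hits the leading $\lambda_m$ produces monomials $\lambda_{m-j}\lambda_{j-1}\lambda_{I(2)}\cdots$, all of which start with index $m-j<m$ (for $j\ge 1$) or, after expanding products into admissibles, still have leading index $<m$ — hence are killed by $h$ after reindexing, matching $d$ of the image; the terms where $d$ hits a later $\lambda_{I(r)}$ commute past $h$. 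One must take care when the expansion of $d(\lambda_m)\lambda_{I(2)}\cdots$ or of the tail produces non-admissible monomials that must be rewritten via relation \eqref{eq:lambda_relation}; here Lemma \ref{lem:lambda_check} ($\Lambda^{s,t}(m)\Lambda(m+t)\subseteq\Lambda(m)$) guarantees the rewriting stays within the appropriate subcomplex, so the bookkeeping closes.

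Finally, exactness is a linear-algebra statement in each bidegree $(s,t)$. The admissible monomials in $\Lambda^{s,t}(m+1)$ split into those with $I(1)<m$ — which are precisely the image of $e$, and on which $h$ vanishes — and those with $I(1)=m$. For the latter, removing the leading $\lambda_m$ gives a bijection with admissible monomials $\lambda_J\in\Lambda^{s-1,t-m-1}(2m+1)$ subject to $J(1)=I(2)\le 2m$, i.e. exactly the admissible basis of $\Lambda(2m+1)$ in that bidegree; so $h$ restricted to the span of the $I(1)=m$ monomials is an isomorphism onto $\Sigma^{1,m+1}\Lambda(2m+1)$. Thus $e$ is injective, $h$ is surjective, and $\ker h=\operatorname{im}e$. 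The main obstacle is the chain-map verification for $h$: it is the only place where one genuinely manipulates the $\Lambda$-differential and the Adem-type relations \eqref{eq:lambda_relation} together, and care is needed that every non-admissible intermediate monomial is rewritten consistently on both sides of $h\circ d=d\circ h$ — everything else is a transparent basis count.
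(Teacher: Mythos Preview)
The paper does not actually supply a proof of this proposition: it sits in the ``Recall'' subsection and is stated as a classical fact about the $\Lambda$ algebra, with the references \cite{bousfield1966, priddy1970} given at the top of that subsection. So there is no paper-proof to compare against; your write-up is a self-contained verification of a standard result.

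Your argument is correct and is the expected one. The exactness portion is a clean basis count and needs no change. In the chain-map check for $h$ there is one small point you pass over that is worth making explicit: when $d$ hits the tail you obtain $\lambda_m\, d(\lambda_{I(2)}\cdots\lambda_{I(s)})$, and you want $h$ of this to equal $d(\lambda_{I(2)}\cdots\lambda_{I(s)})$. For that you need each admissible summand $\lambda_J$ of $d(\lambda_{I(2)}\cdots\lambda_{I(s)})\in\Lambda(2m+1)$ to satisfy $J(1)\le 2m$, so that $\lambda_m\lambda_J$ is \emph{already} admissible and $h$ simply strips the leading $\lambda_m$. This is immediate from $\lambda_J\in\Lambda(2m+1)$, but saying so removes any doubt that a further application of the relations \eqref{eq:lambda_relation} could move a term out of the ``leading index $=m$'' stratum. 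With that one sentence added, the verification that $h\circ d=d\circ h$ is complete: the $d(\lambda_m)\cdot(\text{tail})$ contribution lies in $\Lambda(m)$ by Lemma~\ref{lem:lambda_check} and is annihilated by $h$, while the $\lambda_m\cdot d(\text{tail})$ contribution passes through $h$ to give exactly $d(h(\lambda_I))$.
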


This short exact sequence in Proposition \ref{prop:lambda_EHP} leads to a long exact sequence, known as the EHP sequence, for each $m$.

One can generalize $\Lambda(m)$ to $\Lambda(M)$ in such a way that the cohomology of $\Lambda(M)$ is equal to the $\Ext$ group from $M$ to spheres.

\begin{definition}
Given any $M\in\mathcal{U}$, we construct a cochain complex
\[\Lambda(M)=\bigoplus_{m}\Lambda(m)\otimes M_{m}\] with differential
\[d(\lambda_{I}\otimes x_{m}) = d(\lambda_{I})\otimes x_{m} + \sum_{i\geq 1, m-2i\geq 0}\lambda_{i-1}\lambda_{I}\otimes x_{m}\Sq^{i}.\]
Here $M_{m}$ denotes the dual $\Hom(M^{m}, \mathbb{F}_{2})$ of the degree $m$ part of $M$.
So the Steenrod operation $\Sq^{i}$ acts from the right on $x_{m}$.
We enforce the condition $m-2i\geq 0$ because otherwise $x_{m}\Sq^{i} = x_{m}\Sq_{2m-i} = 0$ by the unstability of the module $M$.
The $\Lambda(M)$, as a subspace of $\Lambda\otimes \left(M^{\vee}\right)$, closed under the differential by Lemma \ref{lem:lambda_check}.
The relations (\ref{eq:lambda_relation}) in the $\Lambda$ algebra and the ones (\ref{eq:steenrod_relation}) in the Steenrod algebra $A$ play so well with each other that one can check $d^{2}(\lambda_{I}\otimes x_{m}) = 0$.

The cochain complex $\Lambda(M)$ is still bigraded: the first grading is still the cohomological degree $s$, but the second grading is the \emph{absolute} internal degree $a$.
The absolute internal degree $a$ of $\lambda_{I}\otimes x_{m}$ is equal to $m$ plus the internal degree of $\lambda_{I}$.
The differential $d$ in $\Lambda(M)$ increases $s$ by one and preserves $a$.
\end{definition}

\begin{proposition}
\label{prop:lambda_coho}
The absolute internal degree $a$ part of the $s$-th cohomology of $\Lambda(M)$ is equal to the $s$-th Ext group in the category of $\mathcal{U}$ from $M$ to $S(a)$, i.e. \[H^{s,a}(\Lambda(M)) = \Ext^{s}_{\mathcal{U}}(M,S(a)).\]
\end{proposition}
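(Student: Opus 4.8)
The plan is to exhibit both $H^{s,a}(\Lambda(-))$ and $\Ext^{s}_{\mathcal{U}}(-,S(a))$ as contravariant cohomological $\delta$-functors on $\mathcal{U}$ and to compare them by dévissage, bottoming out at the sphere case of Proposition \ref{prop:lambda_m_coho}. First I would record that $M\mapsto\Lambda(M)$ is an exact contravariant functor $\mathcal{U}^{\op}\to\Ch^{*}(\Gr(\mathbb{F}_{2}\Mod))$: passing to the $\mathbb{F}_{2}$-linear dual $M^{m}\mapsto M_{m}$ is exact (we are over a field) and tensoring with the fixed complex $\Lambda(m)$ and summing over $m$ are exact, so a short exact sequence $0\to M'\to M\to M''\to 0$ in $\mathcal{U}$ produces a short exact, in fact degreewise split, sequence of cochain complexes $0\to\Lambda(M'')\to\Lambda(M)\to\Lambda(M')\to 0$; fixing the absolute internal degree $a$, the resulting long exact cohomology sequences make $(H^{s,a}(\Lambda(-)))_{s\geq 0}$ a contravariant $\delta$-functor. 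Next I would identify its degree-zero part: $\Lambda^{0}(M)=\bigoplus_{m}\mathbb{F}_{2}\cdot 1\otimes M_{m}$, whose absolute-internal-degree-$a$ piece is $M_{a}$, and the differential formula shows that $\xi\in M_{a}$ is a cocycle precisely when $\xi(\Sq^{i}y)=0$ for all $1\leq i\leq a/2$ and all $y\in M^{a-i}$; these are exactly the conditions for $\xi\colon M^{a}\to\mathbb{F}_{2}$ to extend to an unstable map $M\to S(a)$, so $H^{0,a}(\Lambda(M))\cong\Hom_{\mathcal{U}}(M,S(a))$ naturally in $M$. Since $\Ext^{*}_{\mathcal{U}}(-,S(a))$ is the universal $\delta$-functor extending $\Hom_{\mathcal{U}}(-,S(a))$, this produces a canonical morphism of $\delta$-functors $\theta^{s}_{M}\colon\Ext^{s}_{\mathcal{U}}(M,S(a))\to H^{s,a}(\Lambda(M))$, an isomorphism for $s=0$.

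It then remains to see that $\theta$ is an isomorphism in all degrees. On sphere modules this is Proposition \ref{prop:lambda_m_coho}, and one checks that the isomorphism there is compatible with the two $\delta$-structures by matching the EHP short exact sequence of Proposition \ref{prop:lambda_EHP} on the $\Lambda$-side with the corresponding EHP long exact sequence for $\Ext$ in $\mathcal{U}$; exactness of tensoring with a vector space then extends this to direct sums of spheres. A bounded-above module $N$ is a finite iterated extension of such sums (filter by degree: $0\to N^{b}\to N\to N^{<b}\to 0$ with $N^{b}$ a sum of spheres and $N^{<b}$ of strictly smaller top degree), so induction on the top nonzero degree together with the five lemma gives that $\theta$ is an isomorphism for all bounded-above $N$. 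Finally, for arbitrary $M$ I would fix $a$ and pass to the truncation $q\colon M\twoheadrightarrow M^{\leq a}:=M/M^{>a}$, which is bounded above: on the $\Ext$ side $q^{*}$ is an isomorphism because $M^{>a}$ is concentrated in degrees $>a$, hence admits a free resolution by direct sums of $F(n)$'s with $n>a$, and $F(n)^{j}=\mathcal{Q}(n,j)=0$ for $j<n$, so $\Hom_{\mathcal{U}}(-,S(a))$ annihilates the whole resolution and $\Ext^{s}_{\mathcal{U}}(M^{>a},S(a))=0$; on the $\Lambda$-side $\Lambda(q)$ is an isomorphism in absolute internal degree $a$ because its cokernel $\bigoplus_{m>a}\Lambda(m)\otimes M_{m}$ is concentrated in absolute internal degrees $>a$. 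Naturality of $\theta$ then promotes the bounded-above case to the general case.

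The substance of the argument lies in bookkeeping rather than in any single hard step: one must check that $\Lambda(-)$ really is well defined (the $d^{2}=0$ check, granted in the construction) and exact, carry out the degree-zero identification carefully, and — the most delicate point — confirm that the isomorphism of Proposition \ref{prop:lambda_m_coho} coincides with the comparison map $\theta$ rather than being merely an abstract isomorphism of graded vector spaces, so that the five-lemma step is legitimate. One should also track the contravariance throughout, since $\mathbb{F}_{2}$-dualization turns infinite direct sums into products; this is precisely why the proof proceeds through degree truncations and finite dévissage rather than through a single free resolution of $M$. An alternative would be to apply $\Lambda$ to a free resolution $P_{\bullet}\to M$ and compare the two spectral sequences of the resulting double complex, but that route requires the extra input that $\Lambda(P)$ is acyclic above cohomological degree zero for $P$ free, so I would only pursue it after establishing that acyclicity independently.
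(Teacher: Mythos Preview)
Your approach is genuinely different from the paper's. The paper takes exactly the ``alternative'' you mention and set aside: it applies $\Lambda$ (resp.\ $\Lambda_{k}$) to a free resolution $P_{\bullet}\to M$, forms the double complex, and compares the two spectral sequences. The key input is the analogue of Proposition~\ref{prop:lambda_k_coho} for $\mathcal{U}$, namely that $H^{s,a}(\Lambda(F(n)))$ vanishes for $s>0$; the paper proves this by an explicit two-step filtration of $\Lambda(F(n))$ (Constructions~\ref{cons:P^u} and~\ref{cons:Q^u_L}) whose associated graded pieces are visibly acyclic. Once this acyclicity is in hand, the double complex collapses on both sides and gives the result immediately, with no compatibility issues to track.

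Your d\'evissage route is sound in outline, and the degree-zero identification and truncation step are fine. The weak point is precisely the one you flag: you need the comparison map $\theta$ itself to be an isomorphism on spheres, not merely that the two sides are abstractly isomorphic via Proposition~\ref{prop:lambda_m_coho}. Your proposed fix---matching the two EHP sequences---is not part of the $\delta$-functor data (neither long exact sequence arises from a short exact sequence of objects in $\mathcal{U}$), so compatibility of $\theta$ with EHP is a separate statement you would have to prove, and the induction pattern it suggests ($m\mapsto m+1$ uses $2m+1$) does not obviously terminate. In practice the clean way to pin down $\theta$ on spheres is to exhibit $H^{s,a}(\Lambda(-))$ as \emph{effaceable}, i.e.\ vanishing on projectives for $s>0$, so that Grothendieck's universality forces $\theta$ to be an isomorphism everywhere---but that is exactly the free-module acyclicity you hoped to avoid, and it is what the paper supplies. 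So the two strategies converge at the same essential lemma; the paper's filtration argument is where the real content lives.
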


We can view $\Lambda$ as a contravariant functor from $\mathcal{U}$ to $\Ch^{*}(\Gr(\mathbb{F}_{2}\Mod))$.

\subsection{Cochain complex $\Lambda_{k}(m)$}
\begin{definition}
For all $m, k\geq 0$, $\Gamma(m, k)$ is defined to be the sub-bigraded vector space of $\Lambda(m)$ spanned by the admissible monomials $\lambda_{I}$ with $I(1)< m$ and \[\textrm{excess}(I) + (s-1) > I(1) - (m-k).\]
The trivial monomial 1 does not live in any $\Gamma(m,k)$.
\end{definition}

The $\Gamma(m,k)$ is a subcomplex of $\Lambda(m)$ by virtue of the following lemma.

\begin{lemma}
$\Gamma(m,k)$ is closed under the differential.
\end{lemma}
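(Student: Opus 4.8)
The plan is to show that if $\lambda_I$ is an admissible monomial lying in $\Gamma(m,k)$, then every admissible monomial appearing in $d(\lambda_I)$ — viewed as an element of $\Lambda(m)$ — also satisfies both defining conditions of $\Gamma(m,k)$, namely $I'(1) < m$ and $\mathrm{excess}(I') + (s'-1) > I'(1) - (m-k)$, where $s' = s+1$ is the new cohomological degree. We already know from Lemma \ref{lem:lambda_check} that $d(\Lambda(m)) \subseteq \Lambda(m)$, so the first condition $I'(1) < m$ is automatic; the real content is the excess inequality.

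First I would recall the formula for the differential on a single generator, $d(\lambda_n) = \sum_{j\geq 1}\binom{n-j}{j}\lambda_{n-j}\lambda_{j-1}$, and note the standard fact that the leading term is $\lambda_{n-1}\lambda_0$ (when the binomial coefficient is nonzero) and more generally that in each nonzero summand the pair $(n-j, j-1)$ satisfies $2(n-j) \geq j-1$ automatically only after re-expanding in the admissible basis; the Leibniz rule then gives $d(\lambda_I) = \sum_r \lambda_{I(1)}\cdots d(\lambda_{I(r)})\cdots\lambda_{I(s)}$. The key bookkeeping step is to track how the excess and the cohomological degree change when we replace a single $\lambda_{I(r)}$ by a length-two admissible expansion of $d(\lambda_{I(r)})$: the cohomological degree $s$ goes up by one, so the left side of the inequality gains $+1$ from the $(s-1)$ term, while I must bound how much $\mathrm{excess}(I') - \mathrm{excess}(I)$ and $I'(1) - I(1)$ can decrease. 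When $r > 1$, the first index is unchanged, so $I'(1) = I(1)$, and one checks that the excess drops by at most $1$ in each such term (this is the classical computation underlying the fact that $d$ respects the excess filtration on $\Lambda$); combined with the $+1$ from $s$, the inequality is preserved. The delicate case is $r = 1$, where the first index changes from $I(1)$ to $I(1) - j$ for some $j \geq 1$, so the right side $I'(1) - (m-k)$ decreases by $j$; here I would use that the new monomial contributes $\lambda_{I(1)-j}\lambda_{j-1}$ at the front, so the excess gains a new term $2(I(1)-j) - (j-1) = 2I(1) - 3j + 1$ from this adjacent pair while possibly losing up to something controlled at the old junction $2I(1) - I(2)$, and carefully combine these with the $+1$ from the cohomological degree shift to verify the inequality still holds strictly.

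The remaining subtlety is that $d(\lambda_I)$ is a sum of monomials that need not themselves be admissible, so after applying the Adem-type relations (\ref{eq:lambda_relation}) to rewrite each summand in the admissible basis I must check that every admissible monomial so produced still satisfies the excess bound. For this I would invoke the standard fact that the relations (\ref{eq:lambda_relation}) do not decrease excess — rewriting $\lambda_i\lambda_{2i+1+j}$ in admissible form produces only terms $\lambda_{i+j-t}\lambda_{2i+1+t}$ with excess at least that of the left side — together with the observation that they fix the length $s$ and, crucially, can only \emph{increase} or fix the first index $I(1)$ when the leading pair is non-admissible, which only helps the inequality. Assembling these pieces term by term over all $r$ and over all summands in $d(\lambda_I)$ gives $d(\Gamma(m,k)) \subseteq \Gamma(m,k)$.

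I expect the main obstacle to be the $r=1$ case: tracking the simultaneous change of $I(1)$ and $\mathrm{excess}(I)$ when the first generator is hit by $d$, since both sides of the strict inequality move and one must extract exactly enough slack from the $(s-1) \mapsto s$ shift. A clean way to organize this may be to rephrase the condition $\mathrm{excess}(I) + (s-1) > I(1) - (m-k)$ as a statement about the quantity $\sum_{r=2}^s(2I(r) - I(r-1)) $ or equivalently about $I(1) + I(s) - \mathrm{(something)}$ depending on telescoping the excess, so that the effect of $d$ becomes a local comparison at the two adjacent positions involved, rather than a global recomputation. Once the inequality is recast in such a local form, each case — $r=1$, $1 < r < s$, and $r = s$ — reduces to a short verification with binomial-coefficient side conditions handled by Lucas's theorem as in Lemma \ref{lem:froMor}.
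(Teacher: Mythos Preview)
Your approach is more laborious than necessary, and it contains one genuine error. You track $\mathrm{excess}(I)$ and $I(1)$ separately through the differential and through the admissible rewriting. In step (b) you assert that the relations can only increase or fix $I(1)$, ``which only helps the inequality.'' This is backwards: the inequality is $\mathrm{excess}(I) + (s-1) > I(1) - (m-k)$, so increasing $I(1)$ enlarges the right-hand side and \emph{hurts}. What actually saves you is that the combined quantity $\mathrm{excess}(I) + (s-1) - I(1)$ telescopes to $t - 1 - 2I(s)$, so any increase in $I(1)$ under a relation is exactly compensated by the accompanying increase in excess (since $t,s$ are fixed and $I(s)$ does not increase). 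You never supply this compensation argument, so as written there is a gap.

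The paper's proof is exactly the telescoping you sketch in your final paragraph but do not carry out. One rewrites the membership condition for $\Gamma(m,k)$ as $t + m - k - 1 > 2I(s)$, where $t$ is the internal degree. Since $d$ preserves $t$ (and $m,k$ are parameters), it suffices to show that the last subscript never increases when you apply $d$ and rewrite in the admissible basis. Both the differential formula $d(\lambda_n) = \sum_j \binom{n-j}{j}\lambda_{n-j}\lambda_{j-1}$ and the relations $\lambda_i\lambda_{2i+1+j} = \sum_t \binom{j-t-1}{t}\lambda_{i+j-t}\lambda_{2i+1+t}$ have the feature that the rightmost subscript on the output is no larger than on the input, and this finishes the proof in two lines. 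All of your case analysis on $r=1$, $1<r<s$, $r=s$, the Lucas-type binomial checks, and the excess-filtration bookkeeping become unnecessary once the condition is rephrased in terms of $I(s)$ rather than $I(1)$.
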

\begin{proof}
Note that $\textrm{excess}(I) + (s-1) > I(1) - (m-k)$ if and only if $t+m-k-1 > 2I(s)$, where $t$ is the internal degree of $\lambda_{I}$.
Since the differential $d$ does not change $t, m, k$, it suffices to prove that the differential $d$ does not increase the last subscript.
In other words, we need to prove that $d(\lambda_{I})$ can be written as a sum of admissible monomials $\lambda_{J}$ with $J(s+1) \leq I(s)$.
This is true because both the differential formula and the relations in $\Lambda$ does not increase the second subscript.
\end{proof}

Observe that $\Gamma(m, k+1)$ is a subcomplex of $\Gamma(m, k)$.

\begin{definition}
For all $m,k\geq 0$, $\Lambda_{k}(m)$ is defined to be the quotient cochain complex of $\Lambda(m)$ by its subcomplex $\Gamma(m, k)$.
The differentials in $\Lambda_{k}(m)$ follow from those in $\Lambda(m)$.
Observe that all nontrivial admissible monomials $\lambda_{I}$ in $\Lambda_{k}(m)$ have $m-k\leq I(1)\leq m-1$, because $I(1)<m$ and $0\leq\textrm{excess}(I) + (s-1)\leq I(1) - (m-k)$.

Note that $\Lambda_{k}^{s}(m) = 0$ when $s > k$ or $s < 0$.
When $s > k$, we have $\textrm{excess}(I) + s - 1 < I(1) - m + s$ and thus $\textrm{excess}(I)\leq I(1) - m< 0$.
No admissible monomial $\lambda_{I}$ can have $\textrm{excess}(I) < 0$.
\end{definition}

In a later subsection, we are going to prove Theorem \ref{thm:lambda_k_coho}, a special case of which is that the cohomology $H^{s,t}(\Lambda_{k}(m))$ is equal to $\Ext_{k}^{s}(S_{k}(m),S_{k}(m+t))$ for all $s,t$.
This result is the correspondence of Proposition \ref{prop:lambda_m_coho} in the world $\mathcal{U}_{k}$.

\begin{proposition}
The short exact sequence in Proposition \ref{prop:lambda_EHP} induces a short exact sequence \[0\to\Lambda_{k}(m)\xrightarrow{e}\Lambda_{k+1}(m+1)\xrightarrow{h}\Sigma^{1,m+1}\Lambda_{k}(2m+1)\to 0,\]
where the suspension suspends $s$ and $t$ by 1 and $m+1$ respectively.
\end{proposition}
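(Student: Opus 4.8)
The plan is to lift the short exact sequence of Proposition~\ref{prop:lambda_EHP} along the defining quotient sequences $0\to\Gamma(m,k)\to\Lambda(m)\to\Lambda_{k}(m)\to 0$. Concretely, I would first prove that $e$ and $h$ restrict to a short exact sequence of subcomplexes
\[0\to\Gamma(m,k)\xrightarrow{e}\Gamma(m+1,k+1)\xrightarrow{h}\Sigma^{1,m+1}\Gamma(2m+1,k)\to 0,\]
and then invoke the $3\times 3$ lemma on the diagram whose middle row is the sequence of Proposition~\ref{prop:lambda_EHP}, whose columns are the three quotient sequences above, whose top row is the displayed sequence, and whose bottom row is the one we want. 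The middle row is exact by Proposition~\ref{prop:lambda_EHP} and the columns are exact by definition, so once the top row is known exact the bottom row is too; the induced maps on the quotients are automatically chain maps since $e$ and $h$ are and since the $\Gamma$'s are subcomplexes.

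All the content is therefore in the top row, which I would check on admissible monomial bases using the reformulation from the lemma following the definition of $\Gamma(m,k)$: a non-trivial admissible monomial $\lambda_{I}$ of length $s$ and internal degree $t$ lies in $\Gamma(m,k)$ precisely when $I(1)<m$ and $t+m-k-1>2I(s)$. Since $e$ fixes admissible monomials and this inequality is \emph{literally unchanged} when $(m,k)$ is replaced by $(m+1,k+1)$, one gets $e^{-1}\bigl(\Gamma(m+1,k+1)\bigr)=\Gamma(m,k)$ on the nose; combined with exactness of the original sequence this yields injectivity of $\bar e$, the inclusion $e(\Gamma(m,k))\subseteq\Gamma(m+1,k+1)$, and $\ker\bigl(h|_{\Gamma(m+1,k+1)}\bigr)=\operatorname{im}\bigl(e|_{\Gamma(m,k)}\bigr)$. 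For $h$: an admissible $\lambda_{J}$ with $J(1)=m$ is sent to $\lambda_{J'}=\lambda_{J(2)}\cdots\lambda_{J(s)}$, which has length $s-1$, internal degree $t-(m+1)$, first index $J(2)\le 2m<2m+1$, and last index still $J(s)$; substituting these into the inequality defining $\Gamma(2m+1,k)$ reduces it, after cancellation, to $t+m-k-1>2J(s)$, which is exactly the condition putting $\lambda_{J}$ in $\Gamma(m+1,k+1)$. This gives $h\bigl(\Gamma(m+1,k+1)\bigr)\subseteq\Sigma^{1,m+1}\Gamma(2m+1,k)$, and running the same computation backwards — sending $\lambda_{J'}$ to $\lambda_{m}\lambda_{J'}$, which is admissible because $J'(1)<2m+1$ — gives surjectivity.

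I expect the only real friction to be bookkeeping: correctly tracking how the four quantities $(s,t,I(1),I(s))$ transform under $h$ (which strips a leading $\lambda_{m}$, hence lowers $s$ by $1$ and $t$ by $m+1$ while leaving $I(s)$ alone), so that the three excess inequalities genuinely coincide, and disposing of the harmless boundary cases — the trivial monomial $1$, which by fiat lies in no $\Gamma(m,k)$ and therefore never interferes, and length-one monomials under $h$. No idea beyond Proposition~\ref{prop:lambda_EHP} and the fact that the $\Gamma$'s are subcomplexes is required.
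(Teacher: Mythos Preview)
Your proposal is correct and amounts to the same computation the paper performs: the paper works directly on the quotient side, observing that the admissible basis of $\Lambda_{k}(m)$ is described by $I(1)<m$ together with the inequality $\textrm{excess}(I)+(s-1)\le I(1)-(m-k)$ (equivalently $t+m-k-1\le 2I(s)$), and checks by inspection that $e$ and $h$ respect this condition; you instead check the complementary inequality on the $\Gamma$ side and invoke the $3\times 3$ lemma, but the actual inequality-tracking under $e$ and under stripping a leading $\lambda_{m}$ is identical in both arguments. The only difference is packaging, and your treatment of the boundary cases ($1$ and length-one monomials) is handled correctly.
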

\begin{proof}
As a vector space over $\mathbb{F}_{2}$, the $\Lambda_{k}(m)$ is spanned by the admissible monomials $\lambda_{I}$ satisfying $I(1)<m$ and  \[\textrm{excess}(I) + (s-1) \leq I(1) - (m-k).\]
The map $e$ keeps the admissible monomials and the map $h$ drops $\lambda_{m}$ if the the admissible monomial starts with $\lambda_{m}$ and sends all other additive basis to zero.
It is easy to check the exactness of the short sequence and we omit it.
\end{proof}

The short exact sequence above leads to a long exact sequence
\[\begin{tikzcd}
&&\ldots\arrow[dll, "P"']\\
\Ext_{k}^{s}(m,n)\arrow[r, "E"'] &\Ext_{k+1}^{s}(m+1,n+1)\arrow[r, "H"] &\Ext_{k}^{s-1}(2m+1,n)\arrow[dll, "P"]\\
\Ext_{k}^{s+1}(m,n)\arrow[r, "E"'] &\Ext_{k+1}^{s+1}(m+1,n+1)\arrow[r, "H"] &\Ext_{k}^{s-1}(2m+1,n)\arrow[dll, "P"]\\
\ldots&&
\end{tikzcd}\]
where $\Ext_{k}^{s}(m,n)$ is an abbreviation for $\Ext_{k}^{s}(S_{k}(m),S_{k}(n))$.
Note that we've seen this long exact sequence before in Lemma \ref{lem:U_k_EHP}.
We see this long exact sequence as the correspondence of the EHP sequence in the world of $\mathcal{U}_{k}$.

\begin{example}
We write down the structure of several $\Lambda_{k}(m)$'s explicitly.
\begin{enumerate}
\item
$\Lambda_{0}(m)$ has additive basis $\{1\}$ for all $m\geq 0$.
All differentials are trivial.
\item
$\Lambda_{1}(m)$ has additive basis 
\[
\left\{
\begin{split}
&\{1\}&\textrm{ if }m = 0\\
&\{1,\lambda_{m-1}\}&\textrm{ if }m \geq 1
\end{split}\right.
\]
All differentials are trivial.
\item
$\Lambda_{2}(m)$ has additive basis
\[
\left\{
\begin{split}
&\{1\}&\textrm{ if }m = 0\\
&\{1,\lambda_{0},\lambda_{0}\lambda_{0}\}&\textrm{ if }m = 1\\
&\{1,\lambda_{m-2},\lambda_{m-1},\lambda_{m-1}\lambda_{2m-2}\}&\textrm{ if } m \geq 2
\end{split}\right.
\]
All differentials are trivial.
\item
$\Lambda_{3}(m)$ has additive basis
\[
\left\{
\begin{split}
&\{1\}&\textrm{ if }m = 0\\
&\{1,\lambda_{0},\lambda_{0}\lambda_{0},\lambda_{0}\lambda_{0}\lambda_{0}\}&\textrm{ if }m = 1\\
&\{1,\lambda_{0},\lambda_{1},\lambda_{0}\lambda_{0},\lambda_{1}\lambda_{1},\lambda_{1}\lambda_{2},\lambda_{1}\lambda_{2}\lambda_{4}\}&\textrm{ if }m = 2\\
&\{1,\lambda_{m-3},\lambda_{m-2},\lambda_{m-1},\lambda_{m-2}\lambda_{2m-4},\lambda_{m-1}\lambda_{2m-3},&\\
&\quad\lambda_{m-1}\lambda_{2m-2},\lambda_{m-1}\lambda_{2m-2}\lambda_{4m-4}\}&\textrm{ if } m \geq 3
\end{split}\right.
\]
All differentials are trivial.
\end{enumerate}
\end{example}

\subsection{Functor $\Lambda_{k}:\mathcal{U}_{k}^{\op}\to\Ch^{*}(\Gr(\mathbb{F}_{2}\Mod))$}

\begin{definition}
Given any $M\in\mathcal{U}_{k}$, we construct a cochain complex
\[\Lambda_{k}(M)=\bigoplus_{m}\Lambda_{k}(m)\otimes M_{m}\] with differentials
\[d(\lambda_{I}\otimes x_{m}) = d(\lambda_{I})\otimes x_{m} + \sum_{i\geq 1, 0\leq m-2i< k}\lambda_{i-1}\lambda_{I}\otimes x_{m}\Sq^{i}.\]
We require $0\leq m-2i<k$ because $x_{m}Sq^{i} = x_{m}\Sq_{2m-i}$ and as a module in $\mathcal{U}_{k}$, the $M$ only allows operations $\Sq_{0},\ldots,\Sq_{k-1}$.
Note that we can omit the condition $m-2i\geq 0$ in the definition of the differential because $x_{m}\Sq^{i} = 0$ automatically when $m-2i < 0$.
\end{definition}

\begin{lemma}
The differential $d$ is well-defined.
\end{lemma}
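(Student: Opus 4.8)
The plan is to verify two things: first, that the right-hand side of the formula for $d(\lambda_I \otimes x_m)$ actually lands in the quotient complex $\Lambda_k(M) = \bigoplus_m \Lambda_k(m) \otimes M_m$ (i.e. that it respects the subcomplexes $\Gamma(m,k)$), and second, that $d$ does not depend on the choice of representative — but since we are working over $\mathbb{F}_2$ with chosen bases of admissible monomials, well-definedness really amounts to checking that each term on the right is expressible in the correct quotient and that $d^2 = 0$ is inherited from $\Lambda(M)$. Concretely, I would proceed as follows.

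First I would observe that the term $d(\lambda_I) \otimes x_m$ is fine: by the lemma showing $\Gamma(m,k)$ is closed under the differential, the class of $d(\lambda_I)$ in $\Lambda_k(m)$ is well-defined, so $d(\lambda_I)\otimes x_m \in \Lambda_k(m)\otimes M_m \subseteq \Lambda_k(M)$. The real content is the second sum $\sum_{i\geq 1,\, 0\leq m-2i<k} \lambda_{i-1}\lambda_I \otimes x_m\Sq^i$. Here $x_m \Sq^i$ is an element of $M_{2m-i}$ (the dual of $M^{2m-i}$, using that $\Sq^i\colon M^{2m-i}\to M^m$, equivalently $\Sq_{2m-i} = \Sq_{(2m-i) - ?}$ — more simply $x_m\Sq^i \in (M^{2m-i})^\vee$), and the constraint $0\leq m-2i < k$ guarantees this operation is one of $\Sq_0,\dots,\Sq_{k-1}$ available in $\mathcal{U}_k$, so $x_m\Sq^i$ is genuinely defined; when $m - 2i < 0$ the operation $\Sq^i$ on degree $2m-i$ vanishes by unstability, so dropping that condition changes nothing. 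The nontrivial point is that $\lambda_{i-1}\lambda_I$, viewed modulo $\Gamma(2m-i, k)$, gives a well-defined element of $\Lambda_k(2m-i)$; for this I would check the excess/subscript inequality defining $\Gamma$: writing $\lambda_{i-1}\lambda_I$ and using that $\mathrm{excess}$ behaves predictably under prepending a generator, one sees that the defining inequality of $\Gamma(2m-i,k)$ is equivalent to an inequality that already failed for $\lambda_I$ in $\Lambda_k(m)$ (since $\lambda_I$ survives to $\Lambda_k(m)$ means $\mathrm{excess}(I) + (s-1) \leq I(1) - (m-k)$), and prepending $\lambda_{i-1}$ with $i - 1 < 2m - i$ increases $s$ by one while the new leading subscript $i-1$ satisfies the admissibility bound $i - 1 \leq 2I(1)$ automatically or forces $\lambda_{i-1}\lambda_I = 0$ after rewriting via the $\Lambda$-relations — in which case there is nothing to check. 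So the term maps into $\Lambda_k(2m-i)\otimes M_{2m-i} \subseteq \Lambda_k(M)$.

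Finally I would note that the sum is finite (only finitely many $i$ satisfy $0 \leq m-2i < k$, and $M_m$ is a fixed finite-or-not but the sum over $i$ is finite regardless), so $d$ is a well-defined additive map of bigraded vector spaces raising $s$ by one and preserving the absolute internal degree $a$. The identity $d^2 = 0$ is then inherited: $\Lambda_k(M)$ is a quotient of $\Lambda(M)$ (degreewise, via the quotient maps $\Lambda(m)\twoheadrightarrow \Lambda_k(m)$ tensored with $M_m$, after restricting the unstable module $M$ appropriately), and the displayed differential is precisely the image of the differential on $\Lambda(M)$ under this quotient, so $d^2 = 0$ descends. I expect the main obstacle to be the bookkeeping in the excess inequality showing $\lambda_{i-1}\lambda_I$ lands in the right $\Lambda_k(2m-i)$ — in particular handling the case where $\lambda_{i-1}\lambda_I$ is not itself admissible and must be rewritten via the $\Lambda$-relations \eqref{eq:lambda_relation}, where one uses (as in the proof that $\Gamma(m,k)$ is closed under $d$) that those relations do not increase the second subscript, hence do not move a monomial out of the span defining the quotient.
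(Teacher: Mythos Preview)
There are two concrete gaps. First, a degree slip: $x_m\Sq^i$ lies in $M_{m-i}$, not $M_{2m-i}$. The right action dualizes $\Sq^i\colon M^{m-i}\to M^m$, so $x_m\Sq^i\in (M^{m-i})^\vee = M_{m-i}$; the lower index of the corresponding operation is $m-2i$, which is where the constraint $0\le m-2i<k$ comes from. Consequently you must show $\lambda_{i-1}\lambda_I$ is well-defined in $\Lambda_k(m-i)$, not $\Lambda_k(2m-i)$.

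Second, and more importantly, your well-definedness argument runs in the wrong logical direction. Since $\Lambda_k(m)=\Lambda(m)/\Gamma(m,k)$, saying the formula is well-defined means the map $\lambda_I\mapsto\lambda_{i-1}\lambda_I$ sends $\Gamma(m,k)$ into $\Gamma(m-i,k)$. You instead argue from ``$\lambda_I$ survives to $\Lambda_k(m)$'' to ``$\lambda_{i-1}\lambda_I$ survives'', which is the contrapositive of the wrong implication and does not establish that the map descends to the quotient. The paper's proof does exactly the needed implication: it rewrites the membership condition for $\Gamma(m,k)$ as $t(I)-2I(s)>k-m+1$ (internal degree minus twice the \emph{last} subscript), observes that left multiplication by $\lambda_{i-1}$ adds $i$ to $t$ while the $\Lambda$-relations never increase the last subscript, and concludes $t(J)-2J(s+1)>k-(m-i)+1$ for every admissible summand $\lambda_J$. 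Your remark that ``relations do not increase the second subscript'' is the right ingredient, but it has to be fed into this form of the inequality.

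Finally, your closing claim that $d^2=0$ is ``inherited'' from $\Lambda(M)$ does not work here: $M$ is only a $\mathcal{U}_k$-module, so $\Lambda(M)$ is not defined, and indeed the paper proves $d^2=0$ as a separate (and rather involved) Theorem~\ref{thm:lambda_k_dsq}. The present lemma is only about the formula descending to the quotient.
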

\begin{proof}
It suffices to prove $\lambda_{i-1}\lambda_{I}\in\Gamma(m-i,k)$ if $\lambda_{I}\in\Gamma(m, k)$ when $k\geq 0, i\geq 1$ and $m\geq 2i$.
By Lemma \ref{lem:lambda_check}, $2i\leq m$ implies that $\lambda_{i-1}\Lambda_{I}$ lives in $\Lambda(m-i)$.
Note that the condition for an admissible monomial $\lambda_{I}\in\Lambda(m)$ to be in $\Gamma(m,k)$ is $\textrm{excess}(I)  + (s-1) > I(1) - (m-k)$, which is equivalent to $t(I) - 2I(s) > k - m + 1$.
Here $t(I)$ denotes the internal degree of $\lambda_{I}$.
Say $\lambda_{i-1}\lambda_{I} = \sum_{J}\lambda_{J}$ with $\lambda_{J}$ being admissible monomials.
Then $t(J) = i + t(I)$ and $t(J) - 2J(s+1) = i + t(I) - 2J(s+1) \ge i + t(I) - 2I(s) > i + k - m + 1$.
So $\lambda_{J}\in\Gamma(m-i, k)$.
\end{proof}

\begin{theorem}
\label{thm:lambda_k_dsq}
$d^{2} = 0$ in $\Lambda_{k}(M)$.
\end{theorem}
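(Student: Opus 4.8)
The plan is to check $d^{2}(\lambda_{I}\otimes x_{m})=0$ directly on a generator and to organize the computation so that it reduces to the one proving $d^{2}=0$ in the classical complex $\Lambda(M)$, together with a boundary analysis for the quotient by $\Gamma(m,k)$. By bilinearity it suffices to take $\lambda_{I}$ an admissible monomial whose class spans a basis vector of $\Lambda_{k}(m)$ and $x_{m}$ homogeneous of degree $m$. First I would expand $d^{2}(\lambda_{I}\otimes x_{m})$ using the definition of $d$ and the Leibniz rule $d(\lambda_{i-1}\lambda_{I})=d(\lambda_{i-1})\lambda_{I}+\lambda_{i-1}d(\lambda_{I})$ for the differential of $\Lambda$. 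The term $d^{2}(\lambda_{I})\otimes x_{m}$ disappears because $d^{2}=0$ already holds in $\Lambda$, and the two families of terms of the form $\lambda_{i-1}d(\lambda_{I})\otimes x_{m}\Sq^{i}$ — one produced by differentiating the $\Lambda$-factor of $d(\lambda_{I})\otimes x_{m}$, the other by the Leibniz rule — run over the same index set $\{\,i\ge 1:\ 0\le m-2i<k\,\}$ and hence cancel modulo $2$. What survives is
\[
Z=\sum_{\substack{i\ge 1\\ 0\le m-2i<k}} d(\lambda_{i-1})\lambda_{I}\otimes x_{m}\Sq^{i}
\;+\;\sum_{\substack{i\ge 1\\ 0\le m-2i<k}}\ \sum_{\substack{j\ge 1\\ 0\le m-i-2j<k}}\lambda_{j-1}\lambda_{i-1}\lambda_{I}\otimes x_{m}\Sq^{i}\Sq^{j},
\]
and the theorem is precisely the assertion that $Z=0$ in $\Lambda_{k}(M)$.

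The next step is to derive $Z=0$ by the same mechanism that makes $\Lambda(M)$ a complex: rewrite every inadmissible initial factor $\lambda_{j-1}\lambda_{i-1}$ by the $\Lambda$-relation (\ref{eq:lambda_relation}), expand each $d(\lambda_{i-1})$ by its defining formula, and rewrite every inadmissible composite $\Sq^{i}\Sq^{j}$ acting on $x_{m}$ by the Adem relations (\ref{eq:steenrod_relation}). The compatibility of (\ref{eq:lambda_relation}), (\ref{eq:steenrod_relation}) and the differential formula — the binomial-coefficient bookkeeping already used to check $d^{2}=0$ in $\Lambda(M)$ — arranges all the resulting terms into cancelling pairs. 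The one feature that is new here is that a cancelling pair can straddle the truncation: one partner sits at indices that still lie among the top $k$ squares, while its mate would be produced only by applying a square $\Sq^{\ell}$ to a class of degree $m'$ with $m'-2\ell\ge k$, an operation that is simply not present in a module of $\mathcal{U}_{k}$ and so never literally occurs in $Z$.

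The step I expect to be the main obstacle is showing that these straddling cancellations are harmless, i.e. that the truncation becomes invisible after passing to the quotient $\Lambda_{k}(m)=\Lambda(m)/\Gamma(m,k)$. The subcomplexes $\Gamma(m,k)$ are cut out by the inequality $t(J)-2J(s)>k-m+1$ on the internal degree $t(J)$, length $s$ and last subscript $J(s)$ of an admissible monomial, and — as in the proof that $d$ is well-defined on $\Lambda_{k}(M)$, where one uses that neither $d$ nor the $\Lambda$-relations raise the last subscript — this inequality is exactly tuned so that any monomial that would otherwise pair with an out-of-range term already lies in $\Gamma$ and is therefore zero in $\Lambda_{k}(m)$. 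Carrying out this matching of the $\Gamma$-inequality against the inequality $m'-2\ell\ge k$ defining ``out of range'' is the delicate point; once it is done, the truncated computation agrees term by term with the classical one modulo $\Gamma$, so $Z=0$ in $\Lambda_{k}(M)$ and $d^{2}=0$. A convenient but inessential preliminary reduction is to observe that a free cover $\bigoplus_{\alpha}F_{k}(n_{\alpha})\twoheadrightarrow M$ induces, after applying the contravariant functor $\Lambda_{k}$, an injective cochain map $\Lambda_{k}(M)\hookrightarrow\bigoplus_{\alpha}\Lambda_{k}(F_{k}(n_{\alpha}))$, so that it is enough to prove the theorem for free $M$; the combinatorics is the same in either case.
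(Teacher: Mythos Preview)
Your plan is correct and matches the paper's proof essentially step for step: the same initial expansion and cancellations leave exactly your $Z$, and the paper then rewrites the inadmissible $\lambda$-factors via (\ref{eq:lambda_relation}), the inadmissible $\Sq^{i}\Sq^{j}$ via the Adem relations, and $d(\lambda_{n-1})$ via its formula, after which everything cancels except for terms indexed by pairs with $m-2i\ge k$. Your ``delicate point'' --- matching the $\Gamma$-inequality against $m'-2\ell\ge k$ --- is isolated in the paper as Lemma~\ref{lem:lambda_comp_0} (proved using the auxiliary inequality Lemma~\ref{lem:lambda_ineq} on last subscripts under left multiplication in $\Lambda$), which says precisely that $\lambda_{i-1}\lambda_{I}=0$ in $\Lambda_{k}(m)$ once $i\le m-k$; your reduction to free $M$ is harmless but, as you note, buys nothing.
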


We first prove two lemmas which will come in handy when proving Theorem \ref{thm:lambda_k_dsq}.
Lemma \ref{lem:lambda_comp_0} is about the cochain complex $\Lambda_{k}(m)$ and Lemma \ref{lem:lambda_ineq} is about the standard $\Lambda$ algebra.

\begin{lemma}
\label{lem:lambda_comp_0}
If $i + 1 \leq m - k$ and $\lambda_{I}$ is any admissible monomial in $\Lambda(m+i+1)$, then $\lambda_{i}\lambda_{I} = 0 \in \Lambda_{k}(m)$.
\end{lemma}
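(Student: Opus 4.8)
The plan is to unwind what ``$\lambda_i\lambda_I = 0 \in \Lambda_k(m)$'' means and verify it term by term. Since $\Lambda_k(m) = \Lambda(m)/\Gamma(m,k)$, the assertion is that $\lambda_i\lambda_I$, rewritten in the admissible basis of $\Lambda$, lies in $\Gamma(m,k)$. First I would check $\lambda_i\lambda_I \in \Lambda(m)$: the hypothesis $i+1\le m-k\le m$ gives $i<m$, so $\lambda_i\in\Lambda^{1,i+1}(m)$, and as $\lambda_I\in\Lambda(m+i+1)$, Lemma~\ref{lem:lambda_check} yields $\lambda_i\lambda_I\in\Lambda(m)$. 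Hence every admissible monomial $\lambda_J$ occurring in $\lambda_i\lambda_I$ has $J(1)<m$, and—using the reformulation noted right after the definition of $\Gamma(m,k)$, that an admissible $\lambda_J$ of length $\ell$ and internal degree $t(J)$ lies in $\Gamma(m,k)$ iff $J(1)<m$ and $t(J)+m-k-1>2J(\ell)$—it remains to check this inequality for each such $\lambda_J$. Because multiplication preserves internal degree, $t(J)=(i+1)+t(I)$ for every term, so the task reduces to bounding the last index: $2J(\ell)<i+t(I)+m-k$.

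I would then induct on the length $s$ of $I$. The base case $s=0$ is $\lambda_i\cdot 1=\lambda_i$, where $2i<i+m-k$ is just $i<m-k$. For $s\ge 1$, write $I=(b_1,\dots,b_s)$ and look at the leading pair $\lambda_i\lambda_{b_1}$. If $b_1\le 2i$ then $\lambda_i\lambda_I$ is the single admissible monomial $\lambda_{(i,b_1,\dots,b_s)}$ and the inequality is a short computation: admissibility of $I$ gives $b_s-(b_1+\cdots+b_{s-1})\le b_1$, hence $2b_s-t(I)\le b_1-s\le 2i-s$, and so $2b_s<i+t(I)+m-k$ since $i-s<m-k$. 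If $b_1=2i+1$, the $\Lambda$-relation with $j=0$ forces $\lambda_i\lambda_{b_1}=0$, so $\lambda_i\lambda_I=0$. If $b_1\ge 2i+2$ but $b_1\le m-k+i$, then Lemma~\ref{lem:lambda_check} upgrades the membership to $\lambda_i\lambda_I\in\Lambda(\max(i+1,b_1-i))\subseteq\Lambda(m-k)$; every admissible term then has $J(1)<m-k$ and $\mathrm{excess}(J)\ge 0$, so $t(J)-2J(\ell)=\mathrm{excess}(J)+\ell-J(1)\ge 1-(m-k-1)>k-m+1$, which finishes this case.

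The remaining case—$\lambda_i\lambda_{b_1}$ inadmissible with $b_1\in\{m-k+i+1,\dots,m+i\}$—is the heart of the matter and, I expect, the main obstacle. One applies the $\Lambda$-relation $\lambda_i\lambda_{2i+1+j}=\sum_t\binom{j-t-1}{t}\lambda_{i+j-t}\lambda_{2i+1+t}$ and must bring each resulting $\lambda_{i+j-t}\lambda_{2i+1+t}\lambda_{b_2}\cdots\lambda_{b_s}$ to admissible form; the trouble is that the new tail $\lambda_{2i+1+t}\lambda_{b_2}\cdots\lambda_{b_s}$ need not be admissible and its admissibilization can raise the leading index, so the inductive hypothesis does not apply verbatim. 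The way through is to exploit that the $\Lambda$-relation preserves the sum of the two indices of the pair it acts on while strictly decreasing the second (right-hand) one—indeed $2i+1+t<2i+1+j=b_1$ for every nonzero term—so that iterating it propagates a ``halving'' of the large indices down the monomial; together with the admissibility of $I$ (which bounds $b_{p+1}\le 2b_p$) and the hypothesis $b_1\le m+i$, this should keep the last index $J(\ell)$ of every admissible monomial in $\lambda_i\lambda_I$ below $\tfrac12(i+t(I)+m-k)$. A convenient way to organize it: since $J(1)\le m-1$ always, the inequality $\mathrm{excess}(J)+\ell-J(1)+m-k-1>0$ is automatic whenever $\mathrm{excess}(J)+s\ge k$, so it suffices to prove that any admissible $\lambda_J$ appearing in $\lambda_i\lambda_I$ with $\mathrm{excess}(J)+s<k$ nonetheless satisfies $J(1)<s+m-k$. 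Establishing this excess-sensitive estimate on the leading index—which is where the interplay between the $\Lambda$-relations and the admissibility of $I$ really has to be used—is the crux of the proof.
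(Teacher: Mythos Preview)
Your reduction is correct and matches the paper: one must show that every admissible summand $\lambda_J$ of $\lambda_i\lambda_I$ satisfies $t(J)+m-k-1>2J(\ell)$, equivalently $J(1)+\cdots+J(s)+s+(m-k)>J(s+1)$ for $\ell=s+1$. Where you diverge, and where the real gap lies, is in the induction scheme. Your case split on the leading pair $\lambda_i\lambda_{b_1}$ dispatches the easy situations, but in the remaining case (inadmissible pair with $b_1>m-k+i$) the $\Lambda$-relation produces terms $\lambda_{i'}\lambda_{j'}\lambda_{b_2}\cdots\lambda_{b_s}$ whose tail $\lambda_{j'}\lambda_{b_2}\cdots\lambda_{b_s}$ is no longer admissible, so your inductive hypothesis (formulated for admissible $I$) does not apply; you acknowledge this and retreat to an ``excess-sensitive estimate on the leading index'' which you never prove. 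That is precisely the missing idea.

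The paper sidesteps the case split by isolating a purely $\Lambda$-algebraic inequality, independent of $m$ and $k$ (its Lemma~\ref{lem:lambda_ineq}): for admissible $\lambda_I$ of length $s\ge 1$, every admissible summand $\lambda_J$ of $\lambda_i\lambda_I$ satisfies $i+J(1)+\cdots+J(s)\ge J(s+1)$. Combined with $i+1\le m-k$ and $s\ge 1$, this immediately yields the needed inequality. Its proof is an induction on $s$ that processes the word left-to-right rather than peeling off the first letter: first admissibilize $\lambda_i\lambda_{I(1)}\cdots\lambda_{I(s-1)}$ to $\sum\lambda_P$ (inductive hypothesis: $i+P(1)+\cdots+P(s-1)\ge P(s)$), then append $\lambda_{I(s)}$ and apply the $s=1$ case to the rightmost pair $\lambda_{P(s)}\lambda_{I(s)}$; any further straightening only helps, since each application of the relation never increases the rightmost index. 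This ordering of the induction is exactly what your approach lacks.
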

\begin{proof}
By Lemma \ref{lem:lambda_check}, $\lambda_{i}\lambda_{I}$ lives in $\Lambda(m)$.
If $\lambda_{I} = 1$, then $\lambda_{i}$ is trivial in $\Lambda_{k}(m)$ because $i<m-k$.
From now on, assume the length of $\lambda_{I}$ is $s\geq 1$.
Say $\lambda_{i}\lambda_{I} = \sum_{J}\lambda_{J}$ where each $\lambda_{J}$ is admissible of length $s + 1$ in $\Lambda(m)$.
It suffices to prove $\textrm{excess}(J) + s > J(1) - (m-k)$ or equivalently, $J(1) + \cdots + J(s) + s + (m-k) > J(s+1)$.
Since $i + 1\leq m - k$, it suffices to prove $J(1) + \cdots + J(s) + i + (s+1) > J(s+1)$, which follows from Lemma \ref{lem:lambda_ineq}
\end{proof}

\begin{lemma}
\label{lem:lambda_ineq}
Let $\lambda_{I}$ be any admissible monomial of length $s\geq 1$.
Write $\lambda_{i}\lambda_{I}$ as the sum of admissible monomials $\lambda_{J}$'s.
Then $i + J(1) + \cdots + J(s)\geq J(s+1)$.
\end{lemma}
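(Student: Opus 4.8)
\emph{Plan.} The plan is to convert the asserted inequality into a statement about a single numerical invariant of a monomial and then to track that invariant through the straightening process. For a monomial $\lambda_K=\lambda_{K(1)}\cdots\lambda_{K(n)}$ set $\delta(\lambda_K)=K(n)-\sum_{r<n}K(r)$. Since the relations (\ref{eq:lambda_relation}) preserve both length and internal degree, every admissible $\lambda_J$ occurring in $\lambda_i\lambda_I$ has length $s+1$ and satisfies $\sum_{r=1}^{s+1}J(r)=i+\sum_{r=1}^{s}I(r)$; in particular the desired inequality $i+J(1)+\cdots+J(s)\ge J(s+1)$ is \emph{exactly} the inequality $\delta(\lambda_J)\le i$. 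I would also record two standard facts: for admissible $\lambda_J$ one has $\delta(\lambda_J)=J(1)-\mathrm{excess}(J)\le J(1)$, and straightening never decreases the leading index, so $J(1)\ge i$ throughout.

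First I would prove a monotonicity lemma for $\delta$. Applying a relation from (\ref{eq:lambda_relation}) at an \emph{interior} position of a monomial leaves $\delta$ unchanged, because the last letter is untouched and the two letters that are changed have the same sum before and after; applying a relation at the \emph{final} position \emph{strictly decreases} $\delta$, since a genuine reduction $\lambda_a\lambda_b\mapsto\sum_t\lambda_{a+j-t}\lambda_{2a+1+t}$ with $b=2a+1+j$, $j\ge 1$, produces only second letters $2a+1+t$ with $t\le(j-1)/2$, all strictly smaller than $b$ (a short computation then shows the change in $\delta$ equals $2(b'-b)<0$). Hence $\delta(\lambda_J)\le\delta(\lambda_i\lambda_I)=I(s)-i-\sum_{r<s}I(r)$ for every admissible $\lambda_J$ in the expansion. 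If $\delta(\lambda_i\lambda_I)\le i$, we are done. Otherwise, iterating the admissibility inequalities $I(r+1)\le 2I(r)$ downward along $I$ forces $I(1)>2i$, i.e.\ the leading pair $\lambda_i\lambda_{I(1)}$ is itself inadmissible; this is the case that requires work.

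In that case I would peel off the leading $\lambda_i$, apply (\ref{eq:lambda_relation}) once to write $\lambda_i\lambda_{I(1)}=\sum_t\binom{j-t-1}{t}\lambda_{i+j-t}\lambda_{2i+1+t}$ with $j=I(1)-2i-1$, and use that the coefficient vanishes unless $t\le(j-1)/2$; this one inequality simultaneously makes each new leading pair $\lambda_{i+j-t}\lambda_{2i+1+t}$ admissible and yields precisely $i+(i+j-t)\ge 2i+1+t$. Then I would straighten the tail $\lambda_{2i+1+t}\lambda_{I(2)}\cdots\lambda_{I(s)}$ using the inductive hypothesis in the length $s$ (its first factor $\lambda_{2i+1+t}$ playing the role of ``$\lambda_i$'') and combine its conclusion with the inequality just extracted. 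The main obstacle is that after this step the new leading pair may again be inadmissible, so the leading position must be re-straightened and the procedure iterated; to close this I would invoke that straightening terminates — for instance each relation strictly lowers the reverse-lexicographic value of the index sequence, giving a well-founded secondary induction — and use the $\delta$-monotonicity of the previous paragraph to keep the bound $\delta(\lambda_J)\le i$ intact along the way. Assembling the two cases finishes the proof.
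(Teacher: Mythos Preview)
Your proof is correct and shares its core idea with the paper's: both reduce the inequality to a statement about the last index (your $\delta$ invariant) and use that the relations~(\ref{eq:lambda_relation}) never increase the last index. Your explicit monotonicity lemma for $\delta$ is exactly what the paper invokes tersely as ``further applying relations will either preserve both sides or increase the left hand side while decreasing the right hand side.''

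The organizational difference is where the induction peels. The paper peels off the \emph{last} letter of $I$: write $\lambda_i\lambda_{I(1,\ldots,s-1)}$ as a sum of admissibles $\lambda_P$ (inductive hypothesis at length $s-1$), then handle $\lambda_{P(s)}\lambda_{I(s)}$ by the base case, add the two inequalities to get $\delta\le i$ for $\lambda_{P(1,\ldots,s-1)}\lambda_Q$, and finish by monotonicity. This avoids your case split entirely. You instead peel from the \emph{front}: apply the relation to $\lambda_i\lambda_{I(1)}$, then use the inductive hypothesis on the tail $\lambda_{2i+1+t}\lambda_{I(2)}\cdots\lambda_{I(s)}$ (note: this is the hypothesis with admissible part of length $s-1$, not $s$ as you wrote), and add the two inequalities the same way. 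Both routes land at an intermediate monomial with $\delta\le i$ and then invoke monotonicity.

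One simplification: your worry about a ``secondary induction'' on termination is unnecessary. Once you have combined the two inequalities to obtain $\delta(\lambda_{i+j-t}\lambda_K)\le i$, your own monotonicity lemma finishes the argument in one stroke---any sequence of relation applications taking $\lambda_{i+j-t}\lambda_K$ to admissible form keeps $\delta\le i$ throughout, regardless of how many times the leading pair becomes inadmissible again. The two ``standard facts'' you record ($\delta(\lambda_J)=J(1)-\mathrm{excess}(J)$ and $J(1)\ge i$) are correct but end up unused.
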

\begin{proof}
We will proceed by induction on $s$.
The base case is $s = 1$.
When $s = 1$, write $\lambda_{i}\lambda_{j}$ as the sum of admissible monomials $\lambda_{i'}\lambda_{j'}$'s.
If $2i\geq j$, then $i' = i$ and $j' = j$.
Otherwise, in the relation (\ref{eq:lambda_relation}), we have $i + (i + j -t ) \geq 2i + 1 + t$ because the binomial coefficient requires $j - t - 1\geq t$.
One can check that the right hand side of the relation (\ref{eq:lambda_relation}) is admissible.

Assume $s > 1$.
Write $\lambda_{i}\lambda_{I(1,2,\ldots,s-1)}$ as the sum of admissible monomials $\lambda_{P}$.
Then by the case $s-1$, we have $i + P(1) + \cdots P(s-1)\geq P(s)$.
Now consider $\lambda_{P}\lambda_{I(s)} = \lambda_{P(1,\ldots,s-1)}\lambda_{P(s)}\lambda_{I(s)}$.
It is equal to a sum of $\lambda_{P(1,\ldots,s-1)}\lambda_{Q(1,2)}$ where both $\lambda_{P(1,\ldots,s-1)}$ and $\lambda_{Q(1,2)}$ are admissible monomials.
By the base case $s = 1$, we have $P(s) + Q(1)\geq Q(2)$.
Adding those two inequalities leads to $i + P(1) + \cdots + P(s-1) + Q(1) \geq Q(2)$.
Further applying relations (\ref{eq:lambda_relation}) to $\lambda_{P(1,\ldots,s-1)}\lambda_{Q(1,2)}$ will either preserve both sides or increases left hand side while decreasing the right hand side.
\end{proof}

\begin{proof}[Proof to Theorem \ref{thm:lambda_k_dsq}]
In this proof, we declare ${a\choose b} = 0$ if $a < 0$ or $b < 0$.
Let $x$ be any element in $M_{m}$.
\[
\begin{split}
d^{2}\left(\lambda_{I}\otimes x\right)
=&d^{2}(\lambda_{I})\otimes x\\
&\quad+\sum_{n\geq 1, m-2n< k}\lambda_{n-1}d(\lambda_{I})\otimes x\Sq^{n}\\
&\quad+\sum_{n\geq 1, m-2n< k}d\left(\lambda_{n-1}\lambda_{I}\right)\otimes x\Sq^{n}\\
&\quad+\sum_{i,j\geq 1, m-2i< k, m-i-2j<k}\lambda_{j-1}\lambda_{i-1}\lambda_{I}\otimes x\Sq^{i}\Sq^{j}\\
=&\sum_{n\geq 2, m-2n< k}d(\lambda_{n-1})\lambda_{I}\otimes x\Sq^{n}\\
&\quad+\sum_{i,j\geq 1, m-2i< k, m-i-2j<k}\lambda_{j-1}\lambda_{i-1}\lambda_{I}\otimes x\Sq^{i}\Sq^{j}\\
\end{split}
\]
For convenience, define $S(m, k)$ to be the set of indices $(i,j)\in\mathbb{Z}_{>0}^{2}$ satisfying \[m-2i< k, m-i-2j<k.\]
Define $A, B, C$ as
\[
\begin{split}
A:=&\sum_{(i, j)\in S(m, k), i\geq 2j}\lambda_{j-1}\lambda_{i-1}\lambda_{I}\otimes x\Sq^{i}\Sq^{j},\\
B:=&\sum_{(i, j)\in S(m, k), i< 2j}\lambda_{j-1}\lambda_{i-1}\lambda_{I}\otimes x\Sq^{i}\Sq^{j},\\
C:=&\sum_{n\geq 2, m-2n< k}d(\lambda_{n-1})\lambda_{I}\otimes x\Sq^{n}.
\end{split}
\]
Therefore, $d^{2}(\lambda_{I}\otimes x) = A + B + C$.
In $A$, the $\Sq^{i}\Sq^{j}$ is admissible but $\lambda_{j-1}\lambda_{i-1}$ is not.
Applying the relations (\ref{eq:lambda_relation}), we get
\[
\begin{split}
A =& \sum_{(i,j)\in S(m,k), i\geq 2j}\sum_{t\ge 0}{i-2j-t-1\choose t}\lambda_{i-j-t-1}\lambda_{2j+t-1}\lambda_{I}\otimes x\Sq^{i}\Sq^{j}\\
=& \sum_{(i,j)\in S(m,k), i\geq 2j}\sum_{v\ge 2j}{i-v-1\choose v-2j}\lambda_{i+j-v-1}\lambda_{v-1}\lambda_{I}\otimes x\Sq^{i}\Sq^{j}\\
=& \sum_{(i', j', s, t)\in A(m, k)}{s-i'-1\choose i'-2t}\lambda_{j'-1}\lambda_{i'-1}\lambda_{I}\otimes x\Sq^{s}\Sq^{t},
\end{split}
\]
where $A(m, k)$ is defined as the set of indices $(i, j, s, t) \in \mathbb{Z}_{>0}^{4}$ satisfying \[i< 2j, s\geq 2t, i+j = s+t, m-s-2t < k.\]
In the second equality, we used the substitution $v = 2j + t$.
In the third equality, we used the substitution $i' = v, j' = i+j-v, s = i, t = j$.
It is straightforward to check the third equality and the corresondence of index sets.
In $B$, the $\lambda_{j-1}\lambda_{i-1}$ is admissible but $\Sq^{i}\Sq^{j}$ is not.
Applying the Adem relations, we get
\[
\begin{split}
B &= \sum_{(i,j)\in S(m, k), i < 2j}\sum_{t\geq 0}{j-t-1\choose i-2t}\lambda_{j-1}\lambda_{i-1}\lambda_{I}\otimes x\Sq^{i+j-t}\Sq^{t}\\
&= \sum_{(i,j)\in S(m, k), i < 2j}\sum_{t\geq 1}{j-t-1\choose i-2t}\lambda_{j-1}\lambda_{i-1}\lambda_{I}\otimes x\Sq^{i+j-t}\Sq^{t}\\
&\qquad+\sum_{(i,j)\in S(m, k), i < 2j}{j-1\choose i}\lambda_{j-1}\lambda_{i-1}\lambda_{I}\otimes x\Sq^{i+j}\\
&= D + E,
\end{split}
\]
where $D,E$ are defined as
\[
\begin{split}
D:=&\sum_{(i,j,s,t)\in D(m,k)}{s-i-1\choose i-2t}\lambda_{j-1}\lambda_{i-1}\lambda_{I}\otimes x\Sq^{s}\Sq^{t},\\
E:=&\sum_{(i,j)\in E(m,k)}{j-1\choose i}\lambda_{j-1}\lambda_{i-1}\lambda_{I}\otimes x\Sq^{i+j},
\end{split}
\]
$D(m,k)$ is defined to be the set of indices $(i,j,s,t)\in\mathbb{Z}_{>0}^{4}$ satisfying \[i<2j,s\ge 2t, i+j = s+t, m-2i < k\]
and $E(m,k)$ is defined to be the set of indices $(i,j)\in\mathbb{Z}_{>0}^{2}$ satisfying \[i<2j, m-2i < k.\]
Therefore, $d^{2}(\lambda_{I}\otimes x) = A + D + E + C$.
Applying the differential formula to $d(\lambda_{n-1})$, we get 
\[
\begin{split}
C =& \sum_{n\geq 2, m-2n< k}\sum_{i\geq 1}{n-i-1\choose i}\lambda_{n-i-1}\lambda_{i-1}\lambda_{I}\otimes x\Sq^{n}\\
=& \sum_{(i,j)\in C(m, k)}{j-1\choose i}\lambda_{j-1}\lambda_{i-1}\lambda_{I}\otimes x\Sq^{i+j},
\end{split}
\]
where $C(m, k)$ is defined as the set of indices $(i, j) \in \mathbb{Z}_{>0}^{2}$ satisfying \[i<2j, m-2(i+j)< k.\]
In the second equality, we used the substitution $j = n - i$.
Observe that $A$ and $D$ are summation of the same expression over slightly different index set.
The same for $E$ and $C$.

Take any $(i,j,s,t)\in D(m,k)$.
Then $(m-2i) - (m-s-2t) = s+2t-2i\geq 1$ because the binomial coefficient leads to $s-i-1\geq i-2t$.
So $(i,j,s,t)\in A(m,k)$ and $D(m,k)$ is a subset of $A(m,k)$.
The difference of those two index sets are consists of $(i,j,s,t)\in A(m,k)$ satisfying $m-s-2t = k-1$ and $s+2t-2i = 1$.
Any $(i,j,s,t)$ in the difference $A(m,k) - D(m,k)$ satisfies $m-2i = k$.

Take any $(i,j)\in E(m,k)$.
Then $(m-2i-2j) - (m-2i) = -2j < 0$.
So $(i,j)\in C(m,k)$ and $E(m,k)$ is a subset of $C(m,k)$.
The difference of those two index sets are consists of $(i,j)\in C(m,k)$ satisfying $m-2i\geq k$.

Therefore, \[A+D=\sum_{(i,j,s,t)\in A(m,k) - D(m,k)}{s-i-1\choose i-2t}\lambda_{j-1}\lambda_{i-1}\lambda_{I}\otimes x\Sq^{s}\Sq^{t}\]
and \[E+C=\sum_{(i,j)\in C(m,k) - E(m,k)}{j-1\choose i}\lambda_{j-1}\lambda_{i-1}\lambda_{I}\otimes x\Sq^{i+j}.\]
The index $(i,j,s,t)$ in $A(m,k) - D(m,k)$ and the index $(i,j)$ in $C(m,k) - E(m,k)$ both satisfy $m-2i\geq k$.
By Lemma \ref{lem:lambda_comp_0}, $\lambda_{i-1}\lambda_{I} = 0$ when $m-2i\geq k$ and thus $A+D = 0, E+C = 0$.
Adding them up, we get $d^{2}(\lambda_{I}\otimes x) = A + D + E + C = 0$.
\end{proof}

\begin{remark}
$\Lambda_{k}$ is a contravariant exact functor from $\mathcal{U}_{k}$ to $\Ch^{*}(\Gr(\mathbb{F}_{2}\Mod))$.
\end{remark}

\subsection{Cohomology of the cochain complex $\Lambda_{k}(M)$}
The following thoerem is our main result in this subsection.
It is the correspondence of Proposition \ref{prop:lambda_coho} in the world of $\mathcal{U}_{k}$.
\begin{theorem}
\label{thm:lambda_k_coho}
For any module $M\in\mathcal{U}_{k}$, the $\Lambda_{k}(M)$ is a length $\leq k$ cochain complex with
\[H^{s,a}(\Lambda_{k}(M)) = \Ext_{k}^{s}(M,S_{k}(a))\quad\textrm{for all }s,a.\]
\end{theorem}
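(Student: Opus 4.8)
The bound $\Lambda_k^s(M)=0$ for $s\notin[0,k]$ is immediate from the already-established fact that $\Lambda_k^s(m)=0$ there, since $\Lambda_k^s(M)=\bigoplus_m\Lambda_k^s(m)\otimes M_m$. For the cohomology computation the plan is to compare two contravariant cohomological $\delta$-functors on $\mathcal{U}_k$: the family $\{M\mapsto H^{s,a}(\Lambda_k(M))\}_{s\ge0}$ and the family $\{M\mapsto\Ext_k^s(M,S_k(a))\}_{s\ge0}$. The first is a $\delta$-functor because $\Lambda_k$ is exact, so a short exact sequence $0\to M'\to M\to M''\to0$ produces a short exact sequence of cochain complexes $0\to\Lambda_k(M'')\to\Lambda_k(M)\to\Lambda_k(M')\to0$ and hence a long exact cohomology sequence; the second is one because $\mathcal{U}_k$ has enough projectives (Proposition~\ref{prop:enoughProj}). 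They agree in degree $0$: the trivial monomial is the only admissible monomial of cohomological degree $0$, so $\Lambda_k^0(M)=\bigoplus_m M_m$, and reading off the formula for $d^0$ identifies the part of $\ker d^0$ in internal degree $a$ with the functionals on $M^a$ annihilating the image of every allowed operation $\Sq_j$ that lands in degree $a$, i.e.\ with $\Hom_{\mathcal{U}_k}(M,S_k(a))$. Since $\Ext_k^\bullet(-,S_k(a))$ is the universal $\delta$-functor extending this $\Hom$, there is a canonical comparison morphism $\eta\colon\Ext_k^\bullet(-,S_k(a))\to H^{\bullet,a}(\Lambda_k(-))$, and it remains to prove $\eta$ is an isomorphism.

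I would first establish this on sphere modules and then bootstrap. The class of $M$ for which $\eta_M$ is an isomorphism is closed under short exact sequences (five lemma, using that $\eta$ is a morphism of $\delta$-functors and therefore intertwines the two long exact sequences) and under arbitrary direct sums, because the dualization built into $\Lambda_k$ converts a direct sum in $\mathcal{U}_k$ into a product of complexes, matching $\Ext_k^\bullet(\bigoplus(-),S_k(a))=\prod\Ext_k^\bullet(-,S_k(a))$. Hence $\eta$ is an isomorphism on each quotient $F_k(n)/F_{\ge m}$, where $F_{\ge m}\subseteq F_k(n)$ is the submodule spanned by the words of length $\ge m$ on $\iota_n$, since this quotient is a finite iterated extension of direct sums of sphere modules. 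Because $F_{\ge m}$ is concentrated in degrees bounded below by a quantity tending to $\infty$ with $m$, in any fixed bidegree $(s,a)$ both $H^{s,a}(\Lambda_k(F_{\ge m}))$ and $\Ext_k^s(F_{\ge m},S_k(a))$ vanish for $m$ large, so the exact sequences $0\to F_{\ge m}\to F_k(n)\to F_k(n)/F_{\ge m}\to0$ and the five lemma force $\eta$ to be an isomorphism on every free module $F_k(n)$. In particular $H^{>0}(\Lambda_k(F))=0$ for $F$ free, so $H^{\bullet,a}(\Lambda_k(-))$ is itself coeffaceable in positive degrees and hence the universal $\delta$-functor extending $\Hom_{\mathcal{U}_k}(-,S_k(a))$; thus $\eta$ is an isomorphism on all of $\mathcal{U}_k$, provided it is so on spheres.

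For sphere modules I would induct on $k$. When $k=0$ the complex $\Lambda_0(S_0(m))=\Lambda_0(m)$ is $\mathbb{F}_2$ in bidegree $(0,m)$ and $\mathcal{U}_0$ has homological dimension $0$, and for every $k$ the sphere $S_k(0)$ equals the free module $F_k(0)$, so these base cases are immediate. For the inductive step let $m\ge1$ and apply cohomology to the short exact sequence of complexes
\[0\to\Lambda_k(m-1)\xrightarrow{e}\Lambda_{k+1}(m)\xrightarrow{h}\Sigma^{1,m}\Lambda_k(2m-1)\to0\]
(the reindexing of the displayed short exact sequence preceding this theorem). By the inductive hypothesis the two outer terms of the resulting long exact sequence are the $\Ext_k$-groups of $S_k(m-1)$ and $S_k(2m-1)$ into spheres, occurring in exactly the bidegrees appearing in the EHP long exact sequence of Lemma~\ref{lem:U_k_EHP} (with $k$ replaced by $k+1$) applied to $M=S_{k+1}(m)$, for which $\Omega M=S_k(m-1)$ and $\Omega_1M=S_k(2m-1)$. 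Identifying the two long exact sequences and invoking the five lemma then gives $H^{s,t}(\Lambda_{k+1}(m))=\Ext_{k+1}^s(S_{k+1}(m),S_{k+1}(m+t))$, completing the induction.

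The heart of the argument, I expect, is this last identification of long exact sequences. The displayed short exact sequence of $\Lambda$-complexes is an algebraic fact about the $\Lambda$-algebra and is not visibly obtained by applying $\Lambda_{k+1}$ to a short exact sequence in $\mathcal{U}_{k+1}$ — indeed the module-level sequence $0\to\Sigma\Omega_1S_{k+1}(m)\to u\Phi S_{k+1}(m)\to S_{k+1}(m)\to\Sigma\Omega S_{k+1}(m)\to0$ degenerates into two isomorphisms for sphere modules — so its connecting homomorphism must be matched by hand with the boundary map of the Grothendieck spectral sequence underlying Lemma~\ref{lem:U_k_EHP}; concretely this is a diagram chase relating the algebraic operation $P$ on $\Lambda$ to the (co)unit of the adjunction $\Omega\dashv\Sigma$. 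A secondary point requiring care is to carry out the dévissage of the second paragraph rigorously for modules that are not of finite type, where the product-versus-sum behavior of $\Lambda_k$ must be tracked throughout.
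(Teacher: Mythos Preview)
Your overall strategy—identify both sides as contravariant $\delta$-functors, match them in degree $0$, and then show the $\Lambda_k$-side is effaceable on projectives—is sound, and once one knows $H^{>0}(\Lambda_k(F_k(n)))=0$ the conclusion follows exactly as you say. The paper arrives at the same reduction but then proves the vanishing on free modules \emph{directly}: it puts an explicit two-step filtration on $\Lambda_kF_k(n)$ (first by the total length $|I|+|J|$ of $\lambda_I\otimes(\Sq^J\iota_n)^\vee$, then lexicographically by the tuple $(I(s)+1,\dots,I(1)+1,J(1),\dots,J(t))$) and checks that each associated graded piece is either zero or a two-term acyclic complex $\mathbb{F}_2\to\mathbb{F}_2$. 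With that in hand, a routine double-complex argument on a free resolution of $M$ finishes the proof. No induction on $k$, no EHP, and no passage through sphere modules is needed.

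Your route through spheres is genuinely different, and the d\'evissage in your second paragraph (spheres $\Rightarrow$ finite truncations of $F_k(n)$ $\Rightarrow$ $F_k(n)$) is correct. The gap is exactly where you locate it, but it is more serious than a diagram chase. To run the five lemma in the inductive step you need a commuting ladder between the cohomology long exact sequence of $0\to\Lambda_k(m-1)\to\Lambda_{k+1}(m)\to\Sigma^{1,m}\Lambda_k(2m-1)\to0$ and the long exact sequence of Lemma~\ref{lem:U_k_EHP}, with the comparison morphisms $\eta$ as vertical maps. But the three $\eta$'s involved live in two different categories ($\mathcal{U}_k$ and $\mathcal{U}_{k+1}$), each produced by a separate universal-property argument; nothing you have set up forces them to intertwine the maps $E$, $H$, $P$ on the two sides. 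The $\Ext$-side sequence is the edge sequence of a Grothendieck spectral sequence for the composite $\mathcal U_{k}(\Omega(-),N)=\mathcal U_{k+1}(-,\Sigma N)$, while the $\Lambda$-side sequence is a combinatorial fact about admissible monomials; matching the connecting map $P$ to the $d_2$-differential of that spectral sequence is not routine. It amounts to an independent compatibility statement between the $\Lambda$-formalism and the $\Omega\dashv\Sigma$ adjunction, and until it is established the induction does not close. The paper's direct filtration argument on $\Lambda_kF_k(n)$ sidesteps this entirely, at the cost of a somewhat intricate but elementary bookkeeping with admissible words.
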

\begin{proof}
Let $M$ be any module in $\mathcal{U}_{k}$.
Consider a free resolution of $M$ \[\cdots\to P_{1}\to P_{0}\to M\to 0.\]
Applying the functor $\Lambda_{k}$ to the complex \[\cdots\to P_{2}\to P_{1}\to P_{0}\to 0,\] we get the following double complex $C^{*,*}$
\[\begin{tikzcd}
&\vdots&\vdots&\vdots&\\
\cdots&\Lambda_{k}^{2}(P_{2})\arrow[l]\arrow[u] &\Lambda_{k}^{2}(P_{1}) \arrow[l]\arrow[u] & \Lambda_{k}^{2}(P_{0}) \arrow[l] \arrow[u]& 0\arrow[l]\\
\cdots &\Lambda_{k}^{1}(P_{2})\arrow[l]\arrow[u] &\Lambda_{k}^{1}(P_{1}) \arrow[l]\arrow[u] & \Lambda_{k}^{1}(P_{0}) \arrow[l] \arrow[u]& 0\arrow[l]\\
\cdots &\Lambda_{k}^{0}(P_{2})\arrow[l]\arrow[u] &\Lambda_{k}^{0}(P_{1}) \arrow[l]\arrow[u] & \Lambda_{k}^{0}(P_{0}) \arrow[l] \arrow[u]& 0\arrow[l]\\
& 0\arrow[u]& 0\arrow[u]& 0\arrow[u]&
\end{tikzcd}\]
We get two spectral sequences from the double complex $C^{*,*}$ with $C^{s,i} = \Lambda_{k}^{s}(P_{i})$.
As we will see, both spectral sequences collapse at the second page.

Taking horizontal cohomology first, we get only one nontrivial column
\[\begin{tikzcd}
&\vdots&\vdots&\vdots&\\
\cdots&0\arrow[l]\arrow[u]&0\arrow[l]\arrow[u]&\Lambda_{k}^{2}(M)\arrow[l]\arrow[u]&0\arrow[l]\\
\cdots&0\arrow[l]\arrow[u]&0\arrow[l]\arrow[u]&\Lambda_{k}^{1}(M)\arrow[l]\arrow[u]&0\arrow[l]\\
\cdots&0\arrow[l]\arrow[u]&0\arrow[l]\arrow[u]&\Lambda_{k}^{0}(M)\arrow[l]\arrow[u]&0\arrow[l]\\
&0\arrow[u]&0\arrow[u]&0\arrow[u]&
\end{tikzcd}\]
beause the functor $\Lambda_{k}^{s}:\mathcal{U}_{k}^{\op}\to\mathbb{F}_{2}\Mod$ is exact.
Then taking vertical cohomology, we get $H^{s}(\Lambda_{k}(M))$ at position $(s,0)$ and zero everywhere else.
Its absolute internal degree $a$ part is $H^{s,a}(\Lambda_{k}(M))$.

If instead taking vertical cohomology first, we get only one nontrivial row according to Proposition \ref{prop:lambda_k_coho}
\[\begin{tikzcd}
&\vdots&\vdots&\\
\cdots&0 \arrow[l]\arrow[u] & 0 \arrow[l] \arrow[u]& 0\arrow[l]\\
\cdots&\bigoplus_{a\geq 0}\Hom_{k}(P_{1}, S_{k}(a)) \arrow[l]\arrow[u] & \bigoplus_{a\geq 0}\Hom_{k}(P_{0}, S_{k}(a)) \arrow[l] \arrow[u]& 0\arrow[l]\\
& 0\arrow[u]& 0\arrow[u]&
\end{tikzcd}\]
Then taking horizontal cohomology, we get $\bigoplus_{a\geq 0}\Ext_{k}^{s}(M,S_{k}(a))$ at position $(0, s)$ and zero everywhere else.
Its absolute internal degree $a$ piece is $\Ext_{k}^{s}(M,S_{k}(a))$.

Since those two limits of spectral sequences are the same and the absolute internal degrees should match, we have \[H^{s,a}(\Lambda_{k}(M)) = \Ext^{s}_{k}(M,S_{k}(a)).\qedhere\]
\end{proof}

\begin{proposition}
\label{prop:lambda_k_coho}
\[H^{s,a}(\Lambda_{k}F_{k}(n)) = 
\left\{\begin{split}
&\mathbb{F}_{2}&\textrm{if } s = 0,a = n\\
&0&\textrm{otherwise}
\end{split}\right.
\]
Or equivalently,
\[H^{s,a}(\Lambda_{k}F_{k}(n)) = 
\left\{\begin{split}
&\Hom_{k}(F_{k}(n), S_{k}(a))&\textrm{if } s = 0\\
&0&\textrm{if }s > 0
\end{split}\right.
\]
\end{proposition}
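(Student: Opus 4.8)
The differential of $\Lambda_k(M)$ preserves the absolute internal degree $a$, so for each $a$ the piece $\Lambda_k^{*,a}(F_k(n))$ is a subcomplex, and I would compute its cohomology separately in the ranges $a<n$, $a=n$, $a>n$. Since $F_k(n)^d=0$ for $d<n$ and $F_k(n)^n=\mathbb{F}_2\iota_n$, the dual $F_k(n)_d$ vanishes for $d<n$, so $\Lambda_k^{s,a}(F_k(n))=0$ whenever $a<n$. For $a=n$, a basis element $\lambda_I\otimes(\Sq_J\iota_n)^\vee$ of absolute degree $n$ must have $J$ empty and $\lambda_I$ of internal degree $0$, hence $s=0$; moreover $d(1\otimes\iota_n^\vee)=\sum_{i\ge 1}\lambda_{i-1}\otimes\iota_n^\vee\Sq^i=0$ since $\iota_n^\vee\Sq^i$ is a functional on $F_k(n)^{n-i}=0$. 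Thus $H^{s,n}(\Lambda_kF_k(n))$ is $\mathbb{F}_2$ for $s=0$ and $0$ for $s\ge 1$, in agreement with $\Hom_k(F_k(n),S_k(n))=S_k(n)^n$.

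I would next kill the degree-zero cohomology when $a>n$. For $\xi\in F_k(n)_a$ one computes $d(1\otimes\xi)=\sum_{i\ge 1,\ 0\le a-2i<k}\lambda_{i-1}\otimes\xi\Sq^i$, and as $i$ varies these terms lie in distinct summands $\Lambda_k(a-i)\otimes F_k(n)_{a-i}$ with $\lambda_{i-1}$ a fixed basis vector there, so $d(1\otimes\xi)=0$ exactly when $\xi$ annihilates $\sum_i\Sq^iF_k(n)^{a-i}$. When $a>n$, every admissible basis monomial $\Sq_{j(1)}\cdots\Sq_{j(m)}\iota_n$ of degree $a$ has $m\ge 1$ and equals $\Sq^i\bigl(\Sq_{j(2)}\cdots\Sq_{j(m)}\iota_n\bigr)$ with $i=(a-j(1))/2\ge 1$ and $0\le a-2i=j(1)<\min(n,k)\le k$; hence $\sum_i\Sq^iF_k(n)^{a-i}=F_k(n)^a$ and $H^{0,a}(\Lambda_kF_k(n))=0$.

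What remains is $H^{s,a}(\Lambda_kF_k(n))=0$ for $s\ge 1$ and $a>n$, which I would prove by induction on $k$. The base $k=0$ is immediate ($F_0(n)=S_0(n)$ and $\Lambda_0(F_0(n))$ is one-dimensional, at $(0,n)$), as is the case $n=0$ ($F_k(0)=S_k(0)$, with $\Lambda_k(S_k(0))$ one-dimensional at $(0,0)$). For the step, with $n,k\ge 1$, the key input is that the map $\lambda_{F_k(n)}\colon u\Phi F_k(n)\to F_k(n)$, $\Phi x\mapsto\Sq_0x$, is injective: it sends the basis element $\Phi(\Sq_J\iota_n)$ to the admissible basis element $\Sq_0\Sq_J\iota_n$. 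Hence $\Omega_1F_k(n)=0$, and the cokernel of $\lambda_{F_k(n)}$, spanned by the admissible monomials on $\iota_n$ whose first lower index is positive, desuspends to $F_{k-1}(n-1)$; so $\Omega F_k(n)\cong F_{k-1}(n-1)$ and there is a short exact sequence $0\to u\Phi F_k(n)\to F_k(n)\to\Sigma F_{k-1}(n-1)\to 0$ in $\mathcal{U}_k$. Applying the exact contravariant functor $\Lambda_k$ and passing to the long exact cohomology sequence, it suffices to show that $\Lambda_k(\Sigma F_{k-1}(n-1))$ and $\Lambda_k(u\Phi F_k(n))$ have vanishing cohomology in positive degrees.

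This pair of vanishing statements is the main obstacle. For the first I would assemble the EHP short exact sequence $0\to\Lambda_{k-1}(m)\xrightarrow{e}\Lambda_k(m+1)\xrightarrow{h}\Sigma^{1,m+1}\Lambda_{k-1}(2m+1)\to 0$, tensored with $F_{k-1}(n-1)_m$ and summed over $m$, into a short exact sequence of complexes $0\to\Lambda_{k-1}(F_{k-1}(n-1))\to\Lambda_k(\Sigma F_{k-1}(n-1))\to Q\to 0$; the inductive hypothesis concentrates the left term at $(0,n-1)$, leaving one to check that $Q$ has no positive cohomology, which amounts to identifying $Q$ (up to a shift) as a $\Lambda$-complex of a Frobenius--suspension of $F_{k-1}(n-1)$ and applying exactness of the forgetful functor together with Proposition \ref{prop:prefor}. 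The complex $\Lambda_k(u\Phi F_k(n))=\bigoplus_b\Lambda_k(2b)\otimes F_k(n)_b$ is treated the same way, using the doubling (Frobenius) structure of the $\Lambda$-algebra to rewrite it as a $\Lambda$-complex of $F_k(n)$ with rescaled indices. In all of this the delicate part --- exactly as in the proof of Theorem \ref{thm:lambda_k_dsq} --- is the bookkeeping when a $\lambda_{i-1}$ is commuted past a non-admissible monomial, where the $\Lambda$-algebra relations and the lower Adem relations must be balanced against each other and the excess inequalities defining $\Gamma(m,k)$ followed carefully.
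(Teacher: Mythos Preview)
Your treatment of the cases $a\le n$ and of $H^{0,a}$ for $a>n$ is correct and clean. The gap is in the inductive step for $s\ge 1$. After applying $\Lambda_k$ to the short exact sequence $0\to u\Phi F_k(n)\to F_k(n)\to\Sigma F_{k-1}(n-1)\to 0$ you need $H^{s}(\Lambda_k(u\Phi F_k(n)))=0$ and $H^{s}(\Lambda_k(\Sigma F_{k-1}(n-1)))=0$ for $s\ge 1$, and neither follows from your induction hypothesis. The hypothesis concerns $\Lambda_{k-1}(F_{k-1}(m))$, but $u\Phi F_k(n)$ is a module in $\mathcal U_k$, not $\mathcal U_{k-1}$, and it is \emph{not} free (or even projective) there: the odd $\Sq_i$ act trivially on it, so it cannot contain any $F_k(m)$ with $m\ge 2$ as a summand, and Proposition~\ref{prop:prefor} does not help since $\Phi$ is not known to preserve projectives. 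The ``doubling'' you invoke would have to give an identification of the form $\Lambda_k(2b)\cong\Lambda_{k'}(b)$ compatible with the differential of $\Lambda_k(u\Phi F_k(n))$; no such identification is established, and the usual $\Lambda$-algebra doubling $\lambda_i\mapsto\lambda_{2i+1}$ lands in $\Lambda(2b+1)$, not $\Lambda(2b)$, so even the target is wrong. The quotient $Q$ in your EHP splitting has the same problem: it is $\bigoplus_m\Sigma^{1,m+1}\Lambda_{k-1}(2m+1)\otimes F_{k-1}(n-1)_m$, which is $\Lambda_{k-1}$ of a module concentrated in odd degrees of the form $u\Sigma\Phi F_{k-1}(n-1)$, again not free in $\mathcal U_{k-1}$, so the induction hypothesis does not apply. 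In short, the induction does not close: each step produces two new complexes of the same shape as the one you started with, and you have given no mechanism that terminates this regress.

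By contrast the paper's argument is purely combinatorial and avoids any induction on $k$. One filters $\Lambda_kF_k(n)$ first by the total length $u=|I|+|J|$, and then refines each $P^u/P^{u+1}$ by the lexicographic order of the combined sequence $(I(s)+1,\ldots,I(1)+1,J(1),\ldots,J(t))$. Each associated graded piece is either zero or a two-term complex $\mathbb F_2\xrightarrow{\mathrm{id}}\mathbb F_2$ (pairing $\lambda_I\otimes(\Sq^J\iota_n)^\vee$ with $\lambda_{J(1)-1}\lambda_I\otimes(\Sq^{J(2)}\cdots\Sq^{J(t)}\iota_n)^\vee$), except for the single piece $u=0$, which contributes $\mathbb F_2$ at $(s,a)=(0,n)$. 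If you want to salvage an inductive proof, you would need either to prove the stronger statement ``$H^{s}(\Lambda_k(P))=0$ for $s\ge1$ and every projective $P$ in $\mathcal U_k$'' simultaneously (and check that $u\Phi$ of a projective is projective, which is not in the paper), or to carry along explicit acyclicity results for the Frobenius and suspension pieces; either route is substantially more work than the filtration argument.
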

\begin{proof}
Construction \ref{cons:P^u} provides a decreasing filtration $\left(P^{u}\right)_{u\geq 0}$ of the cochain complex $\Lambda_{k}F_{k}(n)$.
Construction \ref{cons:Q^u_L} provides an increasing multi-filtration $\left(Q^{u}_{L}\right)_{L\in\mathbb{N}^{u}}$ of the cochain complex $P^{u}/P^{u+1}$.
By Lemma \ref{lem:Q^u_L_union}, their union is \[\bigcup_{L\in\mathbb{N}^{u}}Q^{u}_{L} = P^{u}/P^{u+1}.\]
Denote by $Q^{u}_{<L}$ the sum of cochain complexes $Q^{u}_{L'}$ with $L'< L$ lexicographically.
Lemma \ref{lem:hom_Q_ag} calculates the cohomology of the associated graded $Q^{u}_{L}/Q^{u}_{<L}$: it has zero cohomology when $u\geq 1$; the associated graded $Q^{0}_{\emptyset}/Q^{0}_{<\emptyset}$ has cohomology $\mathbb{F}_{2}$ at $(s,a) = (0,n)$ and zero everywhere else.
Adding them up, we get the cohomology of the original cochain complex $\Lambda_{k}F_{k}(n)$.
\end{proof}

\begin{remark}
If $\Sq^{J}\iota_{n}$ is an element of the basis of $F_{k}(n)$ by admissibles as in Proposition \ref{prop:Fkn} and is of degree $m$, then we use $\left(\Sq^{J}\iota_{n}\right)^{\vee}$ to denote the element in $F_{k}(n)_{m} = \Hom(F_{k}(n)^{m},\mathbb{F}_{2})$ which sends $\Sq^{J}\iota_{n}$ to 1 and all other admissible basis vectors to 0.
Every element in $\Lambda_{k}F_{k}(n)$ can be written uniquely as a sum of $\lambda_{I}\otimes\left(\Sq^{J}\iota_{n}\right)^{\vee}$ with
\begin{itemize}
\item
$\lambda_{I}$ and $\Sq^{J}$ are both admissible,
\item
$\lambda_{I}$ is nonzero in $\Lambda_{k}(m)$ where $m$ is the degree of $\Sq^{J}\iota_{n}$.
\end{itemize}
We say $\lambda_{I}\otimes\left(\Sq^{J}\iota_{n}\right)^{\vee}$ is \emph{admissible} if it satisfies the two conditions above.
In general, when $S$ forms an additive basis for $T$, we say $s\in S$ \emph{shows up} in $t\in T$ if $t = \sum_{s'\in S'}s'$ with $S'\subseteq S$ and $s\in S'$.
\end{remark}

\begin{construction}
\label{cons:P^u}
For any $u\geq 0$, define $P^{u}$ to be the sub-bigraded vector space of $\Lambda_{k}F_{k}(n)$ spanned by admissible $\lambda_{I}\otimes\left(\Sq^{J}\iota_{n}\right)^{\vee}$ with $|I| + |J| \geq u$.
By $|I|$ and $|J|$ we mean the lengths of them.
\end{construction}

The $P^{u}$ is a subcomplex of $\Lambda_{k}F_{k}(n)$ by virtue of the following lemma.

\begin{lemma}
$P^{u}$ is closed under the differential.
\end{lemma}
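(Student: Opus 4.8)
The plan is to split the differential
\[d(\lambda_I\otimes x_m)=d(\lambda_I)\otimes x_m+\sum_{i\geq 1,\,0\leq m-2i<k}\lambda_{i-1}\lambda_I\otimes x_m\Sq^i\]
into its two summands and, for an admissible generator $\lambda_I\otimes(\Sq^J\iota_n)^\vee$ with $|I|+|J|\geq u$, bound the lengths of the admissible generators that can show up in the image. Since $P^u$ is spanned by the admissible generators of total length $\geq u$, it suffices to show that every admissible $\lambda_{I'}\otimes(\Sq^{J'}\iota_n)^\vee$ showing up in $d\bigl(\lambda_I\otimes(\Sq^J\iota_n)^\vee\bigr)$ again satisfies $|I'|+|J'|\geq u$.

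For the first summand I would use that the differential of $\Lambda$ and the relations (\ref{eq:lambda_relation}) each preserve the word length of a $\lambda$-monomial, so rewriting $d(\lambda_I)$ in the admissible basis of $\Lambda_k(m)$ (discarding whatever falls into $\Gamma(m,k)$) yields a sum of admissible $\lambda_{I'}$ with $|I'|=|I|+1$, tensored with the unchanged functional $(\Sq^J\iota_n)^\vee$. Hence every admissible generator showing up here has total length $|I'|+|J|=|I|+|J|+1\geq u$ (in fact it lands in $P^{u+1}$).

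For the second summand, $\lambda_{i-1}\lambda_I$ likewise expands into admissible monomials of length exactly $|I|+1$, so the only thing to control is the set of $(\Sq^{J'}\iota_n)^\vee$ showing up in $x_m\Sq^i=(\Sq^J\iota_n)^\vee\Sq^i$. Here $(\Sq^{J'}\iota_n)^\vee$ shows up precisely when $\Sq^J\iota_n$ shows up in the admissible expansion of $\Sq^i\Sq^{J'}\iota_n$; and because no application of an Adem relation ever increases the word length of a Steenrod monomial — the $t=0$ term strictly shortens it and every other term has length $2$ — that expansion consists of admissibles of length $\leq|J'|+1$. Therefore $|J|\leq|J'|+1$, i.e. $|J'|\geq|J|-1$, so the admissible generators showing up satisfy $|I'|+|J'|\geq(|I|+1)+(|J|-1)=|I|+|J|\geq u$. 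Combining the two cases gives $d(P^u)\subseteq P^u$.

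The only delicate point is the length bookkeeping in the second summand, namely the claim that Adem relations are length-non-increasing on Steenrod monomials; this is precisely the mechanism already used in the proof of Lemma \ref{lem:Lna_basis}, and I would simply invoke it there rather than reprove it. Everything else is a direct comparison of lengths, so I do not anticipate further obstacles.
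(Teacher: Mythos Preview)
Your proposal is correct and follows essentially the same route as the paper: both arguments observe that the $\lambda$-part of every admissible term in $d\bigl(\lambda_I\otimes(\Sq^J\iota_n)^\vee\bigr)$ has length exactly $|I|+1$, then reduce to showing $|J'|\ge|J|-1$ via the duality identification and the fact that Adem relations never increase the word length of a Steenrod monomial. Your version is slightly more explicit in separating the two summands, whereas the paper handles them together by the case split $J'=J$ versus $J'\neq J$; one small quibble is that your pointer to Lemma~\ref{lem:Lna_basis} for the length-non-increasing property is a bit off (that lemma uses index inequalities rather than length), but the claim itself is elementary and needs no external reference.
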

\begin{proof}
We need to prove $d(P^{u})\subseteq P^{u}$.
Let $\lambda_{I}\otimes\left(\Sq^{J}\iota_{n}\right)^{\vee}$ be admissible.
We have \[d(\lambda_{I}\otimes\left(\Sq^{J}\iota_{n}\right)^{\vee}) = \sum_{I',J'}\lambda_{I'}\otimes\left(\Sq^{J'}\iota_{n}\right)^{\vee}.\]
It suffices to prove $|I'| + |J'| \geq |I| +|J|$.
We know $|I'| = |I| + 1$.
So it suffices to prove $|J'|\geq |J| - 1$.
If $J' = J$, we are done.
Otherwise, $\lambda_{I'}\otimes\left(Sq^{J'}\iota_{n}\right)^{\vee}$ must show up in $\lambda_{i-1}\lambda_{I}\otimes\left(\Sq^{J}\iota_{n}\right)^{\vee}\Sq^{i}$, which means that $\Sq^{J}\iota_{n}$ must show up in $\Sq^{i}\Sq^{J'}\iota_{n}$.
The Adem relations do not increase the length of the Steenrod squares.
So $|J|\leq 1 + |J'|$.
\end{proof}

Observe that $P^{u+1}$ is a subcomplex of $P^{u}$.

\begin{construction}
\label{cons:Q^u_L}
The associated graded $P^{u}/P^{u+1}$ is a cochain complex spanned by the admissible $\lambda_{I}\otimes\left(\Sq^{J}\iota_{n}\right)^{\vee}$ with $|I| + |J| = u$.
For any $u\geq 0$ and any $L\in\mathbb{N}^{u}$, define $Q_{L}^{u}$ to be the sub-bigraded vector space of $P^{u}/P^{u+1}$ spanned by admissible $\lambda_{I}\otimes\left(\Sq_{J}\iota_{n}\right)^{\vee}$ with \[(I(s) + 1, \ldots, I(1) + 1, J(1), \ldots,J(t))\leq L,\] where $s:=|I|, t:=|J|$.
The index set $\mathbb{N}^{u}$ is ordered lexigraphically.
For example, when $u = 2$ and $L \in \mathbb{N}^{2}$, we have $(1,6) < (4,1)$ and $(1, 2) < (1, 4)$.
When $u = 0$, we have $\mathbb{N}^{0} = \{\emptyset\}$ and $Q_{\emptyset}^{0}$ is spanned by $1\otimes\iota_{n}$.
\end{construction}

The $Q_{L}^{u}$ is a subcomplex of $P^{u}/P^{u+1}$ by virtue of the following lemma.

\begin{lemma}
\label{lem:Q^u_L}
$Q^u_L$ is closed under the differential.
\end{lemma}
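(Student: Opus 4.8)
The plan is to compute $d(\lambda_I\otimes(\Sq^J\iota_n)^\vee)$ for an admissible generator of $Q^u_L$ and check that every admissible basis vector occurring in the result again lies in $Q^u_L$. Write $s=|I|$, $t=|J|$ (so $s+t=u$), and let $K=(I(s)+1,\dots,I(1)+1,J(1),\dots,J(t))\in\mathbb{N}^u$ be the index attached to this generator; by hypothesis $K\leq L$. Working in $P^u/P^{u+1}$, the term $d(\lambda_I)\otimes(\Sq^J\iota_n)^\vee$ drops out, since its $\lambda$-length is $s+1$ while its $\Sq$-length is still $t$, so it lies in $P^{u+1}$; and in $\sum_{i\geq 1,\,0\leq m-2i<k}\lambda_{i-1}\lambda_I\otimes(\Sq^J\iota_n)^\vee\Sq^i$ only the admissible summands $\lambda_{I'}\otimes(\Sq^{J'}\iota_n)^\vee$ with $|I'|=s+1$ and $|J'|=t-1$ survive in the associated graded, the rest lying in $P^{u+1}$. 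It therefore suffices to prove that the index $K'=(I'(s+1)+1,\dots,I'(1)+1,J'(1),\dots,J'(t-1))$ of each surviving summand satisfies $K'\leq K$, for then $K'\leq L$.

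To extract $K'$ one rewrites $\lambda_{i-1}\lambda_I$ in the admissible basis using the relations (\ref{eq:lambda_relation}) and expands $(\Sq^J\iota_n)^\vee\Sq^i=\sum_{J'}(\Sq^{J'}\iota_n)^\vee$, the sum over admissible $J'$ of length $t-1$ for which $\Sq^J$ appears in the admissible expansion of $\Sq^i\Sq^{J'}$. The dominant contribution occurs when $\lambda_{i-1}\lambda_I$ is already admissible (so $I'=(i-1,I(1),\dots,I(s))$) and $\Sq^i\Sq^{J'}$ is already admissible (so $i\geq 2J'(1)$ and $\Sq^J=\Sq^{(i,J'(1),\dots,J'(t-1))}$, forcing $i=J(1)$ and $J'=(J(2),\dots,J(t))$); here a direct check gives $K'=K$. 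Every other surviving summand strictly decreases the index, by one of two mechanisms. If $\Sq^i\Sq^{J'}$ is not already admissible ($i<2J'(1)$), then the Adem relations (\ref{eq:steenrod_relation}), which never lower a leading index and raise it strictly when applied to the leading pair, force the admissible monomial $\Sq^J$ to have $J(1)>i$; combined with $I'=(i-1)I$ this gives $K'_1=K_1,\dots,K'_s=K_s$ and $K'_{s+1}=i<J(1)=K_{s+1}$, hence $K'<K$. If instead $\lambda_{i-1}\lambda_I$ is not admissible, one reduces it via (\ref{eq:lambda_relation}); since those relations never increase the second subscript of the pair they act on — the monotonicity already exploited in the proof that $\Gamma(m,k)$ is closed under the differential — and since only a relation applied to the last pair can change the last index, an induction on $|I|$ shows that every admissible $\lambda_{I'}$ appearing has reversed index $(I'(s+1)+1,\dots,I'(1)+1)$ lexicographically $\leq(I(s)+1,\dots,I(1)+1,i)$, with equality only for the dominant term; the first strict drop already occurs inside the $\lambda$-block, so $K'<K$ without reaching the $\Sq$-part. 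In all cases $K'\leq K\leq L$, proving $d(Q^u_L)\subseteq Q^u_L$.

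The step I expect to fight is the inductive comparison governing the $\Lambda$-rewriting of $\lambda_{i-1}\lambda_I$: applying (\ref{eq:lambda_relation}) to one pair can create a fresh inadmissibility immediately to its left, so reduction to the admissible basis cascades rather than sweeping once from the left, and the inductive statement (the lexicographic comparison of reversed index sequences, with the extra slot holding $i$) must be chosen to survive both the reduction of the leftmost inadmissible pair and the reduction of an inadmissible pair newly created further left, and to mesh with the order-reversal built into $K$. Pinning down the two monotonicity principles in exactly the length-preserving form needed — the relations (\ref{eq:lambda_relation}) do not raise second subscripts, the Adem relations (\ref{eq:steenrod_relation}) do not lower leading indices — and, where necessary, invoking Lemma \ref{lem:lambda_ineq} to exclude spuriously large indices, is the technical core; given those, the comparison $K'\leq K$ reduces to a routine induction.
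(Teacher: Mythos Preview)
Your proposal is correct and follows essentially the same route as the paper: reduce to the length-preserving summands in $P^u/P^{u+1}$, attach the index $K'$ to each surviving admissible term, and show $K'\leq K$ by splitting on whether $\lambda_{i-1}\lambda_I$ is admissible (controlling the $\lambda$-block via the fact that the $\Lambda$ relations never increase the second subscript) and whether $\Sq^i\Sq^{J'}$ is admissible (controlling the $\Sq$-block via the fact that the Adem relations never decrease the leading index). The paper's proof is organized identically, though it states the two monotonicity claims more tersely; your explicit case split and identification of the two governing principles are, if anything, a shade more informative. The appeal to Lemma~\ref{lem:lambda_ineq} is not actually needed---the ``second subscript never increases'' observation alone suffices for the reversed-lex comparison---and the induction on $|I|$ can be replaced by the one-line invariant ``each $\Lambda$ relation strictly decreases the reversed index sequence in lex order,'' but neither point is an error.
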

\begin{proof}
When $u = 0$ and $L = \emptyset$, $Q^{0}_{\emptyset}$ is a cochain complex with only one nontrivial element $1\otimes\iota_{n}$.
Assume $u\geq 1$.
We need to prove $d(Q^{u}_{L})\subseteq Q^{u}_{L}$.
Let $\lambda_{I}\otimes\left(\Sq^{J}\iota_{n}\right)^{\vee}$ be admissible in $P^{u}/P^{u+1}$.
We have \[d(\lambda_{I}\otimes\left(\Sq^{J}\iota_{n}\right)^{\vee}) = \sum_{I',J'}\lambda_{I'}\otimes\left(\Sq^{J'}\iota_{n}\right)^{\vee}.\]
It suffices to prove \[(I'(s+1)+1,\ldots,I'(1)+1,J')\leq (I(s)+1,\ldots,I(1)+1,J).\]
Since everything is in $P^{u}/P^{u+1}$, we have $|I'| + |J'| = |I| + |J|$, so $|I'| = |I| + 1$ and $|J'| = |J| - 1$.
So $\lambda_{I'}\otimes\left(Sq^{J'}\iota_{n}\right)^{\vee}$ shows up in $\lambda_{i-1}\lambda_{I}\otimes\left(\Sq^{J}\iota_{n}\right)^{\vee}\Sq^{i}$.
That is, $\lambda_{I'}$ shows up in $\lambda_{i-1}\lambda_{I}$, and $\Sq^{J}\iota_{n}$ shows up in $\Sq^{i}\Sq^{J'}\iota_{n}$.
The Adem relations in the Steenrod algebra tell us that $J\geq (i, J')$.
So \[(I(s)+1,\ldots,I(1) + 1, i, J')\le (I(s)+1,\ldots,I(1) + 1,J).\]
If $\lambda_{i-1}\lambda_{I}$ is not admissible, then \[(I'(s+1) + 1, \ldots, I'(1) + 1) < (I(s) + 1, \ldots, I(1) + 1, i).\]
If $\lambda_{i-1}\lambda_{I}$ is admissible, then \[(I'(s+1) + 1, \ldots, I'(1) + 1) = (I(s) + 1, \ldots, I(1) + 1, i).\]
So in either case, we have \[(I'(s+1)+1,\ldots,I'(1)+1,J')\leq (I(s)+1,\ldots,I(1)+1,J).\qedhere\]
\end{proof}

Observe that $Q^{u}_{L}$ is a subcomplex of $Q^{u}_{L'}$ when $L\leq L'$ lexigraphically.

\begin{lemma}
\label{lem:Q^u_L_union}
\[\bigcup_{L\in\mathbb{N}^{u}}Q^{u}_{L} = P^{u}/P^{u+1}.\]
\end{lemma}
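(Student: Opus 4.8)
The plan is to exploit that the lexicographic order on $\mathbb{N}^{u}$ is total, so that the family $\{Q^{u}_{L}\}_{L\in\mathbb{N}^{u}}$ is totally ordered by inclusion (this is exactly the observation recorded just before the lemma), and hence its union is automatically a subcomplex of $P^{u}/P^{u+1}$. It then suffices to check that this union contains every basis vector of $P^{u}/P^{u+1}$, and the reverse containment is trivial.

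First I would recall, from the Remark following Proposition \ref{prop:lambda_k_coho}, that the admissible tensors $\lambda_{I}\otimes\left(\Sq^{J}\iota_{n}\right)^{\vee}$ form an additive basis of $\Lambda_{k}F_{k}(n)$, so that (as noted in Construction \ref{cons:Q^u_L}) the cochain complex $P^{u}/P^{u+1}$ has as an additive basis precisely those admissible tensors with $|I|+|J|=u$. The inclusion $\bigcup_{L}Q^{u}_{L}\subseteq P^{u}/P^{u+1}$ is then immediate, since by definition each $Q^{u}_{L}$ is a sub-bigraded vector space of $P^{u}/P^{u+1}$.

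For the reverse inclusion, fix an admissible $\lambda_{I}\otimes\left(\Sq^{J}\iota_{n}\right)^{\vee}$ with $|I|+|J|=u$, and write $s:=|I|$, $t:=|J|$, so $s+t=u$. Put
\[L_{0}:=(I(s)+1,\ldots,I(1)+1,J(1),\ldots,J(t)).\]
Every entry of $L_{0}$ is a natural number: $I(r)+1\geq 1$ since the generators $\lambda_{i}$ all have $i\geq 0$, and $J(r)\geq 1$ since $\Sq^{J}$ is admissible. Hence $L_{0}\in\mathbb{N}^{u}$, and since $L_{0}\leq L_{0}$, the defining condition of $Q^{u}_{L_{0}}$ shows that $\lambda_{I}\otimes\left(\Sq^{J}\iota_{n}\right)^{\vee}\in Q^{u}_{L_{0}}$, hence lies in $\bigcup_{L}Q^{u}_{L}$. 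As this accounts for every basis vector of $P^{u}/P^{u+1}$, we conclude $\bigcup_{L\in\mathbb{N}^{u}}Q^{u}_{L}=P^{u}/P^{u+1}$. The whole argument is pure bookkeeping, so I anticipate no genuine obstacle; the only point deserving a moment's care is the verification that the tuple $L_{0}$ attached to a given admissible tensor does lie in $\mathbb{N}^{u}$, which is exactly the positivity of admissible Steenrod squares together with the nonnegativity of the subscripts of the $\lambda_{i}$.
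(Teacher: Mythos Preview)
Your proof is correct and follows exactly the same approach as the paper: for each admissible basis element $\lambda_{I}\otimes\left(\Sq^{J}\iota_{n}\right)^{\vee}$ of $P^{u}/P^{u+1}$, take $L_{0}=(I(s)+1,\ldots,I(1)+1,J(1),\ldots,J(t))$ and observe that the element lies in $Q^{u}_{L_{0}}$. You simply add more detail than the paper, including the (harmless) check that $L_{0}\in\mathbb{N}^{u}$ and an explicit mention of the trivial reverse inclusion.
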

\begin{proof}
When $u = 0$, we have $Q^{0}_{\emptyset} = P^{0}/P^{1}$.
When $u\geq 1$, $\lambda_{I}\otimes\left(\Sq^{J}\iota_{n}\right)^{\vee}$ lives in $Q^{u}_{L}$ when $L = (I(s)+1,\ldots,I(1) + 1, J)$.
\end{proof}

\begin{lemma}
\label{lem:hom_Q_ag}
Define $Q^{u}_{<L}$ as the sum of cochain complexes $Q^{u}_{L'}$ with $L'< L$.
The associated graded $Q^{u}_{L}/Q^{u}_{<L}$ has zero cohomology when $u\geq 1$.
The associated graded $Q^{0}_{\emptyset}/Q^{0}_{<\emptyset}$ has cohomology $\mathbb{F}_{2}$ at $(s,a) = (0,n)$ and zero everywhere else.
\end{lemma}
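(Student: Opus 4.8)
The plan is to handle $u=0$ by inspection and to reduce the case $u\ge 1$ to a combinatorial statement about the ``split-point'' description of the associated graded. For $u=0$ we have $Q^{0}_{<\emptyset}=0$, so $Q^{0}_{\emptyset}/Q^{0}_{<\emptyset}=Q^{0}_{\emptyset}=\mathbb{F}_{2}\cdot(1\otimes\iota_{n})$, sitting in bidegree $(s,a)=(0,n)$. Since $\iota_{n}$ generates the free module $F_{k}(n)$ it is hit by no Steenrod operation $\Sq^{i}$ with $i\ge 1$, so $(\iota_{n})^{\vee}\Sq^{i}=0$ for all such $i$ and hence $d(1\otimes\iota_{n})=d(1)\otimes\iota_{n}=0$; this gives the claimed cohomology $\mathbb{F}_{2}$ at $(0,n)$ and $0$ elsewhere.

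Now let $u\ge 1$ and fix $L=(\ell_{1},\dots,\ell_{u})$. The basis of $Q^{u}_{L}/Q^{u}_{<L}$ consists of the admissible $\lambda_{I}\otimes(\Sq^{J}\iota_{n})^{\vee}$ with $|I|+|J|=u$ whose multi-degree $(I(s)+1,\dots,I(1)+1,J(1),\dots,J(t))$ equals $L$ \emph{exactly}; such a vector is determined by its split point $p:=|I|\in\{0,\dots,u\}$, the first $p$ entries of $L$ (reversed and each decreased by $1$) being $I$ and the last $u-p$ entries being $J$. Write $v_{p}$ for this vector when the resulting $\lambda_{I}$ and $\Sq^{J}$ are admissible, $\Sq^{J}\iota_{n}$ is a basis vector of $F_{k}(n)$, and $\lambda_{I}$ is nonzero in $\Lambda_{k}(\deg(\Sq^{J}\iota_{n}))$; let $V_{p}$ be its span. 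Then $Q^{u}_{L}/Q^{u}_{<L}$ is a complex $0\to V_{0}\to\dots\to V_{u}\to 0$ with $\dim V_{p}\le 1$. The differential on it is easy to identify: $d(\lambda_{I})\otimes(\Sq^{J}\iota_{n})^{\vee}$ raises $|I|$ without lowering $|J|$ and so vanishes in $P^{u}/P^{u+1}$, while in $\sum_{i}\lambda_{i-1}\lambda_{I}\otimes(\Sq^{J}\iota_{n})^{\vee}\Sq^{i}$ the multi-degree estimates already established for Lemma~\ref{lem:Q^u_L} show that the unique summand of multi-degree $L$ is the one with $i=J(1)$ and $\lambda_{J(1)-1}\lambda_{I}$ admissible, occurring with coefficient $1$ since $\Sq^{J(1)}\Sq^{J(2)}\cdots$ is then already admissible. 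Thus $d(v_{p})=v_{p+1}$ whenever $v_{p+1}$ exists and $d(v_{p})=0$ otherwise; existence of $v_{p+1}$ makes $\lambda_{J(1)-1}\lambda_{I}$ the $\lambda$-part of $v_{p+1}$ and, by unwinding when $\lambda_{J(1)-1}\lambda_{I}$ lies in $\Gamma(\deg(\Sq^{J'}\iota_{n}),k)$, forces $0\le\deg(\Sq^{J}\iota_{n})-2J(1)<k$, so that summand is genuinely present in the differential.

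It now suffices to prove that $\mathcal{S}:=\{p:v_{p}\text{ exists}\}$ is a disjoint union of adjacent pairs $\{p,p+1\}$: each such pair is a direct summand $\mathbb{F}_{2}\xrightarrow{\cong}\mathbb{F}_{2}$, hence acyclic. Since $d^{2}=0$ on $\Lambda_{k}F_{k}(n)$ (Theorem~\ref{thm:lambda_k_dsq}) restricts to the subquotient complex, $\mathcal{S}$ contains no three consecutive integers (three consecutive elements would give $d^{2}(v_{p})=v_{p+2}\ne 0$), so it is automatically a union of blocks of size at most $2$ and we need only rule out singletons. Using Proposition~\ref{prop:Fkn} to spell out ``$\Sq^{J}\iota_{n}$ is an admissible basis vector of $F_{k}(n)$'' in terms of the $\ell_{j}$ and the definition of $\Gamma(m,k)$ to spell out ``$\lambda_{I}$ is nonzero in $\Lambda_{k}(m)$'', one checks that $v_{p}$ exists iff a fixed finite list of inequalities in $\ell_{1},\dots,\ell_{u}$ holds, and that moving the split point from $p$ to $p+1$ (for $p\ge 1$) adds exactly the single new constraint $\ell_{p}<2\ell_{p+1}$ while leaving the remaining constraints unchanged or weakened. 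Hence whenever $v_{p}$ exists with $1\le p\le u-1$, exactly one of $v_{p-1},v_{p+1}$ exists, according as $\ell_{p}\ge 2\ell_{p+1}$ or $\ell_{p}<2\ell_{p+1}$; and at the ends one checks that $v_{0}$ existing implies $v_{1}$ exists and $v_{u}$ existing implies $v_{u-1}$ exists. Thus no element of $\mathcal{S}$ is isolated, $\mathcal{S}$ is a disjoint union of pairs, and $Q^{u}_{L}/Q^{u}_{<L}$ is acyclic.

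I expect the main obstacle to be the final paragraph's bookkeeping: pinning down the exact list of inequalities equivalent to the existence of $v_{p}$, and especially the two endpoint implications $v_{0}\Rightarrow v_{1}$ and $v_{u}\Rightarrow v_{u-1}$, which rest on a geometric-series estimate relating the condition $n-\ell_{u}<k$ at the bottom of the Steenrod tower $\Sq^{J}\iota_{n}$ to the excess inequality that controls survival of the corresponding $\lambda$-monomial in $\Lambda_{k}$.
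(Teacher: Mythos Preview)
Your proof is correct, and the core mechanism---the complementarity between the $\lambda$-admissibility inequality $\ell_j<2\ell_{j+1}$ and the Steenrod admissibility inequality $\ell_j\ge 2\ell_{j+1}$---is exactly what the paper uses. The paper, however, exploits this complementarity more directly: since for each $j\in\{1,\dots,u-1\}$ exactly one of the two inequalities holds, the sequence $L$ determines a unique ``switch point'' $a$, and the only split positions compatible with both admissibility conditions are $p=a$ and $p=a+1$. So the associated graded is immediately either $0$ or a two-term complex $\mathbb{F}_2\to\mathbb{F}_2$, and one only has to check (as the paper does in one line) that the $\Lambda_k$-nonvanishing condition is the same for both terms and that the differential is the identity.

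Your route reaches the same endpoint but with extra scaffolding. The appeal to $d^{2}=0$ to rule out three consecutive split points is redundant: your own dichotomy ``exactly one of $v_{p-1},v_{p+1}$ exists'' already excludes three in a row. Also, the sentence ``moving the split point from $p$ to $p+1$ adds exactly the single new constraint $\ell_p<2\ell_{p+1}$ while leaving the remaining constraints unchanged or weakened'' is not literally accurate---it also \emph{drops} the constraint $\ell_{p+1}\ge 2\ell_{p+2}$---but your stated conclusion (exactly one neighbour exists, governed by the sign of $\ell_p-2\ell_{p+1}$) is correct and is what you actually use. The endpoint checks $v_0\Rightarrow v_1$ and $v_u\Rightarrow v_{u-1}$ do work out: the needed inequality $\ell_1-1\ge m'-k$ (respectively $\ell_u-1\ge n-k$) comes straight from the $F_k(n)$ basis condition $i(1)<k$ (respectively from the general bound $m-k\le I(1)$ for nonzero monomials in $\Lambda_k(m)$), and the $\Gamma$-condition reduces to the single inequality $m-k+1\le 2\ell_1$, which is independent of the split point. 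So the ``geometric-series estimate'' you anticipated is not needed; the bookkeeping is entirely local.
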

\begin{proof}
The case $u = 0$ is obvious.
Assume $u\geq 1$.
The associated graded $Q^{u}_{L}/Q^{u}_{<L}$ is spanned by admissible $\lambda_{I}\otimes(\Sq^{J}\iota_{n})^{\vee}$ with \begin{equation}\label{eq:lem:hom_Q_ag}(I(s)+1,\ldots,I(1) + 1, J(1),\ldots,J(t)) = L.\end{equation}
When there exsit no admissible $\lambda_{I}$ and $\Sq^{J}$ such that equation (\ref{eq:lem:hom_Q_ag}) is true, the associated graded will be zero and we are done.
Now assume there exist admissible $\lambda_{I}$ and $\Sq^{J}$ such that equation (\ref{eq:lem:hom_Q_ag}) is true.

The admissibility requires that $I(r+1)\leq 2I(r)$ or equivalently $I(r+1)+1<2(I(r)+1)$ and $J(r)\ge 2J(r+1)$.
So there are two admissibles $\lambda_{I}$ and $\Sq^{J}$ such that equation (\ref{eq:lem:hom_Q_ag}) is true.
They are $\lambda_{I}\otimes\left(\Sq^{J}\iota_{n}\right)^{\vee}$ and $\lambda_{J(1) - 1}\lambda_{I}\otimes\left(\Sq^{J(2)}\cdots\Sq^{J(t)}\iota_{n}
\right)^{\vee}$ with $2(J(1)-1)\geq I(1)$.
Note that $\lambda_{I} = 0$ in $\Lambda_{k}(m)$ if and only if $\lambda_{J(1) - 1}\lambda_{I} = 0$ in $\Lambda_{k}(m-J(1))$.
So the associated graded will either be zero or $\cdots\to0\to\mathbb{F}_{2}\to\mathbb{F}_{2}\to0\to\cdots$.

According to the proof of Lemma \ref{lem:Q^u_L}, the differential will be
\[d\left(\lambda_{I}\otimes\left(\Sq^{J}\iota_{n}\right)^{\vee}\right) = \lambda_{J(1) - 1}\lambda_{I}\otimes\left(\Sq^{J(2)}\cdots\Sq^{J(t)}\iota_{n}\right)^{\vee}\] if $I(1)\leq 2(J(1) - 1)$ and zero otherwise.
Therefore, the map $\mathbb{F}_{2}\to\mathbb{F}_{2}$ is the identity map.
So the cohomology of the associated graded is always zero when $u\geq 1$.
\end{proof}

\begin{remark}
The arguments in this subsection give another proof to Proposition \ref{prop:lambda_coho}.
\end{remark}

\section{Inverse system of $\Ext$ groups}
\label{sec:invsys}
Since the forgetful functor $u:\mathcal{U}_{k+1}\to\mathcal{U}_{k}$ preserves projectives by Proposition \ref{prop:prefor} and is exact by Proposition \ref{prop:exfor}, it induces a map of $\Ext$ groups $\Ext_{k+1}^{s}(M, N)\to \Ext_{k}^{s}(uM, uN)$, where $M$ and $N$ are any two modules in $\mathcal{U}_{k+1}$.
Therefore, given any two modules $M$ and $N$ in $\mathcal{U}$, we have an inverse system of $\Ext$ groups
\[\cdots\to\Ext^{s}_{2}(uM, uN)\to \Ext^{s}_{1}(uM, uN)\to \Ext^{s}_{0}(uM, uN).\]
In this section, we will study this inverse system and its inverse limit.
The main result of this section is summarized in the following theorem.

\begin{theorem}
\label{thm:is}
Let $M$ and $N$ be two nonzero modules in the category $\mathcal{U}$.
Let $s$ be any nonnegative integer.
If $N$ is bounded above with top nontrivial degree $n$, then the inverse system
\[\cdots\to\Ext^{s}_{2}(uM, uN)\to \Ext^{s}_{1}(uM, uN)\to \Ext^{s}_{0}(uM, uN).\]
stablizes and the limit is equal to $\Ext^{s}_{\mathcal{U}}(M,N)$.
More specifically, when $k\geq n-1$, the maps
\[\Ext_{k+1}^{s}(uM,uN)\to\Ext_{k}^{s}(uM,uN)\]
and
\[\Ext_{\mathcal{U}}^{s}(M,N)\to\Ext_{k}^{s}(uM,uN)\]
are isomorphisms.
\end{theorem}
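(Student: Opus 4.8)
The plan is to deduce the whole statement from a single degree-counting fact: once $k$ is large relative to $n$, a $\mathcal{Q}_{k}$-linear map whose target vanishes above degree $n$ is automatically $\mathcal{Q}$-linear. Precisely, I would first prove the following. Let $N$ be a module in $\mathcal{U}$ with $N^{j}=0$ for all $j>n$, and let $P$ be an \emph{arbitrary} module in $\mathcal{U}$; then for every $k\geq n-1$ the restriction map $\mathcal{U}(P,N)\to\mathcal{U}_{k}(uP,uN)$ is a bijection. Injectivity is clear, since a $\mathcal{Q}$-linear map is determined by its underlying degree-preserving linear map and the forgetful functor alters neither the underlying graded vector spaces nor the map. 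For surjectivity, let $f:uP\to uN$ be $\mathcal{Q}_{k}$-linear and let $x\in P^{m}$ be homogeneous; it is enough to verify $f(\Sq^{i}x)=\Sq^{i}f(x)$ for every $i\geq 1$. If $m-i<k$, then $\Sq^{i}=\Sq_{m-i}$ is one of the top $k$ squares, so $f$ already commutes with it by hypothesis. If $m-i\geq k$, then $m+i\geq 2i+k\geq 2+(n-1)>n$, whence $N^{m+i}=0$; thus $f(\Sq^{i}x)\in N^{m+i}=0$ and $\Sq^{i}f(x)\in N^{m+i}=0$, so both sides vanish and $f$ is $\mathcal{Q}$-linear.

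With this lemma in hand the rest is formal. I would fix a projective resolution $P_{\bullet}\to M$ in $\mathcal{U}$. By Proposition~\ref{prop:exfor} the forgetful functor is exact and by Proposition~\ref{prop:prefor} it carries projectives to projectives, so $uP_{\bullet}\to uM$ is a projective resolution in each $\mathcal{U}_{k}$. Hence $\Ext^{s}_{k}(uM,uN)=H^{s}\Hom_{\mathcal{U}_{k}}(uP_{\bullet},uN)$, the transition map $\Ext^{s}_{k+1}(uM,uN)\to\Ext^{s}_{k}(uM,uN)$ is induced by the tautological inclusion of cochain complexes $\Hom_{\mathcal{U}_{k+1}}(uP_{\bullet},uN)\hookrightarrow\Hom_{\mathcal{U}_{k}}(uP_{\bullet},uN)$, and the comparison map $\Ext^{s}_{\mathcal{U}}(M,N)\to\Ext^{s}_{k}(uM,uN)$ is induced by $\Hom_{\mathcal{U}}(P_{\bullet},N)\to\Hom_{\mathcal{U}_{k}}(uP_{\bullet},uN)$. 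Applying the key lemma with $P=P_{i}$ for each $i$ (every term of a projective resolution in $\mathcal{U}$ is a module in $\mathcal{U}$), I obtain that for $k\geq n-1$ the three cochain complexes $\Hom_{\mathcal{U}}(P_{\bullet},N)$, $\Hom_{\mathcal{U}_{k+1}}(uP_{\bullet},uN)$ and $\Hom_{\mathcal{U}_{k}}(uP_{\bullet},uN)$ coincide and the maps between them are identities. Taking $H^{s}$ yields the two claimed isomorphisms. Finally, since for $k\geq n-1$ the inverse system is a constant tower on $\Ext^{s}_{\mathcal{U}}(M,N)$, it trivially stabilizes and its inverse limit is $\Ext^{s}_{\mathcal{U}}(M,N)$.

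I expect essentially all the difficulty to be concentrated in the inequality inside the key lemma, that is, in pinning down the exact range $k\geq n-1$; everything else is bookkeeping with projective resolutions and the functoriality of $\Ext$ under the exact, projective-preserving forgetful functors. The one point that genuinely needs care is that the lemma must be invoked for the (generally unbounded) free modules $P_{i}$ appearing in the resolution, not merely for $M$ itself, so it is crucial that its hypothesis restricts only the target $N$ and leaves the source unconstrained.
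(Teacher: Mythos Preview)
Your argument is correct and in fact cleaner than the paper's. The paper proceeds by first characterising $\Hom_{\mathcal{U}_{k}}(-,S_{k}(n))$ via the indecomposables (Lemma~\ref{lem:sphere_indecom}), then proving the isomorphism for a single sphere (Lemma~\ref{lem:Ext_iso_sphere}), passing to modules concentrated in one degree (Lemma~\ref{lem:Ext_iso_single_deg}), and finally inducting on $n$ via the short exact sequence $0\to N'\to N\to N''\to 0$ together with the five lemma (Propositions~\ref{prop:U_k_is} and~\ref{prop:U_is}). You bypass this entire chain by proving one sharp lemma at the level of $\Hom$: for $k\geq n-1$, every $\mathcal{Q}_{k}$-linear map into a module bounded by degree $n$ is already $\mathcal{Q}$-linear, because any $\Sq^{i}$ not among the top $k$ squares lands in degree at least $k+2\geq n+1$ and hence in zero. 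This makes the three $\Hom$-complexes literally equal, so no spectral sequence, no induction, and no five lemma is needed.

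Two small cosmetic points. First, in your case split ``$m-i<k$'' you should note separately that when $m-i<0$ the operation $\Sq^{i}$ is zero on both source and target by unstability, rather than calling it a top-$k$ square; this does not affect correctness. Second, you need not argue the $\mathcal{U}_{k+1}\to\mathcal{U}_{k}$ isomorphism separately: once you know $\Hom_{\mathcal{U}}(P,N)=\Hom_{\mathcal{U}_{k}}(uP,uN)$ for all $k\geq n-1$, the chain of inclusions $\Hom_{\mathcal{U}}\subseteq\Hom_{\mathcal{U}_{k+1}}\subseteq\Hom_{\mathcal{U}_{k}}$ forces the middle term to agree as well. What your approach buys is brevity and transparency; what the paper's route buys is a slightly finer statement along the way (Lemma~\ref{lem:Ext_iso_sphere} gives the isomorphism for individual spheres $S(n)$ under the weaker hypothesis $n\notin\{k+2,k+4,\dots\}$), though that refinement is not used elsewhere.
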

Before proving this theorem, we take some time defining the decomposables and indecomposables of modules in $\mathcal{U}$ or $\mathcal{U}_{k}$ and presenting several lemmas.

\begin{definition}[Decomposables and indecomposables]
Let $M$ be a module in $\mathcal{U}$.
Let $N$ be the submodule of $M$ generated by $\Sq^{i}x$ with $i\geq 1$ and $x\in M$.
The submodule $N$ is called the decomposables of $M$ and the quotient module $M/N$ is called the indecomposables of $M$.
Note that both $N$ and $M/N$ live in $\mathcal{U}$.
\end{definition}

Similar definitions apply for a module in $\mathcal{U}_{k}$.

\begin{definition}[Decomposables and indecomposables]
Let $k\geq 0$ and $M$ be a module in $\mathcal{U}_{k}$.
Let $N$ be the submodule of $M$ generated by $\Sq^{i}x$ where $i\geq 1, |x| - i < k$ and $x$ is any nonzero homogeneous element in $M$.
The submodule $N$ is called the decomposables of $M$ and the quotient module $M/N$ is called the indecomposables of $M$.
Note that both $N$ and $M/N$ live in $\mathcal{U}_{k}$.
\end{definition}

\begin{lemma}
\label{lem:sphere_indecom}
If $M$ is a module in $\mathcal{U}_{k}$, then $\mathcal{U}_{k}(M, S_{k}(n))$ is equal to the dual of the degree $n$ part of the indecomposables of $M$.
If $M$ is a module in $\mathcal{U}$, then $\mathcal{U}(M,S(n))$ is equal to the dual of the degree $n$ part of the indecomposables of $M$.
\end{lemma}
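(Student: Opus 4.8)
The plan is to identify a degree-zero morphism $M\to S_{k}(n)$ with its unique possibly-nonzero component, a linear functional on $M^{n}$, and then to characterise exactly which functionals respect the Steenrod action. Since $S_{k}(n)$ is $\mathbb{F}_{2}$ concentrated in degree $n$, a morphism $f\in\mathcal{U}_{k}(M,S_{k}(n))$ is determined by $f^{n}\colon M^{n}\to\mathbb{F}_{2}$, while $f^{a}=0$ is forced for every $a\ne n$. So the underlying data of $f$ is an element $\phi\in\Hom(M^{n},\mathbb{F}_{2})$, and the task is to pin down which $\phi$ extend to a $\mathcal{Q}_{k}$-module map.

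The extension condition is that $f(\Sq^{i}x)=\Sq^{i}f(x)$ for every homogeneous $x\in M^{a}$ and every square $\Sq^{i}\colon a\to a+i$ belonging to $\mathcal{Q}_{k}$, that is, with $i\ge 0$ and $a-i<k$. For $i=0$ this is vacuous since $\Sq^{0}=\id$. For $i\ge 1$ the right-hand side vanishes: either $a\ne n$, so $f(x)=0$; or $a=n$, so $\Sq^{i}f(x)$ lands in $S_{k}(n)^{n+i}=0$. Hence the only genuine constraint arises when $a+i=n$ with $i\ge 1$, and it reads $\phi(\Sq^{i}x)=0$ for all $x\in M^{n-i}$ with $n-2i<k$. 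Conversely, any $\phi$ vanishing on all such $\Sq^{i}x$ manifestly assembles (together with the zero maps in other degrees) into a morphism $M\to S_{k}(n)$. So $\mathcal{U}_{k}(M,S_{k}(n))$ is the subspace of $\Hom(M^{n},\mathbb{F}_{2})$ of functionals vanishing on the span of the elements $\Sq^{i}x$ with $i\ge 1$, $|x|-i<k$, and $|x|+i=n$.

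It remains to recognise that span as $N^{n}$, the degree-$n$ part of the decomposables $N\subseteq M$. By definition $N$ is the $\mathcal{Q}_{k}$-submodule generated by $\{\Sq^{i}x:i\ge 1,\ |x|-i<k\}$, so $N^{n}$ is spanned by the elements $\theta\cdot(\Sq^{i}x)$ with $\theta\in\mathcal{Q}_{k}$ an $\mathbb{F}_{2}$-combination of composites of generating squares. Working one composite summand at a time and discarding any $\Sq^{0}=\id$ factors, either $\theta$ is trivial, yielding $\Sq^{i}x$ itself, or its outermost generator is some $\Sq^{j}$ with $j\ge 1$; in the latter case $\theta\cdot(\Sq^{i}x)=\Sq^{j}y$ with $y\in M$, and since $\Sq^{j}$ is a generating square of $\mathcal{Q}_{k}$ acting on degree $|y|$ we have $|y|-j<k$. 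So every spanning element of $N^{n}$ is again of the first-level form $\Sq^{j}y$ with $j\ge 1$ and $|y|-j<k$, and $N^{n}$ coincides with the subspace found above. Therefore $\mathcal{U}_{k}(M,S_{k}(n))=\{\phi\in\Hom(M^{n},\mathbb{F}_{2}):\phi|_{N^{n}}=0\}=\Hom(M^{n}/N^{n},\mathbb{F}_{2})$, the dual of the degree-$n$ part of the indecomposables $M/N$. The statement for $\mathcal{U}$ and $S(n)$ follows by the identical argument, now allowing every $\Sq^{i}$ with $i\ge 1$, and the reduction of higher composites to first-level decomposables is even more immediate. The only bookkeeping — and the mildest possible obstacle — is precisely this last reduction, which uses nothing beyond the closure of $\mathcal{Q}_{k}$ under composition.
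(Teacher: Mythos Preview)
Your proof is correct and takes the same approach as the paper's, which is even terser: it simply asserts that $\mathcal{U}_{k}(M,S_{k}(n))$ is the set of linear maps $M^{n}\to\mathbb{F}_{2}$ vanishing on $\Sq^{i}x$ for $i\ge 1$, $n-2i<k$, $x\in M^{n-i}$, and that this equals $\Hom((M/N)^{n},\mathbb{F}_{2})$. Your extra paragraph verifying that $N^{n}$ coincides with the span of the first-level elements $\Sq^{i}x$ is a detail the paper leaves implicit.
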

\begin{proof}
We assume $M$ is a module in $\mathcal{U}_{k}$.
The $\mathcal{U}$ case is quite similar and omitted.
The $\mathbb{F}_{2}$-module $\mathcal{U}_{k}(M, S_{k}(n))$ is equal to the set of $\mathbb{F}_{2}$-linear maps from $M^{n}$ to $\mathbb{F}_{2}$ such that \[f(\Sq^{i}x) = 0\quad\forall i\geq 1, n-2i<k, x\in M^{n-i}.\]
It is equivalent to the the set of $\mathbb{F}_{2}$-linear maps from $(M/N)^{n}$ to $\mathbb{F}_{2}$, where $N$ is the decomposables of $M$.
\end{proof}

\begin{lemma}
\label{lem:Ext_iso_sphere}
Suppose that $M$ is a module in $\mathcal{U}_{k+1}$.
If $n\not\in\{k+2,k+4,k+6,\ldots\}$, then $\Ext_{k+1}^{s}(M, S_{k+1}(n))\to \Ext_{k}^{s}(uM, S_{k}(n))$ is an isomorphism for any $s$.
\end{lemma}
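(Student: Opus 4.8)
The plan is to prove the statement at the level of the $\Lambda$-complexes of Section~\ref{sec:Lambda_k} and then transport it to $\Ext$ via Theorem~\ref{thm:lambda_k_coho}. Since $\Gamma(m,k+1)\subseteq\Gamma(m,k)$, the surjection $\Lambda(m)\to\Lambda_{k}(m)$ factors through $\Lambda_{k+1}(m)$, producing a degreewise surjection $\Lambda_{k+1}(m)\twoheadrightarrow\Lambda_{k}(m)$. As $M$ and $uM$ have the same underlying graded vector space, $M_{m}=(uM)_{m}$, and tensoring with $M_{m}$ and summing over $m$ gives a surjection of bigraded vector spaces $q\colon\Lambda_{k+1}(M)\to\Lambda_{k}(uM)$. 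First I would check that $q$ is a map of cochain complexes. The two differentials agree except for the terms $\lambda_{i-1}\lambda_{I}\otimes x_{m}\Sq^{i}$ with $m-2i=k$, which occur in the differential of $\Lambda_{k+1}(M)$ but not in that of $\Lambda_{k}(uM)$. For such $i$ one has $2i=m-k$, so Lemma~\ref{lem:lambda_comp_0}, applied with the pair of parameters $(i-1,\,m-i)$ in place of $(i,\,m)$, shows $\lambda_{i-1}\lambda_{I}=0$ in $\Lambda_{k}(m-i)$ for every admissible $\lambda_{I}\in\Lambda(m)$; hence these extra terms lie in $\ker q$ and $q$ commutes with the differential.

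Next I would identify the kernel. We have $\ker q=\bigoplus_{m}\bigl(\Gamma(m,k)/\Gamma(m,k+1)\bigr)\otimes M_{m}$, and an admissible monomial $\lambda_{I}$ of length $s$ lies in $\Gamma(m,k)$ but not in $\Gamma(m,k+1)$ exactly when $I(1)<m$ and the two defining inequalities pinch, i.e.\ $\mathrm{excess}(I)+(s-1)=I(1)-m+k+1$; in particular $s\geq1$. Combining this with the identity $t(I)=s+\mathrm{excess}(I)-I(1)+2I(s)$, valid for any admissible $\lambda_{I}$ of length $s\geq1$ and internal degree $t(I)$, one finds that the absolute internal degree of $\lambda_{I}\otimes x_{m}$ equals $m+t(I)=k+2+2I(s)$. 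Since $I(s)\geq0$, every nonzero homogeneous element of $\ker q$ has absolute internal degree in $\{k+2,k+4,k+6,\dots\}$.

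Consequently, if $n\notin\{k+2,k+4,k+6,\dots\}$ then the absolute internal degree $n$ part of $\ker q$ vanishes in every cohomological degree. The short exact sequence of cochain complexes $0\to\ker q\to\Lambda_{k+1}(M)\xrightarrow{\,q\,}\Lambda_{k}(uM)\to0$ then yields, in its long exact cohomology sequence, isomorphisms $H^{s,n}(\Lambda_{k+1}(M))\xrightarrow{\ \sim\ }H^{s,n}(\Lambda_{k}(uM))$ for all $s$. By Theorem~\ref{thm:lambda_k_coho} the left side is $\Ext_{k+1}^{s}(M,S_{k+1}(n))$ and the right side is $\Ext_{k}^{s}(uM,S_{k}(n))$.

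Finally I would check that $q_{*}$ really is the map in the statement, i.e.\ the one induced by the forgetful functor. For this I would run the double-complex argument proving Theorem~\ref{thm:lambda_k_coho} naturally: choose a free resolution $P_{\bullet}\to M$ in $\mathcal{U}_{k+1}$, so that $uP_{\bullet}\to uM$ is a free resolution in $\mathcal{U}_{k}$; the maps $q_{P_{i}}$ assemble into a morphism from the double complex $\bigl(\Lambda_{k+1}^{s}(P_{i})\bigr)$ to $\bigl(\Lambda_{k}^{s}(uP_{i})\bigr)$. Taking horizontal cohomology recovers $q_{M}$, while taking vertical cohomology recovers, along the $s=0$ edge, the map $\Hom_{k+1}(P_{\bullet},S_{k+1}(n))\to\Hom_{k}(uP_{\bullet},S_{k}(n))$ obtained by applying $u$, whose cohomology is the forgetful map on $\Ext$; comparing the two (degreewise finite) spectral sequences identifies $q_{*}$ with the forgetful map. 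I expect this last naturality bookkeeping — confirming that the identifications of Theorem~\ref{thm:lambda_k_coho} are natural in the module and compatible with $u$ — to be the only real obstacle; the computation of the internal degrees occurring in $\ker q$ is a short manipulation of excess and internal degree.
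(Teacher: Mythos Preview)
Your argument is correct, but it takes a substantially longer route than the paper's. The paper observes that since $u:\mathcal{U}_{k+1}\to\mathcal{U}_{k}$ is exact and preserves projectives (Propositions~\ref{prop:exfor} and~\ref{prop:prefor}), it suffices to show that the natural transformation $\mathcal{U}_{k+1}(-,S_{k+1}(n))\to\mathcal{U}_{k}(u(-),S_{k}(n))$ is an isomorphism of functors; the map on $\Ext$ is then obtained by applying both sides to a single projective resolution. Via Lemma~\ref{lem:sphere_indecom} this reduces to comparing the degree-$n$ indecomposables of $M$ and $uM$, which differ only by the images of $\Sq_{k}:M^{m}\to M^{2m-k}$ for $m\geq k+1$, hence only in degrees $k+2,k+4,\dots$. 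That is the whole proof.

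Your $\Lambda$-complex computation recovers exactly this numerology at the chain level: the kernel of $q$ sits in absolute internal degrees $k+2+2I(s)$, and your use of Lemma~\ref{lem:lambda_comp_0} for the extra differential term with $m-2i=k$ is a clean way to see $q$ is a chain map. The payoff of your approach is an explicit chain-level statement (the map $\Lambda_{k+1}(M)\to\Lambda_{k}(uM)$ is already an isomorphism in internal degree $n$, not just a quasi-isomorphism), whereas the paper's approach avoids the $\Lambda$-machinery and the naturality bookkeeping you flag at the end: once $\Hom$ agrees naturally, there is nothing further to check about which map on $\Ext$ you have produced.
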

\begin{proof}
It suffices to prove that $\mathcal{U}_{k+1}(M, S_{k+1}(n))\to \mathcal{U}_{k}(uM, S_{k}(n))$ is an isomorphism.
By Lemma \ref{lem:sphere_indecom}, it suffices to prove that the degree $n$ part of the indecomposables of $M$ and $uM$ are the same.
Those two indecomposables only differ by the image of the lower Steenrod operations $\Sq_{k} = \Sq^{m-k}$ from degree $m$ to degree $2m-k$ with $m\geq k+1$.
But those $\Sq_{k}$'s does not have target degree $n$ because its target degrees are $k+2, k+4,k+6,\ldots$.
\end{proof}

\begin{lemma}
\label{lem:Ext_iso_single_deg}
Suppose that $M, N$ are two modules in $\mathcal{U}_{k+1}$.
If $n\not\in\{k+2,k+4,k+6,\ldots\}$ and $N^{i} = 0$ for any $i\neq n$, then $\Ext_{k+1}^{s}(M, N)\to \Ext_{k}^{s}(uM, uN)$ is an isomorphism for any $s$.
\end{lemma}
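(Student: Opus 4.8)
The plan is to realize both $\Ext$-groups from a single free resolution of $M$ and to check that the comparison map is already an isomorphism at the cochain level. First I would pick a free resolution $\cdots\to P_{1}\to P_{0}\to M\to 0$ in $\mathcal{U}_{k+1}$, say with $P_{i}=\bigoplus_{\alpha}F_{k+1}(m_{i,\alpha})$. Since the forgetful functor $u$ is exact (Proposition \ref{prop:exfor}) and carries free modules to free modules, applying $u$ gives a free resolution $\cdots\to uP_{1}\to uP_{0}\to uM\to 0$ in $\mathcal{U}_{k}$. By the very definition of the map in question, $\Ext^{s}_{k+1}(M,N)\to\Ext^{s}_{k}(uM,uN)$ is the map induced on cohomology by the cochain map $\Hom_{k+1}(P_{\bullet},N)\to\Hom_{k}(uP_{\bullet},uN)$, $\phi\mapsto u\phi$, which commutes with differentials because $u$ is a functor. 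So it suffices to prove that $\Hom_{k+1}(F_{k+1}(m),N)\to\Hom_{k}(uF_{k+1}(m),uN)$ is an isomorphism for every $m$; since $\Hom$ out of a direct sum is a product and $u$ preserves direct sums (it does not change underlying abelian groups), this upgrades to a degreewise isomorphism of cochain complexes, and taking $H^{s}$ concludes.

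For the single-generator statement I would compute both sides explicitly. On the left, $\Hom_{k+1}(F_{k+1}(m),N)\cong N(m)$ via $\phi\mapsto\phi(\iota_{m})$. For the right, recall from the proof that $u$ preserves free modules that $uF_{k+1}(m)=\bigoplus_{j\geq 0}F_{k}(d_{j})$, where $d_{0}=m$ and, when $k<m$, $d_{j}$ is the degree of $(\Sq_{k})^{j}\iota_{m}$, namely $d_{j}=2^{j}(m-k)+k$; when $k\geq m$ only the term $d_{0}$ occurs. Hence $\Hom_{k}(uF_{k+1}(m),uN)\cong\prod_{j\geq 0}N(d_{j})$. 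Because $N$ is concentrated in degree $n$, the factor $N(d_{j})$ vanishes unless $d_{j}=n$. The factor $d_{0}=m$ may equal $n$; but a factor with $j\geq 1$ would force $2^{j}(m-k)=n-k$ with $m-k\geq 1$, making $n-k$ a positive even integer, which is exactly what the hypothesis $n\notin\{k+2,k+4,k+6,\dots\}$ forbids. So at most the $j=0$ factor survives, and $\Hom_{k}(uF_{k+1}(m),uN)\cong N(m)$. Tracing $\phi\mapsto u\phi$ through these identifications: the restriction of $u\phi$ to the summand $F_{k}(m)\subseteq uF_{k+1}(m)$ generated by $\iota_{m}$ again records $\phi(\iota_{m})$, while on each higher summand it sends $(\Sq_{k})^{j}\iota_{m}$ to $(\Sq_{k})^{j}\phi(\iota_{m})$, an element of degree $d_{j}\neq n$, hence $0$. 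Thus the comparison map is the identity on $N(m)$, in particular an isomorphism.

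It then remains to assemble: for $P_{i}=\bigoplus_{\alpha}F_{k+1}(m_{i,\alpha})$ both $\Hom$-groups become $\prod_{\alpha}$ of the single-generator groups and the comparison map is the corresponding product, so it is an isomorphism; naturality in $P_{\bullet}$ makes $\Hom_{k+1}(P_{\bullet},N)\to\Hom_{k}(uP_{\bullet},uN)$ an isomorphism of cochain complexes, and passing to cohomology gives $\Ext^{s}_{k+1}(M,N)\cong\Ext^{s}_{k}(uM,uN)$ for all $s$.

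The only genuinely delicate point I anticipate is the bookkeeping identifying $uF_{k+1}(m)$ with $\bigoplus_{j}F_{k}(d_{j})$ and locating where the degree $n$ can appear among the $d_{j}$; this is precisely where the arithmetic hypothesis $n\notin\{k+2,k+4,\dots\}$ enters, and it is essentially the degreewise form of the computation already carried out in Lemma \ref{lem:Ext_iso_sphere}. Everything else is formal, using only that $u$ is exact, preserves free modules, and leaves underlying groups unchanged.
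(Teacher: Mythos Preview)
Your proof is correct. The paper's argument, by contrast, runs through the second variable: it writes $N$ as a direct sum of copies of $S_{k+1}(n)$ and then invokes Lemma~\ref{lem:Ext_iso_sphere} term by term, which in turn establishes the $\Hom$-level isomorphism for \emph{all} $M$ via the description of $\Hom_{k}(-,S_{k}(n))$ as the dual of the degree-$n$ indecomposables (Lemma~\ref{lem:sphere_indecom}). You instead work through the first variable: you fix a free resolution of $M$, decompose each $uF_{k+1}(m)$ as $\bigoplus_{j}F_{k}(d_{j})$ with $d_{j}=2^{j}(m-k)+k$, and check directly that no $d_{j}$ with $j\geq 1$ can equal $n$ under the arithmetic hypothesis. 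In effect you have merged Lemmas~\ref{lem:Ext_iso_sphere} and~\ref{lem:Ext_iso_single_deg} into a single computation, trading the indecomposable description for the explicit splitting of $uF_{k+1}(m)$ already established in the proof that $u$ preserves free modules. The two routes are dual in spirit; yours is slightly more concrete and avoids the (minor) question of how $\Ext$ interacts with an infinite direct sum in the second variable, while the paper's version isolates the sphere case as a reusable lemma.
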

\begin{proof}
The module $N$ is equal to the a direct sum of some sphere modules $S_{k+1}(n)$.
Say $N = \bigoplus_{j\in J} S_{k+1}(n)$, where $J$ is an index set.
Therefore, \[\Ext_{k+1}^{s}(M, N) = \bigoplus_{j\in J}\Ext_{k+1}^{s}(M, S_{k+1}(n))\] and \[uN = \bigoplus_{j\in J} S_{k}(n),\quad\Ext_{k}^{s}(M, N) = \bigoplus_{j\in J}\Ext_{k}^{s}(M, S_{k}(n)).\]The map $\Ext_{k+1}^{s}(M, S_{k+1}(n))\to \Ext_{k}^{s}(M, S_{k}(n))$ is an isomorphism by Lemma \ref{lem:Ext_iso_sphere} and thus so is the map $\Ext_{k+1}^{s}(M, N)\to \Ext_{k}^{s}(uM, uN)$.
\end{proof}

\begin{proposition}
\label{prop:U_k_is}%inverse system
Suppose that $M, N$ are two modules in $\mathcal{U}_{k+1}$.
If $N$ is bounded above with the top nontrivial degree $n \leq k+1$, then $\Ext_{k+1}^{s}(M, N)\to \Ext_{k}^{s}(uM, uN)$ is an isomorphism for any $s$.
\end{proposition}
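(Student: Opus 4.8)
The plan is to perform dévissage on the top degree of $N$, reducing the statement to Lemma \ref{lem:Ext_iso_single_deg}. The key numerical observation is that since $N$ is unstable and bounded above by $n\leq k+1$, every nonzero homogeneous degree of $N$ lies in $\{0,1,\ldots,k+1\}$, and in particular avoids the set $\{k+2,k+4,k+6,\ldots\}$ appearing in Lemmas \ref{lem:Ext_iso_sphere}--\ref{lem:Ext_iso_single_deg}.

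I would induct on the number $r$ of nonzero homogeneous degrees of $N$. If $r=0$ then $N=0$ and both sides vanish; if $r=1$ then $N$ is concentrated in a single degree $d\leq k+1$, and since $d\notin\{k+2,k+4,\ldots\}$, Lemma \ref{lem:Ext_iso_single_deg} gives the claim directly. For the inductive step, suppose $r\geq 2$ and let $N'$ be the degree-$n$ part of $N$ and $N''=N/N'$ its degree-$<n$ part, exactly as in the proof of Proposition \ref{prop:hdbdd}; these fit into a short exact sequence $0\to N'\to N\to N''\to 0$ in $\mathcal{U}_{k+1}$ (the degree-$n$ part is a submodule because on a top-degree element every allowed operation $\Sq_{i}$, $0\leq i\leq k$, is either the identity or raises the degree above $n$ and hence vanishes). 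Now $N'$ is concentrated in the single degree $n\leq k+1$, so Lemma \ref{lem:Ext_iso_single_deg} applies to it, while $N''$ is bounded above with top degree $\leq n-1\leq k$ and has $r-1$ nonzero degrees, so the induction hypothesis applies to it. Hence the comparison maps $\Ext_{k+1}^{s}(M,N')\to\Ext_{k}^{s}(uM,uN')$ and $\Ext_{k+1}^{s}(M,N'')\to\Ext_{k}^{s}(uM,uN'')$ are isomorphisms for every $s$.

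Finally I would invoke the five lemma. Choosing a projective resolution $P_{\bullet}\to M$ in $\mathcal{U}_{k+1}$, the short exact sequence $0\to N'\to N\to N''\to 0$ induces a short exact sequence of cochain complexes $0\to\Hom_{k+1}(P_{\bullet},N')\to\Hom_{k+1}(P_{\bullet},N)\to\Hom_{k+1}(P_{\bullet},N'')\to 0$ (each $P_{i}$ being projective), hence a long exact sequence of $\Ext_{k+1}$-groups. Applying the forgetful functor $u$, which is exact and carries $P_{\bullet}$ to a projective resolution $uP_{\bullet}\to uM$ by Propositions \ref{prop:exfor} and \ref{prop:prefor}, yields the analogous long exact sequence of $\Ext_{k}$-groups together with a morphism between the two long exact sequences that commutes with connecting homomorphisms. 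Since in each five-term ladder the four terms flanking $\Ext_{k+1}^{s}(M,N)$ involve only $N'$ and $N''$ and hence map isomorphically, the five lemma shows $\Ext_{k+1}^{s}(M,N)\to\Ext_{k}^{s}(uM,uN)$ is an isomorphism for all $s$, completing the induction.

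The one point that needs a little care, and which I expect to be the main (though still routine) obstacle, is the naturality of the long exact sequence of $\Ext$-groups under $u$, i.e. that $u$ commutes with the connecting homomorphisms. This is the standard statement that an exact functor preserving projectives induces a morphism of $\delta$-functors, and it follows from the compatibility already visible at the level of the Hom-complexes above. Everything else is bookkeeping with the bound $n\leq k+1$, which is exactly what keeps all the degrees occurring in $N$ and in its truncations out of $\{k+2,k+4,k+6,\ldots\}$.
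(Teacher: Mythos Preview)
Your proof is correct and follows essentially the same route as the paper: peel off the top-degree part $N'$, apply Lemma~\ref{lem:Ext_iso_single_deg} to it, apply the inductive hypothesis to the remainder $N''$, and finish with the five lemma on the map of long exact sequences. The only cosmetic difference is that you induct on the number of nonzero degrees of $N$ while the paper inducts on the top degree $n$; the d\'evissage and the invocation of Lemma~\ref{lem:Ext_iso_single_deg} are identical, and your extra remarks on why $N'$ is a submodule and on the naturality of the connecting maps under $u$ just make explicit what the paper leaves implicit.
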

\begin{proof}
We will proceed by induction on $n$.
The lemma above solves the base case $n = 0$.
Now assume $n > 0$.
We have the following short exact sequence in $\mathcal{U}_{k+1}$
\[0\to N'\to N\to N''\to 0,\]
where $N'$ is the degree $n$ part of $N$ and $N''$ is the degree $<n$ part of $N$.
This short exact sequence gives rise to a long exact of $\Ext$ groups
\[\cdots\to \Ext_{k+1}^{s}(M, N')\to \Ext_{k+1}^{s}(M, N)\to \Ext_{k+1}^{s}(M, N'')\to \cdots\]
Since the forgetful functor $u:\mathcal{U}_{k+1}\to\mathcal{U}_{k}$ is exact, we have the following short exact sequence in $\mathcal{U}_{k}$
\[0\to uN'\to uN\to uN''\to 0\] and similarly we have the following long exact sequence of $\Ext$ groups
\[\cdots\to \Ext_{k}^{s}(uM, uN')\to \Ext_{k}^{s}(uM, uN)\to \Ext_{k}^{s}(uM, uN'')\to \cdots\]
The two long exact sequences above form a commutative diagram
\[\begin{tikzcd}
\cdots\arrow[r]&\Ext_{k+1}^{s}(M, N')\arrow[r]\arrow[d]&\Ext_{k+1}^{s}(M, N)\arrow[r]\arrow[d] &\Ext_{k+1}^{s}(M, N'')\arrow[r]\arrow[d]&\cdots\\
\cdots\arrow[r]&\Ext_{k}^{s}(uM,uN')\arrow[r]&\Ext_{k}^{s}(uM,uN)\arrow[r]&\Ext_{k}^{s}(uM, uN'')\arrow[r]&\cdots
\end{tikzcd}\]
By Lemma \ref{lem:Ext_iso_single_deg}, the map $\Ext^{s}_{k+1}(M,N')\to\Ext^{s}_{k}(uM,uN')$ is an isomorphism for any $s$.
By the induction hypothesis, the map $\Ext^{s}_{k+1}(M,N'')\to\Ext^{s}_{k}(uM,uN'')$ is an isomorphism for any $s$.
By five lemma, the middle map $\Ext^{s}_{k+1}(M, N)\to\Ext^{s}_{k}(uM,uN)$ is an isomorphism for all $s$.
\end{proof}

\begin{lemma}
\label{lem:Ext_iso_single_deg_U}
Suppose that $M,N$ are two modules in $\mathcal{U}$.
If $n \leq k+1$ and $N^{i} = 0$ for any $i\neq n$, then the map $\Ext_{\mathcal{U}}^{s}(M,N)\to\Ext_{k}^{s}(uM,uN)$ is an isomorphism for all $s$.
\end{lemma}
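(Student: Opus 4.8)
My plan is to reduce to $N=S(n)$ and then compare the $\Lambda$-complex of $M$ with the $\Lambda_{k}$-complex of $uM$. Since $N$ is concentrated in degree $n$, it is a direct sum of copies of the sphere module $S(n)$ in $\mathcal{U}$, and $uN$ is the corresponding direct sum of copies of $S_{k}(n)$; arguing exactly as in Lemma \ref{lem:Ext_iso_single_deg}, the forgetful map $\Ext_{\mathcal{U}}^{s}(M,N)\to\Ext_{k}^{s}(uM,uN)$ is the direct sum of the maps $\Ext_{\mathcal{U}}^{s}(M,S(n))\to\Ext_{k}^{s}(uM,S_{k}(n))$, so it suffices to treat $N=S(n)$.

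For the comparison, observe that for each $m$ the quotient map $\Lambda(m)\twoheadrightarrow\Lambda_{k}(m)=\Lambda(m)/\Gamma(m,k)$, tensored with $M_{m}$, assembles into a surjection $q_{M}\colon\Lambda(M)\to\Lambda_{k}(uM)$ which is natural in $M$. This $q_{M}$ is a chain map: the differential of $\Lambda(M)$ differs from that of $\Lambda_{k}(uM)$ only in the terms $\lambda_{i-1}\lambda_{I}\otimes x_{m}\Sq^{i}$ with $m-2i\geq k$, and by Lemma \ref{lem:lambda_comp_0} each such $\lambda_{i-1}\lambda_{I}$ is already zero in $\Lambda_{k}(m-i)$, so those terms vanish after applying $q_{M}$; its kernel is $\bigoplus_{m}\Gamma(m,k)\otimes M_{m}$. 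Now a basis vector $\lambda_{I}\otimes x_{m}$ of absolute internal degree $n$ has $\lambda_{I}$ of internal degree $t(I)=n-m$, and, using the equivalence $\mathrm{excess}(I)+(s-1)>I(1)-(m-k)\iff t(I)+m-2I(s)-1>k$ already recorded in Section \ref{sec:Lambda_k} (here $s$ denotes the length of $I$), membership $\lambda_{I}\in\Gamma(m,k)$ amounts to $n-1-2I(s)>k$, which is impossible once $n\leq k+1$ since $I(s)\geq 0$. Hence for $n\leq k+1$ the kernel of $q_{M}$ vanishes in absolute internal degree $n$, so $q_{M}$ restricts to an isomorphism of cochain complexes there; passing to cohomology and invoking Proposition \ref{prop:lambda_coho} and Theorem \ref{thm:lambda_k_coho} yields isomorphisms $\Ext_{\mathcal{U}}^{s}(M,S(n))=H^{s,n}(\Lambda(M))\xrightarrow{\ \sim\ }H^{s,n}(\Lambda_{k}(uM))=\Ext_{k}^{s}(uM,S_{k}(n))$ for every $s$.

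The remaining, and most delicate, point is to check that this isomorphism coincides with the map induced by the forgetful functor; I expect this bookkeeping to be the main obstacle. I would establish it by applying $q$ levelwise to a free resolution $P_{\bullet}\to M$ in $\mathcal{U}$; since $u$ is exact and preserves projectives (Propositions \ref{prop:exfor} and \ref{prop:prefor}), $uP_{\bullet}\to uM$ is a projective resolution in $\mathcal{U}_{k}$, and $q$ produces a map between the two double complexes used to prove Proposition \ref{prop:lambda_coho} and Theorem \ref{thm:lambda_k_coho}. One edge of this map of double complexes realizes $q_{M,*}$ on $\Lambda$-cohomology, while the other realizes the map $H^{s}(\Hom_{\mathcal{U}}(P_{\bullet},S(n)))\to H^{s}(\Hom_{\mathcal{U}_{k}}(uP_{\bullet},S_{k}(n)))$, which is precisely the forgetful map on $\Ext$; for $s=0$ this is transparent because $q_{M}$ is the identity on $\Lambda^{0}(M)=M^{\vee}$. (Equivalently, one can check that $q_{M,*}$ and the forgetful map are both morphisms of $\delta$-functors $\Ext_{\mathcal{U}}^{\bullet}(-,S(n))\to\Ext_{k}^{\bullet}(u(-),S_{k}(n))$ over $\mathcal{U}^{\op}$ agreeing for $s=0$, hence coincide by universality of the source.) Together with the previous paragraph this proves the lemma.
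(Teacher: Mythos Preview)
Your argument is correct but substantially more elaborate than the paper's.  The paper does reduce to $N=S(n)$ exactly as you do, but then finishes in one line: since $u$ is exact and preserves projectives, to identify the derived functors it suffices to show that the natural map $\mathcal{U}(M,S(n))\to\mathcal{U}_{k}(uM,S_{k}(n))$ is an isomorphism for every $M$.  By Lemma~\ref{lem:sphere_indecom} this amounts to comparing the degree~$n$ indecomposables of $M$ and of $uM$; these differ at most by the images of the operations $\Sq_{j}$ with $j\geq k$, whose targets lie in degrees $\geq k+2$, so for $n\leq k+1$ there is no difference.  This is exactly the argument of Lemma~\ref{lem:Ext_iso_sphere}, transported from $\mathcal{U}_{k+1}$ to $\mathcal{U}$.

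Your route via the $\Lambda$-complexes works and is internally consistent with Section~\ref{sec:Lambda_k}: the key degree computation $\lambda_{I}\in\Gamma(m,k)\iff n-1-2I(s)>k$ is correct, and your use of Lemma~\ref{lem:lambda_comp_0} to see that $q_{M}$ is a chain map is the right check.  What you gain is an explicit chain-level isomorphism and an independent confirmation of the $\Lambda_{k}$ machinery; what you pay is the extra bookkeeping you flag at the end (identifying $q_{M,*}$ with the forgetful map), which the paper's Hom-level argument avoids entirely, since there the comparison map \emph{is} the forgetful map by construction.
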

\begin{proof}
The proof is similar to the proof to Lemma \ref{lem:Ext_iso_single_deg}.
The core observation is that when $n \leq k+1$, the degree $n$ parts of the indecomposables of $M\in\mathcal{U}$ and $uM\in\mathcal{U}_{k}$ are the same.
\end{proof}

\begin{proposition}
\label{prop:U_is}
Suppose that $M,N$ are two modules in $\mathcal{U}$.
If $N$ is bounded above and the top nontrivial degree $n \leq k+1$, then $\Ext_{\mathcal{U}}^{s}(M,N)\to\Ext_{k}^{s}(uM,uN)$ is an isomorphism for any $s$.
\end{proposition}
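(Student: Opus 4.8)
The plan is to mimic the proof of Proposition \ref{prop:U_k_is}, replacing $\mathcal{U}_{k+1}$ by $\mathcal{U}$ throughout and using the $\mathcal{U}$-counterpart Lemma \ref{lem:Ext_iso_single_deg_U} in place of Lemma \ref{lem:Ext_iso_single_deg}. I would induct on $n$, the top nontrivial degree of $N$. For the base case $n = 0$ the module $N$ is concentrated in the single degree $0 \leq k+1$, so Lemma \ref{lem:Ext_iso_single_deg_U} applies directly and $\Ext_{\mathcal{U}}^{s}(M,N) \to \Ext_{k}^{s}(uM,uN)$ is an isomorphism for all $s$.

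For the inductive step, let $N'$ be the degree-$n$ part of $N$; this is an $A$-submodule because $\Sq^{i}$ with $i>0$ raises degree and $N^{n+i} = 0$, while $\Sq^{0} = \id$. Let $N'' = N/N'$, which is bounded above with top nontrivial degree $\leq n - 1 \leq k \leq k+1$. We get a short exact sequence $0 \to N' \to N \to N'' \to 0$ in $\mathcal{U}$; applying the forgetful functor $u$, which is exact by Proposition \ref{prop:exfor}, yields a short exact sequence $0 \to uN' \to uN \to uN'' \to 0$ in $\mathcal{U}_{k}$. Both short exact sequences give rise to long exact sequences of $\Ext$ groups, and since $u$ is exact and preserves projectives by Proposition \ref{prop:prefor}, the comparison maps $\Ext_{\mathcal{U}}^{s}(M,-) \to \Ext_{k}^{s}(uM, u(-))$ are natural in the second variable, so the two long exact sequences assemble into a commutative ladder.

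Now Lemma \ref{lem:Ext_iso_single_deg_U} shows that $\Ext_{\mathcal{U}}^{s}(M,N') \to \Ext_{k}^{s}(uM,uN')$ is an isomorphism for every $s$ (here $N'$ is concentrated in degree $n \leq k+1$), and the induction hypothesis applied to $N''$ shows that $\Ext_{\mathcal{U}}^{s}(M,N'') \to \Ext_{k}^{s}(uM,uN'')$ is an isomorphism for every $s$. Applying the five lemma to the commutative ladder then forces the middle map $\Ext_{\mathcal{U}}^{s}(M,N) \to \Ext_{k}^{s}(uM,uN)$ to be an isomorphism for all $s$, completing the induction. I do not expect a genuine obstacle here: the argument is a routine d\'evissage parallel to Proposition \ref{prop:U_k_is}, and the only points deserving a line of care are the verification that the top-degree part of $N$ is an $A$-submodule and that the $\Ext$-comparison maps are natural in the second variable so that the ladder commutes.
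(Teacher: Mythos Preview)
Your proposal is correct and follows exactly the approach the paper itself sketches: induction on $n$, splitting $N$ into its top-degree part $N'$ and the lower-degree quotient $N''$, invoking Lemma \ref{lem:Ext_iso_single_deg_U} for $N'$ and the induction hypothesis for $N''$, and finishing with the five lemma. The paper's own proof is only a two-sentence sketch pointing back to Proposition \ref{prop:U_k_is}, so your write-up is in fact more detailed than what appears there.
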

\begin{proof}
The proof is quite similar to the proof to Propositon \ref{prop:U_k_is}.
So we only give a sketch.
We split $N$ into the degree $n$ part $N'$ and the degree $<n$ part $N''$.
Then  Lemma \ref{lem:Ext_iso_single_deg_U} and an induction on $n$ complete the proof.
\end{proof}

Proposition \ref{prop:U_k_is} and Proposition \ref{prop:U_is} together give a proof to Theorem \ref{thm:is}.

\bibliography{ref}
\bibliographystyle{alpha}

\end{document}